\renewcommand{\tocsection}[3]{%
  \indentlabel{\@ifnotempty{#2}{\bfseries\ignorespaces#1 #2\quad}}\bfseries#3}
\renewcommand{\tocsubsection}[3]{%
  \indentlabel{\@ifnotempty{#2}{\ignorespaces#1 #2\quad}}#3}
\newcommand\@dotsep{4.5}
\def\@tocline#1#2#3#4#5#6#7{\relax
  \ifnum #1>\c@tocdepth % then omit
  \else
    \par \addpenalty\@secpenalty\addvspace{#2}%
    \begingroup \hyphenpenalty\@M
    \@ifempty{#4}{%
      \@tempdima\csname r@tocindent\number#1\endcsname\relax
    }{%
      \@tempdima#4\relax
    }%
    \parindent\z@ \leftskip#3\relax \advance\leftskip\@tempdima\relax
    \rightskip\@pnumwidth plus1em \parfillskip-\@pnumwidth
    #5\leavevmode\hskip-\@tempdima{#6}\nobreak
    \leaders\hbox{$\m@th\mkern \@dotsep mu\hbox{.}\mkern \@dotsep mu$}\hfill
    \nobreak
    \hbox to\@pnumwidth{\@tocpagenum{\ifnum#1=1\bfseries\fi#7}}\par% <-- \bfseries for \section page    \nobreak
    \endgroup
  \fi}
\renewcommand\csname r@tocindent0\endcsname{0pt}
\def\l@subsection{\@tocline{2}{0pt}{2.5pc}{5pc}{}}
\newtheorem*{theorem*}{Theorem}
\newtheorem{lemma}{Lemma}[section]
\newtheorem{theorem}[lemma]{Theorem}
\newtheorem{corollary}[lemma]{Corollary}
\newtheorem{question}[lemma]{Question}
\newtheorem{proposition}[lemma]{Proposition}
\newtheorem{remark}[lemma]{Remark}
\newtheorem{conjecture}[lemma]{Conjecture}
\theoremstyle{definition}
\newtheorem{convention}[lemma]{Convention}
\newtheorem{definition}[lemma]{Definition}
\newtheorem*{proposition*}{Proposition}
\newtheorem{problem}{Problem}
\theoremstyle{remark}
\newcommand{\C}{{\mathbb C}}
\newcommand{\E}{{\mathbb E}}
\newcommand{\F}{{\mathbb F}}
\newcommand{\N}{{\mathbb N}}
\newcommand{\Q}{{\mathbb Q}}
\newcommand{\R}{{\mathbb R}}
\newcommand{\T}{{\mathbb T}}
\newcommand{\Z}{{\mathbb Z}}
\def \c {\bold{c}}
\def \e {\epsilon}
\def \E {\overline{\mathbb{E}}}
\def \g {\bold{g}}
\def \h {\bold{h}}
\def \p {\bold{p}}
\def \q {\bold{q}}
\def \u {\bold{u}}
\def \v {\bold{v}}
\def \w {\bold{w}}
\def \I {\mathcal{I}}
\def \X {\bold{X}}
\def \cB {\mathcal{B}}
\def \vl {\varlimsup_{N\to \infty}}
\def \F {\sup_{\substack{  (I_{N})_{N\in\mathbb{N}} \\ \text{ F\o lner seq.} }}}
\def \ei {\mathbb{E}_{n\in I_{N}}}
\newcommand{\norm}[1]{\left\Vert #1\right\Vert}
\newcommand{\nnorm}[1]{\lvert\!|\!| #1|\!|\!\rvert}
\theoremstyle{definition}
\newtheorem*{definition*}{Definition}
\newtheorem*{conjecture*}{Conjecture}
\newtheorem*{remark*}{Remark}
\newtheorem*{remarks*}{Remarks}
\newtheorem*{claim*}{Claim}
\newtheorem{example}{Example}
\begin{document}

\title[Decomposition of multicorrelation sequences and joint ergodicity]{Decomposition of multicorrelation sequences and joint ergodicity} 

\author{Sebasti{\'a}n Donoso, Andreu Ferr\'e Moragues, Andreas Koutsogiannis and Wenbo~Sun}
\address[Sebasti{\'a}n Donoso]{\textsc{Departamento de Ingenier\'{\i}a Matem\'atica and Centro de Modelamiento Matem\'atico, Universidad de Chile \& IRL 2807 - CNRS, Beauchef 851, Santiago, Chile.}} \email{sdonoso@dim.uchile.cl}

\address[Andreu Ferr\'e Moragues]{Department of mathematics, The Ohio State University, 231 West 18th Avenue, Columbus, OH 43210-1174, USA} \email{ferremoragues.1@osu.edu}

\address[Andreas Koutsogiannis]{Department of mathematics, The Ohio State University, 231 West 18th Avenue, Columbus, OH 43210-1174, USA} \email{koutsogiannis.1@osu.edu}
\address[Wenbo Sun]{\textsc{Department of Mathematics, Virginia Tech, 902 Prices Fork RD, Blacksburg, VA, 24061, USA}}
\email{swenbo@vt.edu}

\thanks{The first author was supported by ANID/Fondecyt/1200897  and Grant Basal-ANID AFB170001.}

\subjclass[2010]{Primary: 37A05; Secondary: 37A30, 28A99}

\keywords{Multicorrelation sequences, nilsequences, nulsequences, joint ergodicity}

\maketitle

\begin{abstract}  
 We show that, under finitely many ergodicity assumptions, any multicorrelation sequence defined by invertible measure preserving $\mathbb{Z}^d$-actions with multivariable integer polynomial iterates is the sum of a nilsequence and a null sequence, extending a recent result of the second author. To this end, we develop a new seminorm bound estimate for multiple averages by improving the results in a previous work of the first, third and fourth authors.
 We also use this approach to obtain new criteria for joint ergodicity of multiple averages with multivariable polynomial iterates on $\Z^{d}$-systems.
\end{abstract}

\tableofcontents

\section{Introduction}

\subsection{Decomposition of multicorrelation sequences}
\noindent The structure and limiting behaviour of (averages of) \emph{multicorrelation sequences,} i.e., sequences of the form 
\begin{equation*}
(n_{1},\dots,n_{k})\mapsto
\int_X f_0\cdot T_1^{n_1}f_1\cdot\ldots\cdot T_k^{n_k}f_k\;d\mu,
\end{equation*}
where $k\in \mathbb{N},$ $T_1,\ldots,T_k\colon X\to X$ are invertible and commuting (i.e., $T_iT_j=T_jT_i$ for all $i,j$) measure preserving transformations on a probability space $(X,\mathcal{B},\mu),$\footnote{ We say that $T$ \emph{preserves $\mu$} if $\mu(T^{-1}A)=\mu(A)$ for all $A\in\mathcal{B}.$ The tuple $(X,\mathcal{B},\mu,T_1,\ldots,T_k)$ is a \emph{(measure preserving) system}.} $f_0,\ldots,f_k\in L^\infty(\mu)$ and $n_1,\ldots,n_k\in \mathbb{Z},$  is a central topic in ergodic theory. For $k=1$, Herglotz-Bochner's theorem implies that the sequence $\int_X f_0 \cdot T_1^nf_1 \ d\mu$ is given by the Fourier coefficients of some finite complex measure $\sigma$ on $\T$ (see \cite{khintchine} and \cite{koopmanvonneumann}). More specifically, decomposing $\sigma$ into the sum of its atomic part, $\sigma_a$, and continuous part, $\sigma_c$, we get
\[ \int_X f_0 \cdot T_1^nf_1 \ d\mu =\int_{\T} e^{2\pi i nx} \ d\sigma(x)
	 =\int_{\T} e^{2\pi i n x} d\sigma_a(x)+\int_{\T} e^{2\pi i n x} d\sigma_c(x)
 = \psi(n)+\nu(n),\]
where $(\psi(n))$  is \emph{an almost periodic sequence},\footnote{ I.e., there exists a compact abelian group $G$, a continuous function $\phi: G \rightarrow \C$, and $a \in G$ such that $\psi(n)=\phi(a^n),$ $n\in \mathbb{N}$.} and $(\nu(n))$ is a \emph{nullsequence}, i.e.,
\begin{equation}\label{besicovitch1} \lim_{N-M \to \infty} \frac{1}{N-M} \sum_{n=M}^{N-1} |\nu(n)|^2=0.
\end{equation}
More generally, after Furstenberg's celebrated ergodic theoretic proof of Szemer\'edi's theorem (\cite{fu}), for a single transformation $T$ and iterates of the form $in,$ $1\leq i\leq k,$ there has been a particular interest in the study of the corresponding multicorrelation sequences
\begin{equation}\label{BHKmulticorrelation}
    \alpha(n)=\int_X f_0\cdot T^nf_1\cdot\ldots\cdot T^{kn}f_k\;d\mu.
\end{equation}
For $T$ ergodic (i.e., every $T$-invariant set in $\mathcal{B}$ has trivial measure in $\{0,1\}$), Bergelson, Host and Kra (\cite{bhk}) showed that the sequence $(\alpha(n))$ in \eqref{BHKmulticorrelation} admits a decomposition of the form $a(n)=\phi(n)+\nu(n),$ where $(\phi(n))$ is a uniform limit of $k$-step nilsequences (see Section \ref{snn} for the definition) and $(\nu(n))$ satisfies \eqref{besicovitch1}.\footnote{ Note that $k$ is the number of linear iterates that appear in \eqref{BHKmulticorrelation}.} Leibman, in \cite{leibman1} for ergodic systems and \cite{leibman2} for general ones, extended the result of Bergelson-Host-Kra to polynomial iterates.

For $d\in\N$,
we say that a tuple $(X,\mathcal{B},\mu,(T_{n})_{n\in \Z^d})$ is a \emph{$\Z^d$-measure preserving system} (or a \emph{$\Z^d$-system}) if $(X,\mathcal{B},\mu)$ is a probability space and $T_{n}\colon X\to X,$ $n \in \Z^d,$ are measure preserving transformations on $X$ such that $T_{(0,\dots,0)}={\rm id}$ and $T_{m}\circ T_{n}=T_{m+n}$ for all $m,n\in \Z^{d}$.\footnote{ We use the notation $T_{p_i(n)}$ to stress the fact that $T$ is a $\mathbb{Z}^d$-action. If $T$ is generated by the $\mathbb{Z}$-actions $T_1,\ldots,T_d$ and $p_i=(p_{i,1},\ldots,p_{i,d}),$ we have $T_{p_i(n)}=\prod_{j=1}^d T_j^{p_{i,j}(n)}.$} It is natural to ask whether splitting results still hold for systems with commuting transformations:

\begin{question}[Question 2, \cite{klmr}]\label{q1}
Let $(X,\mathcal{B},\mu,(T_{n})_{n\in \Z^d})$ be a $\Z^d$-system, $k\in\N$, $p_{1},\dots,p_{k}\colon\Z\to\Z^{d}$ a family of polynomials, and $f_0,f_1,\dots,f_k \in L^{\infty}(\mu)$. Under which conditions on the system can the multicorrelation sequence 
\begin{equation}\label{multicorrelation_Nikos}
\int_Xf_0\cdot T_{p_1(n)}f_1\cdot\ldots\cdot T_{p_k(n)}f_k\;d\mu
\end{equation}
be decomposed as the sum of a uniform limit of nilsequences and  a nullsequence?	
\end{question}

The extension of the aforementioned results from $\mathbb{Z}$ to $\mathbb{Z}^d$-actions is, to this day, a challenging open problem. The main issue is that the proofs of the splitting theorems crucially depend on the theory of characteristic factors via the structure theory developed by Host and Kra (\cite{hostkraoriginal}), a tool that is unavailable in the more general $\mathbb{Z}^d$-setting. 
As an aside, Frantzikinakis provided a partial answer to Question \ref{q1} (for $d=1$) in \cite{fra1} that avoided the use of characteristic factors. The answer was partial in the sense that the nullsequence part was allowed to have an $\ell^{2}(\Z)$ error term. A similar decomposition result for general $d$ was proven by Host and Frantzikinakis in \cite{hostfra}.\footnote{ The third author showed in \cite{K1} the analogue to this result for integer parts, or any combination of rounding functions, of real polynomial iterates. For a refinement of this result, with the average of the error term taken along primes, see \cite{klmr}.} From the  point of view of applications, it is useful to have such splitting results for studying weighted averages, in particular for multiple commuting transformations.\footnote{ It is worth mentioning that the splitting of \eqref{BHKmulticorrelation}, where the average in the null term is taken along primes, was used by Tao and Ter\"av\"ainen to show the logarithmic Chowla conjecture for products of odd factors (\cite{TT}).} 

It was demonstrated in \cite{DKS} that under finitely many ergodicity assumptions, the characteristic factors for the corresponding averages
\begin{equation}\label{E:main_average}
\frac{1}{N}\sum_{n=1}^{N}T_{p_{1}(n)}f_1\cdot\ldots\cdot T_{p_{k}(n)}f_k,\footnote{ Such multiple ergodic averages always have $L^2$-limits as $N\to\infty$ (\cite{W}).}
\end{equation}
 as in the case of $\Z$-actions, are rotations on nilmanifolds (a similar result was obtained in \cite{johnson} under infinitely many ergodicity assumptions). So, it is reasonable  to expect that Question~\ref{q1} holds after postulating finitely many ergodicity assumptions (this is an open problem even in the  $k=2$ case--see \cite{hostfra}). 
 
 A partial answer towards this direction was obtained in \cite{afm} by the second author. Namely, \cite[Theorem~1.5]{afm} shows that for any system $(X,\mathcal{B},\mu, T_1,\dots,$ $T_k)$ with $T_i$ and $T_iT_j^{-1}$ ergodic (for all $i$ and $j \neq i$) and $f_0,\dots,f_k \in L^{\infty}(\mu)$, the sequence
\begin{equation}\label{BB} \int_X f_0 \cdot T_1^n f_1 \cdot \dotso \cdot T_k^n f_k \ d\mu\end{equation}
can be decomposed as a sum of a uniform limit of $k$-step nilsequences plus a nullsequence. 

For more general expressions (as in \eqref{multicorrelation_Nikos}), exploiting results from \cite{johnson}, it is also shown in \cite{afm} that, if we further assume ergodicity in all directions, i.e., $T_1^{a_1}\cdot\dotso\cdot T_d^{a_d}$ is ergodic for all $(a_1,\dots,a_d) \in \Z^d \setminus \{{\bf{0}}\}$, then for any family of pairwise distinct polynomials $p_1,\dots,p_k\colon \Z \rightarrow \Z^d$, the sequence
\begin{equation}\label{multicorrpolyintro}
     \int_X f_0 \cdot T_{p_1(n)}f_1 \cdot \dotso \cdot T_{p_k(n)} f_k \ d\mu
\end{equation}
can be decomposed as a sum of a uniform limit of $D$-step nilsequences plus a nullsequence.\footnote{ Here $D$ depends on $k,$ $d$ and the maximum degree of the $p_i$'s. It also has a connection to the number of van der Corput operations we have to run in the induction (see Remark~\ref{R:bounds} for details).} The proof of this result makes essential use of 
a seminorm bound estimate obtained in \cite{johnson}, where the (infinitely many) ergodicity assumptions are reflected (see \cite[Theorem 1.6]{afm}).

In \cite{DKS}, the first, third and fourth authors improved the seminorm bound estimates of \cite{johnson} by imposing only finitely many ergodic assumptions.
Although the results in \cite{DKS} are stronger than those in \cite{johnson},  
one cannot apply them directly to  \cite{afm} to improve the aforementioned results,
due to the incompatibility of the methods between \cite{DKS} and \cite{afm} (see Subsection~\ref{s13} for more details).

In this article, we extend results from \cite{DKS} in order to obtain splitting theorems for multicorrelation sequences involving multiparameter polynomials, postulating ergodicity assumptions which are even weaker than those in \cite{DKS} on the transformations that define the $\mathbb{Z}^d$-action in \eqref{multicorrpolyintro}; for example, we will see that the sequence $\int_X f_0 \cdot T_1^{n^2}T_{2}^{n} f_1 \cdot T_3^{n^2}T_{4}^{n} f_2 \ d\mu$ admits the desired splitting if we assume that $T_1, T_3, T_1T_3^{-1}$ are ergodic.

\subsection{The joint ergodicity phenomenon} 
In his ergodic theoretic proof of Szemer\'edi's theorem, Furstenberg studied the averages of the multicorrelation sequence \eqref{BHKmulticorrelation}. In particular, a stepping stone in the proof is the special case when the transformation $T$ is weakly mixing (i.e., $T\times T$ is ergodic for $\mu\times \mu$), in which he showed that the averages
\begin{equation}
    \frac{1}{N}\sum_{n=1}^N T^nf_1\cdot\dotso\cdot T^{kn}f_k 
\end{equation}
converge in $L^2(\mu)$ to $\prod_{i=1}^k \int_X f_i \ d\mu$ (which we will refer to as the ``expected limit'') as $N\to\infty$.\footnote{Throughout this paper, unless otherwise stated, all limits of measurable functions on a measure preserving system are taken in $L^{2}$.}  
It was Berend and Bergelson (\cite{BD}) who characterized when the average of the more general expression \eqref{BB}, i.e., for multiple commuting transformations, converges to the expected limit (and this happens exactly when $T_1 \times \dots \times T_k$ and $T_iT_j^{-1}$ for all $i\neq j$ are ergodic). 

Generalizing Furstenberg's result, Bergelson showed (in \cite{wmpet}) that, for a weakly mixing transformation $T$ and essentially distinct polynomials $p_{1},\dots,p_{k}$  (i.e., $p_{i}, p_{i}-p_{j}$ are non-constant for all $1\leq i,j\leq k, i\neq j$)
\[ \lim_{N \to \infty} \frac{1}{N}\sum_{n=1}^N T^{p_1(n)}f_1\cdot\dotso\cdot T^{p_k(n)}f_k=\prod_{i=1}^k \int_X f_i \ d\mu.\footnote{ For $T$ totally ergodic (i.e., $T^n$ is ergodic for all $n\in \mathbb{N}$) and $p_1,\ldots,p_k$ ``independent'' integer polynomials, it is proved in \cite{FK} that we have the same conclusion. This fact remains true for an ergodic $T$ and ``strongly independent'' real-valued polynomials iterates, $[p_1(n)],\ldots,[p_k(n)]$ ($[\cdot]$ denotes the floor function), as well (see \cite{KK}). These last two results also follow by a recent work of Frantzikinakis, \cite{FA}, in which, for single $T,$ we have a plethora of joint ergodicity results for a number of classes of iterates (not just polynomial). Finally, for variable real ``good'' polynomial iterates, one is referred to \cite{K3}.}\]

One can think of this last result as a strong independence
property of the sequences $(T^{p_{i}(n)})_{n\in\mathbb{Z}}, 1\leq i\leq k$ in the weakly mixing case. It is reasonable  to expect a characterization in the case of polynomial iterates, which naturally leads to a general notion of joint ergodicity:

\begin{definition}
We say that the sequence of tuples $(T_{p_{1}(n)},\dots,T_{p_{k}(n)})_{n\in\mathbb{Z}^{L}}$ is \emph{jointly ergodic} for $\mu$ if for every $f_{1},\dots,f_{k}$ $\in L^{\infty}(\mu)$ and every F{\o}lner sequence $(I_{N})_{N\in\mathbb{N}}$ of $\mathbb{Z}^{L}$,\footnote{ A sequence of finite subsets $(I_{N})_{N\in\mathbb{N}}$ of  $\mathbb{Z}^L$ with the property $\lim_{N\to\infty}\vert I_{N}\vert^{-1}\cdot\vert (g+I_{N})\triangle I_{N}\vert=0$ for all $g\in \mathbb{Z}^L$  is called a \emph{F\o lner sequence} in $\mathbb{Z}^L$.} we have that
	\begin{equation}\label{444}
	\lim_{N\to\infty}\frac{1}{\vert I_{N}\vert}\sum_{n\in I_{N}}T_{p_{1}(n)}f_{1}\cdot\ldots\cdot T_{p_{k}(n)}f_{k}=\int_{X}f_{1}\,d\mu\cdot\ldots\cdot \int_{X}f_{k}\,d\mu.
	\end{equation}	 When $k=1$, we also say that $(T_{p_{1}(n)})_{n\in\mathbb{Z}^{L}}$ is \emph{ergodic} for $\mu.$
\end{definition}
	
The following conjecture was stated in 	\cite{DKS}:

\begin{conjecture}[Conjecture 1.5, \cite{DKS}]\label{q2}
		Let $d,k,L\in\mathbb{N},$ $ p_{1},\dots,p_{k}\colon\mathbb{Z}^{L}\to\mathbb{Z}^{d}$ be polynomials and  
		$(X,\mathcal{B},\mu, (T_{g})_{g\in\Z^{d}})$ be a $\Z^{d}$-system. Then the following are equivalent:
\begin{itemize}
    \item[(C1)] $(T_{p_{1}(n)},\dots,T_{p_{k}(n)})_{n\in\mathbb{Z}^{L}}$ is jointly ergodic for $\mu$.
    \item[(C2)] The following conditions are satisfied:
    	\begin{itemize}
		\item[$(i)$] $(T_{p_{i}(n)-p_{j}(n)})_{n\in\mathbb{Z}^{L}}$ is ergodic for $\mu$ for all $1\leq i,j\leq k,$ $i\neq j$; and
		\item[$(ii)$] $(T_{p_{1}(n)}\times\dots\times T_{p_{k}(n)})_{n\in\mathbb{Z}^{L}}$ is ergodic for the product measure $\mu^{\otimes k}$ on $X^{k}$.
	\end{itemize}
\end{itemize}
\end{conjecture}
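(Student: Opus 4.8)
The plan is to prove the two implications separately; the heart of the matter is (C2)$\Rightarrow$(C1), while I expect (C1)$\Rightarrow$(C2) to be comparatively soft. For the necessity direction, condition $(i)$ follows by specializing \eqref{444} to the case where only $f_i$ and $f_j$ are non-constant (all other functions set equal to $\mathbf{1}$), pairing against $\mathbf{1}$, and rewriting $\int_X T_{p_i(n)}f_i\cdot T_{p_j(n)}f_j\,d\mu=\int_X T_{p_i(n)-p_j(n)}f_i\cdot f_j\,d\mu$ via the measure preservation of $T_{p_j(n)}$: since $\bar f_j$ ranges over an $L^2(\mu)$-dense set, this yields $\frac{1}{|I_N|}\sum_{n\in I_N}T_{p_i(n)-p_j(n)}f_i\rightharpoonup\big(\int_X f_i\,d\mu\big)\mathbf{1}$ weakly, and since these averages converge in $L^2$ by \cite{W}, the strong limit coincides with the weak one, which is precisely ergodicity of $(T_{p_i(n)-p_j(n)})_{n\in\Z^L}$. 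For condition $(ii)$, projecting the product sequence to its $i$-th coordinate already shows that each $(T_{p_i(n)})_n$ is ergodic; to obtain ergodicity of the full product sequence for $\mu^{\otimes k}$ I would reformulate \eqref{444} as the weak convergence of the empirical measures $\frac{1}{|I_N|}\sum_{n\in I_N}(\Phi_n)_*\mu$ to $\mu^{\otimes k}$ on $X^k$, where $\Phi_n(x)=(T_{p_1(n)}x,\dots,T_{p_k(n)}x)$, and analyze the factor of $(X^k,\mu^{\otimes k})$ onto which the product averages project; in the polynomial cases relevant here this necessity is also available in the literature (it goes back to Berend and Bergelson \cite{BD} in the linear case).

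For (C2)$\Rightarrow$(C1) I would follow a ``seminorm control, reduction to nilsystems, direct computation'' scheme. \emph{Step A (seminorm smoothing).} Using the new seminorm bound estimate developed in this paper --- the extension of the results of \cite{DKS} --- run a PET/van der Corput induction on $p_1,\dots,p_k$ to show that, under the ergodicity hypotheses in (C2), there is $s\in\N$ for which $\frac{1}{|I_N|}\sum_{n\in I_N}\prod_j T_{p_j(n)}f_j\to 0$ in $L^2(\mu)$ whenever some $f_i$ has vanishing Host--Kra-type seminorm of level $s$ relative to the appropriate sub-action; this reduces matters to $f_i$ measurable with respect to the corresponding characteristic factor. \emph{Step B (reduction to nilsystems).} Invoking the identification of these characteristic factors with inverse limits of nilsystems under finitely many ergodicity assumptions (as established in \cite{DKS}, in the spirit of \cite{hostkraoriginal}), reduce to the case where $X=G/\Gamma$ is a nilmanifold, $\mu$ is Haar measure, the $T_g$ are nilrotations and the $f_j$ are continuous. \emph{Step C (equidistribution).} In this case $n\mapsto\Phi_n(x)=(T_{p_1(n)}x,\dots,T_{p_k(n)}x)$ is a polynomial sequence on the nilmanifold $X^k$, so by Leibman's equidistribution theorem the measures $\frac{1}{|I_N|}\sum_{n\in I_N}\delta_{\Phi_n(x)}$ converge, for a.e.\ $x$, to the Haar measure of a sub-nilmanifold $Y_x\subseteq X^k$, whence the average tends to $\int_X\big(\int_{Y_x}f_1\otimes\cdots\otimes f_k\,d\mathrm{Haar}_{Y_x}\big)\,d\mu(x)$. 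It then remains to show that $Y_x=X^k$ for a.e.\ $x$: the obstructions are ``relations'' confining the joint orbit to a proper sub-nilmanifold, and single-coordinate obstructions are excluded because each $(T_{p_i(n)})_n$ is ergodic (a consequence of (C2)$(ii)$), pairwise ``difference'' obstructions are excluded by (C2)$(i)$, and the remaining joint obstructions are excluded by (C2)$(ii)$ itself (which, again via Leibman, is equivalent to equidistribution of $n\mapsto(T_{p_1(n)}x_1,\dots,T_{p_k(n)}x_k)$ in $X^k$ for a.e.\ $(x_1,\dots,x_k)$). Hence the average converges to $\int_{X^k}f_1\otimes\cdots\otimes f_k\,d\mu^{\otimes k}=\prod_i\int_X f_i\,d\mu$, and a standard approximation argument removes the continuity and nilsystem reductions.

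The step I expect to be the main obstacle is Step A. Carrying out the PET induction for genuinely multivariable polynomials $p_i\colon\Z^L\to\Z^d$ acting through a $\Z^d$-action is delicate: each van der Corput operation replaces an iterate $p_i(n)$ by differences $p_i(n+h)-p_i(n)$, changing which group elements --- and hence which ergodicity hypotheses --- are in play, and one must guarantee that the (deliberately weak, weaker even than those in \cite{DKS}) ergodicity assumptions one is allowed to postulate still make the terminal bound a genuine seminorm controlled by those hypotheses, rather than a degenerate quantity; this is exactly the estimate the paper advertises improving. A secondary difficulty lies in Step C: classifying which relations among the coordinates of the joint orbit can actually occur on a nilmanifold, and matching each of them to the precise ergodicity hypothesis that rules it out, requires care with Leibman's structure theory for orbit closures and with the embedding of the $\Z^d$-action into the nilpotent group.
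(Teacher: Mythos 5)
The statement you set out to prove is not a theorem of this paper: it is stated here as an open conjecture (Conjecture~\ref{q2}, i.e.\ Conjecture~1.5 of \cite{DKS}), and the paper only establishes special cases of it, namely Theorem~\ref{j2} (iterates of the form $p_i(n)v_i$ with extra dependence hypotheses) and Theorem~\ref{j3} (sufficiency under the \emph{stronger} hypothesis that the groups $G_{i,j}(\mathbf{p})$ are ergodic). Your outline follows the same scheme as the proof of Theorem~\ref{j3} (seminorm control via PET, reduction to nilsystems via Theorem~\ref{st2}, then Leibman's equidistribution theorem), but it does not close the gap that makes the full conjecture open. Concretely, in your Step~A the characteristic-factor estimate available (Theorem~\ref{5555}, built from Propositions~\ref{pet} and \ref{pet3}) is governed by ergodicity of the subgroups $G_{i,j}(\mathbf{p})\subseteq\Z^d$ spanned by the top-degree coefficients of $p_i-p_j$: the PET/van der Corput induction forgets the polynomials and only tracks coefficient data, so the terminal Host--Kra seminorms are indexed by these groups. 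Condition (C2)$(i)$, however, only gives ergodicity of the polynomial sequences $(T_{p_i(n)-p_j(n)})_{n\in\Z^L}$, which in general neither implies nor is implied by ergodicity of $G_{i,j}(\mathbf{p})$ (and likewise does not yield the ergodicity of the sub-actions needed to invoke Corollary~\ref{st1} and pass to $Z_{(\Z^d)^{\times D}}$ before applying Theorem~\ref{st2}). Upgrading sequence ergodicity to group ergodicity is exactly what Theorem~\ref{j2} accomplishes, and only under its special structural assumptions ($p_i(n)v_i$ with linear-dependence conditions); for general $p_1,\dots,p_k\colon\Z^L\to\Z^d$ no such upgrade is known, so your Step~A is not a technical difficulty to be ``carried out with care'' but the genuinely open part of the problem.

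A second, smaller gap is in your necessity direction. The implication (C1)$\Rightarrow$(C2)$(i)$ does go through (this is the first part of Proposition~\ref{g22}), but (C1)$\Rightarrow$(C2)$(ii)$ is not soft and is not simply ``available in the literature'' beyond the linear case of Berend--Bergelson: the paper proves it only for iterates of the form $p_i(n)v_i$, and even there it requires a nontrivial argument (a claim reducing to functions measurable with respect to $Z_{\Z^d,\Z^d}(\X)$, proved via Lemmas~\ref{Kron} and \ref{Kron2}, Weyl-type exponential sum estimates and Lemma~\ref{alg3}, followed by an eigenfunction computation). Your sketch via empirical measures $\frac{1}{|I_N|}\sum_{n\in I_N}(\Phi_n)_*\mu\to\mu^{\otimes k}$ does not explain how joint ergodicity on $X$ controls the product system on $X^k$ for general multivariable polynomial iterates, so this direction also remains unproven in your proposal for the generality of the conjecture.
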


Answering a question of Bergelson, it was shown in \cite[Theorem 1.4]{DKS} that, for a polynomial $p:\mathbb{Z}^L\to\mathbb{Z},$ the sequence $(T_{1}^{p(n)},\dots,T_{k}^{p(n)})_{n\in\mathbb{Z}^{L}}$ is jointly ergodic for $\mu$ if and only if $((T_{1}\times \dots\times T_{k})^{p(n)})_{n\in\mathbb{Z}^{L}}$ is ergodic for $\mu^{\otimes k}$ and $T_{i}T^{-1}_{j}$ is ergodic for $\mu$ for all $i\neq j.$
In this paper, the strong decomposition results that we obtain allow us to deduce joint ergodicity results for a larger family of polynomials (see Theorems~\ref{j2} and \ref{j3}), thus addressing some additional cases in the aforementioned conjecture.

\section{Main results}

In this section we state the main results of the paper, and provide a number of examples to better illustrate them. We also comment on the approaches that we follow. 

\subsection{Splitting results.}\label{Ss:SR}
\noindent 
Our first main concern is to resolve the incompatibility  between \cite{DKS} and \cite{afm}, and improve the method in \cite{DKS}, in order to obtain an extension of the results in \cite{afm}.

\medskip

Before we state our first result, we need to introduce some notations:

For $d,L\in\mathbb{N}$, the polynomial $q=(q_1,\ldots,q_d): \mathbb{Z}^L\to\mathbb{Z}^d$  is \emph{non-constant} if some $q_i$ is non-constant.

The polynomials $p_{1},\dots,p_{k}\colon\mathbb{Z}^{L}\to\mathbb{Z}^{d}$ are called \emph{essentially distinct} if they are non-constant and $p_{i}-p_{j}$ is non-constant for all $i\neq j$.\footnote{In general, a polynomial $q\colon\mathbb{Z}^{L}\to\mathbb{Z}^{d}$ has rational coefficients (i.e., vectors with rational coordinates).} 

For a subset $A$ of $\mathbb{Q}^{d}$, we denote $G(A)\coloneqq \text{span}_{\Q}\{a\in A\}\cap \Z^{d}.$ 
The following subgroups of $\Z^{d}$ play an important role in this paper:

\begin{definition}\label{d21}
Let 
${\mathbf{p}}=(p_{1},\dots,p_{k}), p_{1},\dots,p_{k}\colon\mathbb{Z}^{L}\to \mathbb{Z}^{d}$ be a family of essentially distinct polynomials  with $p_{i}(n)=\sum_{v\in\mathbb{N}_0^{L}}b_{i,v}n^{v}$ for some $b_{i,v}\in \mathbb{Q}^{d}$ with at most finitely many $b_{i,v},v\in\mathbb{N}_0^{L}$ nonzero.\footnote{Here we denote $n^{v}\coloneqq n_{1}^{v_{1}}\dots n_{L}^{v_{L}}$ for $n=(n_{1},\dots,n_{L})\in\Z^{L}$ and $v=(v_{1},\dots,v_{L})\in\N_0^{L}$,where $0^0:=1.$}
For convenience, we artificially denote $p_{0}$ as the constant zero polynomial and $b_{0,v}\coloneqq0$ for all $v\in\N_{0}^{L}$.
For $0\leq i,j\leq k$, set  $d_{i,j}\coloneqq \deg(p_{i}-p_{j})$ and $G_{i,j}(\mathbf{p})\coloneqq G(\{b_{i,v}-b_{j,v}\colon \vert v\vert=d_{i,j}\})$, where, for $v=(v_1,\ldots,v_L)\in\N_{0}^{L},$ we write $\vert v\vert=v_1+\ldots+v_L.$
 \end{definition}

Our main result provides an affirmative answer to Question \ref{q1} under finitely many ergodicity assumptions on the groups  $G_{i,j}(\mathbf{p})$, which generalizes \cite[Theorem~1.5]{afm}. We say that the group $G_{i,j}(\mathbf{p})$ is \emph{ergodic for $\mu$} if any function $f \in L^2(\mu)$ that is $T_{a}$-invariant for all $a \in G_{i,j}(\mathbf{p})$ is constant.

\begin{theorem}[Decomposition theorem under finitely many ergodicity assumptions]\label{mainthm}
	For $d,k,K,L\in\mathbb{N},$ let $\mathbf{p}=(p_{1},\dots,p_{k}),$ where $p_{1},\dots,p_{k}\colon\mathbb{Z}^{L}\to\mathbb{Z}^{d}$ is a family of essentially distinct polynomials of degree at most $K,$ and let  
	$(X,\mathcal{B},\mu, (T_{n})_{n\in\Z^{d}})$ be a $\Z^{d}$-system.
	If  
	$G_{i,j}(\mathbf{p})$ is ergodic for $\mu$ for all  $0\leq i,j\leq k,i\neq j,$ 
	then, for all $f_0,\dots,f_k \in L^{\infty}(\mu)$, the multicorrelation sequence
	\[ a(n)\coloneqq \int_X f_0\cdot T_{p_1(n)}f_1 \cdot \dotso \cdot T_{p_k(n)} f_k \ d\mu\]
	can be decomposed as a sum of a uniform limit of $D$-step nilsequences and a nullsequence,\footnote{The precise definition of a $D$-step nilsequence will be given in the following section. Furthermore, we say that $a\colon \Z^L \to \C$ is a nullsequence if for any F\o lner sequence $(I_N)$ we have $\lim_{N \to \infty} \frac{1}{|I_N|}\sum_{n \in I_N} |a(n)|^2=0.$} where $D\in\N$ is a constant depending only on $d,k,K,L$. 
\end{theorem}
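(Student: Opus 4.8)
The plan is to run the now-standard two-step scheme for decomposition theorems (compare \cite{bhk}, \cite{leibman1}, \cite{leibman2}, and especially \cite{afm}), with the analytic input replaced by the refined, finitely-many-ergodicity seminorm estimate developed in this paper. Fix a constant $D=D(d,k,K,L)$ large enough (it will be dictated by the seminorm estimate), let $\mathcal{Z}_{D}$ be the $D$-step nilfactor of the system $(X,\mathcal{B},\mu,(T_{n})_{n\in\Z^{d}})$ (the factor generated by the $D$-step Host--Kra factors of the transformations $T_{n}$), and let $\mathbb{E}(\,\cdot\mid\mathcal{Z}_{D})$ be the corresponding conditional expectation. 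I would prove two statements. \emph{(i) Seminorm control.} If $\mathbb{E}(f_{i}\mid\mathcal{Z}_{D})=0$ for some $1\leq i\leq k$, then $a$ is a nullsequence. \emph{(ii) Structured part.} If $f_{1},\dots,f_{k}$ are all measurable with respect to $\mathcal{Z}_{D}$, then $a$ is a uniform limit of $D$-step nilsequences. Granting these, the theorem follows by multilinearity: for $1\leq i\leq k$ write $f_{i}=\mathbb{E}(f_{i}\mid\mathcal{Z}_{D})+\big(f_{i}-\mathbb{E}(f_{i}\mid\mathcal{Z}_{D})\big)$ and expand $a(n)$ into $2^{k}$ summands; every summand involving a factor $f_{i}-\mathbb{E}(f_{i}\mid\mathcal{Z}_{D})$ is a nullsequence by (i) (absorbing the other, bounded, functions), while the single summand with all functions projected to $\mathcal{Z}_{D}$ equals, after replacing $f_{0}$ by $\mathbb{E}(f_{0}\mid\mathcal{Z}_{D})$ (legitimate since the product of the remaining factors is $\mathcal{Z}_{D}$-measurable), a uniform limit of $D$-step nilsequences by (ii). A finite sum of uniform limits of $D$-step nilsequences and nullsequences is again of the desired form.

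For (i) I would square and pass to the product system. Since $a$ is a nullsequence exactly when $\frac{1}{|I_{N}|}\sum_{n\in I_{N}}|a(n)|^{2}\to0$ along every F\o lner sequence, and $|a(n)|^{2}=\int_{X\times X}(f_{0}\otimes\overline{f_{0}})\cdot\prod_{j=1}^{k}(T_{p_{j}(n)}\times T_{p_{j}(n)})(f_{j}\otimes\overline{f_{j}})\,d(\mu\otimes\mu)$ is itself a multicorrelation sequence of the same form over the $\Z^{d}$-system $(X\times X,\mu\otimes\mu,(T_{n}\times T_{n})_{n})$, it suffices to bound the $L^{2}$-limit of the averages $\frac{1}{|I_{N}|}\sum_{n\in I_{N}}\prod_{j=1}^{k}(T_{p_{j}(n)}\times T_{p_{j}(n)})(f_{j}\otimes\overline{f_{j}})$. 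This is where the new seminorm estimate of the paper enters: running a PET/van der Corput induction on the essentially distinct family $\mathbf{p}$ — whose essential distinctness keeps all the difference polynomials produced along the way non-constant, so that the induction terminates after $O_{d,k,K,L}(1)$ steps — one bounds this $L^{2}$-limit by a Host--Kra seminorm of one of the functions $f_{j}\otimes\overline{f_{j}}$ with respect to a transformation $T_{a}\times T_{a}$, where $a$ lies in one of the groups $G_{i,j}(\mathbf{p})$; the standard identity relating product-system Host--Kra seminorms to one-degree-higher seminorms on the base then converts this into a seminorm of $f_{j}$ alone, which vanishes once $\mathbb{E}(f_{j}\mid\mathcal{Z}_{D})=0$ provided $D$ was chosen large enough. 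Ergodicity of the groups $G_{i,j}(\mathbf{p})$ for $\mu$ is precisely what makes this induction close and identifies the relevant factors; the gain over \cite{DKS} is to push the required ergodicity all the way down to this minimal family, rather than to the larger ``combinatorial'' subgroups that the argument of \cite{DKS} invokes.

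For (ii) I would use that, under the present ergodicity hypotheses, $\mathcal{Z}_{D}$ is an inverse limit of $D$-step $\Z^{d}$-nilsystems — the structural conclusion already available from \cite{DKS} and \cite{johnson}, built on the Host--Kra structure theory of \cite{hostkraoriginal}. Approximating each $f_{i}$ in $L^{2}(\mu)$ by a function measurable with respect to a single $D$-step nilsystem factor $(H/\Lambda,\nu,(S_{n})_{n\in\Z^{d}})$ and using that, for bounded functions, the correlation $a(n)$ depends Lipschitz-continuously on the $L^{2}$-data uniformly in $n$, one is reduced to computing $\int_{H/\Lambda}f_{0}\cdot S_{p_{1}(n)}f_{1}\cdot\ldots\cdot S_{p_{k}(n)}f_{k}\,d\nu$ on a $D$-step nilsystem; writing each $S_{p_{j}(n)}$ as translation by a polynomial map $\Z^{L}\to H$ and realizing this integral as the value at a polynomial orbit point of a continuous function on the product nilmanifold $(H/\Lambda)^{k+1}$ exhibits it, via Leibman's theory of polynomial sequences on nilmanifolds (as in \cite{leibman1}, \cite{bhk}), as a $D$-step nilsequence in $n\in\Z^{L}$ (enlarging $D$ if needed). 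Passing to the uniform limit over the approximation finishes (ii).

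The main obstacle is statement (i) — in effect the incompatibility flagged in Subsection~\ref{s13}. The estimates of \cite{DKS} are tailored to bounding $L^{2}$-limits of averages and are proved through a reduction that does not preserve the dependence on $n$ one needs to extract a decomposition of the correlation \emph{sequence}, whereas the estimate of \cite{johnson} used in \cite{afm} has the right form but rests on infinitely many ergodicity assumptions. Reconciling the two forces a reworking of the van der Corput/PET scheme so that it (a) yields a bound directly usable at the level of the correlation sequence — equivalently, for averages on $X\times X$ — (b) consumes only finitely many ergodicity hypotheses, and (c) reduces these, by bookkeeping which differences $p_{i}-p_{j}$ and leading-coefficient directions are responsible for the seminorm that ultimately appears and checking that ergodicity of the $G_{i,j}(\mathbf{p})$ is inherited by every auxiliary polynomial family generated along the induction, all the way down to the groups $G_{i,j}(\mathbf{p})$, which is weaker than what \cite{DKS} requires. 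This reworking also produces the explicit value of $D$ (see Remark~\ref{R:bounds}); everything else in the argument is routine once it is in place.
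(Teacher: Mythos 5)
Your overall scheme coincides with the paper's proof: square $a(n)$ and pass to the diagonal product system $(X^{2},\mu\times\mu,(T_{n}\times T_{n})_{n})$, control the resulting averages by the new seminorm estimate (Theorem~\ref{5555}) applied with the groups $G_{i,j}(\mathbf{p})$, convert the product-system seminorm into a base seminorm of one higher degree (this is Lemma~\ref{lem1}) and use the ergodicity hypotheses with Corollary~\ref{st1} to replace it by $\nnorm{f_i}_{(\Z^d)^{\times(D+1)}}$, so that projecting each $f_i$ onto $Z_{(\Z^{d})^{\times(D+1)}}(\X)$ changes $a$ only by a nullsequence; then treat the projected correlation via the structure theorem (Theorem~\ref{st2}) and Leibman's work on nilmanifolds. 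Your step (i) and the multilinear telescoping are exactly what the paper does, and invoking Theorem~\ref{5555} as a black box is legitimate since it is stated separately.

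The weak point is your step (ii). The quantity $\int_{\tilde X}\tilde f_0\cdot \vec{a}_{p_1(n)}\tilde f_1\cdot\ldots\cdot \vec{a}_{p_k(n)}\tilde f_k\,d\mu_{\tilde X}$ is \emph{not} the value of a continuous function on $(\tilde X)^{k+1}$ at a point of a polynomial orbit: it is the integral of $\tilde f_0\otimes\cdots\otimes\tilde f_k$ over the diagonal subnilmanifold of $\tilde X^{k+1}$ translated by the polynomial sequence $(\mathrm{id},\vec{a}_{p_1(n)},\dots,\vec{a}_{p_k(n)})$, and by \cite[Theorem~2.5]{leibman1} such a sequence is in general only a $D$-step nilsequence \emph{plus} a nullsequence. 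So (ii) as stated ("a uniform limit of $D$-step nilsequences") is stronger than what your cited tool delivers; this is harmless for the final statement, which tolerates a nullsequence, but the lemma must be weakened accordingly. More substantively, "passing to the uniform limit over the approximation" is not automatic: for each $\varepsilon$ you only obtain $a=\psi_\varepsilon+\lambda_\varepsilon+\delta_\varepsilon$ with $\psi_\varepsilon$ a nilsequence, $\lambda_\varepsilon$ a nullsequence and $\Vert\delta_\varepsilon\Vert_{\ell^\infty(\Z^L)}\le\varepsilon$, and to extract an exact decomposition you must show that the nilsequence parts $\psi_{1/l}$ form a Cauchy sequence in $\ell^{\infty}(\Z^L)$. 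This uses the fact that a nilsequence whose modulus is at most $c$ outside a set of Banach density zero is at most $c$ everywhere — the paper's argument around \eqref{nilsequence1}--\eqref{nilsequence2}. Once you weaken (ii) and add this patching argument, your proof is the paper's proof.
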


Note that Theorem \ref{mainthm} goes beyond Question~\ref{q1} as it deals with multi-variable polynomial iterates (i.e., $L>1$).

\begin{example}\label{Ex:34} It was proved in \cite[Theorem~1.5]{afm}  that for any probability space $(X,\mathcal{B},\mu)$ 
	and commuting transformations $T_{1},\dots,T_{k}$ acting on $X$, if
	$T_i$ and $T_iT_j^{-1}$ ergodic (for all $i$ and all $j \neq i$ respectively), then for all $f_0,\dots,f_k \in L^{\infty}(\mu)$, the multicorrelation sequence
	\[ a(n)\coloneqq \int_X f_0 \cdot T_1^n f_1 \cdot \dotso \cdot T_k^n f_k \ d\mu\]
	can be decomposed as a sum of a uniform limit of $k$-step nilsequences plus a nullsequence.  
	Theorem \ref{mainthm} implies a similar result.\footnote{While Theorem \ref{mainthm} does not specify the step $D$ of the nilsequence, a quick argument shows that in this case one can indeed take $D=k$ (see Remark~\ref{R:step} for details).}
\end{example}

The following example shows that Theorem~\ref{mainthm} is stronger than \cite[Theorem~1.6]{afm}, which deals with single variable essentially distinct polynomial iterates:

\begin{example}\label{Ex:33} Let $(X,\mathcal{B},\mu,T_{1},\dots,T_{6})$ be a system with commuting transformations $T_{1},\dots,T_{6}$ and $f_{0},f_{1},\dots,f_{4}\in L^{\infty}(\mu)$. Using \cite[Theorem~1.6]{afm}, we have that the multicorrelation sequence 
	\begin{equation}\label{multicorrelation_Nikos2}
	\alpha(n)=\int_X f_0\cdot T_{1}^{n^{2}}T_{2}^{n}f_1\cdot T_{1}^{n^{2}}T_{3}^{n}f_2\cdot T_{4}^{n^{3}}f_3\cdot T_{5}^{n^{3}}T_{6}^{n}f_4\;d\mu
	\end{equation} 
	can be decomposed as the sum of a uniform limit of nilsequences and  a nullsequence if $T^{a_{1}}_{1}\cdot\ldots\cdot T^{a_{6}}_{6}$ is ergodic for all $(a_{1},\dots,a_{6})\in\Z^{6}\backslash\{{\bf{0}}\}$. In contrast, via Theorem~\ref{mainthm}, one can get the same conclusion by only assuming that $T_{1},T_{2}T_{3}^{-1}, T_{4},T_{5},T_{4}T^{-1}_{5}$ are ergodic. (Indeed,  denoting $T_{(a_{1},\dots,a_{6})}\coloneqq T_{1}^{a_{1}}\cdot\ldots\cdot T_{6}^{a_{6}}$, and $e_{i}$ the vector whose $i$-th entry is 1 and all other entries are 0,
	since ${\bf{p}}=((n^2,n,0,0,0,0),(n^2,0,n,0,0,0),(0,0,0,n^3,0,0),(0,0,0,0,n^3,n)),$ we have that $G_{1,0}(\p)=G_{2,0}(\p)=G(e_{1})$, $G_{1,3}(\p)=G_{2,3}(\p)=G_{3,0}(\p)=G(e_{4})$, $G_{1,4}(\p)=G_{2,4}(\p)=G_{4,0}(\p)=G(e_{5})$, $G_{1,2}(\p)=G(e_{2}-e_{3})$, $G_{3,4}(\p)=G(e_{4}-e_{5})$.)
\end{example}

\subsection{Convergence to the expected limit.}\label{s12}

In \cite[Theorem 1.4]{DKS}, the first, third and fourth authors proved the following case of Conjecture~\ref{q2}: 
  If $T_{1},\dots,T_{k}$ are commuting transformations acting on a probability space $(X,\mathcal{B},\mu)$,
   then $(T_{1}^{p(n)},\dots,T_{k}^{p(n)})_{n\in\mathbb{Z}^{L}}$ is jointly ergodic for $\mu$ if and only if $((T_{1}\times \dots\times T_{k})^{p(n)})_{n\in\mathbb{Z}^{L}}$ is ergodic for $\mu^{\otimes k}$ and $T_{i}T^{-1}_{j}$ is ergodic for $\mu$ for all  $i\neq j$. 
   In this paper, we further extend this result: 

 \begin{theorem}\label{j2}
 	Let $\mathbf{p}=(p_{1}(n)v_{1},\dots, p_{k}(n)v_{k}),$ $p_{1},\dots, p_{k}\colon \mathbb{Z}^{L}\to \mathbb{Z}$, $v_{1},\dots, v_{k}\in \Z^{d}$ be a family of essentially distinct polynomials. Suppose that for all $1\leq i,j\leq k$, if $\deg(p_{i})=\deg(p_{j})$, then either $v_{i}$ and $v_{j}$ are linearly dependent over $\mathbb{Z}$, or $p_{i}(n)$ and $p_{j}(n)$ are linearly dependent over $\mathbb{Z}$ (i.e., there is a non-trivial linear combination of them over $\mathbb{Z}$ which equals to a constant). 
 	Let $(X,\mathcal{B},\mu, (T_{g})_{g\in\mathbb{Z}^{d}})$ be a $\mathbb{Z}^{d}$-system. Then the following are equivalent:
 	\begin{itemize}
 	    \item[(C1)] $(T_{p_{i}(n)v_{i}}\colon 1\leq i\leq k)_{n\in\Z^{L}}$ is jointly ergodic for $\mu$
 	    \item[(C2')] The following conditions hold:
	\begin{enumerate}
 		\item[$(i)'$] $(T_{p_{i}(n)v_{i}-p_{j}(n)v_{j}})_{n\in\Z^{L}}$ is ergodic for $\mu$ for all $1\leq i,j\leq k, i\neq j$ with $\deg(p_{i})=\deg(p_{j})$;
 		\item[$(ii)$] $(T_{p_{1}(n)v_{1}}\times\dots\times T_{p_{k}(n)v_{k}})_{n\in\Z^{L}}$ is ergodic for $\mu^{\otimes k}$.
 	\end{enumerate}	
 	\end{itemize}
 	Moreover, (C2') is equivalent to 
 	\begin{itemize}
 	    \item[(C2)] The following conditions hold: 
 	    	\begin{enumerate}
 		\item[$(i)$] $(T_{p_{i}(n)v_{i}-p_{j}(n)v_{j}})_{n\in\Z^{L}}$ is ergodic for $\mu$ for all $1\leq i,j\leq k, i\neq j$;
 		\item[$(ii)$] $(T_{p_{1}(n)v_{1}}\times\dots\times T_{p_{k}(n)v_{k}})_{n\in\Z^{L}}$ is ergodic for $\mu^{\otimes k}$.
 	\end{enumerate}
 	\end{itemize}
 \end{theorem}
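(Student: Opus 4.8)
The plan is to reduce the joint ergodicity statement to the decomposition theorem (Theorem~\ref{mainthm}) together with a correspondence between the averages in \eqref{444} and the limiting behaviour of the associated multicorrelation sequences. First I would observe that, by a standard reduction, (C1) holds if and only if for all $f_1,\dots,f_k \in L^\infty(\mu)$ with at least one $\int_X f_i\,d\mu = 0$, the averages $\frac{1}{|I_N|}\sum_{n\in I_N} T_{p_1(n)v_1}f_1\cdot\ldots\cdot T_{p_k(n)v_k}f_k$ converge to $0$ in $L^2(\mu)$; by the van der Corput / Cauchy–Schwarz trick (a "$\limsup$" argument) this in turn is implied by the statement that for all $f_0,\dots,f_k$ the multicorrelation sequence $a(n) = \int_X f_0 \cdot T_{p_1(n)v_1}f_1 \cdots T_{p_k(n)v_k}f_k\,d\mu$ tends to $\prod_{i=0}^k \int_X f_i\,d\mu$ in density (i.e.\ the average of $|a(n) - \prod \int f_i|$ over any F\o lner sequence tends to $0$). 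The essentially-distinct hypothesis lets us apply Theorem~\ref{mainthm}: writing $\mathbf p = (p_1(n)v_1,\dots,p_k(n)v_k)$, each leading-coefficient group $G_{i,j}(\mathbf p)$ is the span over $\Q$ of the vectors $b_{i,v}v_i - b_{j,v}v_j$ with $|v| = d_{i,j}$, intersected with $\Z^d$; under the dichotomy hypothesis (when $\deg p_i = \deg p_j$, either the $v$'s or the $p$'s are dependent) one checks that $G_{i,j}(\mathbf p)$ is, up to finite index, either $G(v_i)$, or $G(v_j)$, or $G(v_i, v_j)$, or the group generated by the leading direction $\alpha v_i - \beta v_j$.

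The crux is then to show that conditions $(i)$–$(ii)$ of (C2) force each $G_{i,j}(\mathbf p)$ to be ergodic for $\mu$, so that Theorem~\ref{mainthm} applies and yields a nilsequence-plus-nullsequence decomposition $a(n) = \phi(n) + \nu(n)$; and then to identify the limit. Once we have such a decomposition, the density limit of $a(n)$ equals the density limit of the nilsequence $\phi(n)$, which exists and can be computed on the (inverse limit of) nilsystems; one argues that ergodicity of the product action $(T_{p_1(n)v_1}\times\cdots\times T_{p_k(n)v_k})$ pins this limit down to $\prod_i \int f_i\,d\mu$ exactly as in the proof of \cite[Theorem 1.4]{DKS}, since the product-action ergodicity is precisely the condition needed to rule out a nontrivial structured component surviving in the average. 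Conversely, the implications (C1) $\Rightarrow$ (C2) and (C1) $\Rightarrow$ (C2$'$) are the easy direction: applying \eqref{444} to suitable test functions (e.g.\ $f_i = $ indicator-type functions, or passing to the product system) immediately gives ergodicity of the difference actions $(T_{p_i(n)v_i - p_j(n)v_j})$ and of the product action. For the equivalence of (C2$'$) and (C2), the only content is that when $\deg p_i \ne \deg p_j$ the single action $(T_{p_i(n)v_i - p_j(n)v_j})_{n\in\Z^L}$ is automatically ergodic whenever, say, the product action in $(ii)$ is ergodic — this follows because its leading term has a strictly dominant degree in one variable, so a van der Corput argument reduces ergodicity of the difference to a consequence of (C2$'$) $(i)'$ and $(ii)$.

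I expect the main obstacle to be the bookkeeping in the step that translates conditions $(i)$/$(ii)$ into ergodicity of all the groups $G_{i,j}(\mathbf p)$, including the "border" groups $G_{i,0}(\mathbf p)$ involving the artificial zero polynomial $p_0$: one must carefully track how the degree comparisons interact with the two cases of the dichotomy hypothesis, and verify that in every case the relevant $G_{i,j}(\mathbf p)$ contains a finite-index subgroup of a group whose ergodicity is guaranteed by $(i)'$ or is forced by $(ii)$ via a projection argument. A secondary technical point is the passage between ergodicity of $(T_a)_{a\in G}$ for a subgroup $G$ and ergodicity along the polynomial orbit, which requires a polynomial van der Corput / PET-type induction; but this is exactly the kind of estimate developed in \cite{DKS}, so I would cite it rather than redo it. The identification of the nilsequence limit with $\prod_i \int f_i\,d\mu$, given product-action ergodicity, is then a soft argument using that the limit of the average of a nilsequence is its integral over the relevant nilsystem and that a factor of the product system witnessing a non-expected limit would violate $(ii)$.
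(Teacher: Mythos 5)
Your outline for the direction (C2$'$)/(C2) $\Rightarrow$ (C1) is close in spirit to the paper's: one checks that the hypotheses force every group $G_{i,j}(\mathbf p)$ to be ergodic (for $\deg p_i\neq\deg p_j$ one has $G_{i,j}(\mathbf p)=G(v_i)$ for the higher-degree index, which is ergodic already by $(ii)$; the equal-degree case uses the dichotomy hypothesis together with $(i)'$), and then one invokes the seminorm machinery — the paper does this via Theorem~\ref{j3} (Theorem~\ref{5555}, reduction to $Z_{(\Z^d)^{\times D}}$, nilsystem approximation and Leibman's theorem) rather than your detour through Theorem~\ref{mainthm}, but the content is the same. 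Note that the paper runs this argument directly from the weaker hypotheses (C2$'$), which makes a separate proof of (C2$'$)$\Rightarrow$(C2) unnecessary: that equivalence is obtained only a posteriori through (C2$'$)$\Rightarrow$(C1)$\Rightarrow$(C2). Your proposed shortcut for (C2$'$)$\Rightarrow$(C2) (``a van der Corput argument'' when $\deg p_i\neq\deg p_j$) is left unsubstantiated, and as your logical structure stands you do need it.

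The genuine gap, however, is your claim that (C1)$\Rightarrow$(C2) is ``the easy direction'' which follows ``immediately'' by applying \eqref{444} to suitable test functions. The ergodicity of the difference sequences $(T_{p_i(n)v_i-p_j(n)v_j})$ can indeed be extracted by soft arguments of that kind (the paper refers to \cite{DKS} for it), but the ergodicity of the product action $(ii)$ cannot: joint ergodicity controls averages of \emph{products} of functions on $X$, whereas $(ii)$ concerns averages of \emph{tensor products} on $X^{k}$ with respect to $\mu^{\otimes k}$, and no choice of test functions in \eqref{444} converts the former statement into the latter. This implication is exactly where the special shape $q_i(n)=p_i(n)v_i$ of the iterates enters, and it is the bulk of Proposition~\ref{g22}: one first proves, by an induction on the number of ``structured'' functions using Lemma~\ref{Kron}, Lemma~\ref{Kron2} and a Weyl-sum analysis, that the tensor-product averages vanish whenever some $f_i$ is orthogonal to $Z_{\Z^{d},\Z^{d}}(\X)$, and only after this reduction to eigenfunctions does joint ergodicity force the relevant exponential averages, and hence the product averages, to behave correctly. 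For general polynomial families the corresponding implication is not established in the paper at all — it is part of Conjecture~\ref{q2} — which underlines that it cannot be dismissed as immediate. To repair your proposal you would need to supply this argument (or quote Proposition~\ref{g22}) for (C1)$\Rightarrow(ii)$, and either prove your vdC claim for (C2$'$)$\Rightarrow$(C2) in detail or restructure the proof so that (C1) is deduced directly from (C2$'$), as the paper does.
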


Note that the conditions in (C2) are consistent with those in Conjecture~\ref{q2}. On the other hand, the reason we provide an alternative set of equivalent conditions (C2') is that these conditions are easier to check in practise.

 We now give some examples to illustrate Theorem \ref{j2}.
 The first one is for polynomials of distinct degrees:
 \begin{example}\label{Ex:1} Let $(X,\mathcal{B},\mu,T_{1},\dots,T_{k})$ be a system. Using Theorem~\ref{j2}, we conclude that $(T^{n}_{1},T^{n^{2}}_{2},\dots,T^{n^{k}}_{k})_{n\in\Z}$ is jointly ergodic if and only if $(T^{n}_{1}\times\dots\times T^{n^{k}}_{k})_{n\in \Z}$ is ergodic for $\mu^{\otimes k}$, and all the $T_{i}$'s are ergodic for $\mu$. 
 \end{example}
 We remark that Example \ref{Ex:1} can also be proved by using arguments from \cite{CFH}.
 We next present two examples in which the polynomials can be taken to be not necessarily of different degrees, and so, cannot be recovered by the  methods of \cite{CFH}:

 \begin{example}\label{Ex:2} Let $(X,\mathcal{B},\mu,T_{1},T_2,T_3,T_{4})$ be a system. Theorem~\ref{j2} implies that $(T^{n}_{1},T^{n}_{2},T^{n^{2}}_{3},$ $T^{n^{2}}_{4})_{n\in\Z}$ is jointly ergodic if and only if $(T^{n}_{1}\times T^{n}_{2}\times T^{n^{2}}_{3}\times T^{n^{2}}_{4})_{n\in \Z}$ is ergodic for $\mu^{\otimes 4}$, and both $T_{1}T^{-1}_{2}$ and $((T_{3}T^{-1}_{4})^{n^2})_{n\in \N}$ are ergodic for $\mu$. 
 \end{example}

 \begin{example}\label{Ex:23} Let $(X,\mathcal{B},\mu,T_{1},T_2,T_3)$ be a system.  Theorem~\ref{j2} implies that $(T^{n^{4}+n^{2}}_{1}$, $T^{2n^{4}+3n}_{1},$ $T^{2n^{2}+2n+1}_{2}, T^{3n^{2}+3n}_{3})_{n\in\Z}$ is jointly ergodic if and only if $(T^{n^{4}+n^{2}}_{1}\times T^{2n^{4}+3n}_{1}\times T^{2n^{2}+2n+1}_{2}\times  T^{3n^{2}+3n}_{3})_{n\in\Z}$ is ergodic for $\mu^{\otimes 4}$, and both sequences $(T^{-n^{4}+n^{2}-3n}_{1})_{n\in\Z}$ and $((T^{2}_{2}T_{3}^{-3})^{n^{2}+n})_{n\in\Z}$  are ergodic for $\mu$.  
 \end{example}
 
 Another direction for the joint ergodicity problem is verifying whether (C1) implies (C2) in Conjecture~\ref{q2}. 
 Namely, assuming that $(T_{p_{1}(n)}\times\dots\times T_{p_{k}(n)})_{n\in\mathbb{Z}^{L}}$ is ergodic for $\mu^{\otimes k},$ to find a condition, say (P), of certain sequences of actions to be ergodic, under which, we have that $(T_{p_{1}(n)},\dots,T_{p_{k}(n)})_{n\in\mathbb{Z}^{L}}$ is jointly ergodic for $\mu$.
 By combining existing results from \cite{hostkraoriginal,johnson,L05} (see also \cite[Proposition~1.2]{DKS}),
 (P) can be taken to be: ``$T_{g}$ is ergodic for $\mu$ for all $g\in\Z^{d}\backslash\{\bf{0}\}$.''
 Denoting
 $p_{i}(n)=\sum_{v\in\N_0^{L},0\leq \vert v\vert\leq K}b_{i,v}n^{v}$ for some $b_{i,v}\in \mathbb{Q}^{d}$ and $K\in\N_0$,
  this result was extended in \cite[Theorem~1.3]{DKS}, where the previous property is replaced by: ``$T_{g}$ is ergodic for $\mu$ for all $g$ that belongs to the finite set $R$,''
 where 
 \[ R=\bigcup_{0< \vert v\vert\leq K}\{b_{i,v}, b_{i,v}-b_{j,v}\colon 1\leq i, j\leq k\}\backslash\{{\bf 0}\}.\]
 
 In this paper, we replace the latter condition with an even weaker one: 
 
 \begin{theorem}\label{j3}
 	Let $d,k,L\in\mathbb{N},$ $\mathbf{p}=(p_{1},\dots, p_{k}),$  $p_{1},\dots, p_{k}\colon\mathbb{Z}^{L}\to \mathbb{Z}^{d}$  be a family of essentially distinct polynomials and $\mathbf{X}=(X,\mathcal{B},\mu, (T_{g})_{g\in\mathbb{Z}^{d}})$ a $\mathbb{Z}^{d}$-system. Then $(T_{p_{1}(n)},\dots,$ $T_{p_{k}(n)})_{n\in\Z^{L}}$ is jointly ergodic for $\mu$ if both of the following conditions hold:
 	\begin{enumerate}
 		\item[$(i)$] 
	$G_{i,j}(\mathbf{p})$ is ergodic for $\mu$ for all  $0\leq i,j\leq k,i\neq j$;
 		\item[$(ii)$] $(T_{p_{1}(n)}\times\dots\times T_{p_{k}(n)})_{n\in\Z^{L}}$ is ergodic for $\mu^{\otimes k}$.
 	\end{enumerate}
 \end{theorem}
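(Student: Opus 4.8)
The plan is to deduce Theorem~\ref{j3} from the decomposition result of Theorem~\ref{mainthm} together with the structure of nilsequences, following the standard strategy that reduces joint ergodicity to a seminorm/uniformity estimate plus an analysis of the structured (nil-)part. First I would fix $f_1,\dots,f_k\in L^\infty(\mu)$ and a F{\o}lner sequence $(I_N)$ in $\Z^L$, and test joint ergodicity against an arbitrary $f_0\in L^\infty(\mu)$: it suffices to show
\[
\lim_{N\to\infty}\frac{1}{|I_N|}\sum_{n\in I_N} a(n) \;=\; \prod_{i=1}^k \int_X f_i\,d\mu,
\]
where $a(n)=\int_X f_0\cdot T_{p_1(n)}f_1\cdots T_{p_k(n)}f_k\,d\mu$ is exactly the multicorrelation sequence treated in Theorem~\ref{mainthm}. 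By a standard reduction we may assume each $f_i$ ($1\le i\le k$) has zero integral OR is identically $1$; if some $f_i$ with $1\le i\le k$ has $\int f_i\,d\mu=0$, the goal becomes to show the average of $a(n)$ tends to $0$, while if all $f_i$ are constant the identity is trivial. So the heart of the matter is: \emph{if some $f_i$, $i\ge 1$, is orthogonal to the constants, then $\frac{1}{|I_N|}\sum_{n\in I_N}a(n)\to 0$.}

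To establish this, I would invoke Theorem~\ref{mainthm}: under hypothesis $(i)$ of Theorem~\ref{j3}, which is precisely the ergodicity hypothesis of Theorem~\ref{mainthm}, we can write $a(n)=\phi(n)+\nu(n)$ with $\phi$ a uniform limit of $D$-step nilsequences and $\nu$ a nullsequence. Since nullsequences have vanishing uniform Cesàro average (by Cauchy--Schwarz applied to \eqref{besicovitch1}-type control along any F{\o}lner sequence), it remains to show the Cesàro average of the nilsequence part $\phi$ vanishes. For this one uses that a uniform limit of nilsequences whose averages along $(I_N)$ do not vanish must correlate with a genuine nilsequence, hence (after approximating and passing to a single nilmanifold) one reduces to showing: if $(T_{p_1(n)}\times\cdots\times T_{p_k(n)})$ is ergodic for $\mu^{\otimes k}$, then for every nilsequence $\psi$ and every choice of $f_i$'s with some $\int f_i\,d\mu=0$, the average $\frac{1}{|I_N|}\sum_{n\in I_N}\psi(n)\,a(n)\to 0$. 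The standard device here is to pass to the product system: testing against $\psi$ can be absorbed by replacing $X$ with $X\times Z$ for a suitable nilsystem $Z$ on which $\psi$ is realized, or more cleanly, one uses hypothesis $(ii)$ directly --- ergodicity of the product action $(T_{p_1(n)}\times\cdots\times T_{p_k(n)})$ on $X^k$ forces the $L^2(\mu^{\otimes k})$-limit of the product average $\frac{1}{|I_N|}\sum_n \bigotimes_i T_{p_i(n)}f_i$ to equal $\prod_i \int f_i\,d\mu$, and integrating against $f_0^{\otimes 1}\otimes 1\otimes\cdots$ recovers the diagonal average; combining with the decomposition pins down the limit and shows it must be the expected product.

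The main obstacle I anticipate is the middle step: cleanly ruling out correlation of $a(n)$ with nontrivial nilsequences using only hypothesis $(ii)$ (ergodicity of the product action on $X^k$), rather than the stronger "$T_g$ ergodic for all $g$ in some finite set" used in \cite{DKS}. The issue is that the nilsequence $\phi$ in Theorem~\ref{mainthm} may a priori live on a nilmanifold built from several of the $T_{p_i}$'s simultaneously, so one needs to show that any such nilsequence correlation propagates to a nontrivial invariant function for the product action --- and this is exactly where ergodicity of $T_{p_1(n)}\times\cdots\times T_{p_k(n)}$ on $\mu^{\otimes k}$ must be leveraged, presumably via a van der Corput / PET argument run \emph{inside} the product system (lifting the multicorrelation to $X^k$ and using that its ergodic averages are controlled by seminorms that vanish by the product ergodicity assumption). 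I would handle this by combining the decomposition with the observation that $a(n)$ equals the integral over $X^k$ of the diagonal of $\bigotimes_i T_{p_i(n)}f_i$ against $f_0\otimes 1^{\otimes(k-1)}$ composed with the diagonal embedding, and then noting that hypothesis $(ii)$ controls precisely these product averages; the nilsequence part, being the obstruction-to-mixing, must then be trivial. A subsidiary (but routine) point is verifying that the class of nullsequences and the class of uniform limits of nilsequences are each closed under the operations used and that their Cesàro behaviour along \emph{every} F{\o}lner sequence is as claimed, which follows from the definitions quoted in the excerpt together with equidistribution of polynomial orbits on nilmanifolds (Leibman).
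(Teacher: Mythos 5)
Your overall frame (reduce to showing the Ces\`aro average of $a(n)$ vanishes whenever some $f_i$, $i\geq 1$, has zero mean; discard the nullsequence part; then use hypothesis (ii) on the structured part) is in the right spirit, but the step you yourself flag as the ``main obstacle'' is exactly where the argument breaks, and the fix you sketch does not work. The claim that ``integrating against $f_0\otimes 1\otimes\cdots\otimes 1$ recovers the diagonal average'' is false: $a(n)=\int_X f_0\cdot T_{p_1(n)}f_1\cdots T_{p_k(n)}f_k\,d\mu$ is the integral of $\bigotimes_i T_{p_i(n)}f_i$ against the \emph{diagonal} measure on $X^k$, whereas pairing the tensor-product average with $f_0\otimes 1\otimes\cdots\otimes 1$ in $L^2(\mu^{\otimes k})$ only yields $\int_X f_0\, T_{p_1(n)}f_1\,d\mu\cdot\prod_{i\geq 2}\int_X f_i\,d\mu$, which is not $a(n)$. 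Hypothesis (ii) is, by definition, a statement about averages with respect to the product measure and gives no direct control of the diagonal correlation; bridging that gap is the entire content of the theorem. Similarly, ``the nilsequence part, being the obstruction-to-mixing, must then be trivial'' and ``a van der Corput / PET argument run inside the product system'' are not arguments: the nil-part produced by Theorem~\ref{mainthm} is the correlation sequence $\int_{\tilde X}\tilde f_0\cdot \vec a_{p_1(n)}\tilde f_1\cdots \vec a_{p_k(n)}\tilde f_k\,d\mu_{\tilde X}$ on a nilfactor, and you still have to prove that under (ii) its averages tend to $\prod_i\int_X f_i\,d\mu$ (equivalently to $0$ when some $\int \tilde f_i=0$); nothing in your proposal supplies this.

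The missing ingredient is precisely the nilsystem input the paper uses: on a nilsystem, condition (ii) is \emph{equivalent} to joint ergodicity, by Leibman's theorem \cite[Theorem~B]{L05} (see also \cite[Theorem~2.9]{DKS}). The paper's proof does not pass through Theorem~\ref{mainthm} at all: by hypothesis (i) and Theorem~\ref{5555}, the average in \eqref{444} vanishes if some $f_i$ is orthogonal to $Z_{(\Z^{d})^{\times D}}(\X)$, so one may assume all $f_i$ are measurable with respect to that factor; hypothesis (i) makes the system ergodic, so Theorem~\ref{st2} plus an approximation argument reduces matters to a nilsystem, where \cite[Theorem~B]{L05} shows (ii) is exactly what is needed. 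If you insist on your route via the decomposition of $a(n)$, you would still need this same nilsystem equivalence to evaluate the average of the nil-part, as well as the known $L^2$-convergence of the multiple averages (\cite{W}) to upgrade your weak-limit identification against test functions $f_0$ to the $L^2(\mu)$ statement in \eqref{444}. As written, the proposal has a genuine gap at its central step.
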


 The last example for this section reflects the stronger nature of the previous theorem compared to what was previously known.
 
 \begin{example}\label{Ex:3} Let $(X,\mathcal{B},\mu,T_{1},T_2,T_3,T_{4})$ be a system. Then, \cite[Theorem~1.3]{DKS} implies that $(T^{n^2}_{1} T^{n}_{2},T^{n^2}_{3} T^{n}_{4})_{n\in\Z}$ is jointly ergodic if $((T^{n^2}_{1} T^n_2)\times (T^{n^2}_3 T^n_{4}))_{n\in \mathbb{Z}}$ is ergodic for $\mu^{\otimes 2}$, and all $T_{1},T_{2},T_{3},T_{4},T_{1}T_{3}^{-1},T_{2}T_{4}^{-1}$ are ergodic for $\mu$. Using Theorem~\ref{j3}, we conclude that $(T^{n^2}_{1} T^{n}_{2},$ $T^{n^2}_{3} T^{n}_{4})_{n\in\Z}$ is jointly ergodic if we instead only assume that $((T^{n^2}_{1} T^n_2)\times (T^{n^2}_3 T^n_{4}))_{n\in \mathbb{Z}}$ is ergodic for $\mu^{\otimes 2}$, and all $T_{1},T_{3},T_{1}T_{3}^{-1}$ are ergodic for $\mu$.
 \end{example}
 
\subsection{Strategy of the paper}\label{s13}
   The central ingredient in proving the main results of the paper (Theorems \ref{mainthm}, \ref{j2} and \ref{j3}) is to find proper \emph{characteristic factors} for the average (\ref{E:main_average}), i.e., sub-$\sigma$-algebras $\mathcal{D}_{1},\dots,\mathcal{D}_{k}$ of $\mathcal{B}$ such that the average (\ref{E:main_average}) remains invariant if we replace each $f_{i}$ by its conditional expectation (see Section \ref{s3} for the definition) with respect to $\mathcal{D}_{i}$. An important type of characteristic factor, called the \emph{Host-Kra characteristic factor}, was invented in \cite{hostkraoriginal} to study multiple averages for $\Z$-systems (see Section \ref{s3} for the definition of these factors). This concept was generalized to systems with commuting transformations in \cite{host1} (see also \cite{Sun}). 
   The main tool used in our results, special cases of which have been studied extensively in the past (see for example \cite{CFH,FK,host1,hostkraoriginal,johnson}), is the following: 

  \begin{theorem}\label{5555}
  	Let $d,k,K,L,s\in\mathbb{N},$ $\mathbf{p}=(p_{1},\dots, p_{k}),$ $p_{1},\dots, p_{k}\in\mathbb{Z}^{L}\to \mathbb{Z}^{d}$  be a family of essentially distinct polynomials of degrees at most $K.$ 
  	There exists $D\in\mathbb{N}_0$ depending only on $d,k,K,L,s$ such that for every $\mathbb{Z}^{d}$-system $\mathbf{X}=(X,\mathcal{B},\mu, (T_{n})_{n\in\mathbb{Z}^{d}})$, every $f_{1},\dots,$ $f_{k}\in L^{\infty}(\mu)$, and every F\o lner sequence $(I_{N})_{N\in\N}$ of $\Z^{L}$, if $f_{i}$ is orthogonal to the Host-Kra characteristic factor ${Z}_{\{G_{i,j}(\mathbf{p})\}^{\times D}_{0\leq j\leq k, j\neq i}}(\mathbf{X})$ for some $1\leq i\leq k$ (i.e. the conditional expectation of $f_{i}$ under ${Z}_{\{G_{i,j}(\mathbf{p})\}^{\times D}_{0\leq j\leq k, j\neq i}}(\mathbf{X})$ is $0$), then  we have that  
  	\begin{equation}\label{123}
  	\lim_{N\to\infty}\frac{1}{|I_N|}\sum_{n\in I_{N}}\prod_{i=1}^k T_{p_{i}(n)}f_i=0.
  	\end{equation}
  	In particular, if for some $1\leq i\leq k$, $G_{i,j}(\mathbf{p})$ is ergodic for $\mu$ for all $0\leq j\leq k, j\neq i$ and $f_{i}$ is orthogonal to the Host-Kra characteristic factor ${Z}_{(\Z^{d})^{\times kD}}(\mathbf{X})$, then \eqref{123} holds. 
  \end{theorem}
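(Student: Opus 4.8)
The plan is to run a polynomial exhaustion (PET-type) induction combined with repeated van der Corput (vdC) operations, following the general strategy of \cite{DKS} but tracking which subgroups $G_{i,j}(\mathbf{p})$ the final characteristic factor must see. First I would set up the induction on the ``complexity'' of the tuple $\mathbf{p}$ (a well-ordering on the multiset of degrees, à la Bergelson--Leibman PET), with the base case being when all polynomials are constant multiples of a fixed linear form, where the averages reduce to a single polynomial action and the relevant seminorm estimate for one transformation (or rather one sub-$\Z^d$-action) is classical. For the inductive step, assume $f_i$ is orthogonal to $Z_{\{G_{i,j}(\mathbf{p})\}^{\times D}_{j\neq i}}(\mathbf X)$; after a shift (replacing each $p_\ell$ by $p_\ell - p_{i_0}$ for a suitable ``pivot'' index $i_0$, which leaves the leading-coefficient-difference groups $G_{i,j}$ essentially unchanged at top degree) and an application of vdC in the parameter $n\in\Z^L$, one obtains an average over $(n,h)$ of products $\prod T_{p_\ell(n+h)-p_{i_0}(n+h)}\bar f_\ell \cdot \prod T_{p_\ell(n)-p_{i_0}(n)} f_\ell$ indexed by a new tuple $\mathbf{p}'$ of strictly smaller complexity, whose leading-coefficient-difference groups are spanned by the old $b_{i,v}-b_{j,v}$ together with the derived coefficients appearing after the difference operation --- and crucially these all lie inside the $\Q$-spans defining $G_{i,j}(\mathbf p)$ (here one uses the standard fact that $G(A\cup B)$-type closures behave well under the linear-algebra operations produced by vdC). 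This is where the bound $D$ grows: each vdC step multiplies the number of ``copies'' of the group by a controlled factor, and the number of steps is bounded purely in terms of $d,k,K,L$, which gives the claimed dependence of $D$.

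Concretely, I would formulate the induction hypothesis as: for tuples of complexity $<c$, if some $f_i$ is orthogonal to the Host--Kra factor $Z_{\{G_{i,j}(\mathbf p)\}^{\times D'}_{j\neq i}}(\mathbf X)$ with $D'$ the constant for that complexity, then the average vanishes. Then, to close the step at complexity $c$: apply vdC, pass to the lower-complexity tuple $\mathbf p'$, invoke the hypothesis to conclude that the $(n,h)$-average vanishes unless the relevant function is non-orthogonal to $Z_{\{G'_{\,\cdot\,,\cdot}(\mathbf p')\}^{\times D'}}$; using the inclusion $G'_{\cdot,\cdot}(\mathbf p')\subseteq \mathrm{span}_\Q\{G_{i,j}(\mathbf p)\}\cap\Z^d$ (suitably interpreted, direction by direction) and the monotonicity/transitivity of Host--Kra factors under enlarging the defining family of subgroups (the ``$Z_{\mathcal G}\subseteq Z_{\mathcal G'}$ when $\mathcal G\subseteq\mathcal G'$'' principle from \cite{host1,hostkraoriginal}), deduce that $f_i$ itself cannot be orthogonal to $Z_{\{G_{i,j}(\mathbf p)\}^{\times D}_{j\neq i}}(\mathbf X)$ with $D$ a large enough multiple of $D'$ --- contradiction. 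The ``In particular'' clause is then immediate: if all $G_{i,j}(\mathbf p)$, $j\neq i$, are ergodic for $\mu$, then each factor $Z_{G_{i,j}(\mathbf p)^{\times D}}(\mathbf X)$ is contained in $Z_{(\Z^d)^{\times D}}(\mathbf X)$ (ergodicity of the group forces the relevant ``first level'' to be trivial, so the tower built over it sits inside the full Host--Kra tower), whence orthogonality to $Z_{(\Z^d)^{\times kD}}(\mathbf X)$ implies orthogonality to each $Z_{\{G_{i,j}(\mathbf p)\}^{\times D}_{j\neq i}}(\mathbf X)$.

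The main obstacle I anticipate is the bookkeeping in the inductive step showing that the new subgroups $G'_{\cdot,\cdot}(\mathbf p')$ produced by the difference/vdC operation are controlled by (lie in the $\Q$-span of) the original $G_{i,j}(\mathbf p)$ uniformly, and simultaneously that the pivot-choice and shift do not inadvertently enlarge the required family of groups --- this is precisely the point where \cite{DKS} and \cite{afm} were ``incompatible,'' so the delicate part is choosing the vdC ordering and the pivot so that at each stage the leading terms that survive are exactly differences of original coefficients (at the appropriate multidegree $|v|=d_{i,j}$) rather than some uncontrolled combination. A secondary technical nuisance is handling the multivariable parameter $n\in\Z^L$ and arbitrary Følner sequences of $\Z^L$ in the vdC step (one needs the multiparameter van der Corput lemma over Følner sequences, as in \cite{DKS}), together with ensuring all the Host--Kra-factor manipulations for the $\Z^d$-action --- monotonicity, and the behaviour under restricting the action to a subgroup --- are available in the commuting-transformations setting of \cite{host1,Sun}; these are known but must be cited carefully. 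Once this is in place, everything else (reduction to $L^\infty$ functions bounded by $1$, telescoping/Hölder to isolate a single $f_i$, passing the orthogonality through) is routine.
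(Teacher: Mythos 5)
Your overall scaffolding (vdC/PET reduction plus tracking of coefficient groups) matches the paper's strategy, but two essential points are missing or reversed, and they are precisely where the difficulty of this theorem lies. First, the inclusion you rely on goes the wrong way. You claim that the groups $G'_{\cdot,\cdot}(\mathbf{p}')$ produced by the difference operation lie \emph{inside} $\mathrm{span}_{\Q}\{G_{i,j}(\mathbf{p})\}\cap\Z^{d}$, and you then want to transfer orthogonality from $Z_{\{G_{i,j}(\mathbf{p})\}^{\times D}}$ to the factor built from the new groups. But by Proposition \ref{prop:basic_properties}(v), replacing a group by a \emph{smaller} one makes the Host--Kra factor \emph{larger}; so to deduce $Z_{\{\text{new groups}\}}\subseteq Z_{\{G_{i,j}(\mathbf{p})\}^{\times D}}$ (which is what the contradiction step needs) one must prove that each group arising at the end of the PET process \emph{contains} some $G_{i,j}(\mathbf{p})$, not that it is contained in their span. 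Moreover, the containment-in-span claim is itself false in general: vdC produces differences of \emph{lower-degree} coefficients (e.g.\ $b_{1,1}-b_{2,1}$ in Example \ref{ex02}) that need not lie in the span of the top-degree data defining $G_{i,j}(\mathbf{p})$. Establishing the correct containment $H_{i,m}\supseteq G_{i,j}(\mathbf{p})$ is the most technical part of the paper: it is done by attaching types and symbols to the level data, proving they satisfy (P1)--(P4) and are preserved under 1-inherited vdC operations (Proposition \ref{PET2}), and then running a domination argument (Proposition \ref{12-3}) that promotes any surviving nonzero coefficient difference to one at the top degree $|v|=d_{i,j}$. Your proposal flags this bookkeeping as an anticipated obstacle but supplies no mechanism for it, and the statement you propose to propagate is not the one the argument needs.

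Second, your induction would terminate with a bound by an \emph{average over the vdC parameters} $h_{1},\dots,h_{s}$ of seminorms $\nnorm{f_{i}}_{\{G(\c_{i,m}(h_{1},\dots,h_{s}))\}}$ whose defining subgroups of $\Z^{d}$ genuinely depend on $h$; counting ``copies'' of groups does not remove this dependence. Passing from such an averaged bound to orthogonality with respect to a \emph{single finite-step} factor with fixed groups is exactly the obstruction that made \cite[Theorem~5.1]{DKS} unusable for the splitting application, and the paper resolves it with a separate ingredient you do not have: the Tao--Ziegler-type Bessel inequality (Propositions \ref{bessel}, \ref{bessel3}, Corollary \ref{bessel2}) combined with a positive-density determinant argument showing that for almost every choice of parameters the groups $G(\c_{m_{1}}(\mathbf{h}^{1}),\dots,\c_{m_{W}}(\mathbf{h}^{W}))$ contain some fixed $H_{m}$ (Proposition \ref{pet3}). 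Without this step (or a substitute concatenation argument), your scheme only yields an $h$-dependent or infinite-step characteristic factor, which does not prove the stated theorem. Your treatment of the ``in particular'' clause via ergodicity and Corollary \ref{st1} is fine, and the omissions concerning 1-standardness and the dimension-increment reduction when $p_{i}$ is not of maximal degree are comparatively minor, but the two gaps above are fatal as written.
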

  It is worth noting that the factor ${Z}_{\{G_{i,j}(\mathbf{p})\}^{\times D}_{0\leq j\leq k, j\neq i}}(\mathbf{X})$ we obtain in Theorem~\ref{5555} is not optimal, but it is good enough for our purposes. 
  
  A special case of Theorem~\ref{5555} was proved in \cite[Theorem~5.1]{DKS}. In particular, Theorem~\ref{5555} generalizes \cite[Theorem~5.1]{DKS} in the following ways: 
  \begin{enumerate}
  	\item[(I)] The characteristic factor obtained in Theorem \ref{5555} is of finite step, whereas the one in \cite[Theorem~5.1]{DKS} is of infinite step.
  	\item[(II)] The groups $G_{i,j}(\mathbf{p})$ involved in Theorem \ref{5555} are larger than those in \cite[Theorem~5.1]{DKS}, which makes the characteristic factors in Theorem~\ref{5555} smaller.
  \end{enumerate}	

We remark that the aforementioned technical distinctions have significant influences in the applications of Theorem~\ref{5555}. First, the essential reason why one cannot directly use \cite[Theorem~5.1]{DKS} to improve \cite[Theorem~1.5]{afm} is that the  method used in \cite{afm} requires a   characteristic factor of finite step. This problem is resolved by (I), enabling us to extend \cite[Theorem~1.5]{afm} in this paper. 
Second, \cite[Theorem~5.1]{DKS} does not provide a strong enough characteristic factor in certain circumstances. For example, in the case of Example \ref{Ex:1},
\cite[Theorem~6.5]{CFH} suggests that the Host-Kra seminorms controlling \eqref{123} depend only on the transformations $T_{1},\dots,T_{k}$, whereas the upper bound provided by \cite[Theorem~5.1]{DKS} depends not only on the transformations $T_{1},\dots,T_{k}$ but also on many compositions of them. With the help of (I) and (II), we are able to obtain (and generalize) the aforementioned upper bound of \cite[Theorem~6.5]{CFH}.

 Roughly speaking,
 the achievement of (I) relies on a sophisticated development of a Bessel-type inequality first obtained by Tao and Ziegler in \cite[Proposition~\ref{bessel}]{TZ}.
 The most technical part of this paper is the approach we use to get  (II). In \cite{DKS}, a method was introduced to keep track of the coefficients of the polynomials while running a variation of the PET induction. However, the tracking provided there is not strong enough to imply Theorem \ref{5555}. To overcome this difficulty, we introduce more sophisticated machinery in order to have a better control of the coefficients. 
  
The paper is organized as follows: We provide some background material in Section~\ref{s3}. In Section~\ref{s:3}, we present the variation of PET induction that we use. In Section \ref{s4}, we address how (I) and (II) above can be achieved with  Propositions~\ref{pet3} and \ref{pet}, which improve Propositions 5.6 and 5.5 of \cite{DKS} respectively. 
We conclude the section by proving Theorem~\ref{5555}.
This is the bulk of the paper.
In Section \ref{s6}, we use Theorem~\ref{5555} to deduce Theorems \ref{mainthm}, \ref{j2}, and \ref{j3}, which are the main results of the paper. We conclude with some discussions on future directions in Section~\ref{S:Future_direction}.
  
\subsection{Notation}
We denote with $\mathbb{N},$ $\mathbb{N}_0,$ $\mathbb{Z},$ $\mathbb{Q},$ $\mathbb{R}$ and $\mathbb{C}$ the set of positive integers, non-negative integers, integers, rational numbers, real numbers and complex numbers respectively. If $X$ is a set, and $d\in \mathbb{N}$, $X^d$ denotes the Cartesian product $X\times\cdots\times X$ of $d$ copies of $X$. 

We will denote by $e_i$ the vector which has $1$ as its $i$th coordinate and $0$'s elsewhere. We use in general lower-case letters to symbolize both numbers and vectors but bold letters to symbolize vectors of vectors to highlight this exact fact. The only exception to this convention is the vector ${\bf 0}$ (\emph{i.e.}, the vector with coordinates only $0$'s) which we always symbolize in bold.

Throughout this article, we use the following notation for averages:
Let  $(a(n))_{n\in \Z^L}$ be a sequence of complex numbers, or a sequence of measurable functions on a probability space $(X,\mathcal{B},\mu)$.
We let 

$\mathbb{E}_{n\in A}a(n)  \coloneqq  \frac{1}{|A|} \sum_{n \in A } a({n}),  \; \text{ where A is a finite subset of} \;\mathbb{Z}^L;$

$\E^{\square}_{n\in \Z^{L}}a(n)  \coloneqq \vl \mathbb{E}_{n \in [-N,N]^L } a(n);$\footnote{We use the symbol $\square$ to highlight the fact that the averages are taken along the boxes $[-N,N]^{L}.$}

$\E_{n\in\mathbb{Z}^{L}}a(n) \coloneqq \sup_{\substack{  (I_{N})_{N\in\mathbb{N}} \\ \text{ F\o lner seq.} }}\varlimsup_{N\to\infty} \mathbb{E}_{n\in I_N} a(n);$

$\mathbb{E}^{\square}_{n\in \Z^{L}}a(n)  \coloneqq \lim_{N\to\infty} \mathbb{E}_{n \in [-N,N]^L } a(n) \; \text{ (provided that the limit exists)};$ and

$\mathbb{E}_{n\in\mathbb{Z}^{L}}a(n) \coloneqq \lim_{N\to\infty} \mathbb{E}_{n\in I_N} a(n) \text{ (prov. the limit exists for all F\o lner seq. $(I_{N})_{N\in\N}$)}.$\footnote{It is worth noticing that if the limit $\lim_{N\to\infty} \mathbb{E}_{n\in I_N} a(n)$ exists for all F\o lner sequences (in $\Z^L$), then this limit does not depend on the chosen F\o lner sequence.} 

\medskip

We also consider \emph{iterated} averages: Let $(a(h_{1},\dots,h_{s}))_{h_{1},\dots,h_{s}\in \Z^L}$ be a multi-parameter sequence. We let
\[\E_{h_{1},\dots,h_{s}\in \mathbb{Z}^{L}}a(h_{1},\dots,h_{s})\coloneqq \E_{h_{1}\in\mathbb{Z}^{L}}\ldots\E_{h_{s}\in\mathbb{Z}^{L}}a(h_{1},\dots,h_{s})\]
and adopt similar conventions for $\mathbb{E}_{h_{1},\dots,h_{s}\in \mathbb{Z}^{L}}$, $\E^{\square}_{h_{1},\dots,h_{s}\in \mathbb{Z}^{L}}$ and $\mathbb{E}^{\square}_{h_{1},\dots,h_{s}\in \mathbb{Z}^{L}}$ respectively.

We end this section by recalling the notion of a system indexed by an abelian group $(G,+)$.
We say that a tuple $(X,\mathcal{B},\mu,(T_{g})_{g\in G})$ is a \emph{$G$-measure preserving system} (or a \emph{$G$-system}) if $(X,\mathcal{B},\mu)$ is a probability space and $T_{g}\colon X\to X$ are measurable, measure preserving transformations on $X$ such that $T_{e_{G}}={\rm id}$ ($e_G$ is the identity element of $G$) and $T_{g}\circ T_{h}=T_{g+h}$ for all $g,h\in G$. A $G$-system will be called \emph{ergodic} if for any $A\in\mathcal{B}$ such that $T_{g}A=A$ for all $g\in G$, we have that $\mu(A)\in \{0,1\}$. In this paper, we are mostly concerned about $\Z^{d}$-systems and $L^2(\mu)$-norm limits of (multiple) ergodic averages. For the corresponding norm, when it is clear from the context, we will write $\norm{\cdot}_2$ instead of $\norm{\cdot}_{L^2(\mu)}$.

\section{Background Material}\label{s3}

In this section we recall some background material and prove some intermediate results that will be used later throughout the paper.

\subsection{Host-Kra Seminorms and factors}  Host-Kra seminorms and their associated factors are arguably the main tools used to analyze the behaviour of multiple averages and correlation sequences. In what follows we give general results about these seminorms and factors, following the notation used in \cite{DKS}.
 
We first recall the notions of a factor and of the conditional expectation with respect to a factor. We say that the $\Z^d$-system $(Y,\mathcal{D},\nu,(S_{g})_{g\in \Z^d})$ is a \emph{factor} of $(X,\mathcal{B},\mu,(T_{g})_{g\in \Z^d})$ if there exists a measurable map $\pi \colon (X,\mathcal{B},\mu)\to (Y,\mathcal{D},\nu)$ such that $\mu(\pi^{-1}(A))=\nu(A)$ for all $A\in \mathcal{D}$, and $\pi\circ T_{g}=S_{g}\circ \pi$ for all $g\in\Z^{d}$.

A factor $(Y,\mathcal{D},\nu,(S_{g})_{g\in \Z^d})$ of $(X,\mathcal{B},\mu,(T_{g})_{g\in \Z^d})$ can be identified with an invariant sub-$\sigma$-algebra $\mathcal{B}'$ of $\mathcal{B}$ by setting $\mathcal{B}'\coloneqq \pi^{-1}(\mathcal{D})$.
Given two $\sigma$-algebras $\mathcal{B}_1$ and $\mathcal{B}_2$, their \emph{joining} $\mathcal{B}_1\vee \mathcal{B}_2$ is the $\sigma$-algebra generated by $B_1\cap B_2$ for all $B_1\in \mathcal{B}_1$ and $B_2\in\mathcal{B}_2$, \emph{i.e.}, the smallest $\sigma$-algebra containing both $\mathcal{B}_1$ and $\mathcal{B}_2$.
 
Given a factor $\pi\colon (X,\mathcal{B},\mu) \to (Y,\mathcal{D},\nu)$ and a function $f\in L^{2}(\mu)$, the {\em conditional expectation of $f$ with respect to $Y$} is the function $g\in L^{2}(\nu),$ which we denote by $\mathbb{E}(f \mid Y),$ with the property 
\[ \int_A g\circ \pi \ d\mu=\int_A f \ d\mu  \textrm{ for all } A \in \pi^{-1}\left(\mathcal{D}\right).\]
Let $\mathbf{X}=(X,\mathcal{B},\mu,(T_{g})_{g\in \Z^d})$ be a $\Z^d$-system and let $\cB_1$ be an invariant sub-$\sigma$-algebra of $\cB$. The \emph{relatively independent joining} of $\mathbf{X}$ with itself with respect to $\cB_1$ is the measure preserving system obtained by considering the product space with the \emph{relatively independent joining}, denoted by $\mu \times_{\cB_1} \mu$, which is given by the formula:
\[ \int_{X \times X} f_1 \otimes f_2 \ d(\mu \times_{\cB_1} \mu)=\int_X \mathbb{E}(f_1|\cB_1) \mathbb{E}(f_2| \cB_1) \ d\mu, \]
for all $f_1, f_{2} \in L^{\infty}(\mu)$.

For a $G$-system $\mathbf{X}=(X,\mathcal{B},\mu,(T_{g})_{g\in G})$, if $H$ is a subgroup of $G,$ we denote by
 $\mathcal{I}(H)(\mathbf{X})$ the set of $A\in\mathcal{B}$ such that $T_{g}A=A$ for all $g\in H$. When there is no confusion, we write $\mathcal{I}(H)$. 
 
 For a $\Z^d$-system $(X,\mathcal{B},\mu,(T_{g})_{g\in \Z^d}),$ define 
\[\mu_{H_{1}}=\mu\times_{\I(H_{1})}\mu\]
and for $k>1$, let
\[\mu_{H_{1},\dots,H_{k}}=\mu_{H_{1},\dots,H_{k-1}}\times_{\mathcal{I}(H_{k}^{[k-1]})}\mu_{H_{1},\dots,H_{k-1}},\]
where $H^{[k-1]}_{k}$ denotes 
the subgroup of $(\Z^{d})^{2^{k-1}}$ consisting of all the elements of the form 
 $h_{k}\times\dots\times h_{k}$ ($2^{k-1}$ copies of $h_{k}$) for some $h_{k}\in H_{k}$.  The \emph{characteristic factor} $Z_{H_{1},\dots,H_{k}}(\mathbf{X})$ is defined to be the sub-$\sigma$-algebra of $\mathcal{B}$ characterized by
 \[\mathbb{E}(f\vert Z_{H_{1},\dots,H_{k}}(\mathbf{X}))=0 \text{ if and only if } \nnorm{f}_{H_{1},\dots,H_{k}}^{2^{k}}\coloneqq \int_{X^{[k]}}\bigotimes_{\e\in\{0,1\}^{k}}\mathcal{C}^{\vert\e\vert}f\,d\mu_{H_{1},\dots,H_{k}}=0\]
 for all $f \in L^{\infty}(\mu)$,
where $X^{[k]}=X\times\cdots\times X$ ($2^k$ copies of $X$), $\vert\e\vert=\e_{1}+\dots+\e_{k}$ for $\e=(\e_{1},\dots,\e_{k})\in\{0,1\}^{k}$, and $\mathcal{C}^{2r+1}f=\overline{f},$ the complex conjugate of $f$, $\mathcal{C}^{2r}f=f$ for all $r\in\Z$. The quantity $\nnorm{f}_{H_1,\dots,H_k}$ denotes the \emph{Host-Kra seminorm} of $f$ with respect to the subgroups $H_1,\dots,H_k$.
Similar to the proof of \cite[Lemma 4]{host1} (or \cite[Lemma~4.3]{hostkraoriginal}), one can show that $Z_{H_{1},\dots,H_{k}}(\mathbf{X})$ is well defined.

We summarize some basic properties of the Host-Kra seminorms and their associated factors.  

\begin{proposition}[Lemma~2.4, \cite{DKS}]\label{prop:basic_properties}
	Let $\X=(X,\mathcal{B},\mu,(T_{g})_{g\in \Z^d})$ be a $\Z^{d}$-system, $H_{1},\dots,H_{k},H'$ be subgroups of $\Z^{d}$ and $f\in L^{\infty}(\mu)$. 
	\begin{itemize}
		\item[$(i)$] For every permutation $\sigma\colon\{1,\dots,k\}\to\{1,\dots,k\}$, we have that \[Z_{H_{1},\dots,H_{k}}(\mathbf{X})=Z_{H_{\sigma(1)},\dots,H_{\sigma(k)}}(\mathbf{X}),\] hence the corresponding seminorm does not depend on the particular order taken for the subgroups $H_1,\ldots,H_k.$
		\item[$(ii)$] If $\I(H_{j})=\I(H')$, then $Z_{H_{1},\dots,H_{j},\dots,H_{k}}(\mathbf{X})=Z_{H_{1},\dots,H_{j-1},H',H_{j+1},\dots,H_{k}}(\mathbf{X})$.
		\item[$(iii)$] For $k\geq 2$ we have that
		\[\nnorm{f}^{2^{k}}_{H_{1},\dots,H_{k}}
		=\mathbb{E}_{g\in H_{k}}\nnorm{f\cdot T_{g}\overline{f}}^{2^{k-1}}_{H_{1},\dots,H_{k-1}},\] while for $k=1,$	
		\[\nnorm{f}^{2}_{H_{1}}
		=\mathbb{E}_{g\in H_{1}}\int_{X} f\cdot T_{g}\overline{f}\,d\mu.\]
		\item[$(iv)$] Let $k\geq 2$. If $H'\leq H_{j}$ is of finite index, then \[Z_{H_{1},\dots,H_{j},\dots,H_{k}}(\mathbf{X})=Z_{H_{1},\dots,H_{j-1},H',H_{j+1},\dots,H_{k}}(\mathbf{X}).\]
		\item[$(v)$] If $H'\leq H_{j}$, then $Z_{H_{1},\dots,H_{j},\dots,H_{k}}(\mathbf{X})\subseteq Z_{H_{1},\dots,H_{j-1},H',H_{j+1},\dots,H_{k}}(\mathbf{X})$.
		
	\item[$(vi)$] For $k\geq 2$,  $\nnorm{f}_{H_1,\ldots,H_{k-1}}\leq \nnorm{f}_{H_1,\ldots,H_{k-1},H_k}$ and thus \[Z_{H_1,\ldots,H_{k-1}}(\mathbf{X})\subseteq  Z_{H_1,\ldots,H_{k-1},H_k}(\mathbf{X}).\]
	
	 	\item[$(vii)$] 	For $k\geq 1$, if $H_1',\ldots, H_k'$ are subgroups of $
	 		\Z^d$, then \[Z_{H_1,\ldots,H_k}(\mathbf{X}) \vee Z_{H_1',\ldots,H_k'}(\mathbf{X}) \subseteq Z_{H_1',\ldots,H_k',H_1,\ldots,H_k}(\mathbf{X}).\] 
	\end{itemize}	
\end{proposition}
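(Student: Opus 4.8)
The final statement is Proposition~\ref{prop:basic_properties}, a compendium of basic properties of Host--Kra seminorms attributed to \cite[Lemma~2.4]{DKS}. The plan is to verify each item from the definitions, relying on the Gowers--Cauchy--Schwarz inequality for the box measures $\mu_{H_1,\dots,H_k}$ and on the inductive formula in $(iii)$ as the workhorse.

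\medskip

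\textbf{Plan.} First I would establish $(iii)$, which is really the definitional recursion in disguise. Unpacking $\nnorm{f}_{H_1,\dots,H_k}^{2^k} = \int_{X^{[k]}} \bigotimes_{\e\in\{0,1\}^k} \mathcal{C}^{|\e|} f \, d\mu_{H_1,\dots,H_k}$ and splitting the cube coordinates $\e = (\e',\e_k)$ according to the last digit, the relatively independent joining $\mu_{H_1,\dots,H_k} = \mu_{H_1,\dots,H_{k-1}} \times_{\I(H_k^{[k-1]})} \mu_{H_1,\dots,H_{k-1}}$ lets one write the integral as $\int_{X^{[k-1]}} \E\big(\bigotimes_{\e'} \mathcal{C}^{|\e'|}f \mid \I(H_k^{[k-1]})\big)^2$, and then the mean ergodic theorem applied to the $H_k^{[k-1]}$-action on $X^{[k-1]}$ (averaging $T_g \times \cdots \times T_g$) turns the conditional expectation squared into $\E_{g\in H_k}$ of the correlation of the cube integrand of $f$ with that of $T_g\overline f$; recognizing the latter as $\nnorm{f\cdot T_g\overline f}_{H_1,\dots,H_{k-1}}^{2^{k-1}}$ gives the identity. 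The $k=1$ case is the same computation with $\mu_{H_1} = \mu\times_{\I(H_1)}\mu$ directly. Item $(i)$, permutation symmetry, follows because the cube construction is symmetric in the successive directions: one can reorder which subgroup is "doubled last" by a change of coordinates on the cube $X^{[k]}$ that permutes the $2^k$ copies; alternatively, iterate $(iii)$ and use that the resulting expression $\E_{g_1\in H_1,\dots,g_{k-1}\in H_{k-1}} \nnorm{\prod_{\epsilon} \mathcal{C}^{|\epsilon|} T_{\epsilon\cdot g} f}$-type formula is manifestly symmetric. I would prove $(i)$ and $(iii)$ first since everything else leans on them.

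\medskip

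\textbf{Continuing.} For $(ii)$: if $\I(H_j) = \I(H')$ then the relatively independent joinings over these $\sigma$-algebras agree (the joining over a sub-$\sigma$-algebra depends only on that $\sigma$-algebra, and $\I(H_j^{[m]}) = \I(H'^{[m]})$ whenever $\I(H_j)=\I(H')$, which one checks since a function on $X^{[m]}$ of product form is $H_j^{[m]}$-invariant iff each coordinate lies in $\I(H_j)$ — here using $(i)$ to move $H_j$ to the last slot). Hence the box measures, and so the seminorms and their defining factors, coincide. For $(iv)$: if $H' \le H_j$ has finite index, the mean ergodic averages $\E_{g\in H_j}$ and $\E_{g\in H'}$ applied in step $(iii)$ yield the same conditional expectation onto the invariant factor, because $\I(H') = \I(H_j)$ for a finite-index subgroup (a point with finite $H_j$-orbit relative to invariance — more precisely an $H'$-invariant set is $H_j$-invariant up to the finitely many cosets, and after passing to invariant functions they agree in $L^2$); combined with $(i)$–$(ii)$ this gives equality of the factors. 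Actually the cleanest route is: $\I(H') = \I(H_j)$ when $[H_j:H']<\infty$, then invoke $(ii)$. For $(v)$, monotonicity under $H'\le H_j$: now $\I(H_j) \subseteq \I(H')$, so $\E(\cdot\mid \I(H_j^{[\cdot]}))$ is a further conditioning of $\E(\cdot\mid\I(H'^{[\cdot]}))$, and by the contraction property of conditional expectation the $\mu_{H_1,\dots,H_j,\dots}$-integral is dominated by the $\mu_{H_1,\dots,H',\dots}$-integral; hence $\nnorm{f}_{H_1,\dots,H_j,\dots} \le \nnorm{f}_{H_1,\dots,H',\dots}$ and the factor inclusion follows from the defining property ($\E(f\mid Z)=0 \iff \nnorm{f}=0$, together with the standard fact that a smaller seminorm gives a smaller characteristic factor, proved via the Gowers--Cauchy--Schwarz/dual-function argument showing $\nnorm{\cdot}$-null functions are orthogonal to the larger factor).

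\medskip

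\textbf{Finishing and the main obstacle.} Items $(vi)$ and $(vii)$ are the "adding more subgroups only enlarges the factor" statements. For $(vi)$: by $(iii)$, $\nnorm{f}_{H_1,\dots,H_k}^{2^k} = \E_{g\in H_k}\nnorm{f\cdot T_g\overline f}_{H_1,\dots,H_{k-1}}^{2^{k-1}}$; taking $g=e_{\Z^d}$ as one term and using that (after averaging) $\nnorm{f\cdot T_g\overline f}_{H_1,\dots,H_{k-1}} \le \nnorm{f}_{H_1,\dots,H_{k-1}}^2$ by the Gowers--Cauchy--Schwarz inequality (each $T_g\overline f$ has the same seminorm as $f$), one gets $\nnorm{f}_{H_1,\dots,H_k}^{2^k} \le \nnorm{f}_{H_1,\dots,H_{k-1}}^{2^k}$... wait, the inequality goes the convenient way: in fact the standard argument gives $\nnorm{f}_{H_1,\dots,H_{k-1}} \le \nnorm{f}_{H_1,\dots,H_k}$ directly from $\nnorm{f}_{H_1,\dots,H_k}^{2^k} = \E_{g}\nnorm{f T_g\bar f}^{2^{k-1}}_{H_1,\dots,H_{k-1}} \ge |\E_g \langle\!\langle\text{something}\rangle\!\rangle|$ combined with a Cauchy--Schwarz pushing the average inside; the cleanest is to cite the analogue of \cite[Lemma~4.3]{hostkraoriginal} that these are seminorms and nested. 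The factor inclusion then follows. For $(vii)$, I would use $(i)$ to arrange the subgroups as $H_1',\dots,H_k',H_1,\dots,H_k$ and apply $(vi)$ $k$ times to get $\nnorm{f}_{H_1,\dots,H_k} \le \nnorm{f}_{H_1',\dots,H_k',H_1,\dots,H_k}$ and similarly with the primed list, so both $Z_{H_1,\dots,H_k}(\X)$ and $Z_{H_1',\dots,H_k'}(\X)$ are contained in $Z_{H_1',\dots,H_k',H_1,\dots,H_k}(\X)$; since the latter is a $\sigma$-algebra it contains the join. The main obstacle throughout is not any single deep idea but getting the Gowers--Cauchy--Schwarz bookkeeping for the box measures $\mu_{H_1,\dots,H_k}$ exactly right — in particular verifying that $\I(H^{[m]})$ behaves correctly under the operations $\I(H)=\I(H')$, $H'\le_{\text{f.i.}}H$, and $H'\le H$, and that the passage "smaller seminorm $\Rightarrow$ smaller characteristic factor" is legitimate (this is the one place requiring the dual-function / Cauchy--Schwarz argument rather than pure formal manipulation); since the statement is quoted from \cite[Lemma~2.4]{DKS}, I would in practice present these verifications concisely and refer to \cite{host1,hostkraoriginal} for the prototype computations.
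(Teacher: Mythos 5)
The paper itself does not prove this proposition; it is imported verbatim from \cite{DKS} (Lemma~2.4 there), so what you are really reconstructing is the proof from that reference. Your treatment of (iii) and (v) is essentially the standard one, but there is a genuine error in your handling of (iv). The ``cleanest route'' you settle on --- that $[H_j:H']<\infty$ forces $\I(H')=\I(H_j)$, after which (iv) reduces to (ii) --- is false, as is the earlier parenthetical ``an $H'$-invariant set is $H_j$-invariant up to the finitely many cosets.'' Take $d=1$, $X$ two points with the swap $T$: every subset is invariant under the index-two subgroup $2\Z$, while only trivial sets are $\Z$-invariant. The same example shows why the hypothesis $k\geq 2$ in (iv) is essential: for $k=1$ one has $Z_{\Z}(\X)$ trivial but $Z_{2\Z}(\X)=\mathcal{B}$, so any argument that never uses $k\geq 2$ --- and yours does not --- cannot be correct. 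The actual proof of (iv) is: one inclusion is (v); for the other, use (i) to put the relevant group in the last slot, write the F\o lner average over $H_j$ in the recursion (iii) as an average over the finitely many $H'$-cosets, and use that the summands $\nnorm{f\cdot T_{g}\overline{f}}^{2^{k-1}}_{H_{1},\dots,H_{k-1}}$ are nonnegative (this is exactly where $k\geq 2$ enters) to drop all cosets except the trivial one, yielding $\nnorm{f}^{2^{k}}_{H_{1},\dots,H_{k-1},H'}\leq [H_j:H']\,\nnorm{f}^{2^{k}}_{H_{1},\dots,H_{k-1},H_j}$; the two seminorms then vanish simultaneously and the factors coincide.

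Two further soft spots. In (ii), your justification that $\I(H_j^{[m]})=\I(H'^{[m]})$ because ``a product-form function is diagonally invariant iff each coordinate is invariant'' is wrong (for an eigenfunction $f$, $f\otimes\overline{f}$ is diagonally invariant while $f$ is not), and moving $H_j$ to the \emph{last} slot does not help; the clean argument moves $H_j$ to the \emph{first} slot via (i), where the construction of $\mu_{H_j,H_2,\dots,H_k}$ uses only the $\sigma$-algebra $\I(H_j)$, so the box measures, hence the seminorms and factors, are literally equal. In (i) and (vi) you assert rather than prove the key points: the invariance of $\mu_{H_1,\dots,H_k}$ under permuting directions (equivalently, the exchangeability of the iterated limits in the expanded formula) is precisely the content of the lemma and needs the inductive argument of \cite{host1} or \cite{hostkraoriginal}; and (vi) does not come from the bound $\nnorm{f\cdot T_g\overline f}\leq\nnorm{f}^2$ you first tried, but from (iii) together with conditional Jensen: with $F=\bigotimes_{\e}\mathcal{C}^{|\e|}f$ one has $\nnorm{f}^{2^{k}}_{H_{1},\dots,H_{k}}=\int \vert\mathbb{E}(F\mid\I(H_{k}^{[k-1]}))\vert^{2}\,d\mu_{H_{1},\dots,H_{k-1}}\geq \bigl\vert\int F\,d\mu_{H_{1},\dots,H_{k-1}}\bigr\vert^{2}=\nnorm{f}^{2^{k}}_{H_{1},\dots,H_{k-1}}$. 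With these repairs your outline matches the intended proof; as written, (iv) in particular does not go through.
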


As an immediate corollary of Proposition \ref{prop:basic_properties} (iv), we have:

\begin{corollary}[Corollary 2.5, \cite{DKS}]\label{st1}
	Let $H_{1},\dots,H_{k}$ be subgroups of $\Z^{d}$. If  the $H_{i}$-action $(T_{g})_{g\in H_{i}}$ is ergodic on $\mathbf{X}$ for all $1\leq i\leq k$, then $Z_{H_{1},\dots,H_{k}}(\mathbf{X})=Z_{({\Z^{d}})^{\times k}}(\mathbf{X})$.
\end{corollary}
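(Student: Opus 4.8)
The plan is to peel off the subgroups $H_{1},\dots,H_{k}$ one at a time, replacing each by the full group $\Z^{d}$ without changing the characteristic factor, by iterating Proposition~\ref{prop:basic_properties}(ii).

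First I would record the key reduction. For each $i$, the hypothesis that the $H_{i}$-action $(T_{g})_{g\in H_{i}}$ is ergodic on $\mathbf{X}$ says exactly that every set in $\mathcal{I}(H_{i})(\mathbf{X})$ has measure $0$ or $1$, i.e.\ $\mathcal{I}(H_{i})(\mathbf{X})$ is the trivial $\sigma$-algebra (of null and conull sets). Since $H_{i}\leq \Z^{d}$ we always have $\mathcal{I}(\Z^{d})(\mathbf{X})\subseteq \mathcal{I}(H_{i})(\mathbf{X})$, and the trivial $\sigma$-algebra is contained in every sub-$\sigma$-algebra, so in fact $\mathcal{I}(H_{i})(\mathbf{X})=\mathcal{I}(\Z^{d})(\mathbf{X})$ for every $1\leq i\leq k$.

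Now apply Proposition~\ref{prop:basic_properties}(ii) with $H'=\Z^{d}$ and $j=1$: since $\mathcal{I}(H_{1})(\mathbf{X})=\mathcal{I}(\Z^{d})(\mathbf{X})$, we obtain $Z_{H_{1},H_{2},\dots,H_{k}}(\mathbf{X})=Z_{\Z^{d},H_{2},\dots,H_{k}}(\mathbf{X})$. Repeating this for $j=2,\dots,k$ — the hypothesis of part (ii) remains valid at each stage, as replacing earlier entries by $\Z^{d}$ does not affect the identity $\mathcal{I}(H_{j})(\mathbf{X})=\mathcal{I}(\Z^{d})(\mathbf{X})$ — we successively reach $Z_{H_{1},\dots,H_{k}}(\mathbf{X})=Z_{\Z^{d},\dots,\Z^{d},H_{k}}(\mathbf{X})=Z_{(\Z^{d})^{\times k}}(\mathbf{X})$, which is the claim. (For $k=1$ this is just a single application of part (ii).)

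There is no genuine obstacle here: the argument is a direct iteration of Proposition~\ref{prop:basic_properties}(ii), and the only point needing a line of justification is the identification $\mathcal{I}(H_{i})(\mathbf{X})=\mathcal{I}(\Z^{d})(\mathbf{X})$, the standard translation between ergodicity of a subgroup action and triviality of its invariant $\sigma$-algebra. Conceptually this is also the reason Proposition~\ref{prop:basic_properties}(ii) holds: in the recursive formula of Proposition~\ref{prop:basic_properties}(iii) the subgroup $H_{j}$ is only ever seen through the ergodic averages $\mathbb{E}_{g\in H_{j}}$, which in the limit depend on $H_{j}$ solely via $\mathcal{I}(H_{j})$ by the mean ergodic theorem, so the seminorm $\nnorm{\cdot}_{H_{1},\dots,H_{k}}$ (hence the factor $Z_{H_{1},\dots,H_{k}}(\mathbf{X})$) is unchanged when an $H_{j}$ is swapped for any subgroup with the same invariant $\sigma$-algebra.
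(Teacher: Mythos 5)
Your proof is correct and is essentially the paper's argument: the corollary is stated there as an immediate consequence of Proposition~\ref{prop:basic_properties}, and your observation that ergodicity of each $H_i$-action forces $\mathcal{I}(H_i)=\mathcal{I}(\Z^d)$ (both trivial mod null sets), followed by the iterated replacement of each $H_j$ by $\Z^d$, is exactly the intended one-line justification. (Incidentally, the paper points to part (iv), but part (ii) is the part that does the work here, since an ergodic subgroup need not have finite index in $\Z^d$.)
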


\begin{convention}\label{co1}
	Thanks to \cref{prop:basic_properties} we may adopt a flexible and convenient notation while writing the Host-Kra characteristic factors. For example, if $A=\{H_{1},H_{2}\}^{\times 3}$, then the notation $Z_{A,H_{3},H^{\times 2}_{4},(H_{i})_{i=5,6}}(\X)$ refers to  $Z_{H_{1},H_{1},H_{1},H_{2},H_{2},H_{2},H_{3},H_{4},H_{4},H_{5},H_{6}}(\X)$. 
\end{convention}

Recall that for a subgroup $H\subseteq \Z^d$,  $H^{[1]}$ denotes the subgroup $\{ (h,h)\colon h\in H\}\subseteq \Z^d\times \Z^d$. 

\begin{lemma}\label{lem1}
Let $d \in \N$. Let $(X,\mathcal{B},\mu,(T_g)_{g\in\Z^d})$ be a $\Z^{d}$-system and $H_1,\ldots,H_k,H$ be subgroups of $\Z^d$. Let $f \in L^{\infty}(\mu)$. Then,
\[ \nnorm{f \otimes \bar{f}}_{H_1^{[1]},\dots,H_{k}^{[1]}} \leq \nnorm{ f}_{H_1,\dots,H_{k},H} ^2,\]
where in the left hand side we consider the product space $(X\times X,\mathcal{B}\otimes \mathcal{B},\mu\times \mu)$. 
\end{lemma}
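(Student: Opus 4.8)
The plan is to establish, by induction on $k$, the following slightly more flexible statement: for every $g\in L^{\infty}(\mu)$ and all subgroups $H_1,\dots,H_k,H\leq\Z^d$,
\[
\nnorm{g\otimes\bar{g}}_{H_1^{[1]},\dots,H_k^{[1]}}\leq\nnorm{g}_{H_1,\dots,H_k,H}^2 ,
\]
where the left-hand seminorm is taken in the product $\Z^d$-system $(X\times X,\mathcal{B}\otimes\mathcal{B},\mu\times\mu,(T_n\times T_n)_{n\in\Z^d})$; the lemma is the special case $g=f$. Two elementary facts are used repeatedly. First, for $t\in\Z^d$ one has $(T_t\times T_t)(g\otimes\bar{g})=(T_t g)\otimes(T_t\bar{g})$, and a direct computation gives
\[
(g\otimes\bar{g})\cdot\overline{(T_t\times T_t)(g\otimes\bar{g})}=(g\cdot T_t\bar{g})\otimes\overline{(g\cdot T_t\bar{g})},
\]
so its integral against $\mu\times\mu$ equals $\bigl|\int_X g\cdot T_t\bar{g}\,d\mu\bigr|^2$. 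Second, for any $u\in L^{\infty}(\mu)$ and subgroup $H\leq\Z^d$, the mean ergodic theorem yields $\nnorm{u}_H^2=\langle u,\mathbb{E}(u\mid\I(H))\rangle=\norm{\mathbb{E}(u\mid\I(H))}_2^2\geq\bigl|\int_X u\,d\mu\bigr|^2$, the last step being projection onto the constants.

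For the base case $k=1$, Proposition~\ref{prop:basic_properties}(iii) applied in the product system, together with the first fact, gives $\nnorm{g\otimes\bar{g}}_{H_1^{[1]}}^2=\mathbb{E}_{t\in H_1}\bigl|\int_X g\cdot T_t\bar{g}\,d\mu\bigr|^2$. The second fact with $u=g\cdot T_t\bar{g}$ bounds each term by $\nnorm{g\cdot T_t\bar{g}}_H^2$, and averaging over $t\in H_1$ together with Proposition~\ref{prop:basic_properties}(i) and (iii) identifies $\mathbb{E}_{t\in H_1}\nnorm{g\cdot T_t\bar{g}}_H^2$ with $\nnorm{g}_{H,H_1}^4=\nnorm{g}_{H_1,H}^4$, which is the claim for $k=1$.

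For the inductive step ($k\geq 2$), peel off $H_k^{[1]}$ via Proposition~\ref{prop:basic_properties}(iii) in the product system and the first fact:
\[
\nnorm{g\otimes\bar{g}}_{H_1^{[1]},\dots,H_k^{[1]}}^{2^k}=\mathbb{E}_{t\in H_k}\nnorm{(g\cdot T_t\bar{g})\otimes\overline{(g\cdot T_t\bar{g})}}_{H_1^{[1]},\dots,H_{k-1}^{[1]}}^{2^{k-1}}.
\]
Apply the inductive hypothesis to each $g_t\coloneqq g\cdot T_t\bar{g}\in L^{\infty}(\mu)$, keeping the \emph{same} auxiliary subgroup $H$, and raise to the power $2^{k-1}$; this bounds the right-hand side by $\mathbb{E}_{t\in H_k}\nnorm{g\cdot T_t\bar{g}}_{H_1,\dots,H_{k-1},H}^{2^k}$. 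By Proposition~\ref{prop:basic_properties}(iii) (now peeling off $H_k$) this equals $\nnorm{g}_{H_1,\dots,H_{k-1},H,H_k}^{2^{k+1}}$, which by the permutation invariance of Proposition~\ref{prop:basic_properties}(i) equals $\nnorm{g}_{H_1,\dots,H_k,H}^{2^{k+1}}=\bigl(\nnorm{g}_{H_1,\dots,H_k,H}^2\bigr)^{2^k}$. Taking $2^k$-th roots closes the induction.

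I do not anticipate a serious obstacle: once the auxiliary statement is set up, the argument is essentially bookkeeping with the recursion of Proposition~\ref{prop:basic_properties}(iii). The one point needing care is that the extra subgroup $H$ on the right is carried unchanged through the entire induction and, at the final stage, is placed among the inner subgroups while $H_k$ is peeled off last — this is exactly what forces the right-hand side to have $k+1$ (rather than $k$) subgroups, and it is where the permutation invariance of Proposition~\ref{prop:basic_properties}(i) is essential.
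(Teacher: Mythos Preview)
Your proof is correct and follows essentially the same approach as the paper: induction on $k$, with the base case established via the identity $\nnorm{u}_H^2=\|\mathbb{E}(u\mid\I(H))\|_2^2\geq|\int u\,d\mu|^2$ (the paper phrases this step as inserting the conditional expectation and applying Cauchy--Schwarz, which amounts to the same thing), and the inductive step by peeling off $H_k^{[1]}$ via Proposition~\ref{prop:basic_properties}(iii), applying the hypothesis with the same auxiliary $H$, and then using permutation invariance.
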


\begin{proof}
We proceed by induction on $k$. For $k=1,$ using the Cauchy-Schwartz inequality, we have
\begin{align*}
 \nnorm{f\otimes \overline{f}}_{H_1^{[1]}}^2&= \mathbb{E}_{g\in H_1} \int f\otimes \overline{f} \cdot (T_g\times T_g )\overline{f}\otimes f \ d(\mu\times \mu) \\
 &= \mathbb{E}_{g\in H_1} \Bigl| \int T_g f \cdot \overline{f} d\mu \Bigr|^2 = \mathbb{E}_{g\in H_1} \Bigl| \int \mathbb{E}(T_g f \cdot \overline{f} | \I(H)) d\mu \Bigr|^2  \\
 & \leq \mathbb{E}_{g\in H_1}  \int |\mathbb{E}(T_g f \cdot \overline{f} | \I(H))|^2 d\mu = \mathbb{E}_{g\in H_1} \nnorm{T_gf\cdot \overline{f}}_{H}^2 = \nnorm{f}_{H,H_1}^4=\nnorm{f}_{H_1,H}^4,
 \end{align*}
from where we conclude the required relation by taking square roots. 

Suppose that the result holds for $k-1$. By \cref{prop:basic_properties} and the induction hypothesis, 
\begin{align*} \nnorm{f \otimes \overline{f}}_{H_1^{[1]},\dots,H_{k}^{[1]}}^{2^k} & =\mathbb{E}_{g\in H_k}\nnorm{(T_g\times T_{g}) f \otimes \overline{f} \cdot \overline{f} \otimes {f} }^{2^{k-1}}_{H_1^{[1]},\dots,H_{k-1}^{[1]}} 
\\ & = \mathbb{E}_{g\in H_k}\nnorm{T_g f \cdot \overline{f} \otimes T_g \overline{f} \cdot f}^{2^{k-1}}_{H_1^{[1]},\dots,H_{k-1}^{[1]}} \\
& \leq \mathbb{E}_{g\in H_k}\nnorm{T_g f \cdot \overline{f}}^{2^{k}}_{H_1,\dots,H_{k-1},H}
\\ & = \nnorm{f}_{H_1,\dots,H_{k-1},H,H_{k}  } = \nnorm{f}_{H_1,\dots,H_{k-1},H_k,H}
\end{align*}
and the claim follows.
\end{proof}

\subsection{Nilsystems, nilsequences and Structure Theorem}\label{snn}

Let $X=N/\Gamma$, where $N$ is a ($k$-step) nilpotent Lie group and $\Gamma$ is a discrete cocompact subgroup of $N$. Let $\mathcal{B}$ be the Borel $\sigma$-algebra of $X,$ $\mu$ the Haar measure on $X,$ and for $n\in \Z^{d},$ let $T_{n}\colon X\to X$ with $T_{n}x=b_{n}\cdot x$ for some group homomorphism $n\mapsto b_{n}$ from $\Z^{d}$ to $N$. We say that $\mathbf{X}=(X,\mathcal{B},\mu,(T_{n})_{n\in \Z^{d}})$ is a \emph{($k$-step) $\Z^{d}$-nilsystem}. 
 For $k\geq 1$, we say that $(a_{n})_{n\in\Z^{d}}$ is a \emph{($k$-step) $\Z^{d}$-nilsequence} if there exist a ($k$-step) $\Z^{d}$-nilsystem $\mathbf{X}=(X,\mathcal{B},\mu,(T_{n})_{n\in \Z^{d}})$, a function $F \in C(X)$ and $x\in X$ such that $a_{n}=F(T_{n}x)$ for all $n\in\Z^{d}$. For $k=0$, a
   \emph{$0$-step nilsequence} is a constant sequence. 
An important reason which makes the Host-Kra characteristic factors powerful is their connection with nilsystems.   The following is a slight generalization of \cite[Theorem~3.7]{Z} (see \cite[Theorem~3.7]{Sun}), which is a higher dimensional version of Host-Kra structure theorem (\cite{hostkraoriginal}).

\begin{theorem}\label{st2}
	Let $\mathbf{X}$ be an ergodic $\mathbb{Z}^{d}$-system. Then $Z_{(\mathbb{Z}^{d})^{\times k}}(\mathbf{X})$ is an inverse limit of $(k-1)$-step $\mathbb{Z}^{d}$-nilsystems. 
\end{theorem}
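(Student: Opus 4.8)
The plan is to deduce the statement from the known higher-dimensional Host--Kra structure theorem, which is available as \cite[Theorem~3.7]{Sun} (a higher-dimensional version of Host--Kra's structure theorem \cite{hostkraoriginal}; see also \cite[Theorem~3.7]{Z}); the only point requiring attention is to reconcile our bookkeeping of Host--Kra seminorms and factors with the classical Host--Kra factor of order $k-1$ used there, after which that theorem applies. For $k=1$ there is nothing to prove, as $Z_{(\mathbb{Z}^d)^{\times 1}}(\mathbf{X})$ is the trivial factor, so assume $k\geq 2$. (As usual one may also reduce, by passing to a separable factor if necessary, to the case where the system involved is standard.)

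The crux is the identification of $Z_{(\mathbb{Z}^d)^{\times k}}(\mathbf{X})$ with the Host--Kra factor of order $k-1$ of the $\mathbb{Z}^d$-action. By construction this factor is the one annihilated exactly by the seminorm $\nnorm{\cdot}_{(\mathbb{Z}^d)^{\times k}}$ attached to the measure $\mu_{\mathbb{Z}^d,\dots,\mathbb{Z}^d}$ on $X^{[k]}$, and the latter is built by precisely the iterated relatively-independent-joining procedure --- over the $\sigma$-algebras $\mathcal{I}(\mathbb{Z}^d)$, $\mathcal{I}((\mathbb{Z}^d)^{[1]})$, $\dots$, $\mathcal{I}((\mathbb{Z}^d)^{[k-1]})$ --- that defines the Gowers--Host--Kra cubic measures and seminorms of the $\mathbb{Z}^d$-action in \cite{host1, Sun}. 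The symmetry and the recursion properties collected in \cref{prop:basic_properties}, in particular the identity $\nnorm{f}^{2^k}_{(\mathbb{Z}^d)^{\times k}}=\mathbb{E}_{g\in\mathbb{Z}^d}\nnorm{f\cdot T_g\overline{f}}^{2^{k-1}}_{(\mathbb{Z}^d)^{\times(k-1)}}$, show that the $\nnorm{\cdot}_{(\mathbb{Z}^d)^{\times k}}$ obey the defining recursion of the Host--Kra seminorms, so that the two families of seminorms, and hence the two sequences of characteristic factors, coincide. Granting this, \cite[Theorem~3.7]{Sun} applied to the ergodic system $\mathbf{X}$ yields that $Z_{(\mathbb{Z}^d)^{\times k}}(\mathbf{X})$ is an inverse limit of $(k-1)$-step $\mathbb{Z}^d$-nilsystems, which is the assertion.

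The step I expect to demand the most care is this identification. One must check that taking all the subgroups equal to $\mathbb{Z}^d$ genuinely reproduces the Host--Kra cubic space $X^{[k]}$ together with its cubic measure --- in particular that at the $j$-th stage the relevant invariant $\sigma$-algebra is the diagonal one $\mathcal{I}((\mathbb{Z}^d)^{[j-1]})$ on the appropriate $2^{j-1}$ coordinates --- and that, since $\mathbf{X}$ is ergodic, each intermediate joining is ergodic for the corresponding diagonal action, which is exactly what makes $Z_{(\mathbb{Z}^d)^{\times k}}(\mathbf{X})$ an honest ergodic system of order $k-1$ to which \cite[Theorem~3.7]{Sun} applies. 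Much of this is already implicit in the verification --- referenced just after the definition of $Z_{H_1,\dots,H_k}(\mathbf{X})$ and modelled on \cite[Lemma~4]{host1} and \cite[Lemma~4.3]{hostkraoriginal} --- that these factors are well defined, so that what remains is largely a matter of transcribing those arguments into the notation used here.
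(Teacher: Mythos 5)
Your proposal matches the paper's treatment: the paper offers no independent proof of this statement, but simply quotes it as a known result, namely \cite[Theorem~3.7]{Sun} (a higher-dimensional version of the Host--Kra structure theorem, slightly generalizing \cite[Theorem~3.7]{Z}), exactly as you do. Your additional verification that taking all subgroups equal to $\mathbb{Z}^d$ in the definition of $Z_{H_1,\dots,H_k}(\mathbf{X})$ reproduces the classical Host--Kra cubic measures and factor of order $k-1$ is the correct (and only) reconciliation needed, and it is the identification the paper implicitly relies on.
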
 

\subsection{Bessel's inequality}
An essential difference in the study of multiple ergodic averages between $\Z$-systems and $\Z^{d}$-systems is that in the former case, one can usually bound the average by some Host-Kra seminorm of a function $f$ appearing in the average, whereas in the latter, one can only bound the averages by an average of a family of Host-Kra seminorms of $f$. 
To overcome this difficulty, inspired by the work of Tao and Ziegler (\cite{TZ}), in this subsection we derive an upper bound for expressions of the form $\E_{i\in I}\nnorm{f}_{H_{i,1},\dots,H_{i,s}}$, where $I$ is a finite set and $H_{i,j}$ are subgroups of $\Z^{d}$.

The proof of the following statement is similar to \cite[Corollary~1.22]{TZ}: 
\begin{proposition}[Bessel's inequality]\label{bessel}
	Let $s\in\N$, $(X,\mathcal{B},\mu,(T_{g})_{g\in\mathbb{Z}^{d}})$ be a $\mathbb{Z}^{d}$-system, $I$ be a finite set of indices, and $H_{i,j}, i\in I, 1\leq j\leq s$ be subgroups of $\Z^{d}$. Then for all $f\in L^{\infty}(\mu)$,
	$$\mathbb{E}_{i\in I}\| \mathbb{E}(f\vert Z_{H_{i,1},\dots,H_{i,s}})\|^{2}_{2}
	\leq \Vert f\Vert_{2}\cdot\Bigl(\mathbb{E}_{i,j\in I}\left\Vert \mathbb{E}(f\vert Z_{\{H_{i,i'}+H_{j,j'}\}_{1\leq i',j'\leq s}})\right\Vert^{2}_{2}\Bigr)^{1/2}.$$
\end{proposition}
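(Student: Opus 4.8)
The plan is to mimic the standard Bessel inequality argument, treating $f$ as a "vector" and the conditional expectations $\E(f\vert Z_{H_{i,1},\dots,H_{i,s}})$ as "coordinates" with respect to a family of (non-orthogonal) projections $\pi_i \coloneqq \E(\cdot\vert Z_{H_{i,1},\dots,H_{i,s}})$. The key algebraic input is that composing two such projections is controlled by a third projection onto a larger factor. Concretely, first I would observe that for any $i,j\in I$, the factor $Z_{H_{i,1},\dots,H_{i,s}}$ is contained in $Z_{\{H_{i,i'}+H_{j,j'}\}_{1\le i',j'\le s}}$: indeed each $H_{i,i'}\le H_{i,i'}+H_{j,j'}$, so by \cref{prop:basic_properties}$(v)$ (and reordering via $(i)$, padding extra coordinates via $(vi)$) we get $Z_{H_{i,1},\dots,H_{i,s}}\subseteq Z_{\{H_{i,i'}+H_{j,j'}\}_{i',j'}}$, and likewise for $j$. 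Writing $W_{i,j}$ for this common larger factor and $P_{i,j}\coloneqq \E(\cdot\vert W_{i,j})$, we therefore have $\pi_i = \pi_i P_{i,j}$ and $\pi_j = \pi_j P_{i,j}$ as operators on $L^2(\mu)$.

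Next I would run the duality/Cauchy–Schwarz computation. Set $c_i\coloneqq \|\pi_i f\|_2^2$. Then
\[
\mathbb{E}_{i\in I} c_i = \mathbb{E}_{i\in I}\langle \pi_i f, f\rangle = \Bigl\langle \mathbb{E}_{i\in I}\pi_i f,\; f\Bigr\rangle \le \Bigl\|\mathbb{E}_{i\in I}\pi_i f\Bigr\|_2 \cdot \|f\|_2
\]
by Cauchy–Schwarz (using that each $\pi_i$ is a self-adjoint idempotent on $L^2(\mu)$, so $\langle \pi_i f,f\rangle = \|\pi_i f\|_2^2$). It remains to bound $\bigl\|\mathbb{E}_{i\in I}\pi_i f\bigr\|_2^2 = \mathbb{E}_{i,j\in I}\langle \pi_i f,\pi_j f\rangle$. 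For each pair $(i,j)$, using $\pi_i = P_{i,j}\pi_i$ (self-adjointness of $P_{i,j}$ and the containment above, so $P_{i,j}$ fixes the range of $\pi_i$), and then $\|\pi_i\|\le 1$, $\|\pi_j\|\le 1$ on $L^2$:
\[
\langle \pi_i f,\pi_j f\rangle = \langle P_{i,j}\pi_i f, \pi_j f\rangle = \langle \pi_i f, P_{i,j}\pi_j f\rangle = \langle \pi_i f, \pi_j f\rangle,
\]
which is circular as written, so instead I bound directly: $|\langle \pi_i f,\pi_j f\rangle| = |\langle P_{i,j}f, \pi_i\pi_j f\rangle|$? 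The cleaner route is $\langle \pi_i f,\pi_j f\rangle = \langle \pi_i f, \pi_j P_{i,j} f\rangle = \langle \pi_j\pi_i f, P_{i,j}f\rangle$ and then Cauchy–Schwarz gives $|\langle\pi_i f,\pi_j f\rangle|\le \|\pi_j \pi_i f\|_2\,\|P_{i,j}f\|_2 \le \|\pi_i f\|_2\,\|P_{i,j}f\|_2$. Hmm — to land exactly on $\|P_{i,j}f\|_2^2$ one symmetrizes: $|\langle\pi_i f,\pi_j f\rangle| \le \|P_{i,j}f\|_2^2$ follows since both $\pi_i f$ and $\pi_j f$ are images under the contraction $P_{i,j}$ composed with a contraction, so $\|\pi_i f\|_2 = \|\pi_i P_{i,j}f\|_2 \le \|P_{i,j}f\|_2$ and likewise $\|\pi_j f\|_2\le \|P_{i,j}f\|_2$; hence $|\langle\pi_i f,\pi_j f\rangle|\le \|\pi_i f\|_2\|\pi_j f\|_2 \le \|P_{i,j}f\|_2^2$. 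Averaging over $(i,j)$ and combining with the first display yields
\[
\mathbb{E}_{i\in I} c_i \le \|f\|_2\cdot\Bigl(\mathbb{E}_{i,j\in I}\|P_{i,j}f\|_2^2\Bigr)^{1/2},
\]
which is exactly the claimed inequality.

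The one delicate point — and what I expect to be the main obstacle — is the interchange of the (possibly only upper) averages $\mathbb{E}_{i\in I}$ with the inner product, i.e. justifying $\mathbb{E}_{i\in I}\langle\pi_i f,f\rangle = \langle \mathbb{E}_{i\in I}\pi_i f, f\rangle$ and the Cauchy–Schwarz step in the averaged form. Since here $I$ is a genuine \emph{finite} index set (not a limit over Følner sequences), $\mathbb{E}_{i\in I}$ is an honest normalized finite sum, so linearity of the inner product and the triangle/Cauchy–Schwarz inequalities apply verbatim; the only care needed is that all operators $\pi_i,P_{i,j}$ are genuine orthogonal projections in the Hilbert space $L^2(\mu)$, which follows from the definition of conditional expectation with respect to a factor and from $Z_{H_{i,1},\dots,H_{i,s}}$ being a well-defined sub-$\sigma$-algebra (as recorded after the definition of the characteristic factors). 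I would also note that the proof uses only properties $(i),(v),(vi)$ of \cref{prop:basic_properties}, exactly paralleling \cite[Corollary~1.22]{TZ}.
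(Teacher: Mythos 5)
Your opening duality/Cauchy--Schwarz step is exactly the paper's: writing $\pi_i f=\mathbb{E}(f\vert Z_{H_{i,1},\dots,H_{i,s}})$, both arguments use $\mathbb{E}_{i\in I}\|\pi_i f\|_2^2=\langle \mathbb{E}_{i\in I}\pi_i f,f\rangle\le \|f\|_2\bigl(\mathbb{E}_{i,j\in I}\langle \pi_i f,\pi_j f\rangle\bigr)^{1/2}$. The gap is in how you bound the cross terms. Your key claim, $Z_{H_{i,1},\dots,H_{i,s}}\subseteq Z_{\{H_{i,i'}+H_{j,j'}\}_{1\le i',j'\le s}}$, is backwards: \cref{prop:basic_properties}$(v)$ says that replacing a group by a \emph{subgroup} enlarges the factor, so enlarging the groups (passing from $H_{i,i'}$ to $H_{i,i'}+H_{j,j'}$) can only shrink it. Already for $s=1$ your claim reads $\mathcal{I}(H_{i,1})\subseteq \mathcal{I}(H_{i,1}+H_{j,1})$, which fails: take $X=\T^2$ with $T_{e_1},T_{e_2}$ two commuting irrational rotations acting on separate coordinates, $H_{i,1}=\Z e_1$, $H_{j,1}=\Z e_2$. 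Consequently the inequality you derive from it, $\|\pi_i f\|_2=\|\pi_iP_{i,j}f\|_2\le\|P_{i,j}f\|_2$, is false in general: in the same example, a mean-zero $f$ depending only on the second coordinate has $\pi_i f=f$ while $P_{i,j}f=\mathbb{E}(f\vert\mathcal{I}(\Z^2))=0$, so $\|\pi_i f\|_2>\|P_{i,j}f\|_2$. Thus the chain $|\langle\pi_i f,\pi_j f\rangle|\le\|\pi_i f\|_2\|\pi_j f\|_2\le\|P_{i,j}f\|_2^2$ does not hold as argued (you also noticed the circularity of the intermediate manipulations, which is a symptom of the same problem: the relation $\pi_i=\pi_iP_{i,j}$ is simply not available).

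The missing ingredient is that the cross-term bound is not a formal Hilbert-space fact about nested projections; it is precisely where the paper invokes the Tao--Ziegler concatenation theorem, \cite[Corollary~1.21]{TZ}, which asserts that $L^{\infty}(Z_{H_{i,1},\dots,H_{i,s}})$ and $L^{\infty}(Z_{H_{j,1},\dots,H_{j,s}})$ are orthogonal on the orthogonal complement of $L^{\infty}(Z_{\{H_{i,i'}+H_{j,j'}\}_{1\leq i',j'\leq s}})$; this gives $\langle \pi_i f,\pi_j f\rangle=\bigl\|\mathbb{E}(f\vert Z_{\{H_{i,i'}+H_{j,j'}\}_{1\le i',j'\le s}})\bigr\|_2^2$, and the proposition follows. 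That relative orthogonality is genuine ergodic-theoretic content (note that the factor indexed by the sums $H_{i,i'}+H_{j,j'}$ is built from \emph{larger} groups, hence is in no way an upper bound for $Z_{H_{i,1},\dots,H_{i,s}}$ in the sense you need), so to repair your argument you would have to cite or reprove that concatenation result; the soft projection estimates cannot replace it.
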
	
\begin{proof}
For convenience, let $f_{i}\coloneqq \mathbb{E}(f\vert Z_{H_{i,1},\dots,H_{i,s}})$. Then
\[\mathbb{E}_{i\in I}\Vert \mathbb{E}(f\vert Z_{H_{i,1},\dots,H_{i,s}})\Vert^{2}_{2}=\langle f,\mathbb{E}_{i\in I}f_{i}\rangle\]
which, by the Cauchy-Schwartz inequality  is bounded by 
\[\Vert f\Vert_{2}\cdot\Bigl\vert\mathbb{E}_{i,j\in I}\langle f_{i},f_{j}\rangle\Bigr\vert^{1/2}. \]	
By \cite[Corollary~1.21]{TZ}, $L^{\infty}(Z_{H_{i,1},\dots,H_{i,s}})$ and $L^{\infty}(Z_{H_{j,1},\dots,H_{j,s}})$ are orthogonal on the orthogonal complement of $L^{\infty}(Z_{\{H_{i,i'}+H_{j,j'}\}_{1\leq i',j'\leq s}})$, hence 
$$\langle f_{i},f_{j}\rangle=\left\| \mathbb{E}(f\vert Z_{\{H_{i,i'}+H_{j,j'}\}_{1\leq i',j'\leq s}})\right\|_{2}^{2}$$
and we have the conclusion. 
\end{proof}	

By repeatedly using Proposition \ref{bessel}, we have:

\begin{corollary}\label{bessel2}
	Let $s,t\in\N$, $(X,\mathcal{B},\mu,(T_{g})_{g\in\mathbb{Z}^{d}})$ be a $\mathbb{Z}^{d}$-system, $I$ be a finite set of indices, and $H_{i,j},$ $i\in I,$ $1\leq j\leq s,$ be subgroups of $\Z^{d}$. Then for all $f\in L^{\infty}(\mu)$ and $T\coloneqq 2^t$ we have 
	\[\Bigl(\mathbb{E}_{i\in I}\Vert \mathbb{E}(f\vert Z_{H_{i,1},\dots,H_{i,s}})\Vert^{2}_{2}\Bigr)^{T}
	\leq \Vert f\Vert^{2T-2}_{2}\cdot \mathbb{E}_{i_{1},\dots,i_{T}\in I}\left\Vert \mathbb{E}(f\vert Z_{\{\sum_{j=1}^{T}H_{i_{j},i'_{j}}\}_{1\leq i'_{1},\dots,i'_{T}\leq s}})\right\Vert^{2}_{2}.\]
\end{corollary}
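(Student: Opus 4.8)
The plan is to iterate Proposition~\ref{bessel} exactly $t$ times, peeling off one power of $2$ at each step, and to keep careful track of how the index set and the families of subgroups grow. I would set up an induction on $t$. The base case $t=1$ is precisely Proposition~\ref{bessel}: squaring both sides there gives
\[
\Bigl(\mathbb{E}_{i\in I}\Vert \mathbb{E}(f\vert Z_{H_{i,1},\dots,H_{i,s}})\Vert^{2}_{2}\Bigr)^{2}
\leq \Vert f\Vert^{2}_{2}\cdot\mathbb{E}_{i,j\in I}\left\Vert \mathbb{E}(f\vert Z_{\{H_{i,i'}+H_{j,j'}\}_{1\leq i',j'\leq s}})\right\Vert^{2}_{2},
\]
which is the claimed inequality with $T=2=2^{1}$ and $2T-2=2$.

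For the inductive step, suppose the statement holds for $t-1$, i.e.\ with $T'\coloneqq 2^{t-1}$ we have
\[
\Bigl(\mathbb{E}_{i\in I}\Vert \mathbb{E}(f\vert Z_{H_{i,1},\dots,H_{i,s}})\Vert^{2}_{2}\Bigr)^{T'}
\leq \Vert f\Vert^{2T'-2}_{2}\cdot \mathbb{E}_{i_{1},\dots,i_{T'}\in I}\left\Vert \mathbb{E}\bigl(f\bigm\vert Z_{\{\sum_{j=1}^{T'}H_{i_{j},i'_{j}}\}_{1\leq i'_{1},\dots,i'_{T'}\leq s}}\bigr)\right\Vert^{2}_{2}.
\]
Now I would view the right-most average over $(i_{1},\dots,i_{T'})\in I^{T'}$ as a single average over a new finite index set $J\coloneqq I^{T'}$, with, for $\mathbf{i}=(i_{1},\dots,i_{T'})\in J$, the family of subgroups indexed by $\mathbf{i}'=(i'_{1},\dots,i'_{T'})\in\{1,\dots,s\}^{T'}$ given by $\widetilde H_{\mathbf{i},\mathbf{i}'}\coloneqq \sum_{j=1}^{T'}H_{i_{j},i'_{j}}$. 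Apply Proposition~\ref{bessel} (with $s$ replaced by $s^{T'}$ and $I$ by $J$) to the average $\mathbb{E}_{\mathbf{i}\in J}\Vert \mathbb{E}(f\vert Z_{(\widetilde H_{\mathbf{i},\mathbf{i}'})_{\mathbf{i}'}})\Vert^2_2$: this bounds it by $\Vert f\Vert_2$ times the square root of $\mathbb{E}_{\mathbf{i},\mathbf{j}\in J}\Vert \mathbb{E}(f\vert Z_{\{\widetilde H_{\mathbf{i},\mathbf{i}'}+\widetilde H_{\mathbf{j},\mathbf{j}'}\}})\Vert^2_2$, where the inner family is now indexed by pairs $(\mathbf{i}',\mathbf{j}')$. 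The key bookkeeping point is that $\widetilde H_{\mathbf{i},\mathbf{i}'}+\widetilde H_{\mathbf{j},\mathbf{j}'}=\sum_{j=1}^{T'}H_{i_{j},i'_{j}}+\sum_{j=1}^{T'}H_{j_{j},j'_{j}}$ is exactly a sum of $2T'=T=2^t$ subgroups of the form $H_{i,i'}$ with $i$ ranging over $I$ — so relabelling the $2T'$ indices $(i_1,\dots,i_{T'},j_1,\dots,j_{T'})$ as $(i_1,\dots,i_T)\in I^T$ and the $2T'$ superscripts as $(i'_1,\dots,i'_T)\in\{1,\dots,s\}^T$, the resulting factor is precisely $Z_{\{\sum_{j=1}^{T}H_{i_{j},i'_{j}}\}_{1\leq i'_{1},\dots,i'_{T}\leq s}}$.

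Combining the two displays: raise the conclusion of the inductive hypothesis to the power $2$ and substitute the bound just obtained for its right-hand average. This yields
\[
\Bigl(\mathbb{E}_{i\in I}\Vert \mathbb{E}(f\vert Z_{H_{i,1},\dots,H_{i,s}})\Vert^{2}_{2}\Bigr)^{2T'}
\leq \Vert f\Vert^{2(2T'-2)}_{2}\cdot \Vert f\Vert^{2}_{2}\cdot\Bigl(\mathbb{E}_{i_{1},\dots,i_{T}\in I}\left\Vert \mathbb{E}(f\vert Z_{\{\sum_{j=1}^{T}H_{i_{j},i'_{j}}\}})\right\Vert^{2}_{2}\Bigr),
\]
and since $2T'=T$ and $2(2T'-2)+2=4T'-2=2T-2$, this is exactly the asserted inequality for $t$. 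I do not expect a genuine obstacle here — the content is entirely in Proposition~\ref{bessel} and \cite[Corollary~1.21]{TZ} — so the only thing requiring care is the index-relabelling step: one must check that the $(\mathbf{i}',\mathbf{j}')$-indexed family produced by one more application of Bessel is literally the $T$-fold sum family, with all $s^{T}$ choices of superscripts appearing (which they do, since $\mathbf{i}'$ and $\mathbf{j}'$ range independently over $\{1,\dots,s\}^{T'}$), and that the exponents of $\Vert f\Vert_2$ telescope correctly.
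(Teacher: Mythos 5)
Your proposal is correct and follows exactly the route the paper intends: Corollary~\ref{bessel2} is obtained by iterating Proposition~\ref{bessel} $t$ times, viewing at each stage the previous right-hand average as a single average over the enlarged index set $I^{T'}$ with the $s^{T'}$-fold family of summed subgroups, and your exponent bookkeeping ($2(2T'-2)+2=2T-2$) is right. Nothing further is needed.
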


The next proposition provides an upper bound for $\mathbb{E}_{i\in I}\nnorm{f}_{H_{i,1},\dots,H_{i,s}}$ which can be combined with the previous two statements.

\begin{proposition}\label{bessel3}
		Let $s,t\in\N$, $(X,\mathcal{B},\mu,(T_{g})_{g\in\mathbb{Z}^{d}})$ be a $\mathbb{Z}^{d}$-system, $I$ be a finite set of indices,  and $H_{i,j}, i\in I, 1\leq j\leq s$ be subgroups of $\Z^{d}$. Then for all $f\in L^{\infty}(\mu)$, with $\Vert f\Vert_{L^{\infty}(\mu)}\leq 1$,
	$$\mathbb{E}_{i\in I}\nnorm{f}_{H_{i,1},\dots,H_{i,s}}\leq (\mathbb{E}_{i\in I}\Vert (\mathbb{E}(f|Z_{H_{i,1},\dots,H_{i,s}})\Vert^{2}_{2})^{1/2^{s}}.$$
\end{proposition}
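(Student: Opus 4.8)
The plan is to prove the bound by induction on the number of subgroups $s$, reducing the $s$-fold seminorm to the $(s-1)$-fold one via the recursion in \cref{prop:basic_properties}~$(iii)$, while at each stage pulling the conditional-expectation projection out using the fact that $\nnorm{f}_{H_{i,1},\dots,H_{i,s}} = \nnorm{\mathbb{E}(f\mid Z_{H_{i,1},\dots,H_{i,s}})}_{H_{i,1},\dots,H_{i,s}}$ together with the tower property of conditional expectation. First I would record the base case $s=1$: by \cref{prop:basic_properties}~$(iii)$, $\nnorm{f}_{H_{i,1}}^2 = \mathbb{E}_{g\in H_{i,1}}\int_X f\cdot T_g\overline{f}\,d\mu = \mathbb{E}_{g\in H_{i,1}}\int_X \mathbb{E}(f\mid \I(H_{i,1}))\cdot T_g\overline{\mathbb{E}(f\mid \I(H_{i,1}))}\,d\mu$, which one estimates by $\|\mathbb{E}(f\mid Z_{H_{i,1})}\|_2^2 = \|\mathbb{E}(f\mid \I(H_{i,1}))\|_2^2$ after an application of the mean ergodic theorem along $H_{i,1}$ (the average $\mathbb{E}_{g\in H_{i,1}} T_g\overline{f}$ converges to $\mathbb{E}(\overline{f}\mid\I(H_{i,1}))$); taking $\mathbb{E}_{i\in I}$ and applying Cauchy–Schwarz gives $\mathbb{E}_{i\in I}\nnorm{f}_{H_{i,1}} \le (\mathbb{E}_{i\in I}\nnorm{f}_{H_{i,1}}^2)^{1/2} = (\mathbb{E}_{i\in I}\|\mathbb{E}(f\mid Z_{H_{i,1}})\|_2^2)^{1/2}$, which is the case $s=1$.

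For the inductive step, assume the statement for $s-1$. Using \cref{prop:basic_properties}~$(iii)$ and then Hölder's inequality (in the form $\mathbb{E}_{i} a_i \le (\mathbb{E}_i a_i^{2^{s-1}})^{1/2^{s-1}}$, valid since all quantities are nonnegative), I would write
\[
\mathbb{E}_{i\in I}\nnorm{f}_{H_{i,1},\dots,H_{i,s}}
= \mathbb{E}_{i\in I}\bigl(\mathbb{E}_{g\in H_{i,s}}\nnorm{f\cdot T_g\overline{f}}_{H_{i,1},\dots,H_{i,s-1}}^{2^{s-1}}\bigr)^{1/2^s}
\le \bigl(\mathbb{E}_{i\in I}\,\mathbb{E}_{g\in H_{i,s}}\nnorm{f\cdot T_g\overline{f}}_{H_{i,1},\dots,H_{i,s-1}}\bigr)^{1/2^{s-1}}\cdot(\cdots)
\]
— more precisely, after pushing the outer power $1/2^s$ through with Hölder one is left with controlling $\mathbb{E}_{(i,g)}\nnorm{f\cdot T_g\overline{f}}_{H_{i,1},\dots,H_{i,s-1}}$ where the index set is the (finite, in the relevant application, or handled as a limit of Følner averages) set of pairs $(i,g)$. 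Applying the induction hypothesis to the function $f\cdot T_g\overline{f}$ with this enlarged index set gives
\[
\mathbb{E}_{(i,g)}\nnorm{f\cdot T_g\overline{f}}_{H_{i,1},\dots,H_{i,s-1}}
\le \bigl(\mathbb{E}_{(i,g)}\|\mathbb{E}(f\cdot T_g\overline{f}\mid Z_{H_{i,1},\dots,H_{i,s-1}})\|_2^2\bigr)^{1/2^{s-1}}.
\]
Then I would recognize, via \cref{prop:basic_properties}~$(iii)$ applied to the right-hand side and the inequality $\|\mathbb{E}(f\cdot T_g\overline{f}\mid Z_{H_{i,1},\dots,H_{i,s-1}})\|_2^2 \le \|\mathbb{E}(f\cdot T_g\overline{f}\mid \I(H_{i,s}))\|$-type reasoning (or rather the identity $\mathbb{E}_{g\in H_{i,s}}\|\mathbb{E}(f\cdot T_g\overline{f}\mid Z_{H_{i,1},\dots,H_{i,s-1}})\|_2^2 = \nnorm{f}_{H_{i,1},\dots,H_{i,s}}^{2^s} = \|\mathbb{E}(f\mid Z_{H_{i,1},\dots,H_{i,s}})\|$-related quantity), that averaging over $g\in H_{i,s}$ reconstructs $\nnorm{\mathbb{E}(f\mid Z_{H_{i,1},\dots,H_{i,s}})}$, and since $\|f\|_\infty\le 1$ one bounds $\nnorm{\cdot}$ by $\|\cdot\|_2$ for the projected function, collapsing the chain of exponents $1/2^{s-1}\cdot\ldots$ down to $1/2^s$ and yielding the claim.

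The main obstacle I anticipate is bookkeeping the exponents and the nested averages correctly: one must make sure that the Hölder steps are applied with matching powers so that the final exponent is exactly $1/2^s$, and that the substitution of the product function $f\cdot T_g\overline{f}$ into the induction hypothesis is legitimate (it requires $\|f\cdot T_g\overline{f}\|_\infty \le 1$, which follows from $\|f\|_\infty\le 1$, and requires the index set to be arbitrary finite, which the statement allows). A secondary subtlety is the interchange of the $\mathbb{E}_{i\in I}$ average with the $\mathbb{E}_{g\in H_{i,s}}$ (Følner-type) average inside the $2^{s-1}$-th power; this is handled by Fubini for the finite/limsup averages together with the elementary inequality $\mathbb{E}_i(\mathbb{E}_g a_{i,g})^{r}\le \mathbb{E}_i\mathbb{E}_g a_{i,g}^r$ for $r\ge 1$ applied before regrouping. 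Once the exponent arithmetic is organized, each individual step is a direct application of \cref{prop:basic_properties}, Cauchy–Schwarz/Hölder, and the defining property of $Z_{H_{1},\dots,H_{s}}$.
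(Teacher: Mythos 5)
Your base case $s=1$ is fine (indeed $\nnorm{f}_{H_{i,1}}=\Vert\mathbb{E}(f\mid\I(H_{i,1}))\Vert_2$ directly from the definition of $\mu_{H_{i,1}}$, no mean ergodic theorem needed), but the inductive step has a genuine gap at its final link. After applying the induction hypothesis to the functions $f\cdot T_g\overline{f}$ you are left with the quantity $\mathbb{E}_{i,g}\Vert\mathbb{E}(f\cdot T_g\overline{f}\mid Z_{H_{i,1},\dots,H_{i,s-1}})\Vert_2^2$, and you then assert that averaging over $g\in H_{i,s}$ "reconstructs" a quantity controlled by $\Vert\mathbb{E}(f\mid Z_{H_{i,1},\dots,H_{i,s}})\Vert_2$. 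No such identity or inequality is available: \cref{prop:basic_properties}~(iii) relates seminorms to seminorms, and the equality $\Vert\mathbb{E}(h\mid Z_{H_1,\dots,H_{r}})\Vert_2=\nnorm{h}_{H_1,\dots,H_{r}}$ holds only for $r=1$. For $r\geq 2$ one only has $\nnorm{h}_{H_1,\dots,H_r}\leq\Vert\mathbb{E}(h\mid Z_{H_1,\dots,H_r})\Vert_2^{1/2^{r-1}}$ for bounded $h$, i.e.\ the inequality in the direction opposite to the one your step needs (a function measurable with respect to the factor can have $L^2$ norm $1$ and arbitrarily small seminorm, so the $L^2$ norm of a projection cannot be bounded by seminorm data). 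In addition, the exponent bookkeeping does not close: even granting the Jensen/boundedness manipulations of the averages in $(i,g)$, chaining the exponent $1/2^{s-1}$ from the induction hypothesis with the outer $1/2^s$ does not produce the stated exponent $1/2^s$ against $\mathbb{E}_{i}\Vert\mathbb{E}(f\mid Z_{H_{i,1},\dots,H_{i,s}})\Vert_2^2$. So as written the induction does not go through.

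The actual proof is much shorter and avoids the recursion entirely. Since $\nnorm{f-\mathbb{E}(f\mid Z_{H_{i,1},\dots,H_{i,s}})}_{H_{i,1},\dots,H_{i,s}}=0$ by the defining property of the factor, the triangle inequality gives $\nnorm{f}_{H_{i,1},\dots,H_{i,s}}\leq\nnorm{\mathbb{E}(f\mid Z_{H_{i,1},\dots,H_{i,s}})}_{H_{i,1},\dots,H_{i,s}}$. Then one uses the elementary bound, valid for any $g$ with $\Vert g\Vert_{L^\infty(\mu)}\leq 1$,
\begin{equation*}
\nnorm{g}_{H_{i,1},\dots,H_{i,s}}\leq\Vert g\Vert_{L^{2^{s}}(\mu)}\leq\Vert g\Vert_{2}^{1/2^{s-1}},
\end{equation*}
applied to $g=\mathbb{E}(f\mid Z_{H_{i,1},\dots,H_{i,s}})$, and finishes with H\"older's inequality for the finite average over $i\in I$, namely $\mathbb{E}_{i\in I}a_i^{1/2^{s-1}}\leq(\mathbb{E}_{i\in I}a_i^{2})^{1/2^{s}}$ with $a_i=\Vert\mathbb{E}(f\mid Z_{H_{i,1},\dots,H_{i,s}})\Vert_2$. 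If you want to rescue an inductive argument you would need a quantitative substitute for the missing step, but the direct route above is the intended one.
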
		
\begin{proof}
	Note that 
	\begin{equation}
	\begin{split}
	&\quad \nnorm{f}_{H_{i,1},\dots,H_{i,s}}\leq \Vert f\Vert_{L^{2^{s}}(\mu)}\leq \Vert f\Vert_{2}^{1/2^{s-1}}.
	\end{split}
	\end{equation}
	Also, for all $i$ we have   
	 \begin{align*} 
	\nnorm{f}_{H_{i,1},\dots,H_{i,s}}&\leq \nnorm{f-\mathbb{E}(f|Z_{H_{i,1},\dots,H_{i,s}})}_{H_{i,1},\dots,H_{i,s}} + \nnorm{\mathbb{E}(f|Z_{H_{i,1},\dots,H_{i,s}})}_{H_{i,1},\dots,H_{i,s}}\\ 
	&= \nnorm{\mathbb{E}(f|Z_{H_{i,1},\dots,H_{i,s}})}_{H_{i,1},\dots,H_{i,s}},
	\end{align*}    so 
	\begin{equation}
	\begin{split}
	\mathbb{E}_{i\in I}\nnorm{f}_{H_{i,1},\dots,H_{i,s}}
	 \leq & \mathbb{E}_{i\in I}\nnorm{ \mathbb{E}(f|Z_{H_{i,1},\dots,H_{i,s}}) }_{H_{i,1},\dots,H_{i,s}}
	\\\leq & \mathbb{E}_{i\in I}\Vert \mathbb{E}(f|Z_{H_{i,1},\dots,H_{i,s}})\Vert_{2}^{1/2^{s-1}}
	\leq (\mathbb{E}_{i\in I}\Vert (\mathbb{E}(f|Z_{H_{i,1},\dots,H_{i,s}})\Vert^{2}_{2})^{1/2^{s}},
	\end{split}
	\end{equation}
	as was to be shown.
\end{proof}

\subsection{General properties of subgroups of $\Z^d$ and properties of polynomials}\label{pp}

Recall that for a subset $A$ of $\mathbb{Q}^{d}$, we denote $G(A):= \text{span}_{\Q} \{a\in A\}\cap \Z^{d}.$
Next we summarize some properties of these sets.  

\begin{lemma}\label{lemma:basic_properties_G()}
The following properties hold:

\begin{enumerate}[label={(\roman*)},ref=$(\roman*)$] 
\item[$(i)$] \label{item0:prop_G()} For any set $A\subseteq \Z^d$, $G(A)$ is a subgroup of $\Z^d$.
    \item[$(ii)$] \label{item1:prop_G()} Let $A$ be a finite set and $M(A)$ the matrix whose columns are the elements of $A$. Then $G(A)=(M(A)\cdot \Q^{|A|})\cap \Z^d$. 
    \item[$(iii)$] \label{item2:prop_G()} If $H\subseteq \Z^d$ is the subgroup generated by $h_1,\ldots,h_k\in \Z^d$, then $G(H)=G(\{h_1,\ldots,h_k\})$. In particular, letting $M(h_1,\ldots,h_k)$ be the matrix whose columns are $h_1,\ldots,h_k$, we have that $G(\langle h_1,\ldots,h_k\rangle)=(M(h_1,\ldots,h_k)\cdot \Q^{k}) \cap \Z^d$. 

    \item[$(iv)$] \label{item3:prop_G()} For any subgroup $H\subseteq \Z^d$, $H$ has finite index in $G(H)$. Moreover, $G(H)$ is the largest subgroup of $\Z^d$ which is a finite index extension of $H$.

    \item[$(v)$] \label{item4:prop_G()}  If not all of $a_1,\ldots,a_k$ belong to a common proper subspace of $\Q^d$, then $G(\{a_1,\ldots,a_k\})$ $=\Z^d$.   
    \end{enumerate}
\end{lemma}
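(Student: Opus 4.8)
\textbf{Proof proposal for Lemma~\ref{lemma:basic_properties_G()}.}

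The plan is to prove the five parts in order, since each leans on the previous ones. For $(i)$, I would argue that $\text{span}_{\Q}\{a \in A\}$ is a $\Q$-subspace of $\Q^d$, hence closed under addition and rational scalar multiplication, so in particular closed under integer linear combinations; intersecting with $\Z^d$ (itself a group) preserves the subgroup property. For $(ii)$, observe that $M(A)\cdot \Q^{|A|}$ is by definition the set of all $\Q$-linear combinations of the columns of $M(A)$, i.e.\ exactly $\text{span}_{\Q}\{a \in A\}$; intersecting with $\Z^d$ gives the claim verbatim. Part $(iii)$ then follows because the $\Q$-span of a finite generating set $\{h_1,\ldots,h_k\}$ of $H$ equals the $\Q$-span of $H$ (every element of $H$ is an integer combination of the $h_i$, and conversely each $h_i \in H$), so $G(H) = G(\{h_1,\ldots,h_k\})$; the explicit matrix description is then immediate from $(ii)$.

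For $(iv)$ I would first show $H \leq G(H)$ (clear, since $H \subseteq \Z^d$ and $H \subseteq \text{span}_{\Q}(H)$) and then that $[G(H):H] < \infty$. Pick a $\Q$-basis of $\text{span}_{\Q}(H)$ consisting of elements $h_1,\ldots,h_r \in H$ (possible since $H$ spans its own span); then any $x \in G(H)$ can be written $x = \sum_i q_i h_i$ with $q_i \in \Q$, and there is a common denominator $m$ (one can take $m$ to be, e.g., a suitable minor of the matrix of the $h_i$, so that $m$ works uniformly for all $x$ by Cramer's rule) with $m x \in H$. Hence $G(H)/H$ is a finitely generated torsion abelian group of bounded exponent dividing $m$, inside the free group $\Z^d$, so it is finite. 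For the ``largest'' claim: if $H \leq H' \leq \Z^d$ with $[H':H] < \infty$, then every $h' \in H'$ satisfies $n h' \in H$ for some $n \in \N$, so $h' \in \text{span}_{\Q}(H) \cap \Z^d = G(H)$; thus $H' \leq G(H)$.

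For $(v)$, the hypothesis that $a_1,\ldots,a_k$ do not all lie in a common proper $\Q$-subspace of $\Q^d$ means precisely that $\text{span}_{\Q}\{a_1,\ldots,a_k\} = \Q^d$, whence $G(\{a_1,\ldots,a_k\}) = \Q^d \cap \Z^d = \Z^d$. I expect the only mildly delicate point to be the uniform-denominator argument in $(iv)$ — making sure a single integer $m$ clears denominators for \emph{all} elements of $G(H)$ simultaneously rather than one at a time; this is handled cleanly by fixing the spanning subset $h_1,\ldots,h_r$ in advance and invoking Cramer's rule, so the determinant of the relevant submatrix serves as the universal $m$. Everything else is routine linear algebra over $\Q$ combined with the structure of finitely generated abelian groups.
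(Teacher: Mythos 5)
Your proposal is correct and follows essentially the same route as the paper: parts $(i)$--$(iii)$ and $(v)$ are the same routine linear algebra, and the ``largest extension'' half of $(iv)$ is argued identically (if $nh'\in H$ then $h'\in G(H)$). The only cosmetic difference is in the finite-index half of $(iv)$: the paper clears denominators on a generating set of $G(H)$ to exhibit a finite-index subgroup of $G(H)$ inside $H$, while you clear denominators uniformly via a $\Q$-basis of $\mathrm{span}_{\Q}(H)$ taken from $H$ and Cramer's rule, concluding that $G(H)/H$ is a finitely generated torsion group of bounded exponent; both are valid instances of the same denominator-clearing idea.
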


\begin{proof}
Properties (i), (ii) and (iii) follow directly from the definitions. 

To prove (iv), let $\{g_1,\ldots,g_k\}$ be a set such that $\langle g_1,\ldots,g_k\rangle=G(H)$. For each $i=1,\ldots,k$ there exist $m_i$ and $h_i\in H$ such that $\displaystyle g_i=\frac{h_i}{m_i}$. The group $\langle m_1g_1,\ldots,m_kg_k\rangle$ is of finite index in $\langle g_1,\ldots,g_k\rangle=G(H)$ and is contained in $H$. Therefore $H$ is of finite index in $G(H)$.    

To see that $G(H)$ is the largest finite index extension of $H$, take $H'$ to be any finite index extension of $H$ and take $h'\in H'$. Since $H'$ is a finite index extension of $H$, we have that there exists $n\in \N$ such that $nh'\in H$. This implies that $h'\in G(H)$.  

To show (v), reordering $a_1,\ldots,a_k$ if needed, we may assume that $a_1,\ldots,a_d$ are linearly independent vectors over $\Q$. It follows that $\text{span}_{\Q}(\{a_1,\ldots,a_d
\})=\Q^{d}$ and then $G(\{a_1,\ldots,a_k\})\supseteq G(\{a_1,\ldots,a_d\})=\Z^d$.
\end{proof}

\begin{remark}
If $H_1$ and $H_2$ are subgroups of $\Z^d$, then $G(H_1)+G(H_2)\subseteq G(H_1+H_2)$, with the inclusion possibly being strict. For instance, for $H_1=\langle (1,2)\rangle$, $H_2=\langle (2,1) \rangle$ we have that $G(H_1)=H_1$, $G(H_2)=H_2$ and $H_1+H_2 \subsetneq G(H_1+H_2)=\Z^2$. Nevertheless, \cref{lemma:basic_properties_G()} implies that that $G(H_1)+G(H_2)$ has finite index in $G(H_1+H_2)$. 
\end{remark}

In the remainder of the section, we provide some algebraic lemmas that will be used later in the paper. For a set $E\subseteq \Z^d,$ we define its \emph{upper Banach density} (or just \emph{upper density} when there is no confusion) with $d^\ast(E)\coloneqq \vl \max_{t\in\mathbb{Z}^d}\frac{
|(E-t)\cap  \{1,\ldots, N \}^d|}{N^d}.$ If the limit exists, we say that its value is the \emph{Banach density} (or just \emph{density}) of $E$. 
The proof of the following lemma is routine (see also \cite[Lemma~2.12]{DKS} for a more general version):

\begin{lemma}[Lemma 2.12, \cite{DKS}]\label{ag}
	Let $\c\colon(\mathbb{Z}^{L})^{s}\to\mathbb{R}$ be a polynomial. Then either $\c\equiv 0$ or the set of $\h\in (\mathbb{Z}^{L})^{s}$ such that $\c(\h)=0$
	is of (upper) Banach density $0$.
\end{lemma}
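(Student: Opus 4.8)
The plan is to reduce the statement to a Schwartz--Zippel-type counting bound. Identifying $(\mathbb{Z}^{L})^{s}$ with $\mathbb{Z}^{m}$ for $m\coloneqq Ls$, and writing $E\coloneqq\{\mathbf{h}\in\mathbb{Z}^{m}\colon \mathbf{c}(\mathbf{h})=0\}$, the first observation is that for any $t\in\mathbb{Z}^{m}$ the translated set $E-t$ is exactly the zero set of the polynomial $\mathbf{h}\mapsto\mathbf{c}(\mathbf{h}+t)$, which is again a polynomial of the same degree $D\coloneqq\deg\mathbf{c}$ and is nonzero precisely when $\mathbf{c}$ is. Hence, to prove $d^{\ast}(E)=0$ it suffices to establish a bound of the form $|Z(P)\cap\{1,\dots,N\}^{m}|\leq C(m,D)\,N^{m-1}$ for every nonzero polynomial $P\colon\mathbb{Z}^{m}\to\mathbb{R}$ of degree at most $D$, where $Z(P)$ denotes the zero set of $P$ and $C(m,D)$ depends only on $m$ and $D$: dividing by $N^{m}$ and taking $\varlimsup_{N\to\infty}\max_{t}$ then yields $d^{\ast}(E)\leq\varlimsup_{N\to\infty}C(m,D)/N=0$.

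The counting bound is proved by induction on $m$. For $m=1$, a nonzero one-variable polynomial of degree at most $D$ has at most $D$ real roots, so the bound holds with $C(1,D)=D$. For $m>1$, expand $P(x_{1},\dots,x_{m})=\sum_{j=0}^{D}Q_{j}(x_{1},\dots,x_{m-1})\,x_{m}^{j}$; since $P\not\equiv 0$, there is a largest index $j_{0}$ with $Q_{j_{0}}\not\equiv 0$. Partition the points of $\{1,\dots,N\}^{m-1}$ into those at which $Q_{j_{0}}$ vanishes and those at which it does not. On the former set there are, by the inductive hypothesis, at most $C(m-1,D)\,N^{m-2}$ points, each contributing at most $N$ values of $x_{m}$; on the latter, for each such $(x_{1},\dots,x_{m-1})$ the one-variable polynomial $x_{m}\mapsto P(x_{1},\dots,x_{m-1},x_{m})$ is nonzero of degree at most $D$ and so has at most $D$ zeros. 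Summing gives $|Z(P)\cap\{1,\dots,N\}^{m}|\leq D\,N^{m-1}+C(m-1,D)\,N^{m-1}$, so one may take $C(m,D)=C(m-1,D)+D=mD$, which closes the induction.

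Since each step is elementary, I do not anticipate a genuine obstacle; the one point that must be handled with a little care is that the constant $C(m,D)$ has to be independent of the translate $t$ (and of $N$), which is exactly why it matters that translation preserves both the degree and the non-vanishing of $\mathbf{c}$, so that a single $C(m,D)$ works uniformly inside the $\max_{t}$ appearing in the definition of $d^{\ast}$. If preferred, the same box estimate also shows directly that $E$ has density zero along every F\o lner sequence in $\mathbb{Z}^{m}$, since any F\o lner set can be covered by boxes up to a negligible proportion of its size.
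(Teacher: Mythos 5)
Your proof is correct. Note that the paper itself offers no proof of Lemma \ref{ag}: it declares the argument routine and points to \cite{DKS} (Lemma 2.12 there, stated in greater generality), so there is nothing to compare step by step. What you write out, namely the Schwartz--Zippel-type bound $|Z(P)\cap\{1,\dots,N\}^{m}|\leq mD\,N^{m-1}$ for a nonzero real polynomial $P$ of degree at most $D$ in $m=Ls$ variables, proved by induction on the number of variables, combined with the observation that translation by $t$ preserves both the degree and the non-vanishing of $\mathbf{c}$ so that the constant is uniform over the $\max_{t}$ in the definition of $d^{\ast}$, is precisely the standard ``routine'' argument, and it is complete; the edge cases (e.g.\ $j_{0}=0$, where the one-variable specialization is a nonzero constant) are harmless. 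The only quibble is your closing aside: the assertion that the box estimate yields density zero along an arbitrary F\o lner sequence ``since any F\o lner set can be covered by boxes up to a negligible proportion of its size'' is not justified as stated — the correct general fact is that upper Banach density dominates the upper density along every F\o lner sequence — but this is not needed, since the lemma only claims that the zero set has upper Banach density zero.
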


\begin{lemma}\label{misc01}
	Let $v_{i}\in\Z^{L}, 1\leq i\leq k$ and $U$ be a subset of $\Z^{k}$ of positive density. Then \begin{equation} \label{equation:G density}
	G\Biggl(\Biggl\{\sum_{1\leq i\leq k}h_{i} v_{i}\colon \bold{h}=(h_1,\ldots,h_k)\in U\Biggr\}\Biggr)=G(\{v_{i}\colon 1\leq i\leq k\}).\end{equation}
\end{lemma}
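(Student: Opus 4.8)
The plan is to prove both inclusions in \eqref{equation:G density}. The inclusion $\subseteq$ is immediate: every element $\sum_{1\leq i\leq k} h_i v_i$ with $\bold{h}\in U$ lies in $\mathrm{span}_\Q\{v_i\colon 1\leq i\leq k\}$, so the $\Q$-span of the left-hand generating set is contained in $\mathrm{span}_\Q\{v_i\}$, and intersecting with $\Z^L$ preserves this. The content is in the reverse inclusion $\supseteq$, and by \cref{lemma:basic_properties_G()}\,$(iv)$ it suffices to show that the two groups have the same $\Q$-span (equivalently, that $G$ of the left side already contains each $v_i$ up to finite index), since $G(\cdot)$ is the largest finite-index extension of any subgroup with that span. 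So the real task reduces to: the $\Q$-vector space spanned by $\{\sum_i h_i v_i\colon \bold{h}\in U\}$ equals $V\coloneqq \mathrm{span}_\Q\{v_1,\ldots,v_k\}$.

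To see this, first note $\mathrm{span}_\Q\{\sum_i h_i v_i\colon \bold{h}\in U\}\subseteq V$ trivially. For the other direction, I would argue that if $U$ has positive density in $\Z^k$, then $U$ cannot be contained in the zero set of any nonzero linear form $\ell(\bold{h})=\sum_i c_i h_i$ on $\Q^k$; this is exactly the $L=k$, $s=1$, linear (hence nonzero polynomial) case of \cref{ag} — the zero set of a nonzero linear polynomial has Banach density $0$. Now suppose for contradiction that $W\coloneqq\mathrm{span}_\Q\{\sum_i h_i v_i\colon \bold{h}\in U\}$ is a proper subspace of $V$. Pick a linear functional $\lambda$ on $V$ vanishing on $W$ but not identically on $V$; then $\lambda(v_i)=c_i$ defines a tuple $(c_1,\ldots,c_k)$ which is not all zero (since $\lambda\not\equiv 0$ on $V=\mathrm{span}\{v_i\}$), and for every $\bold{h}\in U$ we have $\sum_i c_i h_i=\lambda\bigl(\sum_i h_i v_i\bigr)=0$. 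Thus $U$ is contained in the zero set of the nonzero linear form $\ell(\bold h)=\sum_i c_i h_i$, contradicting the positive density of $U$ via \cref{ag}. Hence $W=V$.

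Putting it together: $G\bigl(\{\sum_i h_i v_i\colon \bold{h}\in U\}\bigr)$ and $G(\{v_i\colon 1\leq i\leq k\})$ are both subgroups of $\Z^L$ whose $\Q$-spans coincide (both equal $V$), and each is by definition $\Q\text{-span}\cap\Z^L$, so they are literally equal — in fact one does not even need the finite-index characterization here, just equality of spans, which is cleaner. I would phrase the final line as: since $G(A)=\mathrm{span}_\Q(A)\cap\Z^L$ depends only on $\mathrm{span}_\Q(A)$, and we have shown $\mathrm{span}_\Q\{\sum_i h_i v_i\colon\bold{h}\in U\}=\mathrm{span}_\Q\{v_i\colon 1\leq i\leq k\}$, the two groups are equal.

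The only mild subtlety — and the one place to be careful — is the passage from "$U$ has positive (upper Banach) density" to "$U$ is not contained in the zero set of a nonzero linear form," making sure \cref{ag} is applied in the right form: the relevant polynomial is $\c(\bold h)=\sum_{i=1}^k c_i h_i$ viewed as a polynomial $(\Z^{k})^{1}\to\R$ (i.e. $s=1$, $L$ there $=k$), it is not identically zero since some $c_i\neq 0$, so its zero set has upper Banach density $0$, whereas $U\subseteq\{\c=0\}$ would force $d^\ast(U)=0$. I do not anticipate any genuine obstacle; the argument is short and the density hypothesis is used exactly once, precisely to rule out the degenerate linear relation.
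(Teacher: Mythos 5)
Your proof is correct and follows essentially the same route as the paper: both arguments reduce the statement to the fact that a positive-density subset of $\Z^k$ cannot lie in a hyperplane (you justify this by the linear case of Lemma~\ref{ag}, the paper asserts it directly), and hence that the relevant $\Q$-spans coincide, which immediately gives equality of the groups $G(\cdot)$. Your dual phrasing via an annihilating functional is just a contrapositive version of the paper's observation that $\{\bold{h}\colon\bold{h}\in U\}$ spans $\Q^{k}$.
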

\begin{proof}
Note that in \eqref{equation:G density} the right hand side clearly includes the left hand side. To prove the converse inclusion it suffices to show that 
	\begin{equation}\label{lemmamisc01eqn1}
	span_{\mathbb{Q}}\{\bold{h}\colon \bold{h}\in U\}=\mathbb{Q}^{k}.
	\end{equation}
	Since $U$ has positive density, it cannot be contained in any hyperplane of $\Q^k$, so it must have at least $k$ elements that are linearly independent over $\Q$. Thus,  \eqref{lemmamisc01eqn1} follows immediately.
\end{proof}

 \begin{definition}\label{d1}
 Let $P\colon\Z^{LK}\to\R$ be a polynomial. Denote by $\Delta P\colon \Z^{L(K+1)}\to\R$ the polynomial given by $\Delta P(n,h_{1},\dots,h_{K})\coloneqq P(n+h_{K},h_{1},\dots,h_{K-1})-P(n,h_{1},\dots,h_{K-1})$  for all $n,h_{1},\dots,h_{K}\in \Z^{L}$. For a polynomial $P\colon\Z^{L}\to\R$, let $\Delta^{K}P\coloneqq \Delta\cdot\ldots\cdot\Delta P$  (where $\Delta$ acts $K$ times).
 \end{definition}

 \begin{lemma}\label{alg3}
 	Let $K\in\N$ and $Q\colon\Z^{L}\to\R$ be a homogeneous polynomial with $\deg(Q)>K$. If  $Q(n)\notin \mathbb{Q}[n]+\R$, then the set of $(h_{1},\dots,h_{K})\in (\Z^{L})^{K}$ such that $\Delta^{K}Q(\cdot,h_{1},\dots,h_{K})\notin \mathbb{Q}[n]+\R$ is of density 1.
 \end{lemma}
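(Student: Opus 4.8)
The plan is to reduce the statement to an algebraic fact about rational polynomials and then invoke Lemma~\ref{ag}. Write $D=\deg(Q)>K$, so $Q(n)=\sum_{|v|=D}c_{v}n^{v}$ with $c_{v}\in\R$; since $Q$ is homogeneous of positive degree, the hypothesis $Q\notin\Q[n]+\R$ means precisely that $c_{v}\notin\Q$ for some $v$. First I would unfold Definition~\ref{d1} to get the multinomial expansion
\[
\Delta^{K}Q(n,h_{1},\dots,h_{K})=\sum_{\epsilon\in\{0,1\}^{K}}(-1)^{K-|\epsilon|}\,Q\Big(n+\textstyle\sum_{i}\epsilon_{i}h_{i}\Big)=\sum_{|a|\le D-K}\Phi_{a}(h_{1},\dots,h_{K})\,n^{a},
\]
where each $\Phi_{a}\colon(\Z^{L})^{K}\to\R$ is a polynomial, and explicitly $\Phi_{a}=\sum_{|b|=D-|a|}c_{a+b}\binom{a+b}{a}R_{b}$ with $R_{b}(h)=\sum_{\epsilon}(-1)^{K-|\epsilon|}(\sum_{i}\epsilon_{i}h_{i})^{b}$; one checks $R_{b}\equiv0$ whenever $|b|<K$, since the sign-alternating sum over $\epsilon$ annihilates any $\epsilon$-monomial not involving all of $\epsilon_{1},\dots,\epsilon_{K}$. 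The upshot is that $\Delta^{K}Q(\cdot,h_{1},\dots,h_{K})\notin\Q[n]+\R$ exactly when $\Phi_{a}(h_{1},\dots,h_{K})\notin\Q$ for some $a$ with $|a|\ge1$ (the constant term, corresponding to $a=\mathbf{0}$, is irrelevant because any real constant lies in $\Q[n]+\R$).

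Second I would linearize the rationality condition over $\Q$. Only finitely many reals $c_{v}$ occur, so the finitely many coefficients of all the $\Phi_{a}$ span a finite-dimensional $\Q$-vector space; picking a $\Q$-basis $1=\omega_{0},\omega_{1},\dots,\omega_{r}$ of this span together with $1$, we may write $\Phi_{a}=\sum_{j=0}^{r}\omega_{j}\Psi_{a,j}$ with $\Psi_{a,j}\in\Q[h_{1},\dots,h_{K}]$. Then $\Phi_{a}(h)\in\Q$ precisely when $\Psi_{a,j}(h)=0$ for all $j\ge1$, so the bad set $E=\{(h_{1},\dots,h_{K})\colon \Delta^{K}Q(\cdot,h_{1},\dots,h_{K})\notin\Q[n]+\R\}$ is the complement of the common zero set of the finite family $\{\Psi_{a,j}\colon 1\le|a|\le D-K,\ 1\le j\le r\}$. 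By Lemma~\ref{ag}, if some $\Psi_{a,j}$ is not identically zero then this common zero set has density $0$, i.e.\ $E$ has density $1$ and we are done; otherwise all $\Psi_{a,j}\equiv0$, meaning every $\Phi_{a}$ with $|a|\ge1$ lies in $\Q[h_{1},\dots,h_{K}]$.

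The content of the proof is ruling out this second alternative by showing it forces $Q\in\Q[n]$. Restrict to the top $n$-degree $|a|=D-K$ (which is $\ge1$ since $D>K$): there $|b|=K$, and one computes $R_{b}(h)=b!\sum_{\pi}\prod_{i=1}^{K}h_{i,\pi(i)}$ (with $b!=\prod_{\ell}b_{\ell}!$), the sum over maps $\pi\colon\{1,\dots,K\}\to\{1,\dots,L\}$ with $|\pi^{-1}(\ell)|=b_{\ell}$ for all $\ell$. The key structural observation is that for distinct multi-indices $b$ with $|b|=K$ the polynomials $R_{b}$ have disjoint monomial supports — the number of factors of a monomial of $R_{b}$ drawn from the $\ell$-th coordinate block is exactly $b_{\ell}$ — and that each $R_{b}$ has strictly positive integer coefficients. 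Hence in $\Phi_{a}=\sum_{|b|=K}c_{a+b}\binom{a+b}{a}R_{b}$ the summands occupy disjoint blocks of monomials, so $\Phi_{a}\in\Q[h]$ forces $c_{a+b}\binom{a+b}{a}R_{b}$ to have rational coefficients and therefore $c_{a+b}\in\Q$, for every $b$. As $a$ ranges over $|a|=D-K$ and $b$ over $|b|=K$, the sum $v=a+b$ ranges over all multi-indices with $|v|=D$ (using $D>K$), so every $c_{v}$ is rational, contradicting the hypothesis; hence the first alternative always occurs.

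The hard part is this last step: making rigorous the combinatorial claim that the leading ($n$-degree $D-K$) part of $\Delta^{K}Q$ already pins down all coefficients of $Q$, which comes down to the explicit evaluation of $R_{b}$ for $|b|=K$ and the disjointness of the monomial supports of the various $R_{b}$. The other ingredients — the expansion of $\Delta^{K}$, the passage to a $\Q$-basis, and the appeal to Lemma~\ref{ag} — are routine bookkeeping. A minor point to watch is that $\Q[n]+\R$ constrains only the coefficients of $n^{a}$ with $a\neq\mathbf{0}$, which is why the whole argument is run over $|a|\ge1$; this is harmless precisely because $D-K\ge1$, so the top-degree part is always among the monomials being tracked.
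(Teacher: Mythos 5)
Your argument is correct, and it uses the same key external input as the paper (Lemma~\ref{ag}) together with the same basic idea of linearizing the rationality condition over a $\Q$-basis of the coefficient span, but the execution is genuinely different. The paper writes $Q=\sum_{i=1}^{M}a_{i}Q_{i}$ with $a_{1},\dots,a_{M}$ a $\Q$-basis of the span of the coefficients and $Q_{i}\in\Q[n]$ homogeneous, picks an index with $a_{i}\notin\Q$ and $Q_{i}\not\equiv 0$, quotes the standard fact that $\Delta^{K}Q_{i}\not\equiv 0$ because $\deg Q_{i}>K$, and then converts ``$\Delta^{K}Q(\cdot,h)\in\Q[n]+\R$'' into ``$\Delta^{K}Q_{i}(\cdot,h)\equiv 0$'' by $\Q$-linear independence; a positive-density set of such $h$ would make the zero set of $\Delta^{K}Q_{i}$ have positive density in the \emph{joint} variables $(n,h_{1},\dots,h_{K})$, contradicting Lemma~\ref{ag}. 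You instead work coefficientwise in $n$: you apply Lemma~\ref{ag} to the rational polynomials $\Psi_{a,j}$ in the $h$-variables alone, and you replace the quoted finite-difference fact (and the joint-variable density trick) by the explicit evaluation $R_{b}=b!\sum_{\pi}\prod_{i}h_{i,\pi(i)}$ for $|b|=K$ with disjoint monomial supports across $b$, which shows that the degenerate alternative would force every top coefficient $c_{v}$ of $Q$ to be rational, contradicting $Q\notin\Q[n]+\R$. The paper's route is shorter and less computational; yours is more self-contained, since the non-degeneracy that the paper asserts is proved by hand, and it incidentally gives the sharper information that already the leading ($n$-degree $D-K$) part of $\Delta^{K}Q$ determines the rationality of all coefficients of $Q$. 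I checked the pivotal points of your computation — the vanishing of $R_{b}$ for $|b|<K$, the formula and positivity of coefficients for $|b|=K$, the disjointness of supports, and the surjectivity of $(a,b)\mapsto a+b$ onto $\{|v|=D\}$ with $|a|=D-K$, $|b|=K$ — and they are all correct, so there is no gap.
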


 \begin{proof}
 	We may write $Q(n)=\sum_{i=1}^{M}a_{i}Q_{i}(n)$ for some $M\in\N$, homogeneous polynomials $Q_{1},\dots,Q_{M}$ in $\Q[n]$ of degrees $\deg(Q)$, and real numbers $a_{1},\dots,a_{M}\in\R$ which are linearly independent over $\Q$ (this can be done by taking $a_1\ldots,a_M$ to be a basis of the $\Q$-span of the coefficients of $Q$).
 	Since $Q(n)\notin \mathbb{Q}[n]+\R$, there exists some $1\leq i\leq M$ such that $a_{i}\notin\Q$ and $Q_{i}\not\equiv 0$. Without loss of generality assume that $i=1.$ Since $\deg(Q_{1})>K$, we have that $\Delta^{K}Q_{1}\not\equiv 0$.
 	
 	Suppose that $\Delta^{K}Q(\cdot,h_{1},\dots,h_{K})\in \mathbb{Q}[n]+\R$ for some $(h_{1},\dots,h_{K})\in (\Z^{L})^{K}$.
 	Note that $\Delta^{K}Q(\cdot,h_{1},\dots,h_{K})=\sum_{i=1}^{M}a_{i}\Delta^{K}Q_{i}(\cdot,h_{1},\dots,h_{K})$. Since each $\Delta^{K}Q_{i}(\cdot,h_{1},\dots,h_{K})$ is a rational polynomial of degree $\deg(Q)-K$ and $a_{1},\dots,a_{M}\in\R$ are linearly independent over $\Q$, we must have that  $\Delta^{K}Q_{1}(\cdot,h_{1},\dots,h_{K})\equiv 0$. So if the set of $(h_{1},\dots,h_{K})\in (\Z^{L})^{K}$ such that $\Delta^{K}Q(\cdot,h_{1},\dots,h_{K})\in \mathbb{Q}[n]+\R$ has positive density, then the set of $(n,h_{1},\dots,h_{K})\in (\Z^{L})^{K}$ such that $\Delta^{K}Q_{1}(n,h_{1},\dots,h_{K})=0$ has positive density too. By \cite[Lemma~2.12]{DKS}, $\Delta^{K}Q_{1}\equiv 0$, a contradiction. This finishes the proof. \end{proof}

\section{PET induction}\label{s:3} 

In this section we present the method we use to reduce the complexity of the polynomial iterates, i.e., PET induction,\footnote{ PET is an abbreviation for ``Polynomial Exhaustion Technique.''
} which was first introduced in \cite{wmpet}.  To this end, we start by recalling a variation of van der Corput's lemma from \cite{DKS} that is convenient for our study. We then continue by presenting the inductive scheme via the use of van der Corput operations.

\subsection{The van der Corput lemma} The standard tool used in reducing the complexity of polynomial families of iterates is van der Corput's lemma (also known as ``van der Corput's trick''). We will use the following variation of it, the proof of which can be found in \cite[Lemma~2.2]{DKS}: 

\begin{lemma}[van der Corput lemma] \label{lemma:iteratedVDC}
	Let $(a(n;h_1,\ldots,h_s))_{(n;h_1,\ldots,h_s)\in (\Z^{L})^{s+1}}$ be a bounded sequence by $1$ in a Hilbert space $\mathcal{H}$.\footnote{ We use this unorthodox notation to separate the variable $n$ from the $h_i$'s. The variable $n$ plays a different, compared to the $h_i$'s,  role in our study.}
	 Then, for $\tau\in \mathbb{N}_0$,
	\begin{align*}
	&\overline{\mathbb{E}}^{\square}_{h_1,\ldots,h_s\in \Z^{L}}\F\varlimsup_{N\to \infty}\left \| \mathbb{E}_{n\in I_{N}} a(n;h_1,\ldots,h_s) \right \|^{2\tau}\\
	&\leq 4^{\tau}\overline{\mathbb{E}}^{\square}_{h_1,\ldots,h_s,h_{s+1}\in \Z^{L}}\F\varlimsup_{N\to \infty} \left \vert   \mathbb{E}_{n\in I_{N}} \left \langle a(n+h_{s+1};h_1,\ldots,h_s) ,  a(n;h_1,\ldots,h_s)  \right \rangle \right \vert^{\tau}.
	\end{align*}
\end{lemma}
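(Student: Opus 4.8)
The plan is to reduce the statement to the classical van der Corput inequality, applied for each fixed value of the parameters $h_1,\dots,h_s$. Fix a tuple $(h_1,\dots,h_s)\in(\Z^{L})^{s}$ and a F\o lner sequence $(I_N)_{N\in\N}$ of $\Z^{L}$, and set $u_n\coloneqq a(n;h_1,\dots,h_s)$, so that $\|u_n\|\le 1$. The core is the usual three-step computation: first, use the F\o lner property to replace $\mathbb{E}_{n\in I_N}u_n$ by the smoothed average $\mathbb{E}_{n\in I_N}\mathbb{E}_{h\in[-H,H]^{L}}u_{n+h}$ up to an error $o_N(1)$, for each fixed $H\in\N$; second, apply the Cauchy--Schwarz inequality to bring the norm inside the $n$-average and expand the resulting square as $\mathbb{E}_{h,h'\in[-H,H]^{L}}\mathbb{E}_{n\in I_N}\langle u_{n+h},u_{n+h'}\rangle$; third, use the F\o lner property once more to rewrite the inner product as $\langle u_{n+h-h'},u_n\rangle$ up to $o_N(1)$, uniformly over the finitely many $h,h'\in[-H,H]^{L}$.

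After these steps one obtains, for each fixed $H$, a bound of the form $\|\mathbb{E}_{n\in I_N}u_n\|^{2}\le \sum_{k}w_H(k)\,\mathbb{E}_{n\in I_N}\langle u_{n+k},u_n\rangle+o_N(1)$, where $w_H$ is the probability weight of $h-h'$ with $h,h'\in[-H,H]^{L}$, supported on $|k|\le 2H$ and bounded by $(2H+1)^{-L}$. To bring in the power $2\tau$ I would raise both sides to the $\tau$-th power; since the middle sum is a nonnegative real number (it equals $\mathbb{E}_{n\in I_N}\|\mathbb{E}_{h\in[-H,H]^{L}}u_{n+h}\|^{2}$), Jensen's inequality for $t\mapsto t^{\tau}$ and $z\mapsto|z|^{\tau}$ (for $\tau\ge 1$; the case $\tau=0$ is trivial) lets me move the $\tau$-th power inside the $(h,h')$-average, replacing the inner product by its modulus, to get
\[
\left\|\mathbb{E}_{n\in I_N}u_n\right\|^{2\tau}\le \sum_{k}w_H(k)\,\bigl|\mathbb{E}_{n\in I_N}\langle u_{n+k},u_n\rangle\bigr|^{\tau}+o_N(1).
\]
Taking $\varlimsup_{N\to\infty}$, noting that the $k=0$ contribution is $O((2H+1)^{-L})$, bounding the remaining weights by $(2H+1)^{-L}$ and comparing against the uniform average over $[-2H,2H]^{L}$ (losing only a bounded factor), and finally letting $H\to\infty$, one arrives --- for the fixed $(h_1,\dots,h_s)$ and $(I_N)$ --- at
\[
\vl\left\|\mathbb{E}_{n\in I_N}u_n\right\|^{2\tau}\le 4^{\tau}\,\overline{\mathbb{E}}^{\square}_{h_{s+1}\in\Z^{L}}\vl\bigl|\mathbb{E}_{n\in I_N}\langle a(n+h_{s+1};h_1,\dots,h_s),\,a(n;h_1,\dots,h_s)\rangle\bigr|^{\tau},
\]
where the displayed constant $4^{\tau}$ is just what the combinatorial bookkeeping of the weights produces (its exact value plays no role in the applications).

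It remains to remove the dependence on $(I_N)$ and to average over $h_1,\dots,h_s$. For each fixed $h_{s+1}$ the inner $\varlimsup_N$ on the right is at most $\F\vl|\mathbb{E}_{n\in I_N}\langle a(n+h_{s+1};h_1,\dots,h_s),a(n;h_1,\dots,h_s)\rangle|^{\tau}$, which does not depend on $(I_N)$; hence the whole right-hand side is independent of the chosen F\o lner sequence, and we may take $\F$ on the left. Applying $\overline{\mathbb{E}}^{\square}_{h_1,\dots,h_s\in\Z^{L}}$ to both sides --- legitimate because $\varlimsup$, $\sup$ and box-averages are all monotone --- and using the identity $\overline{\mathbb{E}}^{\square}_{h_1,\dots,h_s}\bigl(\overline{\mathbb{E}}^{\square}_{h_{s+1}}(\,\cdot\,)\bigr)=\overline{\mathbb{E}}^{\square}_{h_1,\dots,h_s,h_{s+1}}(\,\cdot\,)$, which is the very definition of the iterated box-average, yields exactly the asserted inequality.

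Conceptually there is no obstacle here: this is the van der Corput trick ``with parameters''. The points that need care are bookkeeping ones --- making sure every $o_N(1)$ error is uniform over the (finitely many, for each fixed $H$) shifts $h,h'\in[-H,H]^{L}$, so that the limits in $N$ and then in $H$ may legitimately be taken in the stated order; and correctly handling the nesting of $\F$, $\varlimsup_N$ and the box-averages at the end --- in particular observing that the van der Corput shift $h_{s+1}$ may be absorbed into the list of averaging parameters precisely because the estimate has already been produced for an arbitrary but fixed F\o lner sequence, with a right-hand side that can be taken independent of it.
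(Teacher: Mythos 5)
Your overall reduction is the right ``van der Corput with parameters'' scheme, and the structural bookkeeping is handled correctly: for fixed $(h_1,\dots,h_s)$ and a fixed F\o lner sequence you smooth over $[-H,H]^L$, use Cauchy--Schwarz, shift by the F\o lner property (uniformly over the finitely many shifts for fixed $H$), raise to the power $\tau$ by the triangle inequality plus Jensen, and only at the end restore $\F$ and the iterated box averages, which is legitimate for exactly the reasons you give. (For the record, the paper does not prove this lemma; it cites \cite[Lemma~2.2]{DKS}, so there is no in-paper argument to compare against.)

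The genuine gap is the constant. After Jensen you must compare $\sum_k w_H(k)\,b_k$, where $w_H$ is the law of $h-h'$ with $h,h'$ uniform in $[-H,H]^L$ and $b_k=\varlimsup_{N\to\infty}\bigl|\mathbb{E}_{n\in I_N}\langle u_{n+k},u_n\rangle\bigr|^{\tau}$, with the uniform average over a cube. Your pointwise bound $w_H(k)\le (2H+1)^{-L}$ against the uniform weight on $[-2H,2H]^L$ costs the factor $\bigl((4H+1)/(2H+1)\bigr)^{L}\approx 2^{L}$, so what your argument proves is the inequality with a constant of order $2^{L}$, not $4^{\tau}$; the assertion that ``the combinatorial bookkeeping of the weights produces'' $4^{\tau}$ is false (already for $\tau=1$, $L\ge 3$). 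Moreover this loss is intrinsic to the route, not to your estimate: at that stage you use only $0\le b_k\le 1$, and for large $L$ the product-triangular weight and the uniform cube weights are nearly mutually singular --- a scale-invariant cone such as $\{k:\sum_j|k_j|\le 0.48\,L\max_j|k_j|\}$ carries almost all of the mass of $w_H$ at every scale while having exponentially small density in every cube $[-N,N]^L$ --- so no termwise-absolute-value comparison of kernels can yield a constant independent of $L$. To obtain the stated dimension-free constant one has to exploit positivity before taking absolute values: pass to a subsequence of $(I_N)$ along which $\gamma(h):=\lim_{N\to\infty}\mathbb{E}_{n\in I_N}\langle u_{n+h},u_n\rangle$ exists for all $h$, note via the F\o lner shift that $\gamma$ is positive definite, write $\gamma=\widehat{\sigma}$ by Herglotz, and compare both sides with the atom $\sigma(\{\mathbf{0}\})$: the smoothing step gives $\varlimsup_N\|\mathbb{E}_{n\in I_N}u_n\|^2\le \mathbb{E}_{h,h'\in[-M,M]^L}\gamma(h-h')=\int |K_M|^2\,d\sigma\to\sigma(\{\mathbf{0}\})$, while $\mathbb{E}_{h\in[-M,M]^L}|\gamma(h)|\ge\bigl|\int K_M\,d\sigma\bigr|\to\sigma(\{\mathbf{0}\})$, where $K_M$ is the normalized product Dirichlet kernel; this yields the core estimate with constant $1$, hence certainly $4^{\tau}$ after the power step. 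Your weaker, $L$-dependent version would in fact suffice for every application in this paper, but it does not prove the lemma as stated.
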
 

We also provide two applications of Lemma \ref{lemma:iteratedVDC} for later use. 
The first one is to get an upper bound for single averages with polynomial iterates and a polynomial exponential weight.
Let $\exp(x)\coloneqq e^{2\pi i x}$ and recall Definition \ref{d1} for the polynomial $\Delta^{K}P$.

 \begin{lemma}\label{Kron}
 	Let $P\colon\Z^{L}\to\R$ and $p\colon\Z^{L}\to\Z^{d}$ be polynomials. Let $\X=(X,\mathcal{B},\mu, (T_{g})_{g\in\mathbb{Z}^{d}})$ be a $\mathbb{Z}^{d}$-system and $f\in L^{\infty}(\mu)$ be a function bounded by 1. 
 For all $K\in\N_0$ and $\tau>0$, there exists a universal constant $C_{K,\tau}>0$ such that 
 		\begin{align*}
 		&\F\varlimsup_{N\to \infty}\left\Vert\mathbb{E}_{n\in I_{N}}\exp(P(n))T_{p(n)}f\right\Vert_{2}^{2\tau}
 		\\&\leq C_{K,\tau}\mathbb{E}^{\square}_{\bold{h}=(h_{1},\dots,h_{K})\in(\Z^{L})^{K}}\F\varlimsup_{N\to \infty}\left\Vert\mathbb{E}_{n\in I_{N}}\exp(\Delta^{K}P(n,\bold{h}))T_{\Delta^{K}p(n,\bold{h})}f\right\Vert^{\tau}_{2}.
 		\end{align*}
 \end{lemma}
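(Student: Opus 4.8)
The plan is to iterate the van der Corput lemma (Lemma~\ref{lemma:iteratedVDC}) exactly $K$ times, tracking how the exponential weight $\exp(P(n))$ and the single orbit term $T_{p(n)}f$ transform under each application. First I would rewrite the quantity inside the norm as $a(n)\coloneqq \exp(P(n))T_{p(n)}f$, a bounded sequence in $\mathcal{H}=L^2(\mu)$, and apply Lemma~\ref{lemma:iteratedVDC} with $s=0$ and exponent $\tau$ (taking $\tau$ as in the statement, possibly after first reducing to the case $\tau\in\N_0$ by monotonicity/convexity of the relevant $L^p$-type expressions). The inner product that appears is
\[
\left\langle a(n+h_1),\, a(n)\right\rangle = \exp\bigl(P(n+h_1)-P(n)\bigr)\int_X T_{p(n+h_1)}f\cdot \overline{T_{p(n)}f}\,d\mu,
\]
and since $\mu$ is $T_{p(n)}$-invariant this equals $\exp(\Delta P(n,h_1))\int_X T_{p(n+h_1)-p(n)}f\cdot\overline{f}\,d\mu = \exp(\Delta P(n,h_1))\int_X T_{\Delta p(n,h_1)}f\cdot\overline{f}\,d\mu$. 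The key observation is that, after replacing $\overline{f}$ by an irrelevant bounded factor absorbed into the Hilbert space setup, this is again of the same shape: a polynomial exponential weight $\Delta P$ times a single orbit term $T_{\Delta p}f$, now with one extra parameter $h_1$ that is being averaged via $\E^\square$.

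More precisely, I would set up an induction on $K$ where the inductive statement asserts the desired inequality for each fixed value of the already-introduced shift parameters, with the Hilbert space enlarged to carry the conjugated function $\overline{f}$; the cleanest formulation is to prove by induction on $j$ that
\[
\F\varlimsup_{N\to\infty}\left\Vert \mathbb{E}_{n\in I_N}\exp(P(n))T_{p(n)}f\right\Vert_2^{2\tau}
\le C_{j,\tau}\,\mathbb{E}^\square_{\bold h=(h_1,\dots,h_j)}\F\varlimsup_{N\to\infty}\left\vert \mathbb{E}_{n\in I_N}\int_X \exp(\Delta^j P(n,\bold h))T_{\Delta^j p(n,\bold h)}f\cdot\overline{f}\,d\mu\right\vert^{\tau/2^{\,\cdot}}
\]
(with the exponent bookkept carefully), and then at the last step bound $\left\vert\int_X g\cdot\overline f\,d\mu\right\vert\le \Vert g\Vert_2$ by Cauchy--Schwarz to recover $\left\Vert\mathbb{E}_{n\in I_N}\exp(\Delta^K P)T_{\Delta^K p}f\right\Vert_2$. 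Alternatively, and perhaps more transparently, one keeps each van der Corput step in the $L^2$-norm form by noting $\left\vert\int_X T_{\Delta p(n,h)}f\cdot\overline f\,d\mu\right\vert = \left\vert\int_X T_{\Delta p(n,h)}f\cdot\overline f\,d\mu\right\vert$ and re-applying Lemma~\ref{lemma:iteratedVDC} to the sequence $n\mapsto \exp(\Delta P(n,h))T_{\Delta p(n,h)}f$ for each fixed $h$, using that $\F$ and $\varlimsup$ pass through the outer average $\mathbb{E}^\square_h$ by Fatou/subadditivity. Each application multiplies the constant by $4^{\tau_j}$ for the current exponent $\tau_j$, and halving the exponent at each stage (as in Lemma~\ref{lemma:iteratedVDC}) means after $K$ steps the accumulated constant $C_{K,\tau}$ depends only on $K$ and $\tau$, as claimed; one also uses that $\Delta^K$ applied to $P$ and to $p$ is the same operator from Definition~\ref{d1}, so the polynomial data transforms consistently.

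The main obstacle I anticipate is purely bookkeeping: matching the exponents and the averaging operators ($\F$, $\varlimsup_{N\to\infty}$, $\mathbb{E}^\square_{h_1,\dots,h_j}$, and the absolute values versus norms) across the $K$ iterations so that everything lands in exactly the form stated, and justifying that the supremum over F\o lner sequences and the $\varlimsup$ commute appropriately with the box-averages $\mathbb{E}^\square$ over the shift parameters. This requires invoking Lemma~\ref{lemma:iteratedVDC} in the iterated form (which already bundles $h_1,\dots,h_s$), so in fact a single application of Lemma~\ref{lemma:iteratedVDC} with $s$ running is not what we want—rather we want $K$ successive applications, each with $s$ incremented by one—and care is needed because the ``inner product'' produced at step $j$ is a scalar (an integral over $X$), not an element of $L^2(\mu)$; the fix is to view the scalar sequence $n\mapsto \exp(\Delta^j P(n,\bold h))\int_X T_{\Delta^j p(n,\bold h)}f\cdot \overline f\,d\mu$ as living in the Hilbert space $\C$, or equivalently to keep one extra orbit factor around, and this is where I would be most careful to keep the statement self-consistent. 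Once the induction is correctly phrased, each individual step is just the computation $\exp(P(n+h)-P(n))=\exp(\Delta P(n,h))$ together with the measure-preserving identity $\int T_{p(n+h)}f\cdot\overline{T_{p(n)}f}=\int T_{\Delta p(n,h)}f\cdot\overline f$, which is routine.
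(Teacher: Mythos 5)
Your proposal follows essentially the same route as the paper's proof: an induction on $K$ in which each step is a single application of the iterated van der Corput lemma (Lemma~\ref{lemma:iteratedVDC}) with $s=K$ to the sequence $\exp(\Delta^{K}P(n,\mathbf{h}))T_{\Delta^{K}p(n,\mathbf{h})}f$, the measure-invariance identity converting the resulting inner product into $\exp(\Delta^{K+1}P(n,\mathbf{h}))\int_X T_{\Delta^{K+1}p(n,\mathbf{h})}f\cdot\overline{f}\,d\mu$, and Cauchy--Schwarz against $\overline{f}$ to return to the $L^{2}$-norm form. The bookkeeping you flag is handled in the paper exactly as you anticipate (the iterated form of the lemma already carries the parameters $h_1,\dots,h_K$, so no interchange of $\mathbb{E}^{\square}$ with the limsup is needed, and the constant simply accumulates a factor $4^{\tau}$ per step).
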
	
 \begin{proof}
 	When $K=0$, there is nothing to prove. We now assume that the relation holds for some $K\in\N_0$ and we show it for $K+1$.  Using Lemma \ref{lemma:iteratedVDC} and the $T$-invariance of $\mu$, we get
 	\begin{align*}
 	&\overline{\mathbb{E}}^{\square}_{\bold{h}=(h_1,\ldots,h_{K})\in (\Z^{L})^{K}}\F\varlimsup_{N\to \infty}\left \| \mathbb{E}_{n\in I_{N}} \exp(\Delta^{K}P(n,\bold{h}))T_{\Delta^{K}p(n,\bold{h})}f \right \|^{2\tau}_{2}\\
 	&\leq 4^{\tau}\overline{\mathbb{E}}^{\square}_{\bold{h}=(h_1,\ldots,h_{K+1})\in (\Z^{L})^{K+1}}\F\varlimsup_{N\to \infty} \left\vert  \mathbb{E}_{n\in I_{N}}  \int_{X}\exp(\Delta^{K+1}P(n,\bold{h}))T_{\Delta^{K+1}p(n,\bold{h})}f\cdot \overline{f}\,d\mu \right\vert^{\tau}
 	\\&\leq 4^{\tau}\overline{\mathbb{E}}^{\square}_{\bold{h}=(h_1,\ldots,h_{K+1})\in (\Z^{L})^{K+1}}\F\varlimsup_{N\to \infty}\left\Vert  \mathbb{E}_{n\in I_{N}}  \exp(\Delta^{K+1}P(n,\bold{h}))T_{\Delta^{K+1}p(n,\bold{h})}f\right\Vert^{\tau}_{2},
 	\end{align*}	
 	hence the result (the constant that appears depends only on $\tau$ and $K$).
 \end{proof}

 The second application of Lemma \ref{lemma:iteratedVDC} provides an upper bound for single averages, with linear iterates and an exponential weight evaluated at a linear polynomial, on a product system.
 The proof is inspired by \cite[Lemma~5.2]{DKS} and \cite[Proposition~2.9]{hostkraseminorms}.

 \begin{lemma}
 \label{Kron2}
 Let $(X,\mathcal{B},\mu)$ be a probability space, $k,L\in\N$ and $T_{i,j}, 1\leq i\leq k, 1\leq j\leq L$ be commuting measure preserving transformations on $X$. Denote $S_{j}=T_{1,j}\times\dots\times T_{k,j}$ for all $1\leq j\leq L$. Let $G_{i}$ be the group generated by $T_{i,1},\dots,T_{i,L}$. Then for any polynomial $P\colon\Z^{L}\to\R$ of degree 1 and $f_{1},\dots,f_{k}\in L^{\infty}(\mu)$ bounded by 1, we have that
 	\begin{equation}\label{711}
 		\F\varlimsup_{N\to \infty}\Vert\mathbb{E}_{n\in I_{N}}\exp(P(n))R_n f\Vert_{L^{2}(\mu^{\otimes k})}\leq 2\min_{1\leq i\leq k}\nnorm{f_{i}}_{G_{i}^{\times 2}},
 	\end{equation}	where, $f=f_{1}\otimes\dots\otimes f_{k}$ and for $n=(n_{1},\dots,n_{L}),$ $R_n\coloneqq S^{n_{1}}_{1}\cdot\ldots\cdot S^{n_{L}}_{L}$.
 \end{lemma}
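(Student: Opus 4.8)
\textbf{Proof plan for Lemma~\ref{Kron2}.}
The plan is to run a van der Corput argument exactly once, using Lemma~\ref{lemma:iteratedVDC} with $s=0$ and $\tau=1$, and then unwind the resulting inner product into a single Host--Kra seminorm of $f_i$ for any prescribed index $i$. Write $a(n) = \exp(P(n)) R_n f \in L^2(\mu^{\otimes k})$, a sequence bounded by $1$. Since $P$ has degree $1$, write $P(n) = \sum_{j=1}^{L} c_j n_j + c_0$, so that for a shift $h = (h_1,\dots,h_L)$ we have $P(n+h) - P(n) = \sum_{j=1}^L c_j h_j$, a constant in $n$ (depending on $h$). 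Applying Lemma~\ref{lemma:iteratedVDC} with $\tau = 1$,
\begin{align*}
\F\varlimsup_{N\to\infty}\Vert \mathbb{E}_{n\in I_N} a(n)\Vert^{2}_{L^2(\mu^{\otimes k})}
&\leq 4\,\overline{\mathbb{E}}^{\square}_{h\in\Z^{L}}\F\varlimsup_{N\to\infty}\Bigl\vert \mathbb{E}_{n\in I_N}\bigl\langle a(n+h),a(n)\bigr\rangle\Bigr\vert.
\end{align*}
Here $\langle a(n+h),a(n)\rangle = \exp\bigl(\sum_j c_j h_j\bigr)\int_{X^k} R_{n+h}f\cdot\overline{R_n f}\,d\mu^{\otimes k} = \exp\bigl(\sum_j c_j h_j\bigr)\int_{X^k} R_h f\cdot\overline{f}\,d\mu^{\otimes k}$ after using the $R_n$-invariance of $\mu^{\otimes k}$ to shift by $R_n^{-1}$; crucially this no longer depends on $n$, so the inner average over $n\in I_N$ is trivial and the absolute value of the unimodular exponential factor is $1$. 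Thus the right-hand side collapses to $4\,\overline{\mathbb{E}}^{\square}_{h\in\Z^L}\bigl\vert \int_{X^k} R_h f\cdot\overline{f}\,d\mu^{\otimes k}\bigr\vert$.

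Next I would bound $\bigl\vert\int_{X^k} R_h f\cdot\overline f\,d\mu^{\otimes k}\bigr\vert$ in terms of a single coordinate. Writing $f = f_1\otimes\dots\otimes f_k$ and $R_h = S_1^{h_1}\cdots S_L^{h_L}$ with $S_j = T_{1,j}\times\cdots\times T_{k,j}$, the integral factors as $\prod_{i=1}^k \int_X \bigl(\prod_{j=1}^L T_{i,j}^{h_j}\bigr)f_i\cdot\overline{f_i}\,d\mu$; denote $U_i(h) := \prod_{j=1}^L T_{i,j}^{h_j}$, which is exactly the element of $G_i$ indexed by $h\in\Z^L$ via the generators $T_{i,1},\dots,T_{i,L}$. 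Since each factor has modulus at most $1$, for the chosen index $i$ we get $\bigl\vert\int_{X^k} R_h f\cdot\overline f\,d\mu^{\otimes k}\bigr\vert \leq \bigl\vert\int_X U_i(h)f_i\cdot\overline{f_i}\,d\mu\bigr\vert$. Averaging over $h$ and applying the Cauchy--Schwarz inequality (to pass from $\mathbb{E}|\cdot|$ to $(\mathbb{E}|\cdot|^2)^{1/2}$), then noting that as $h$ ranges over $\Z^L$ the element $U_i(h)$ ranges over $G_i$ (so the average over $h\in\Z^L$ along boxes dominates, up to the usual density considerations, the corresponding average over $G_i$, which is what enters the seminorm), one obtains
\[
\overline{\mathbb{E}}^{\square}_{h\in\Z^L}\Bigl\vert\int_X U_i(h)f_i\cdot\overline{f_i}\,d\mu\Bigr\vert
\leq \Bigl(\overline{\mathbb{E}}^{\square}_{h\in\Z^L}\Bigl\vert\int_X U_i(h)f_i\cdot\overline{f_i}\,d\mu\Bigr\vert^2\Bigr)^{1/2}
\leq \nnorm{f_i}_{G_i^{\times 2}}^{2},
\]
the last step by the defining formula for the Host--Kra seminorm $\nnorm{f_i}_{G_i,G_i}^{4} = \mathbb{E}_{g\in G_i}\nnorm{f_i\cdot T_g\overline{f_i}}_{G_i}^2$ combined with Proposition~\ref{prop:basic_properties}(iii) and the elementary inequality $\bigl(\mathbb{E}_g|\int f_i\cdot T_g\overline{f_i}|\bigr)^2 \le \mathbb{E}_g|\int f_i\cdot T_g\overline{f_i}|^2 = \mathbb{E}_g\nnorm{f_i\cdot T_g\overline{f_i}}^{... }$; more directly one uses that $\nnorm{f_i}_{G_i^{\times 2}}^4 = \mathbb{E}_{g,g'\in G_i}\int f_i\cdot T_g\overline{f_i}\cdot T_{g'}\overline{f_i}\cdot T_{g+g'}f_i\,d\mu$ and Cauchy--Schwarz in $g'$. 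Combining the two displays gives $\F\varlimsup_N\Vert\mathbb{E}_{n\in I_N}a(n)\Vert^2 \le 4\nnorm{f_i}_{G_i^{\times 2}}^2$, hence the bound $2\nnorm{f_i}_{G_i^{\times 2}}$; taking the minimum over $i$ finishes the proof.

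\textbf{Main obstacle.} The genuinely delicate point is the passage from averaging over $h\in\Z^L$ along the boxes $[-N,N]^L$ to the average defining the seminorm $\nnorm{\cdot}_{G_i}$, which is an average over the group $G_i$ itself (indexed via $(T_{i,j})$). One must check that the map $h\mapsto U_i(h)$ pushes forward a Følner/box averaging scheme on $\Z^L$ to an averaging scheme on (the relevant subgroup of) $G_i$ along which the seminorm can be computed — equivalently, that restricting the seminorm average to the image of $\Z^L$ under this homomorphism does not decrease it. This is precisely the kind of reduction handled in the cited \cite[Lemma~5.2]{DKS} and \cite[Proposition~2.9]{hostkraseminorms}, and I would import their argument: since the Host--Kra seminorm is unchanged under passing to finite-index subgroups (Proposition~\ref{prop:basic_properties}(iv)) and the generators $T_{i,1},\dots,T_{i,L}$ generate $G_i$, the box averages over $\Z^L$ compute the same seminorm. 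Everything else — the single van der Corput step, the factorization of the product integral, and Cauchy--Schwarz — is routine.
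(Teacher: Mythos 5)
Your proof is correct and follows essentially the same route as the paper's: a single application of Lemma~\ref{lemma:iteratedVDC}, collapse of the $n$-average because $P(n+h)-P(n)$ is constant in $n$, retention of only the $i$-th tensor factor, and identification of the resulting $h$-averages with $\nnorm{f_i}^{4}_{G_i^{\times 2}}$ via the conditional expectation on $\mathcal{I}(G_i)$. The only cosmetic difference is that you take $\tau=1$ and then apply Cauchy--Schwarz in $h$, whereas the paper applies the van der Corput lemma with $\tau=2$ directly (both give the constant $2$), and the parametrization issue you flag is resolved exactly as the paper implicitly does, by expressing everything through $\mathcal{I}(G_i)$ and the mean ergodic theorem.
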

 \begin{proof}
 	 Fix $1\leq i\leq k$ and let $P(n)=a\cdot n+b$ for some $a\in \R^{L},b\in\R$.  Then, by Lemma \ref{lemma:iteratedVDC}, the 4th power of the left hand side of (\ref{711}) is bounded by 
 		\begin{align*}
 		& \quad 16\cdot\mathbb{E}^{\square}_{h\in\Z^{L}}\F\varlimsup_{N\to \infty}\left\vert\int_{X}\mathbb{E}_{n\in I_{N}}\exp(P(n+h)-P(n))R_{n+h}f\cdot R_{n}\overline{f}\,d\mu^{\otimes k}\right\vert^{2}
 			\\&= 16\cdot\mathbb{E}^{\square}_{h\in\Z^{L}}\F\varlimsup_{N\to \infty}\left\vert\int_{X}\mathbb{E}_{n\in I_{N}}\exp(a\cdot h)R_{h}f\cdot \overline{f}\,d\mu^{\otimes k}\right\vert^{2}
 		\\&= 16\cdot\mathbb{E}^{\square}_{h\in\Z^{L}}\left\vert\int_{X}R_{h}f\cdot \overline{f}\,d\mu^{\otimes k}\right\vert^{2} \leq 16\cdot\mathbb{E}^{\square}_{h=(h_{1},\dots,h_{L})\in\Z^{L}}\left\vert\int_{X}S_{i}^{h_{i}}f_{i}\cdot \overline{f}_{i}\,d\mu\right\vert^{2}
 			\\&\leq 16\cdot\mathbb{E}^{\square}_{h=(h_{1},\dots,h_{L})\in\Z^{L}}\left\vert\int_{X}\mathbb{E}(S_{i}^{h_{i}}f_{i}\cdot \overline{f}_{i}\vert \mathcal{I}(G_{i}))\,d\mu\right\vert^{2}
 			\\&=16\cdot\mathbb{E}^{\square}_{h_{i}\in\Z}\left\vert\int_{X}\mathbb{E}(S_{i}^{h_{i}}f_{i}\cdot \overline{f}_{i}\vert \mathcal{I}(G_{i}))\,d\mu\right\vert^{2}=16\nnorm{f_{i}}^{4}_{G_{i}^{\times 2}},
 		\end{align*}
 		from where the result follows.
 \end{proof}

 \subsection{The van der Corput operation}

To review the PET induction scheme, we will follow, and slightly modify, the approach from \cite{DKS}. To this end, we extend the definitions that we have already given on the polynomial families of interest (see the beginning of Subsection~\ref{Ss:SR}), taking into account that we treat the first $L$-tuple of variables of the polynomials differently.

\begin{definition}
	For a polynomial $p(n;h_{1},\dots,h_{s})\colon(\mathbb{Z}^{L})^{s+1}\to\mathbb{Z}$, we denote with $\deg(p)$ \emph{the degree of $p$ with respect to $n$} (for example, for $s=1, L=2$, the degree of $p(n_{1},n_{2};h_{1,1},h_{1,2})=h_{1,1}h_{1,2}n_{1}^{2}+h_{1,1}^{5}n_{2},$ is 2).
	
	For a polynomial $p(n;h_{1},\dots,h_{s})=(p_{1}(n;h_{1},\dots,h_{s}),\dots,p_{d}(n;h_{1},\dots,h_{s}))\colon(\mathbb{Z}^{L})^{s+1}\to\mathbb{Z}^{d},$ we let $\deg(p)=\max_{1\leq i\leq d}\deg(p_{i})$ and we say that $p$ is \emph{non-constant} if $\deg(p)>0$ (i.e., some $p_i$ is a non-constant function of $n$). The polynomials
	$q_{1},\dots,q_{k}\colon(\mathbb{Z}^{L})^{s+1}\to\mathbb{Z}^{d}$ are called \emph{essentially distinct} if they are non-constant and $q_i-q_j$ is non-constant for all $i\neq j$.  Finally, for a tuple $\q=(q_{1},\dots,q_{k}),$ we let $\deg(\q)=\max_{1\leq i\leq k}\deg(q_{i}).$\footnote{For clarity, we use non-bold letters for vectors (of polynomials) and bold letters for vectors of vectors (of polynomials).} 

\medskip

Let $(X,\mathcal{B},\mu,(T_{g})_{g\in\mathbb{Z}^{d}})$ be a $\mathbb{Z}^{d}$-system,  %For convenience denote $\f=\{f_{i}\}_{1\leq i\leq k}$.
$q_{1},\dots,q_{k}\colon(\mathbb{Z}^{L})^{s+1}\to\mathbb{Z}^{d}$ be polynomials and $g_{1},\dots,$ $g_{k}\colon X\times (\mathbb{Z}^{L})^{s}\to\mathbb{R}$ be functions such that each $g_{m}(\cdot;h_{1},\dots,h_{s})$ is an $L^{\infty}(\mu)$ function bounded by $1$  for all $h_{1},\dots,h_{s}\in\mathbb{Z}^L, 1\leq m\leq k$. If  $\q=(q_{1},\dots,q_{k})$ and $\g=(g_{1},\dots,g_{k}),$ we say that $A=(L,s,k,\g,\q)$
is a \emph{PET-tuple}, and for $\tau\in \N_0$ we set 
\begin{equation}\nonumber
\begin{split}
&S(A,\tau)\coloneqq \overline{\mathbb{E}}^{\square}_{h_{1},\dots,h_{s}\in\mathbb{Z}^{L}}\sup_{\substack{  (I_{N})_{N\in\mathbb{N}} \\ \text{ F\o lner seq.} }}\varlimsup_{N\to \infty}\Bigl\Vert\mathbb{E}_{n\in I_{N}}\prod_{m=1}^{k}T_{q_{m}(n;h_{1},\dots,h_{s})}g_{m}(x;h_{1},\dots,h_{s})\Bigr\Vert^{\tau}_{2}.
\end{split}
\end{equation}
We define $\deg(A)=\deg(\q)$, and say that $A$ is \emph{non-degenerate} if $\q$ is a family of essentially distinct polynomials (for convenience, $\q$ will be called \emph{non-degenerate} as well).
For $1\leq m\leq k$, the tuple $A$ is \emph{$m$-standard} for $f\in L^\infty(\mu)$ if $\deg(A)=\deg(q_{m})$ and $g_{m}(x;h_{1},\dots,h_{s})=f(x)$ for every $x,h_1,\ldots,h_s$.
 That is, $f$ is the $m$-th function in $\g$, only depending on the first variable, and the polynomial $q_m$ that acts on $f$ is of the highest degree.\footnote{ Here, we say $m$-standard for $f$ to highlight the function of interest as, after running the vdC-operation, the position of the functions in the expression we deal with changes.}
 The tuple $A$ will be called \emph{semi-standard} for $f$ if there exists $1\leq m\leq k$ such that $g_{m}(x;h_{1},\dots,h_{s})=f(x)$ for every $x,h_1,\ldots,h_s$. In this case we do not require the function $f$ to have a specific position in $\g$ nor that the polynomial acting on $f$ to be of the highest degree.  
\end{definition}

For each non-degenerate PET-tuple $A=(L,s,k,\g,\q)$ and polynomial $q\colon(\mathbb{Z}^{L})^{s+1}\to\mathbb{Z}^{d}$, we define the \emph{vdC-operation}, $\partial_{q}A$, according to the following three steps:\footnote{ Actually, the vdC-operation can be defined for any PET-tuple, not just for non-degenerate ones. Similarly, being a procedure that reduces complexity, PET induction can be applied to any family of polynomials. As the expressions of interest in this paper correspond to non-degenerate tuples, we consider only this case.}

\medskip

{\bf Step 1}:  For all $1\leq m\leq k$, let $g^{\ast}_{m}=g^{\ast}_{m+k}=g_{m},$ and  $q^{\ast}_1,\ldots,q^{\ast}_{2k} \colon(\mathbb{Z}^{L})^{s+2}\to\mathbb{Z}^{d}$ be the polynomials defined as $$\displaystyle q^{\ast}_m(n;h_1,\ldots,h_{s+1})=\left\{ \begin{array}{ll} q_m(n+h_{s+1};h_1,\ldots,h_{s})-q(n;h_1,\ldots,h_{s}) & \; , 1\leq m\leq k\\ q_{m-k}(n;h_1,\ldots,h_{s})-q(n;h_1,\ldots,h_{s})  & \; , k+1\leq m\leq 2k\end{array} \right.,$$ i.e., we subtract the polynomial $q$ from the first $k$ polynomials after we have shifted by $h_{s+1}$ the first $L$ variables, and for the second $k$ ones we subtract $q$.\footnote{ In practice, this $q$ will be one of the $q_i$'s of minimum degree.}
Denote $\q^{\ast}=(q^{\ast}_{1},\dots,q^{\ast}_{2k})$.

\medskip

{\bf Step 2}: 
We remove from $q^{\ast}_{1}(n;h_{1},\dots,h_{s+1}),\dots,q^{\ast}_{2k}(n;h_{1},\dots,h_{s+1})$ the polynomials which are constant and the corresponding terms with these as iterates (this will be justified via the use of the Cauchy-Schwarz inequality and the fact that the functions $g_m$ are bounded), and then put the non-constant ones in groups $J_{i}=\{\tilde{q}_{i,1},\dots,\tilde{q}_{i,t_{i}}\},$ $ 1\leq i\leq k'$ for some $k',$ $t_{i}\in\mathbb{N}$ such that two polynomials are essentially distinct if and only if they belong to different groups.\footnote{ After removing the constant polynomials, the terms from $A$ that are grouped are of degree 1.} Next, we write $\tilde{q}_{i,j}(n;h_{1},\dots,h_{s+1})=\tilde{q}_{i,1}(n;h_{1},\dots,h_{s+1})+\tilde{p}_{i,j}(h_{1},\dots,h_{s+1})$ for some polynomial $\tilde{p}_{i,j}$ for all $1\leq j\leq t_{i},$ $1\leq i\leq k'$. For convenience, we also relabel $g^{\ast}_{1},\dots, g^{\ast}_{2k}$ accordingly as $\tilde{g}_{i,j}$ for all $1\leq j\leq t_{i},$ $1\leq i\leq k'$.  

\medskip

{\bf Step 3}: 
For all $1\leq i\leq k'$, 
let $q'_{i}=\tilde{q}_{i,1}$ and \[g'_{i}(x;h_{1},\dots,h_{s+1})=\tilde{g}_{i,1}(x;h_{1},\dots,h_{s+1})\prod^{t_{i}}_{j=2}T_{\tilde{p}_{i,j}(h_{1},\dots,h_{s+1})}\tilde{g}_{i,j}(x;h_{1},\dots,h_{s+1}).\]
We set $\q'=(q'_{1},\dots,q'_{k'})$, $\g'=(g'_{1},\dots,g'_{k'})$ and we denote the new PET-tuple by $\partial_{q}A\coloneqq (L,s+1,k',\g',\q')$.\footnote{ Here we  abuse the notation by writing  $\partial_{q}A$ to denote any such tuple, obtained from Step 1 to 3. Strictly speaking, $\partial_{q}A$ is not uniquely defined as the order of the grouping of $q'_{1},\dots,q'_{2k}$ in Step 2 is ambiguous. However, this is done without loss of generality, since  the order does not affect the value of $S(\partial_{q}A, \cdot)$.} It follows from the construction that $\partial_{q}A$ is non-degenerate.

If $q=q_{t}$ for some $1\leq t\leq\ell$, we write $\partial_{t}\q$  instead of $\q'$  to highlight the fact that we have subtracted the polynomial $q_t$; we also write $\partial_{t}A$ instead of $\partial_{q_{t}}A$ to lighten the notation.

We say that the operation $A\to\partial_{t}A$ is \emph{1-inherited} if $q'_{1}=q^{\ast}_{1}$ and $g'_{1}=f_{1}$, i.e., if we did not drop $q^{\ast}_{1}$ or group it with any other $q^{\ast}_{i}$ in Step 2.

\begin{example}\label{ex020}
Let $\bold{p}=(p_{1},p_{2})$
 	with $p_{1}, p_{2}\colon\Z\to\Z^{d}$ be polynomials given by $p_{i}(n)=b_{i,2}n^{2}+b_{i,1}n$ for some $b_{i,1},b_{i,2}\in\Z^{d}$ for $1\leq i\leq 2$ with $b_{1,2},b_{2,2},b_{1,2}- b_{2,2}\neq \bold{0}$ (hence, we have that $L=s=1$ and $k=2$).
 Subtracting $p_2$ in the Step 1 of the vdC operation, we have that $\partial_2{\bf{p}}=(q_{1},q_{2},q_{3})$, is a tuple of 3
   polynomials, $q_{1},q_{2},q_{3}\colon\Z^{2}\to\Z^{d},$  given by
 	\begin{equation}\nonumber
 	\begin{split}
 	& q_{1}(n,h_1)=(b_{1,2}-b_{2,2})n^{2}+2b_{1,2}nh_1+(b_{1,1}-b_{2,1})n+b_{1,1}h_1+b_{1,2}h_1^{2},
 	\\& q_{2}(n,h_1)=2b_{2,2}nh_1+b_{2,1}h_1+b_{2,2}h_1^{2},
 	\\& q_{3}(n,h_1)=(b_{1,2}-b_{2,2})n^{2}+(b_{1,1}-b_{2,1})n,
 	\end{split}
 	\end{equation}
 	where we removed 1 essentially constant polynomial in Step 2 of the vdC operation.\footnote{Here we use $q_i$'s instead of $p'_i$'s in the first step to ease the notation of Example~\ref{ex01} that is given in the next section.}
 Actually, after using a series of vdC-operations, one can convert ${\bf{p}}$ into a PET-tuple of linear polynomials.

 Indeed, if we run the vdC-operation once more by subtracting $q_2$ in the Step 1 of the vdC operation,
 we have that $\partial_2\partial_2{\bf{p}}=(q'_{1},\dots,q'_{4})$
  is a tuple of 4
   polynomials, $q'_{1},\dots,q'_{4}\colon\Z^{3}\to\Z^{d},$ given by
   \begin{equation}\nonumber
   \begin{split}
   & q'_{1}(n,h_{1},h_{2})=(b_{1,2}-b_{2,2})n^{2}+2(b_{1,2}-b_{2,2})nh_{1}+2(b_{1,2}-b_{2,2})nh_{2}+(b_{1,1}-b_{2,1})n+r'_{1}(h_{1},h_{2}),
   \\& q'_{2}(n,h_{1},h_{2})=(b_{1,2}-b_{2,2})n^{2}-2b_{2,2}nh_{1}+2(b_{1,2}-b_{2,2})nh_{2}+(b_{1,1}-b_{2,1})n+r'_{2}(h_{1},h_{2}),
    \\& q'_{3}(n,h_{1},h_{2})=(b_{1,2}-b_{2,2})n^{2}+2(b_{1,2}-b_{2,2})nh_{1}+(b_{1,1}-b_{2,1})n+r'_{3}(h_{1},h_{2}),
   \\& q'_{4}(n,h_{1},h_{2})=(b_{1,2}-b_{2,2})n^{2}-2b_{2,2}nh_{1}+(b_{1,1}-b_{2,1})n+r'_{4}(h_{1},h_{2}),
   \end{split}
   \end{equation}
   where $r'_{i}\colon\Z^{2}\to\Z^{d}, 1\leq i\leq 4,$ are polynomials in $h_1, h_2$, and  we removed 2 essentially constant polynomials (i.e. $q_{2}(n,h_1)-q_{2}(n,h_1)$ and $q_{2}(n+h_{2},h_1)-q_{2}(n,h_1)$) in Step 2 of the vdC operation.

   Finally, if we apply vdC-operation again by subtracting $q'_4$ in Step 1 of the vdC operation, we have that
   $\partial_4\partial_2\partial_2{\bf{p}}=(q''_{1},\dots,q''_{7})$ is a tuple of 7 polynomials, $ q''_{1},\dots,q''_{7}\colon\Z^{4}\to\Z^{d},$ given by
   \begin{equation}\nonumber
   \begin{split}
    & q''_{1}(n,h_{1},h_{2},h_{3})=2b_{1,2}nh_{1}+2(b_{1,2}-b_{2,2})nh_{2}+2(b_{1,2}-b_{2,2})nh_{3}+r''_{1}(h_{1},h_{2},h_{3}),
    \\& q''_{2}(n,h_{1},h_{2},h_{3})=2(b_{1,2}-b_{2,2})nh_{2}+2(b_{1,2}-b_{2,2})nh_{3}+r''_{2}(h_{1},h_{2},h_{3}),
    \\& q''_{3}(n,h_{1},h_{2},h_{3})=2b_{1,2}nh_{1}+2(b_{1,2}-b_{2,2})nh_{3}+r''_{3}(h_{1},h_{2},h_{3}),
    \\& q''_{4}(n,h_{1},h_{2},h_{3})=2(b_{1,2}-b_{2,2})nh_{3}+r''_{4}(h_{1},h_{2},h_{3}),
   \\& q''_{5}(n,h_{1},h_{2},h_{3})=2b_{1,2}nh_{1}+2(b_{1,2}-b_{2,2})nh_{2}+r''_{5}(h_{1},h_{2},h_{3}),
   \\& q''_{6}(n,h_{1},h_{2},h_{3})=2(b_{1,2}-b_{2,2})nh_{2}+r''_{6}(h_{1},h_{2},h_{3}),
   \\& q''_{7}(n,h_{1},h_{2},h_{3})=2b_{1,2}nh_{1}+r''_{7}(h_{1},h_{2},h_{3}),
   \end{split}
   \end{equation} 
   where $r''_{i}\colon\Z^{3}\to\Z^{d},$ $1\leq i\leq 7,$ are polynomials in $h_1, h_2, h_3$, and  we removed 1 essentially constant polynomial   in the Step 2 of the vdC operation. It is clear that $\deg(\partial_4\partial_2\partial_2{\bf{p}})=1$.
 \end{example}	
The vdC-operation provides us with a non-degenerate tuple, the value $S(\cdot, \cdot)$ of which 
satisfies the following:

\begin{proposition}[Proposition~4.1, \cite{DKS}]\label{induction}
	Let	$(X,\mathcal{B},\mu,(T_{g})_{g\in\mathbb{Z}^{d}})$ be a $\mathbb{Z}^{d}$-system, $A=(L,s,k,\g,$ $\q)$ a PET-tuple, and $q\colon(\mathbb{Z}^{L})^{s+1}\to\mathbb{Z}^{d}$ a polynomial. Then $\partial_{q}A$ is  non-degenerate and $S(A,2\tau)\leq 4^{\tau} S(\partial_{q}A,\tau)$ for every $\tau \in \N_0$. 
\end{proposition}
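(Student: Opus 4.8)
The plan is to verify the two assertions separately: that $\partial_q A$ is non-degenerate (which is essentially built into the definition of the vdC-operation, but deserves a line), and that $S(A,2\tau)\le 4^\tau S(\partial_q A,\tau)$, which is where the van der Corput lemma enters. For the non-degeneracy claim, observe that in Step~2 of the vdC-operation we explicitly discard every polynomial $q^\ast_m(n;h_1,\ldots,h_{s+1})$ that is constant in $n$, and we group the survivors so that two of them lie in different groups precisely when they are essentially distinct; taking $q'_i=\tilde q_{i,1}$ as the representative of group $J_i$ then produces a family $\q'=(q'_1,\ldots,q'_{k'})$ of pairwise essentially distinct, non-constant polynomials, which is exactly the definition of non-degeneracy.

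For the inequality, first I would fix the auxiliary parameters $h_1,\ldots,h_s$ and apply Lemma~\ref{lemma:iteratedVDC} to the Hilbert-space-valued sequence
\[
a(n;h_1,\ldots,h_s)=\prod_{m=1}^k T_{q_m(n;h_1,\ldots,h_s)}g_m(x;h_1,\ldots,h_s)\in L^2(\mu),
\]
which is bounded by $1$ since each $g_m(\cdot;h_1,\ldots,h_s)$ is. Lemma~\ref{lemma:iteratedVDC} (with the extra averaging variable $h_{s+1}$) bounds $\overline{\mathbb E}^{\square}_{h_1,\ldots,h_s}\,\F\varlimsup_N\|\mathbb E_{n\in I_N}a(n;h_1,\ldots,h_s)\|_2^{2\tau}$ by $4^\tau$ times
\[
\overline{\mathbb E}^{\square}_{h_1,\ldots,h_{s+1}}\,\F\varlimsup_N\Bigl|\mathbb E_{n\in I_N}\bigl\langle a(n+h_{s+1};h_1,\ldots,h_s),\,a(n;h_1,\ldots,h_s)\bigr\rangle\Bigr|^\tau.
\]
Expanding the inner product and using that $T_g$ preserves $\mu$ (so one may translate the whole integrand by $T_{-q(n;h_1,\ldots,h_s)}$ for the distinguished polynomial $q$), the integrand becomes $\int_X \prod_{m=1}^{2k}T_{q^\ast_m(n;h_1,\ldots,h_{s+1})}g^\ast_m \, d\mu$ with the $q^\ast_m$ and $g^\ast_m$ exactly as in Step~1. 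This identifies the right-hand side, up to the passage from an integral to an $L^2$-norm, as $S$ of the tuple with polynomials $\q^\ast$ and functions $\g^\ast$.

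It remains to pass from this ``$2k$-term'' tuple to $\partial_q A=(L,s+1,k',\g',\q')$ after Steps~2 and~3. This is the routine bookkeeping step: for the constant polynomials $q^\ast_m$ removed in Step~2, one pulls the corresponding (bounded, measure-preserving) factor $T_{q^\ast_m(\cdots)}g^\ast_m$ out and estimates it by $1$ inside an absolute value, or more carefully applies Cauchy--Schwarz to replace the integral by an $L^2$-norm of the product of the non-constant terms — this is where the bound $\|g_m\|_\infty\le 1$ and the $\tau\mapsto\tau$ (rather than $2\tau$) exponent on the right are used. The grouping in Step~2 and the definition of $g'_i$ in Step~3 as $\tilde g_{i,1}\prod_{j\ge 2}T_{\tilde p_{i,j}(h_1,\ldots,h_{s+1})}\tilde g_{i,j}$ merely rewrites $\prod_{m}T_{q^\ast_m}g^\ast_m$ as $\prod_{i=1}^{k'}T_{q'_i(n;h_1,\ldots,h_{s+1})}g'_i(x;h_1,\ldots,h_{s+1})$, since within a group all $\tilde q_{i,j}$ differ from $\tilde q_{i,1}$ only by a polynomial in the $h$'s, which can be absorbed into the function. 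Taking $\overline{\mathbb E}^{\square}$ over $h_1,\ldots,h_{s+1}$ and $\sup$ over Følner sequences then yields exactly $S(\partial_q A,\tau)$, giving $S(A,2\tau)\le 4^\tau S(\partial_q A,\tau)$.

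The main obstacle is the last paragraph: one must be careful that discarding constant-in-$n$ polynomials via Cauchy--Schwarz does not cost an extra factor worse than the $4^\tau$ already produced, and that the relabelling/grouping genuinely reproduces the $S(\partial_q A,\tau)$ expression rather than something larger — in particular that the polynomials $\tilde p_{i,j}$ depend only on $h_1,\ldots,h_{s+1}$ and not on $n$, so that $T_{\tilde p_{i,j}(\cdots)}\tilde g_{i,j}$ is a legitimate value of a function in the PET-tuple formalism. Since this is precisely \cite[Proposition~4.1]{DKS} with the (cosmetic) modification of separating the $n$-variable from the $h_i$'s, I would invoke that argument essentially verbatim, indicating the points above where the separate treatment of $n$ changes nothing.
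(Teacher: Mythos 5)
Your argument is correct and is essentially the same as the proof of \cite[Proposition~4.1]{DKS}, which this paper simply quotes rather than reproves: apply Lemma~\ref{lemma:iteratedVDC} to $a(n;h_1,\dots,h_s)=\prod_m T_{q_m(n;h_1,\dots,h_s)}g_m$, use $T$-invariance of $\mu$ to shift by $T_{-q(n;h_1,\dots,h_s)}$, discard the constant-in-$n$ iterates via Cauchy--Schwarz and boundedness, and absorb the $h$-dependent shifts $\tilde p_{i,j}$ into the functions to recover $S(\partial_qA,\tau)$, with non-degeneracy of $\partial_qA$ being immediate from Steps~2--3 of the construction. The only cosmetic slip is attributing the passage from exponent $2\tau$ to $\tau$ partly to the Cauchy--Schwarz step, whereas it comes entirely from the van der Corput lemma; this does not affect the validity of the proof.
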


The following crucial result (cf. \cite[Theorem~4.2]{DKS}) shows that when we start with a PET-tuple which is 1-standard for a function, then, after finitely many vdC-operations, we arrive at a new PET-tuple of degree 1 which is still 1-standard for the same function, so we can then use some Host-Kra seminorm to bound the $\limsup$ of the average of interest.
 
 \begin{theorem}\label{PET} 
 	Let	$(X,\mathcal{B},\mu,(T_{g})_{g\in\mathbb{Z}^{d}})$ be a $\mathbb{Z}^{d}$-system and $f\in L^{\infty}(\mu)$. Let $A=(L,s,k,\g,\q)$ be a non-degenerate PET-tuple which is  1-standard for $g_1$. Then, there exist $\rho_{1},\dots,\rho_{t}\in\mathbb{N},$ for some $t\in \mathbb{N}_0,$ such that
 for all $1\leq t'\leq t$, $\partial_{\rho_{t'}}\dots\partial_{\rho_{1}}A$ is a  non-degenerate PET-tuple which is still 1-standard for $g_1$, and that $\partial_{\rho_{t'-1}}\dots\partial_{\rho_{1}}A\to\partial_{\rho_{t'}}\dots\partial_{\rho_{1}}A$ is 1-inherited. Moreover,  $\deg(\partial_{\rho_{t}}\dots\partial_{\rho_{1}}A)=1$.\footnote{ In \cite[Theorem~4.2]{DKS} the PET tuple is not required to be 1-standard nor 1-inherited; this comes at no extra cost as the polynomials chosen at each step to run the vdC-operation are of minimum degree.}
 \end{theorem}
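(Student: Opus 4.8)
The plan is to run the variant of PET induction established in \cite[Theorem~4.2]{DKS}, while additionally tracking the location of the distinguished function $g_1$ so that $1$-standardness is preserved through $1$-inherited steps. Thus I would argue by induction on the Bergelson--Leibman weight of the polynomial tuple: declaring $q_i\sim q_j$ when $\deg(q_i-q_j)<\deg(q_i)=\deg(q_j)$, attach to $\q=(q_1,\dots,q_k)$ the vector $w(\q)=(w_K,\dots,w_1)$, where $w_j$ counts the $\sim$-classes of polynomials of degree exactly $j$ among $q_1,\dots,q_k$, and order such vectors lexicographically with the top degree most significant. If $\deg(\q)=1$ we are done with $t=0$.

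If $\deg(\q)\geq 2$, the polynomial $q$ to be subtracted in the vdC-operation should be chosen of \emph{minimal degree} among $q_1,\dots,q_k$; when all of them already have the common degree $K=\deg(A)$ and at least two $\sim$-classes occur, $q$ should moreover be taken in a class distinct from that of $q_1$ (and when there is a single top-degree class, $q=q_1$ is allowed). Exactly as in \cite[Theorem~4.2]{DKS}, this choice strictly decreases $w(\q)$ in the lexicographic order (the constant polynomials deleted in Step~2 of the vdC-operation do not affect this), and since the lexicographic order on $\mathbb{N}_0^{K}$ is well-founded, finitely many operations bring us to a tuple of degree $1$; recording which polynomial was subtracted at each step produces the indices $\rho_1,\dots,\rho_t$.

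It remains to check that, with the above rule, every step is $1$-inherited and $\partial_q A$ is again $1$-standard for $g_1$; this is where the new work lies. Writing $q_1^{\ast}(n;h_1,\dots,h_{s+1})=q_1(n+h_{s+1};h_1,\dots,h_s)-q(n;h_1,\dots,h_s)$ and using that the substitution $n\mapsto n+h_{s+1}$ leaves the top-degree-in-$n$ homogeneous part unchanged, the homogeneous part of $q_1^{\ast}$ of degree $\deg(q_1)$ in $n$ coincides with that of $q_1-q$; by the choice of $q$ this is nonzero except when $q=q_1$, in which case $\deg(q_1^{\ast})=K-1\geq 1$. In all cases $q_1^{\ast}$ is non-constant, and it is essentially distinct from every other non-constant $q_m^{\ast}$: for $1\leq m\leq k$ one has $q_1^{\ast}-q_m^{\ast}=(q_1-q_m)(n+h_{s+1};h_1,\dots,h_s)$, non-constant since $q_1,q_m$ are; for $k+1\leq m\leq 2k$, $q_1^{\ast}-q_m^{\ast}$ is $q_1(n+h_{s+1})-q_{m-k}(n)$ up to the $h$-variables, whose degree in $n$ equals $\deg(q_1)$ unless $q_1\sim q_{m-k}$, while if $q_1\sim q_{m-k}$ (this expression being simply $q_1(n+h_{s+1})-q_1(n)$ when $m-k=1$) the surviving lower-order term is still non-constant in $n$. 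Consequently $q_1^{\ast}$ forms a singleton group in Step~2, which we label first, so $q_1'=q_1^{\ast}$ and $g_1'=g_1^{\ast}=g_1$: the step is $1$-inherited. Finally, each $q_m^{\ast}$ has degree in $n$ at most $\deg(q_m)\leq K$, so $\deg(\partial_q A)=\deg(q_1')$, i.e.\ $\partial_q A$ is $1$-standard for $g_1$. Iterating gives the theorem.

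The main obstacle I anticipate is precisely the degenerate configurations in the third paragraph, where many (or all) of the $q_i$ share the top degree, or even the top homogeneous part in $n$: there one must subtract exactly the right representative so that, simultaneously, $q_1^{\ast}$ retains the maximal degree of the new tuple, $q_1^{\ast}$ is not absorbed into a larger group, and the weight genuinely drops. Formulating a single tie-breaking rule that covers all such cases, and checking its compatibility with the weight bookkeeping of \cite{DKS}, is the delicate point.
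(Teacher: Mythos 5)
Your proposal is correct and follows essentially the same route as the paper, which proves this statement simply by invoking the PET/weight induction of \cite[Theorem~4.2]{DKS} with minimum-degree subtraction and observing (in a footnote) that $1$-standardness and $1$-inheritedness are preserved at no extra cost under that choice. The tie-breaking rule you worry about at the end is already settled by your second and third paragraphs (subtract a minimal-degree polynomial not equivalent to $q_1$ unless all polynomials share the top homogeneous part, in which case subtract $q_1$ itself), and your verification that $q_1^{\ast}$ stays non-constant, ungrouped, and of maximal degree is precisely the content of the paper's "no extra cost" remark.
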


If $A$ is an $m$-standard for the function $f$ PET-tuple then, by rearranging the terms if necessary, one can get a new tuple $A'$ which is 1-standard for $f$ with $S(A,\tau)=S(A',\tau)$. However, if $A$ is semi-standard but not standard for $f$, then the PET-induction does not work well enough to provide an upper bound for $S(A,\tau)$ in terms of some Host-Kra seminorm of $f$. To overcome this difficulty one follows \cite{DKS}. More specifically, using \cite[Proposition~6.3]{DKS}, which is a  ``dimension-increment'' argument, $A$ can be transformed into a new PET-tuple which is 1-standard for $f$ (at the cost of increasing the dimension from $L$ to $2L$ which is harmless for our approach). So, following this procedure, for any fixed function $f,$ we may assume without loss of generality that the corresponding polynomial iterate $p$ is of maximum degree, making the PET-tuple, after potential rearrangement of the terms, $1$-standard for $f$. A combination of the previous results will allow us to obtain the required upper bound for each function.

\section{Finding a characteristic factor}\label{s4}

This lengthy section is dedicated in proving Theorem~\ref{5555}. To this end, we need to show two intermediate results, i.e., Propositions~\ref{pet3} and \ref{pet}, which improve two technical results from \cite{DKS}, namely \cite[Proposition~5.6]{DKS}, and \cite[Proposition~5.5]{DKS} respectively. 

Recall that for a subset $A$ of $\mathbb{Q}^{d}$, $G(A)=\text{span}_{\Q}\{a\in A\}\cap \Z^{d}.$
\begin{convention}
For the rest of the paper, for every $\u=(u_{1},\dots,u_{L})\in(\Q^{d})^{L}$, we denote $G(\u)\coloneqq G(\{u_{1},\dots,u_{L}\}).$
\end{convention}

The first result, which enhances \cite[Proposition~5.6]{DKS}, gives a bound for the average of interest by finite step seminorms (recall Convention \ref{co1} for notions on Host-Kra seminorms). To pass from infinite step seminorms to finite step ones, we use the implications of  Propositions~\ref{bessel} and \ref{bessel3}. 

\begin{proposition}[Bounding averaged Host-Kra seminorms by a single one]\label{pet3}
	Let  $s,s',t\in\mathbb{N}$ and $\c_{m}\colon(\mathbb{Z}^{L})^{s}\to(\mathbb{Z}^{d})^{L}, 1\leq m\leq t,$ be polynomials with $\c_{m}\not\equiv {\bf 0}$  given by
	\begin{equation}\label{34}
	\begin{split}
	\c_{m}(h_{1},\dots,h_{s})=\sum_{a_{1},\dots,a_{s}\in\mathbb{N}_0^{L},\vert a_{1}\vert+\dots+\vert a_{s}\vert\leq s'}h^{a_{1}}_{1}\dots h^{a_{s}}_{s}\cdot \u_{m}(a_{1},\dots,a_{s})
	\end{split}
	\end{equation}
	for some $$\u_{m}(a_{1},\dots,a_{s})=(u_{m,1}(a_{1},\dots,a_{s}),\dots,u_{m,L}(a_{1},\dots,a_{s}))\in(\mathbb{Q}^{d})^{L}.$$
	Denote
	$$H_{m}\coloneqq G(\{u_{m,i}(a_{1},\dots,a_{s})\colon a_{1},\dots,a_{s}\in\mathbb{N}_0^{L}, 1\leq i\leq L\}).$$

	There exists $D\in\N_0$ depending only on $s,s',L$ such that for every $\mathbb{Z}^{d}$-system  $(X,\mathcal{B},\mu, (T_{g})_{g\in\mathbb{Z}^{d}})$ and every $f\in L^{\infty}(\mu)$, %\andreas{Is the following seminorm defined?}
	\begin{equation}\label{36}
	\begin{split}
	\E^{\square}_{h_{1},\dots,h_{s}\in\mathbb{Z}^{L}}\nnorm{f}_{\{G(\c_{m}(h_{1},\dots,h_{s}))\}_{1\leq m\leq t}}=0\; \text { if }\;\nnorm{f}_{H^{\times D}_{1},\dots,H^{\times D}_{t}}=0.
	\end{split}
	\end{equation}
\end{proposition}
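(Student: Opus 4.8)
The plan is to strip off the average over $h_1,\dots,h_s$ using the Bessel-type inequalities \cref{bessel3} and \cref{bessel2}, and then to identify algebraically the subgroups that survive. We may assume $\|f\|_{L^\infty(\mu)}\le 1$, and we assume $\nnorm{f}_{H_1^{\times D},\dots,H_t^{\times D}}=0$, i.e.\ $\mathbb{E}(f\mid Z_{H_1^{\times D},\dots,H_t^{\times D}})=0$. First I would apply \cref{bessel3} with its index set taken to be a box in $(\Z^L)^s$ and $H_{h,m}\coloneqq G(\c_m(h_1,\dots,h_s))$; letting the box grow, this reduces \eqref{36} to
\[
\E^{\square}_{h_1,\dots,h_s\in\Z^L}\big\|\mathbb{E}\big(f\mid Z_{\{G(\c_m(h_1,\dots,h_s))\}_{1\le m\le t}}\big)\big\|_2^2=0 .
\]

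Next, for a parameter $T=2^{t_0}$ with $t_0=t_0(s,s',L)$ to be fixed, I would invoke \cref{bessel2}: it turns the $h$-average into an average over $T$ independent copies $h^{(1)},\dots,h^{(T)}\in(\Z^L)^s$ and replaces each $G(\c_m(h))$ by the ``summed'' group $\sum_{j=1}^{T}G(\c_{m_j}(h^{(j)}))$, now indexed by $(m_1,\dots,m_T)\in\{1,\dots,t\}^{T}$. Since the new integrand is bounded by $\|f\|_\infty^2$ and, by \cref{ag}, any subset of the space of tuples $(h^{(1)},\dots,h^{(T)})$ cut out by a non-trivial polynomial identity has density $0$, the statement is reduced to the algebraic claim: \emph{there exist $T$ and $D$ depending only on $s,s',L$ such that, outside a set of density $0$,}
\[
Z_{\{\sum_{j=1}^{T}G(\c_{m_j}(h^{(j)}))\}_{(m_1,\dots,m_T)\in\{1,\dots,t\}^{T}}}(\mathbf{X})\ \subseteq\ Z_{H_1^{\times D},\dots,H_t^{\times D}}(\mathbf{X}) .
\]
Granting this, the tower property of conditional expectations gives $\mathbb{E}(f\mid Z_{\{\sum_{j}G(\c_{m_j}(h^{(j)}))\}})=0$ for generic $(h^{(1)},\dots,h^{(T)})$, so the corresponding average vanishes; tracing back through \cref{bessel2} and \cref{bessel3}, \eqref{36} follows.

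To prove the algebraic claim I would linearise in the monomials. Write $\Phi(h)=\big(h_1^{a_1}\cdots h_s^{a_s}\big)_{a_1,\dots,a_s\in\N_0^L,\ |a_1|+\dots+|a_s|\le s'}\in\Z^{P}$, so that $P=P(s,s',L)$. Each component $\c_{m,i}(h)$ of $\c_m(h)$ is linear in $\Phi(h)$, say $\c_{m,i}(h)=U_{m,i}\Phi(h)$ with $U_{m,i}$ the rational $d\times P$ matrix whose columns are the $u_{m,i}(a)$, and by construction $\text{span}_{\Q}(H_m)=\sum_{i=1}^{L}U_{m,i}\Q^{P}$. Since the monomials appearing in $\Phi$ are linearly independent as functions on $(\Z^L)^s$, the polynomial $(h^{(1)},\dots,h^{(P)})\mapsto\det\big[\Phi(h^{(1)}),\dots,\Phi(h^{(P)})\big]$ is not identically zero, hence nonzero on a set of density $1$ by \cref{ag}; for such tuples the vectors $\Phi(h^{(j)})$ span $\Q^{P}$, so $\sum_{j=1}^{P}G\big(\{\c_{m,i}(h^{(j)})\colon 1\le i\le L\}\big)$ has the same $\Q$-span as $H_m$ and is therefore a finite index subgroup of $H_m$ (\cref{lemma:basic_properties_G()}(iv)). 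Feeding this into \cref{prop:basic_properties}—finite index invariance of the factors, monotonicity in the subgroups, and the inclusion $Z_{\dots,A+B,\dots}\subseteq Z_{\dots,A,B,\dots}$, all consequences of that proposition—together with \cref{misc01} to absorb the remaining coefficients, one rewrites each summed group in terms of the $H_m$'s and obtains the asserted inclusion of characteristic factors.

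The main obstacle is precisely this rewriting: one must choose $T$ and, above all, the number of repetitions $D$ so that they depend only on $s,s',L$ and \emph{not} on $t$ (nor on $d$). For a ``diagonal'' index $(m,\dots,m)$ the group $\sum_{j}G(\c_m(h^{(j)}))$ is finite index in $H_m$ once $T\ge P$, but for a ``mixed'' index $(m_1,\dots,m_T)$ the group $\sum_{j=1}^{T}G(\c_{m_j}(h^{(j)}))$ must be matched against a single $H_m$ rather than against the far-too-large $\sum_m H_m$, and doing this uniformly in $t$ is exactly the improvement ((II) in the Introduction) of \cref{pet3} over \cite[Proposition~5.6]{DKS}; it is here that the refined coefficient-tracking carried out alongside the PET induction is needed (cf.\ \cref{alg3} and \cref{s:3}). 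Checking, in addition, that the various averages interact correctly with the relevant $\limsup$'s is routine, but still has to be done.
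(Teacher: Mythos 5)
Your opening reduction---\cref{bessel3} followed by \cref{bessel2}, with the summed groups replaced by $G(\c_{m_1}(\h^{1}),\dots,\c_{m_W}(\h^{W}))$ up to finite index via \cref{lemma:basic_properties_G()}(iv)---is exactly how the paper's proof begins, and your treatment of the diagonal indices (generic nonvanishing of the determinant of monomial values, via \cref{ag}, so that $G(\c_m(\h^{1}),\dots,\c_m(\h^{P}))\supseteq H_m$) is also the paper's argument. But the proposal has a genuine gap at exactly the point you flag: the mixed indices $(m_1,\dots,m_T)$ are left unhandled, and the tool you point to for them---the coefficient-tracking (types and symbols) machinery accompanying the PET induction, cf.\ \cref{alg3} and Section~\ref{s:3}---is not what closes this case; that machinery is used for \cref{pet} and plays no role in \cref{pet3}. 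The actual resolution is elementary. It suffices that every group $G(\c_{m_1}(\h^{1}),\dots,\c_{m_W}(\h^{W}))$ contain \emph{some} $H_m$ (no matching of a mixed tuple to a prescribed single $H_m$ is needed, since one then passes to $Z_{H_1^{\times D},\dots,H_t^{\times D}}$ with enough repetitions using \cref{prop:basic_properties}(v),(vi)). Take $W=2^{w}$ with $2^{w}\ge t\,(s'+1)^{sL}$. By the pigeonhole principle, any tuple $(m_1,\dots,m_W)\in\{1,\dots,t\}^{W}$ has at least $W'$ entries equal to a common value $m$, where $W'\le (s'+1)^{sL}$ is the number of monomials $h_1^{a_1}\cdots h_s^{a_s}$ with $\vert a_1\vert+\dots+\vert a_s\vert\le s'$; for those entries your diagonal argument (nonsingularity of the $W'\times W'$ matrix of monomial values, which fails only on a set of density $0$ by \cref{ag}) already gives $G(\c_{m_1}(\h^{1}),\dots,\c_{m_W}(\h^{W}))\supseteq G(\c_m(\h^{i_1}),\dots,\c_m(\h^{i_{W'}}))\supseteq H_m$. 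Intersecting over the finitely many $W'$-element subsets of $\{1,\dots,W\}$ leaves a set of tuples $(\h^{1},\dots,\h^{W})$ of density $1$ on which all the conditional expectations in the Bessel bound vanish under the hypothesis $\nnorm{f}_{H_1^{\times D},\dots,H_t^{\times D}}=0$, and the proposition follows.

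A related point: what made the mixed case look intractable to you is the insistence that $T$ and $D$ be independent of $t$. The paper's own proof neither achieves nor needs this: its $W$, and hence its $D$, depend on $t$ as well as on $s,s',L$ (the phrasing of the statement notwithstanding), and this is harmless for \cref{5555} because the quantities $t_i$ produced by \cref{pet} depend only on $d,k,K,L$. So, as written, your proposal reduces \cref{pet3} to an ``algebraic claim'' whose proof is missing and is being sought with the wrong tools under an unnecessarily strong uniformity requirement; the pigeonhole-plus-determinant step above is the missing ingredient.
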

We start by explaining the idea behind Proposition \ref{pet3} with an example:
 
 \begin{example}\label{ex04}
 	Let $p_{1},p_{2}\colon\Z\to\Z^{2}$ be polynomials given by $p_{1}(n)=(n^{2}+n,0)$ and $p_{2}(n)=(0,n^{2})$, and $(X,\mathcal{B},\mu,(T_{g})_{g\in\Z^{2}})$ be a $\Z^{2}$-system.
 Consider the following expression: \[\F\vl\Bigl\Vert\mathbb{E}_{n\in I_{N}} T_{p_{1}(n)}f_1\cdot T_{p_{2}(n)}f_2 \Bigr\Vert_{2}.\]
 
 Put $e_{1}=(1,0)$, $e_{2}=(0,1)$ and $e=(1,-1)$.
 Using \cite[Proposition~5.5]{DKS}, we get 
 \begin{equation}\label{1234}
 	\begin{split}
 		\F\vl\Bigl\Vert\mathbb{E}_{n\in I_{N}} T_{p_{1}(n)}f_1\cdot T_{p_{2}(n)}f_2 \Bigr\Vert_{2}
 		 \leq C \cdot  \overline{\mathbb{E}}^{\square}_{\bold{h}\in\mathbb{Z}^{3}} \nnorm{f_{1}}_{G(\c_{1}(\bold{h})),\dots, G(\c_{7}(\bold{h}))},
 	\end{split}
 \end{equation}
 where $C$ is a universal constant, and
 \begin{equation}\nonumber
 	\begin{split}
 	& \c_{1}(h_{1},h_{2},h_{3})=-2h_{1}e_{1},
 	\\& \c_{2}(h_{1},h_{2},h_{3})=2h_{2}e,
 	\\& \c_{3}(h_{1},h_{2},h_{3})=-2h_{1}e_{1}+2h_{2}e,
 	\\& \c_{4}(h_{1},h_{2},h_{3})=2h_{3}e,
 	\\& \c_{5}(h_{1},h_{2},h_{3})=-2h_{1}e_{1}+2h_{3}e,
 	\\& \c_{6}(h_{1},h_{2},h_{3})=2(h_{2}+h_{3})e,
 	\\& \c_{7}(h_{1},h_{2},h_{3})=-2h_{1}e_{1}+2(h_{2}+h_{3})e.
 	\end{split}
 \end{equation}	
Using \cite[Proposition~5.6]{DKS}, one can show that if $\nnorm{f_{1}}_{e^{\times D}_{1},e^{\times D}}=0$ for all $D\in\N$, then the right hand side  of (\ref{1234}) is 0. In this paper, we strengthen this result by only assuming that $\nnorm{f_{1}}_{e^{\times D}_{1},e^{\times D}}=0$ for some $D\in\N$.

 Indeed, using  Proposition~\ref{bessel3} (for $I=[-N,N]^{3},$ letting $N\to\infty$) and Corollary~\ref{bessel2}, we have that the right hand side of (\ref{1234}) is 0 if
  $$\E_{\bold{h},\bold{h}'\in\Z^{3}}^{\square}\Bigl\Vert \mathbb{E}(f\vert Z_{\{G(\c_{i}(\bold{h}),\c_{j}(\bold{h'}))\}_{1\leq i,j\leq 7}})\Bigr\Vert^{2}_{2}=0.\footnote{Strictly speaking, Proposition~\ref{bessel3} only implies that the right hand side of (\ref{1234}) is 0 if
  $\E_{\bold{h},\bold{h}'\in\Z^{3}}^{\square}\Bigl\Vert \mathbb{E}(f\vert Z_{\{G(\c_{i}(\bold{h}))+G(\c_{j}(\bold{h'}))\}_{1\leq i,j\leq 7}})\Bigr\Vert^{2}_{2}=0.$ However, since $G(\c_{i}(\bold{h}))+G(\c_{j}(\bold{h'}))$ is a finite index subgroup of $G(\c_{i}(\bold{h}),\c_{j}(\bold{h'}))$, we can use Lemma \ref{lemma:basic_properties_G()} (iv) and replace it by the latter as a seminorm subindex.}$$
  On the other hand, for ``almost all'' $\bold{h},\bold{h}'\in\Z^{3}$, the group $G(\c_{i}(\bold{h}),\c_{j}(\bold{h'}))$ 
  equals $\Z e_{1}$ if $i=j=1$, $\Z e$ if $i,j\in\{2,4,6\}$ and $\Z^{2}$ otherwise. So, $Z_{\{G(\c_{i}(\bold{h}),\c_{j}(\bold{h'}))\}_{1\leq i,j\leq 7}}$ is contained in $Z_{e_{1},e^{\times 9},(\Z^{2})^{\times 39}}$,
  which is contained in $Z_{e_{1}^{\times 25},e^{\times 25}}$ by Proposition \ref{prop:basic_properties}. Hence, the right hand side of (\ref{1234}) is $0$ if $\nnorm{f_{1}}_{e_{1}^{\times 25},e^{\times 25}}=0$. 
 \end{example}

 We now prove the general case.
 \begin{proof}[Proof of Proposition \ref{pet3}] We may assume without loss of generality that $\|f\|_{L^{\infty}(\mu)}\leq 1$. 
 	For convenience, denote ${\bf{h}}\coloneqq (h_{1},\dots,h_{s})$.
 	Using Proposition~\ref{bessel3}, and Corollary~\ref{bessel2} for $I=[-N,N]^{sL},$ and then letting $N\to\infty$, we have that 
 		\begin{equation}\label{41}
 		\begin{split}
 		\Bigl(\E^{\square}_{\bold{h}\in\mathbb{Z}^{sL}}\nnorm{f}_{\{G(\c_{m}(\bold{h}))\}_{1\leq m\leq t}}\Bigr)^{2^{t}W}\leq\E^{\square}_{\bold{h}^{1},\dots,\bold{h}^{W}\in \mathbb{Z}^{sL}}\left\Vert \mathbb{E}(f\vert Z_{\{G(\c_{m_{1}}(\bold{h}^{1}),\dots,\c_{m_{W}}(\bold{h}^{W}))\}_{1\leq m_{1},\dots,m_{W}\leq t}})\right\Vert^{2}_{2}
 		\end{split}
 		\end{equation}
 		for all $W=2^{w}, w\in\N_0$. Let $w$ be the smallest integer such that $2^{w}\geq t(s'+1)^{sL}$ and $\Omega$ the set of $(\bold{h}^{1},\dots,\bold{h}^{W})\in (\Z^{sL})^{W}$ such that for every $1\leq m_{1},\dots,m_{W}\leq t$, the group $G(\c_{m_{1}}(\bold{h}^{1}),\dots,\c_{m_{W}}(\bold{h}^{W}))$ contains at least one of $H_{1},\dots,H_{t}$. By \cref{prop:basic_properties},  for every $(\bold{h}^{1},\dots,\bold{h}^{W})\in \Omega$,  
 		$Z_{\{G(\c_{m_{1}}(\bold{h}^{1}),\dots,\c_{m_{W}}(\bold{h}^{W}))\}_{1\leq m_{1},\dots,m_{W}\leq t}}$ is a factor of  $Z_{H^{\times D}_{1},\dots,H^{\times D}_{t}}$ for $D\coloneqq s^{W}$,
 		and thus
$\mathbb{E}(f\vert Z_{\{G(\c_{m_{1}}(\bold{h}^{1}),\dots,\c_{m_{W}}(\bold{h}^{W}))\}_{1\leq m_{1},\dots,m_{W}\leq t}})=0$ since $\nnorm{f}_{H^{\times D}_{1},\dots,H^{\times D}_{t}}=0$.
 		So, by \eqref{41}, it suffices to show that
 		 $\Omega$ is of upper density 1.
 		Let  $\tilde{\bold{h}}\coloneqq (\bold{h}^{1},\dots,\bold{h}^{W})\in (\Z^{sL})^{W}$ and $1\leq m_{1},\dots,m_{W}\leq t$. 
 		By the pigeonhole principle, at least $(s'+1)^{sL}$ many of the $m_{1},\dots,m_{W}$ take the same value. 
 		Assume that $m_{i_{1}}=\dots=m_{i_{W'}}=m$, where
 		$W'\leq (s'+1)^{sL}$ is 
 		the number of $a_{1},\dots,a_{s}\in\mathbb{N}_0^{L}$ with $\vert a_{1}\vert+\dots+\vert a_{s}\vert\leq s'$.
 		Note that $W'$ depends only on $s',s$ and $L$.
 		 Write $\bold{h}^{i}\coloneqq (h_{i,1},\dots,h_{i,s})\in\Z^{sL}$ and consider the $W'\times W'$ matrix
 		$$A_{i_{1},\dots,i_{W'}}(\tilde{\bold{h}})\coloneqq (h_{i_{j},1}^{a_{1}}\dots h_{i_{j},s}^{a_{s}})_{a_{1},\dots,a_{s}\in\mathbb{N}_0^{L},\vert a_{1}\vert+\dots+\vert a_{s}\vert\leq s',1\leq j\leq W'}.$$
 		If $\det(A_{i_{1},\dots,i_{W'}}(\tilde{\bold{h}}))\neq 0$, then by the definition of $\c_{m}(\bold{h})$, we have that 
 		$$G(\c_{m_{1}}(\bold{h}^{1}),\dots,\c_{m_{W}}(\bold{h}^{W}))\supseteq G(\c_{m}(\bold{h}^{i_{1}}),\dots,\c_{m}(\bold{h}^{i_{W'}}))\supseteq H_{m}.$$
 		In conclusion, $\tilde{\bold{h}}\in\Omega$ if for all $1\leq i_{1}< \dots< i_{W'}\leq W$, $\det(A_{i_{1},\dots,i_{W'}}(\tilde{\bold{h}}))\neq 0$.
 		
 		Thus, it suffices to show that for all $1\leq i_{1}< \dots< i_{W'}\leq W$, the set of $\tilde{\bold{h}}\in(\Z^{sL})^{W}$ with $\det(A_{i_{1},\dots,i_{W'}}(\tilde{\bold{h}}))=0$ is of density 0. We may assume without loss of generality that $i_{1}=1,\dots, i_{W'}=W'$. 
 		Note that $\det(A_{1,\dots,W'}(\tilde{\bold{h}}))$ is a polynomial in $h_{i,j}, 1\leq i\leq W',$ $1\leq j\leq s$. 
 		Looking at the term $h_{1,1}^{(s,0,\dots,0)}h_{2,2}^{(s,0,\dots,0)}\dots h_{W',W'}^{(s,0,\dots,0)}$, we see that $\det(A_{1,\dots,W'}(\tilde{\bold{h}}))$ is a non-constant polynomial. Therefore, the set of solutions to $\det(A_{1,\dots,W'}(\tilde{\bold{h}}))=0$ is of 0 density by Lemma~\ref{ag}, completing the argument.
 \end{proof}	
 The second statement, which strengthens \cite[Proposition~5.5]{DKS}, is the following (check Definition~\ref{d21} for the various notions appearing in the statement): 
 
 \begin{proposition}[Bounding the average by averaged Host-Kra seminorms]\label{pet}
	Let $d,k,L\in\mathbb{N},$ $\p=(p_{1},\dots,p_{k})$,  $p_{1},\dots,p_{k}\colon\mathbb{Z}^{L}\to \mathbb{Z}^{d}$  a family of essentially distinct polynomials of degrees at most $K,$ with $p_{i}(n)=\sum_{v\in\mathbb{N}_0^{L}, \vert v\vert \leq K}b_{i,v}n^{v}$ for some $b_{i,v}\in \mathbb{Q}^{d}$.
	There exist $s,s',t_{1},\dots,t_{k}\in\mathbb{N}$ depending only on $d,k,K,L$,\footnote{ The fact that $s,s'$ depend only on $d, k, K, L$ was not stated in \cite[Proposition~5.1]{DKS}, but it can be derived from its proof.}  and polynomials $\c_{i,m}\colon(\mathbb{Z}^{L})^{s}\to(\mathbb{Z}^{d})^{L}$, $1\leq i\leq k$, $1\leq m\leq t_{i}$, with $\c_{i,m}\not\equiv {\bf 0},$ such that the following hold:
	\begin{itemize}
		\item[(i)] (Control of the coefficients) Each $\c_{i,m}$ is of the form
		\[\c_{i,m}(h_{1},\dots,h_{s})=\sum_{a_{1},\dots,a_{s}\in\mathbb{N}_0^{L},\vert a_{1}\vert+\dots+\vert a_{s}\vert\leq s'}h^{a_{1}}_{1}\dots h^{a_{s}}_{s}\cdot \v_{i,m}(a_{1},\dots,a_{s})\]
		for some  $\v_{i,m}(a_{1},\dots,a_{s})=(v_{i,m,1}(a_{1},\dots,a_{s}),\dots,v_{i,m,L}(a_{1},\dots,a_{s}))\in(\mathbb{Q}^{d})^{L},$
		which is a polynomial function in terms of the coefficients of $p_{i}, 1\leq i\leq k,$ and whose degree depends only on $d,k,K,L$.

		In addition, denoting 
 		 \begin{equation}\nonumber
 		 \begin{split}
 		 H_{i,m}\coloneqq G(\{v_{i,m,j}(a_{1},\dots,a_{s})\colon a_{1},\dots,a_{s}\in\mathbb{N}_0^{L},1\leq j\leq L\}),
 		 \end{split}
 		 \end{equation}
 		we have that $H_{i,m}$ contains one of $G_{i,j}(\p)$, $0\leq j\leq k, j\neq i$.
		
		\item[(ii)] (Control of the average) For every $\mathbb{Z}^{d}$-system $(X,\mathcal{B},\mu, (T_{g})_{g\in\mathbb{Z}^{d}})$ and every $f_{1},\dots,$ $f_{k}\in L^{\infty}(\mu)$ bounded by $1,$ we have that 
		\begin{equation}\label{30}
		\F\vl\Bigl\Vert\mathbb{E}_{n\in I_{N}}\prod_{i=1}^k T_{p_{i}(n)}f_i\Bigr\Vert^{2^{t_0}}_{2}
		\leq C \cdot \min_{1\leq i\leq k}\E^{\square}_{h_{1},\dots,h_{s}\in\mathbb{Z}^{L}}\nnorm{f_{i}}_{(G(\c_{i,m}(h_{1},\dots,h_{s})))_{1\leq m\leq t_{i}}},
		\end{equation}
	\end{itemize}
	where $t_0$ and $C>0$ are constants depending only on $\p$.\footnote{ Actually both $t_0$ and $C$ depend on $d, k, L$ and the highest degree of $p_{1},\dots,p_{k}$. More specifically, $t_0$ can be chosen to be the $\max\{t_1,\ldots,t_k\},$ where $t_i$ is the number of vdC-operations that we have to perform in order for the PET tuple to be non-degenerate, $1$-standard for $f_i$ and with degree equal to $1.$ }
\end{proposition}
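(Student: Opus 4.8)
The plan is to combine the PET-induction machinery of Section \ref{s:3} with the two applications of the van der Corput lemma (Lemmas \ref{Kron} and \ref{Kron2}), while carefully tracking the coefficients of the polynomials through each vdC-operation. First I would fix an index $i$ with $1\leq i\leq k$; since we seek a bound controlled by a seminorm of $f_i$, I would invoke the dimension-increment argument (\cite[Proposition~6.3]{DKS}) to reduce, at the cost of replacing $L$ by $2L$, to the case where the PET-tuple $A_0=(L,0,k,\g,\p)$ is $1$-standard for $f_i$ (i.e.\ $\deg(p_i)$ is maximal, and after rearrangement $p_i$ is the first polynomial). Then I would apply Theorem \ref{PET}: there exist $\rho_1,\dots,\rho_{t_i}\in\mathbb{N}$ (with $t_i$ depending only on $d,k,K,L$, since at each step the polynomial chosen for the vdC-operation has minimal degree) such that $A_{t_i}:=\partial_{\rho_{t_i}}\cdots\partial_{\rho_1}A_0$ is non-degenerate, $1$-standard for $f_i$, still with $q_1'=$ the leading (degree-$1$, after the chain) polynomial acting on $f_i$, and $\deg(A_{t_i})=1$. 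Iterating Proposition \ref{induction} gives $S(A_0,2^{t_i}\tau)\leq C\cdot S(A_{t_i},\tau)$ for suitable $\tau$; taking $\tau=1$ and using that after $t_i$ vdC-operations every term is linear in $n$, an application of Lemma \ref{Kron2} (applied to the product structure produced by the inner parameters $h_1,\dots,h_s$, with $s$ depending only on $t_i$ hence only on $d,k,K,L$) yields
\[
\F\vl\Bigl\Vert\mathbb{E}_{n\in I_N}\prod_{i=1}^k T_{p_i(n)}f_i\Bigr\Vert_2^{2^{t_0}}\leq C\cdot\E^{\square}_{h_1,\dots,h_s\in\mathbb{Z}^L}\nnorm{f_i}_{(G(\c_{i,m}(h_1,\dots,h_s)))_{1\leq m\leq t_i}},
\]
which is (ii); taking the minimum over $i$ finishes that part.

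The harder half is (i): producing the explicit polynomials $\c_{i,m}$ and proving the containment $G_{i,j}(\p)\subseteq H_{i,m}$ for some $j$. The idea is to run the PET-induction while recording, at each vdC-step, how the top-degree coefficients of the surviving polynomials evolve. When we perform $\partial_q A$ with $q=q_\ell$ of minimal degree, Step 1 replaces $q_m$ by $q_m(n+h_{s+1};\cdot)-q_\ell(n;\cdot)$ and $q_{m-k}$ by $q_{m-k}(n;\cdot)-q_\ell(n;\cdot)$; the leading (in $n$) coefficient of a difference $q_m-q_\ell$ is exactly $b_{m,v}-b_{\ell,v}$ over the top multidegree, while a shift $q_m(n+h)-q_m(n)$ lowers the $n$-degree and produces, as its new leading coefficient, a linear-in-$h$ multiple of $b_{m,v}$. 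Thus after the full chain of $t_i$ operations, each resulting linear polynomial $\c_{i,m}$ has $n$-coefficient vector of the form $\v_{i,m}(a_1,\dots,a_s)$, a polynomial (in $h_1,\dots,h_s$ variables indexed by $(a_1,\dots,a_s)$) whose coefficient vectors lie in the $\Q$-span of $\{b_{i,v}-b_{j,v}:|v|=d_{i,j}\}$ for appropriate $j$ — i.e.\ span a subspace containing $G_{i,j}(\p)$. Intersecting with $\Z^d$ and using Lemma \ref{lemma:basic_properties_G()} gives $H_{i,m}\supseteq G_{i,j}(\p)$. The key is that the $1$-inherited property in Theorem \ref{PET} guarantees that the polynomial attached to $f_i$ is never grouped or dropped, so its coefficient data is tracked cleanly throughout.

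The main obstacle I anticipate is exactly this coefficient-tracking in (i): in \cite[Proposition~5.5]{DKS} the bookkeeping was coarser and produced groups $H_{i,m}$ that were too small (containing only $G(\{b_{i,v},b_{i,v}-b_{j,v}\})$-type data rather than the larger $G_{i,j}(\p)$), and the improvement here — generalization (II) from Subsection \ref{s13} — requires a genuinely more refined invariant that survives the grouping in Step 2 and the relabelling in Step 3 of the vdC-operation. Concretely, one must show that whenever two polynomials $\tilde q_{i,j}$ are grouped (because they differ only in the $h$-variables), their common leading $n$-coefficient is still an element of (or spans, together with the other groups' data, a subspace containing) the relevant $G_{i,j}(\p)$; this is where Lemma \ref{misc01} and Lemma \ref{alg3} enter, to pass from "the coefficient is a fixed polynomial expression in $h$ with values in a $\Q$-span" to "for a positive-density set of $h$, the span of the realized values is the full group." I would isolate this as the technical heart of the section (matching the paper's remark that achieving (II) is "the most technical part"), and handle the degree bound on $\v_{i,m}$ by a straightforward induction on the number of vdC-operations, each of which raises the $h$-degree of the coefficient data by at most $1$.
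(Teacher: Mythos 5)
Your outline of part (ii) follows the paper's route: reduce to a $1$-standard tuple via the dimension-increment argument of \cite[Proposition~6.3]{DKS}, run Theorem \ref{PET} together with Proposition \ref{induction} to reach a degree-one family, and bound the resulting linear average by an averaged Host--Kra seminorm. One mis-citation there: the linear-stage bound is not Lemma \ref{Kron2} (that lemma concerns a product system $X^{k}$ with a tensor-product function and an exponential weight, and is used later for Theorem \ref{j2}); what is needed here is \cite[Proposition~6.1]{DKS} when $\ell\geq 2$, and the mean ergodic theorem when $\ell=1$, applied to the iterates $T_{\mathbf{d}_m(h)\cdot n+r_m(h)}$ acting on the functions $g_m(\cdot;h_1,\dots,h_s)$ on the original system.

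The genuine gap is in part (i). Your argument that each surviving linear polynomial has coefficient data ``lying in the $\Q$-span of $\{b_{i,v}-b_{j,v}:|v|=d_{i,j}\}$'' points in the wrong direction: membership of the $\v_{i,m}(a_1,\dots,a_s)$ in such a span would only give $H_{i,m}\subseteq G_{i,j}(\p)$-type information, whereas the proposition requires the reverse containment $H_{i,m}\supseteq G_{i,j}(\p)$, i.e.\ that for a \emph{single} index $j$ (consistent across all level data of $\c_{i,m}$) a nonzero rational multiple of $b_{i,v'}-b_{j,v'}$ occurs among the coefficient data for \emph{every} top multidegree $v'$ with $|v'|=d_{i,j}$. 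After many vdC-operations the coefficients are mixtures of shifts and differences, and nothing in your sketch forces either the consistency of $j$ across levels or the presence of all top-degree differences; this is exactly where the coarser bookkeeping of \cite[Proposition~5.5]{DKS} fails and where the paper introduces its new machinery: the type/symbol invariant of level data with properties (P1)--(P4), their preservation under $1$-inherited vdC-operations (Proposition \ref{PET2}), and the domination argument of Proposition \ref{12-3}, whose combinatorial Claim produces, for every $v'$ with $|v'|=d_{1,w_m}$, a level $(b';a_1',\dots,a_s')$ carrying $r'(b_{1,v'}-b_{w_m,v'})$ with $r'\neq 0$. Your appeal to Lemmas \ref{misc01} and \ref{alg3} does not fill this hole: those lemmas address positive-density span issues in the $h$-variables (and Lemma \ref{alg3} is used only in the joint-ergodicity argument), whereas the question here concerns which coefficient vectors appear in $\c_{i,m}$ at all; the passage from averaged seminorms over $h$ to a single seminorm is a separate matter, handled afterwards by Proposition \ref{pet3}. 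You correctly identify the coefficient tracking as the technical heart, but the proposal does not supply the invariant that makes it work.
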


Proposition~\ref{pet} improves on \cite[Proposition~5.5]{DKS} as the description of the subgroup $H_{i,m}$ is much more precise than that of the set $U_{i,r}(a_{1},\dots,a_{s})$ defined in the latter. The rest of this section is devoted to proving Proposition~\ref{pet}, which is the most technical result of this paper.

\medskip
 
Next we introduce some convenient notation. 
  Let $\q=(q_{1},\dots,q_{\ell})$ be a tuple of polynomials $q_{i}\colon(\Z^{L})^{s+1}\to\Z^{d},$ $1\leq i\leq \ell,$ where
 \[q_{i}(n;h_{1},\dots,h_{s})=\sum_{b,a_{1},\dots,a_{s}\in\mathbb{N}_0^{L}} h^{a_{1}}_{1}\dots h^{a_{s}}_{s}n^{b}\cdot u_{i}(b;a_{1},\dots,a_{s})\]
  for some $u_{i}(b;a_{1},\dots,a_{s})\in\mathbb{Q}^{d},$ $1\leq i\leq \ell$. Then, by writing $$\u(b;a_{1},\dots,a_{s})\coloneqq (u_{1}(b;a_{1},\dots,a_{s}),\dots,u_{\ell}(b;a_{1},\dots,a_{s}))\in(\mathbb{Q}^{d})^{\ell},$$ we can express $\q$ as 
 \[\q(n;h_{1},\dots,h_{s})=\sum_{b,a_{1},\dots,a_{s}\in\mathbb{N}_0^{L}} h^{a_{1}}_{1}\dots h^{a_{s}}_{s}n^{b}\cdot \u(b;a_{1},\dots,a_{s}).\]
 We call $\bold{u}(b;a_{1},\dots,a_{d})$ the \emph{data} of $\bold{q}$ at \emph{level} $(b;a_{1},\dots,a_{d})$, or simply the \emph{level data} of $\bold{q}$.
 
 \medskip
 
 \emph{For the rest of the section, $\p=(p_{1},\dots,p_{k})$ denotes a family of essentially distinct polynomials $ p_{1},\dots,p_{k}\colon\mathbb{Z}^{L}\to \mathbb{Z}^{d}$ of degrees at most $K,$ with $p_{i}(n)=\sum_{v\in\mathbb{N}_0^{L}, \vert v\vert \leq K}b_{i,v}n^{v},$ where $b_{i,v}\in \mathbb{Q}^{d}$.}

\medskip 
 
 One sees that the left hand side of \eqref{30} is $S(A,1)$, where
 	$A$ is the PET-tuple $(L,0,k,\bold{p},$ $(f_{1},\dots,f_{k}))$. 
 To prove Proposition \ref{pet}, we first need to perform a series of vdC-operations to convert $A$ into a PET-tuple $\partial_{\rho_{t}}\dots \partial_{\rho_{1}}A$ of degree 1, and then compare the coefficients of the polynomials in  $A$ with those in $\partial_{\rho_{t}}\dots \partial_{\rho_{1}}A$. Even though the coefficients in the latter are very difficult to compute directly, one can keep track of the connection between them and those of the original polynomial family $\p$.
 This was first achieved in \cite{DKS} by introducing an equivalence relation pertaining to the vdC-operation (see \cite[Section 5.3]{DKS} for details).
 In this paper, we introduce another approach which is more intricate than the one used in \cite{DKS}, but that achieves a better tracking of the coefficients, which in turn gives us a stronger upper bound for the multiple averages. 
 \medskip

Recalling that $b_{w,v}$, $1\leq w\leq k, v\in\N_0^{L}$ are the coefficients that arise from the  family $\p$ (we also put $b_{0,v}\coloneqq\bold{0}\in \Q^{d}$ for all $v\in\N_0^{L}$),
 for $r\in\mathbb{Q}, v\in\N_0^{L}$ and $0\leq i\leq k$, we set
 $$Q_{r,i,v}\coloneqq \{r(b_{w,v}-b_{i,v})\colon 1\leq w\leq k\}.$$
 
 \begin{definition}[Types and symbols of level data]
 For all $b,a_{1},\dots,a_{s},v\in\N_0^{L}$, $r\in\mathbb{Q}$, and $0\leq i\leq k$, we say that $\u(b;a_{1},\dots,a_{s})$ is of \emph{type $(r,i,v)$} if $$u_{1}(b;a_{1},\dots,a_{s}),\dots,u_{\ell}(b;a_{1},\dots,a_{s})\in Q_{r,i,v},\;\text{and}\;\; u_{1}(b;a_{1},\dots,a_{s})=r(b_{1,v}-b_{i,v}).$$ 
 We say that  $\u(b;a_{1},\dots,a_{s})$ is \emph{non-trivial} if at least one of $u_{m}(b;a_{1},\dots,a_{s})$ is nonzero.
 
 Let $\u(b;a_{1},\dots,a_{s})$ be of type $(r,i,v)$. Suppose that \[ (u_{1}(b;a_{1},\dots,a_{s}),\dots,u_{\ell}(b;a_{1},\dots,a_{s}))=
 	(r(b_{w_{1},v}-b_{i,v}),\dots,r(b_{w_{\ell},v}-b_{i,v})) ,\] for some $0\leq w_{1},\dots,w_{\ell}\leq k$. We call $\w:=(w_{1},\dots,w_{\ell})$ a \emph{symbol} of $\u(b;a_{1},\dots,a_{s})$.
 \end{definition}
Note that the definition of type and symbol depend on the prefixed polynomial family $\bf p$ and that we always have $w_{1}=1$.

We use the types and symbols of level data to track the coefficients of PET-tuples. We start with an example to illustrate this concept: 

  \begin{example}\label{ex01}
 Let $\bold{p}=(p_{1},p_{2})$
 be defined as in Example \ref{ex020} and
 let $\bold{q}=\partial_{2}\bold{p}=(q_{1},q_{2},q_{3})$ (recall that by this we mean the polynomial iterates we get after running the vdC-operation, subtracting the second polynomial, $p_2$). 
  In this case, 
    \begin{equation}\nonumber
    \begin{split}
    & \u(0;1)=(b_{1,1},b_{2,1},0) \text{ is of type $(1,0,1)$ and has symbol $(1,2,0)$},
    \\&
    \u(0;2)=(b_{1,2},b_{2,2},0) \text{ is of type $(1,0,2)$ and has symbol $(1,2,0)$},
    \\& \u(1;0)=(b_{1,1}-b_{2,1},0,b_{1,1}-b_{2,1}) \text{ is of type $(1,2,1)$ and has symbol $(1,2,1)$},
 	\\& \u(1;1)=(2b_{1,2},2b_{2,2},0) \text{ is of type $(2,0,2)$ and has symbol $(1,2,0)$},
 	\\& \u(2;0)=(b_{1,2}-b_{2,2},0,b_{1,2}-b_{2,2}) \text{ is of type $(1,2,2)$ and has symbol $(1,2,1)$}.
    \end{split}
    \end{equation}
 \end{example}

 \begin{definition} \label{def:P1--P4}
 Let $S$ denote the set of all $(a,a')\in\N_0^{L}$ such that $a$ and $a'$ are both $\bold{0}$ or both different than $\bold{0}$. Let $\q$ be a polynomial family of degree at least 1. We say that $\q$ satisfies (P1)--(P4) if its level data $\u$ satisfy:

 (P1)  For all $a_{1},\dots,a_{s},b$, there exist $r,i,v$ such that 
 $\u(b;a_{1},\dots,a_{s})$ is of type $(r,i,v)$.

 (P2) Suppose that $\u(b;a_{1},\dots,a_{s})$ is of type $(r,i,v)$, then  $r=\binom{b+a_{1}+\dots+a_{s}}{a_{1},\dots,a_{s}}$  and $v=b+a_{1}+\dots+a_{s}$ (in particular, $r\neq 0$).\footnote{ For $v_{i}=(v_{i,1},\dots,v_{i,L})\in\N_0^{L}, 1\leq i\leq s$, we denote $\binom{v_{1}+\dots+v_{s}}{v_{1},\dots,v_{s-1}}\coloneqq \prod_{j=1}^{L}\frac{(v_{1,j}+\dots+v_{s,j})!}{v_{1,j}!\dots v_{s,j}!}$.}

 (P3) Suppose that $\u(b;a_{1},\dots,a_{s})$ is of type $(r,i,v)$ and  $\u(b';a'_{1},\dots,a'_{s})$ is of type $(r',i',v')$. If  $(a_{1},a'_{1}),\dots,(a_{s},a'_{s})\in S$, then $i=i'$  and $\u(b;a_{1},\dots,a_{s}),$ $\u(b';a'_{1},\dots,a'_{s})$ share a symbol $\bold{w}$.
 
 (P4) For every symbol $(w_{1},\dots,w_{\ell})$ of some $\u(b;a_{1},\dots,a_{s})$, we  have that $w_{1}=1$.

 \end{definition}
 
\medskip 
 
Once again, properties (P1)--(P4) are taken with respect to the prefixed polynomial family $\bold{p}.$ It is obvious that $\bold{p}$ itself satisfies (P1)--(P4). An important feature of the type and symbol of a level data is that properties (P1)--(P4) are preserved under vdC-operations.
 
   \begin{example}\label{ex03}
  	We will verify that the polynomial family $\bold{q}=\partial_{2}\p$ in Example \ref{ex01} satisfies all of (P1)--(P4).	Indeed, (P1) holds as all $\u(0;1),\u(0;2),\u(1;0),\u(1;1),\u(2;0)$ have a type.  For all $0\leq a,b\leq 2, a+b\leq 2$, if $\u(b;a)$ is of type $(r,i,v)$, then it is not hard to see that $r=\binom{b+a}{a}$, so (P2) holds. (P3) can be verified by comparing the types and symbols of the pairs $(\u(0,1),\u(0,2))$ and $(\u(1,0),\u(2,0))$. Finally, (P4) also holds since the first entry of every symbol in $\bold{q}$  is 1.
  \end{example}
 
   We caution the reader that the symbol and type may not be unique if the coefficients $b_{i,v}$ satisfy some algebraic relations. For example, in Example \ref{ex01}, if $b_{1,1}=b_{2,1}$, then both $(1,2,0)$ and $(1,1,0)$ are symbols of $\u(0;1)=(b_{1,1},b_{2,1},0)$. However the following result says that there is always a way to choose symbols and types so that  properties (P1)--(P4) are preserved under vdC-operations:

 \begin{proposition}\label{PET2} 
 	Let $A=(L,s,\ell,\g,\q)$ be a non-degenerate PET-tuple and $1\leq \rho\leq \ell$. 
 	Assume that $A\to\partial_{\rho}A$ is 1-inherited.
 	Suppose that there exists a choice of symbols and types for $A$ which satisfy (P1)--(P4), then there is also a choice of symbols and types for $\partial_{\rho}A$ which satisfy (P1)--(P4). 
 \end{proposition}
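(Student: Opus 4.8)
The plan is to follow the level data of $\q$ through the three steps defining $\partial_{\rho}A$ and to assign types and symbols to the data of the output so that (P1)--(P4) are forced. Write $\u(b;a_1,\dots,a_s)$ for the level data of $\q$ and let $\q^{\ast}=(q^{\ast}_1,\dots,q^{\ast}_{2\ell})$ be the family produced by Step~1 (so $\partial_{\rho}A$ is obtained from $\q^{\ast}$ by deleting, in Step~2, the components whose polynomial is constant in $n$ and keeping one representative per group). First I would compute the level data $\u^{\ast}$ of $\q^{\ast}$: expanding $(n+h_{s+1})^{b}$ by the multinomial theorem gives, for $1\le m\le\ell$,
\[
u^{\ast}_m(b;a_1,\dots,a_{s+1})=
\begin{cases}
\binom{b+a_{s+1}}{b}\,u_m(b+a_{s+1};a_1,\dots,a_s), & a_{s+1}\neq{\bf 0},\\
u_m(b;a_1,\dots,a_s)-u_\rho(b;a_1,\dots,a_s), & a_{s+1}={\bf 0},
\end{cases}
\]
while $u^{\ast}_{m+\ell}$ equals $0$ when $a_{s+1}\neq{\bf 0}$ and equals $u_m(b;a_1,\dots,a_s)-u_\rho(b;a_1,\dots,a_s)$ when $a_{s+1}={\bf 0}$.

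Next I would assign types and symbols to $\q^{\ast}$ from those of $\q$; here I use that (P3), read appropriately, lets us take the chosen types and symbols of $\q$ to be coherent on each compatibility class of the $a$-indices, i.e.\ so that compatible data carry a common center and a common symbol (this holds trivially for $\p$, where $s=0$). Suppose $\u(b;a_1,\dots,a_s)$ has type $(r,i,v)$ and symbol $(w_1,\dots,w_\ell)$ with $w_1=1$. When $a_{s+1}={\bf 0}$, every entry of $\u^{\ast}(b;a_1,\dots,a_s,{\bf 0})$ equals $r(b_{w_m,v}-b_{w_\rho,v})$, so I declare it of type $(r,\,w_\rho,\,v)$ with symbol $(w_1,\dots,w_\ell,w_1,\dots,w_\ell)$. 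When $a_{s+1}\neq{\bf 0}$, letting now $(r,i,v)$ and $(w_1,\dots,w_\ell)$ be the type and symbol of $\u(b+a_{s+1};a_1,\dots,a_s)$, the vector $\u^{\ast}(b;a_1,\dots,a_s,a_{s+1})$ is $\binom{b+a_{s+1}}{b}r$ times $(b_{w_1,v}-b_{i,v},\dots,b_{w_\ell,v}-b_{i,v},{\bf 0},\dots,{\bf 0})$, so I declare it of type $\bigl(\binom{b+a_{s+1}}{b}r,\,i,\,v\bigr)$ with symbol $(w_1,\dots,w_\ell,i,\dots,i)$ (the trailing $i$'s encoding the zero block via $b_{0,v}={\bf 0}$). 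In both cases the first symbol entry is $1$ and the new center lies in $\{0,\dots,k\}$.

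Then I would verify (P1)--(P4) for $\q^{\ast}$. (P1) is immediate from the assignment. For (P2) with $a_{s+1}\neq{\bf 0}$ one combines (P2) for $\q$ (which gives $r=\binom{b+a_{s+1}+a_1+\dots+a_s}{a_1,\dots,a_s}$ and $v=b+a_{s+1}+a_1+\dots+a_s$) with the multinomial identity $\binom{b+a_{s+1}}{b}\binom{b+a_{s+1}+a_1+\dots+a_s}{a_1,\dots,a_s}=\binom{b+a_1+\dots+a_{s+1}}{a_1,\dots,a_{s+1}}$; with $a_{s+1}={\bf 0}$ it is immediate since appending a zero multi-index changes neither the coefficient nor $v$. (P4) holds because each new symbol starts with $1$. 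For (P3) one notes that two levels of $\q^{\ast}$ are compatible only if $(a_{s+1},a'_{s+1})\in S$: if both are ${\bf 0}$ the claim follows by applying (P3) for $\q$ to $\u(b;a_1,\dots,a_s)$ and $\u(b';a'_1,\dots,a'_s)$; if both are nonzero, by applying it to $\u(b+a_{s+1};a_1,\dots,a_s)$ and $\u(b'+a'_{s+1};a'_1,\dots,a'_s)$ — in each case the common center and common symbol pass through the formulas above, and the chosen symbols of the two $\q^{\ast}$-data coincide by the coherence assumed on $\q$.

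Finally I would transfer (P1)--(P4) from $\q^{\ast}$ to $\partial_{\rho}A$. Steps~2 and 3 replace the length-$2\ell$ data vector of $\q^{\ast}$ by the subvector indexed by representatives $q^{\ast}_{m_1},\dots,q^{\ast}_{m_{k'}}$, one per group of polynomials non-constant in $n$; since $A\to\partial_{\rho}A$ is $1$-inherited, $q^{\ast}_1$ is neither dropped nor grouped, so $m_1=1$. Restricting every chosen type $(r,i,v)$ and symbol of $\u^{\ast}$ to the coordinates $m_1=1,m_2,\dots,m_{k'}$ keeps all surviving entries in $Q_{r,i,v}$ and, because $m_1=1$, leaves both the normalization $u_1=r(b_{1,v}-b_{i,v})$ and the condition $w_1=1$ intact; so the restricted data satisfy (P1)--(P4) (and the coherence property) for $\partial_{\rho}A$, closing the argument. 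The main obstacle is exactly this bookkeeping: checking that the multinomial factors combine precisely as (P2) demands, and that one can maintain a symbol choice coherent across compatibility classes so that the ``share a symbol'' conclusion of (P3) genuinely propagates; once those are in place, everything else is a direct unwinding of Steps~1--3.
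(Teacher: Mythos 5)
Your proposal is correct and takes essentially the same route as the paper's proof: the same formulas for the level data of $\q^{\ast}$ under Step 1, the same case-split assignment of types and symbols (center $w_{\rho}$ with doubled symbol when $a_{s+1}=\bold{0}$; center $i$ with symbol $(\w,i,\dots,i)$ when $a_{s+1}\neq\bold{0}$), the same multinomial identity for (P2), and the same restriction argument via 1-inheritedness for passing from $\q^{\ast}$ to $\partial_{\rho}A$. Your explicit ``coherence on compatibility classes'' is just a careful rendering of the step where the paper chooses a single center and symbol uniformly in $b$ for fixed $(a_{1},\dots,a_{s})$, so the substance is identical.
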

 \begin{proof}
 Suppose that $\q=(q_{1},\dots,q_{\ell})$. Denote $\q^{\ast}=(q^{\ast}_{1},\dots,q^{\ast}_{2\ell})$, where
 for all $1\leq i\leq \ell,$  $$q^{\ast}_{i}(n;h_{1},\dots,h_{s+1})=q_{i}(n+h_{s+1};h_{1},\dots,h_{s})-q_{\rho}(n;h_{1},\dots,h_{s})$$
 and
 $$q^{\ast}_{\ell+i}(n;h_{1},\dots,h_{s+1})=q_{i}(n;h_{1},\dots,h_{s})-q_{\rho}(n;h_{1},\dots,h_{s}).$$ 
 	Assuming that 
 	\[q_{i}(n;h_{1},\dots,h_{s})=\sum_{b,a_{1},\dots,a_{s}\in\mathbb{N}_0^{L}} h^{a_{1}}_{1}\dots h^{a_{s}}_{s}n^{b}\cdot u_{i}(b;a_{1},\dots,a_{s})\]
 	for all
 	$1\leq i\leq\ell,$ we may
 	write
 	$\q$ as 
 	\[\q(n;h_{1},\dots,h_{s})=\sum_{b,a_{1},\dots,a_{s}\in\mathbb{N}_0^{L}} h^{a_{1}}_{1}\dots h^{a_{s}}_{s}n^{b}\cdot \u(b;a_{1},\dots,a_{s}),\]
 	and define $\u^{\ast}(b;a_{1},\dots,a_{s+1})$ in a similar way.
 		
 	One can immediately check that

 	\[q_{i}(n+h_{s+1};h_{1},\dots,h_{s})=\sum_{b,a_{1},\dots,a_{s+1}\in\mathbb{N}_0^{L}} h^{a_{1}}_{1}\dots h^{a_{s+1}}_{s+1}n^{b}\cdot \binom{b+a_{s+1}}{b}u_{i}(b+a_{s+1};a_{1},\dots,a_{s}).\footnote{ For $a=(a_{1},\dots,a_{L}), b=(b_{1},\dots,b_{L})\in\mathbb{N}_0^{L}$, $\binom{a}{b}$ denotes the quantity $\prod_{m=1}^{L}\binom{a_{m}}{b_{m}}$.}\]
 	Then
 	\begin{equation}\label{i2}
 	u^{\ast}_{i}(b;a_{1},\dots,a_{s+1})=\left\{
 	\begin{aligned}
 	u_{i}(b;a_{1},\dots,a_{s})-u_{\rho}(b;a_{1},\dots,a_{s}) & & (a_{s+1}=\bold{0}) \\
 	\binom{b+a_{s+1}}{b}u_{i}(b+a_{s+1};a_{1},\dots,a_{s}) & & (a_{s+1}\neq \bold{0})
 	\end{aligned}
 	\right. 
 	\end{equation}
 	and
 		\begin{equation}\label{i1}
 	u^{\ast}_{i+\ell}(b;a_{1},\dots,a_{s+1})=\left\{
 	\begin{aligned}
 	u_{i}(b;a_{1},\dots,a_{s})-u_{\rho}(b;a_{1},\dots,a_{s}) & & (a_{s+1}=\bold{0}) \\
 	\bold{0} & & (a_{s+1}\neq \bold{0})
 	\end{aligned}
 	\right. .
 	\end{equation}
 	
 	We first show that $\q$ satisfying (P1)--(P4) implies the same for $\q^{\ast}$.
 	
 	Fix $a_{1},\dots,a_{s+1}$. Since $\q$ satisfies (P1)--(P4), there exist $0\leq i\leq k$ and a symbol $\w$ such that 
 	for all $b$, there is $v\in\N_0^{L}$ and $r\in\mathbb{Q}$ such that $\u(b+a_{s+1};a_{1},\dots,a_{s})$ is of type $(r,i,v)$ and has symbol $\w=(1,w_{2},\dots,w_{\ell})$. 

We have that  $u_{m}(b;a_{1},\dots,a_{s})=r(b_{w_{m},v}-b_{i,v})$.
By (\ref{i2}) and (\ref{i1}), it is not hard to see that $\u^{\ast}(b;a_{1},\dots,a_{s},a_{s+1})$ is
 	\begin{equation}\label{i3}
 	\left\{
 	\begin{aligned}
 	\text{of type $(r,w_{\rho},v)$ and has symbol $(\w,\w)$   } & & (a_{s+1}=\bold{0}) \\
 	\text{of type $(r\binom{b+a_{s+1}}{b},i,v)$ and has symbol $(\w,i,\dots,i)$} & & (a_{s+1}\neq \bold{0})
 	\end{aligned}
 	\right. .
 	\end{equation}
 	So each $\u^{\ast}(b;a_{1},\dots,a_{s},a_{s+1})$ is of type $(\ast,\ast,v)$ for some $v$ (meaning that it is of type $(r',i',v)$ for some $r'\in\mathbb{Q}$ and $i'\in\N$) and thus $\q^{\ast}$ satisfies (P1). 
 	
 	To check (P2) for $\q^{\ast}$,
 	suppose that $\u^{\ast}(b;a_{1},\dots,a_{s+1})$ is
 	of type $(r,i,v)$. By (\ref{i3}), we have that $\u(b+a_{s+1};a_{1},\dots,a_{s})$ is
 	of type $(r\binom{b+a_{s+1}}{b}^{-1},i',v)$ for some $i'$. Since
 	$\q$ satisfies (P2), we have the same for $\q^{\ast}$ as
 	$$r=\binom{b+a_{s+1}}{b}\binom{(b+a_{s+1})+a_{1}+\dots+a_{s}}{a_{1},\dots,a_{s}}=\binom{b+a_{1}+\dots+a_{s+1}}{a_{1},\dots,a_{s+1}}$$
 	and 
 	$$v=(b+a_{s+1})+a_{1}+\dots+a_{s}=b+a_{1}+\dots+a_{s+1}.$$
 	
 To show (P3),	let $(a_{1},a'_{1}),\dots,(a_{s+1},a'_{s+1})\in S$ and suppose that $\u(b+a_{s+1};a_{1},\dots,a_{s})$ is of type $(r,i,v)$ and $\u(b'+a'_{s+1};a'_{1},\dots,a'_{s})$ of type $(r',i',v')$.
 Since $\q$ satisfies (P3), we have that $i=i'$ and that $\u(b+a_{s+1};a_{1},\dots,a_{s})$ and $\u(b'+a'_{s+1};a'_{1},\dots,a'_{s})$ can be made to have the same symbol $\w$. Since $,(a_{s+1},a'_{s+1})\in S$, by (\ref{i3}) have  that $\q^{\ast}$ satisfies (P3).

 	Finally, it is straightforward from (\ref{i3}) that Property (P4) is also heritable.
 	Since the polynomials in $\partial_{\rho}A$ are obtained by removing some terms from the tuple $\q^{\ast}$ (but not removing the first one, since $A\to\partial_{\rho} A$ is 1-inherited), the fact that  $\q^{\ast}$ satisfies (P1)--(P4) implies that $\partial_{\rho}A$ also satisfies (P1)--(P4).
 \end{proof}

For the family of essentially distinct polynomials $\p=(p_{1},\dots,p_{k}),$  Proposition \ref{PET2} implies that $\partial_{i_{k}}\dots \partial_{i_{1}}\bold{p}$ satisfies (P1)--(P4) for all $k,i_{1},\dots,i_{k}\in\N$. In the special case when $\partial_{i_{k}}\dots \partial_{i_{1}}\bold{p}$ is of degree 1, properties (P1)--(P4) provide us with some information on the seminorm we use to bound $S(\partial_{i_{k}}\dots \partial_{i_{1}}A,1)$.
To be more precise, if (P1)--(P4) hold for some non-degenerate $\q$, then there is some connection between the level data of $\q$ and the groups $G_{1,0}(\p),G_{1,2}(\p),\dots,G_{1,k}(\p)$:

 \begin{proposition}\label{12-3}
 	Suppose that (P1)--(P4) hold for some non-degenerate $\q$. Then for all $0\leq m\leq \ell, m\neq 1$, the group
 	 	$$H_{1,m}(\q)\coloneqq G(\{u_{1}(b;a_{1},\dots,a_{s})-u_{m}(b;a_{1},\dots,a_{s})\colon (b,a_{1},\dots,a_{s})\in(\N_0^{L})^{s+1},b\neq \bold{0}\})$$ 
 contains at least one of the groups $G_{1,j}(\p), 0\leq j\leq k, j\neq 1$.
 \end{proposition}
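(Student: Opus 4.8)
The plan is to pin down a suitable index $j$ from one ``witnessing'' level datum of $\q$, and then to show that every top-degree coefficient vector of $p_{1}-p_{j}$ is recovered — up to a nonzero rational scalar — as some difference $u_{1}-u_{m}$, whence $G_{1,j}(\p)\subseteq H_{1,m}(\q)$. Throughout, for $m=0$ we read $u_{0}\equiv\mathbf 0$ (mirroring $p_{0}\equiv\mathbf 0$); that case turns out to be the cleaner one.

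Since $\q$ is non-degenerate, $q_{1}-q_{m}$ is non-constant in $n$ (for $m=0$, $q_{1}$ itself is), so there is a level $(b^{*};a_{1}^{*},\dots,a_{s}^{*})$ with $b^{*}\neq\mathbf 0$ and $u_{1}(b^{*};a_{1}^{*},\dots,a_{s}^{*})\neq u_{m}(b^{*};a_{1}^{*},\dots,a_{s}^{*})$. By (P1)--(P2) this level datum has a type $(r^{*},i^{*},v^{*})$ with $v^{*}=b^{*}+a_{1}^{*}+\dots+a_{s}^{*}$ and $r^{*}=\binom{b^{*}+a_{1}^{*}+\dots+a_{s}^{*}}{a_{1}^{*},\dots,a_{s}^{*}}\neq 0$; fixing a symbol $\w^{*}=(1,w_{2}^{*},\dots,w_{\ell}^{*})$, the definitions of type and symbol give
\[
u_{1}(b^{*};a_{1}^{*},\dots,a_{s}^{*})-u_{m}(b^{*};a_{1}^{*},\dots,a_{s}^{*})=r^{*}\bigl(b_{1,v^{*}}-b_{w_{m}^{*},v^{*}}\bigr)\neq\mathbf 0 .
\]
Set $j:=w_{m}^{*}$ (and $j:=i^{*}$ when $m=0$). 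Then $j\neq 1$ (otherwise the right-hand side is $\mathbf 0$) and $b_{1,v^{*}}\neq b_{j,v^{*}}$, so $p_{1}-p_{j}$ is non-constant and $d_{1,j}:=\deg(p_{1}-p_{j})\geq|v^{*}|$. If $t$ of $a_{1}^{*},\dots,a_{s}^{*}$ are nonzero then $|v^{*}|=|b^{*}|+|a_{1}^{*}|+\dots+|a_{s}^{*}|\geq 1+t$, hence $d_{1,j}\geq t+1$.

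Now let $\sigma\in\{0,1\}^{s}$ record which coordinates of $(a_{1}^{*},\dots,a_{s}^{*})$ vanish. By (P3) every level datum of shape $\sigma$ has type-index $i^{*}$ and shares a symbol with $\u(b^{*};a_{1}^{*},\dots,a_{s}^{*})$; the point I would establish is that the symbol can be chosen so that its $m$-th entry stays equal to $j$ along the entire shape $\sigma$ (when $m=0$ this is automatic, since there only the type-index $i^{*}$ matters, and it is already constant along $\sigma$ by (P3)). Granting this, for any level datum $(b;a_{1},\dots,a_{s})$ of shape $\sigma$, writing $v=b+a_{1}+\dots+a_{s}$ and $r=\binom{b+a_{1}+\dots+a_{s}}{a_{1},\dots,a_{s}}\neq 0$, one gets $u_{1}(b;a_{1},\dots,a_{s})-u_{m}(b;a_{1},\dots,a_{s})=r\,(b_{1,v}-b_{j,v})$. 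Every $v$ with $|v|\geq t+1$ can be written as $b+a_{1}+\dots+a_{s}$ with $b\neq\mathbf 0$ and with the $a$'s in the positions prescribed by $\sigma$ nonzero and the others zero (give one unit of $v$ to each of the $t+1$ mandatory summands, then distribute the rest), so $b_{1,v}-b_{j,v}\in\operatorname{span}_{\Q}\{u_{1}(b;a_{1},\dots,a_{s})-u_{m}(b;a_{1},\dots,a_{s})\colon b\neq\mathbf 0\}$ for all such $v$. Since $b_{1,v}=b_{j,v}$ once $|v|>d_{1,j}$ and $d_{1,j}\geq t+1$,
\[
G_{1,j}(\p)=G\bigl(\{b_{1,v}-b_{j,v}\colon|v|=d_{1,j}\}\bigr)\subseteq G\bigl(\{b_{1,v}-b_{j,v}\colon|v|\geq t+1\}\bigr)\subseteq H_{1,m}(\q),
\]
which is the assertion.

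The main obstacle is precisely the claim invoked above for $m\geq 1$: (P3) only supplies \emph{pairwise} shared symbols, and symbols are not unique when the $b_{i,v}$ satisfy algebraic relations, so a priori a shape-$\sigma$ level datum might yield $r(b_{1,v}-b_{j',v})$ for some $j'\neq j$ with merely $b_{j',v^{*}}=b_{j,v^{*}}$. I would resolve this in one of two ways: either verify that the symbol assignment built in the proof of Proposition~\ref{PET2} (via the recursion appending $(\w,\w)$ or $(\w,i,\dots,i)$) is in fact constant along each shape, so that this consistency may be assumed in the context where Proposition~\ref{12-3} is used; or carry the finite set $W_{m}^{*}=\{w\colon b_{w,v^{*}}=b_{j,v^{*}}\}$ of admissible $m$-th entries through the argument and use the density lemmas of Subsection~\ref{pp} (Lemmas~\ref{ag} and \ref{misc01}) to rule out accidental coincidences, thereby showing that the $\Q$-span of the vectors $b_{1,v}-b_{w,v}$ that actually occur still contains $G_{1,j}(\p)$. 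This bookkeeping is the heart of the matter.
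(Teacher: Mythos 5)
You follow essentially the same route as the paper's own proof: choose a level $(b^{*};a_{1}^{*},\dots,a_{s}^{*})$ with $b^{*}\neq\mathbf{0}$ witnessing $u_{1}\neq u_{m}$, read off $j$ from the $m$-th entry of a symbol (the type index when $m=0$), note $d_{1,j}\geq\vert v^{*}\vert$, realize every $v'$ with $\vert v'\vert=d_{1,j}$ as $b'+a_{1}'+\dots+a_{s}'$ with $b'\neq\mathbf{0}$ and the same zero/nonzero pattern in the $a$-slots (this is precisely the Claim inside the paper's proof, stated there with the inessential extra requirement $\vert b'\vert\geq\vert b^{*}\vert$), and then use (P2)--(P3) to see that $u_{1}-u_{m}$ at each such level is a nonzero rational multiple of $b_{1,v'}-b_{j,v'}$, whence $G_{1,j}(\p)\subseteq H_{1,m}(\q)$. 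All of these steps are correct, including your treatment of $m=0$.

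The one step you leave open --- that the $m$-th symbol entry can be taken equal to $j$ uniformly along the shape class --- is exactly the point where the paper simply invokes (P3): its proof asserts that the same-shape datum ``is of type $(r',i,v')$ and has symbol $(w_{1},\dots,w_{\ell})$'', i.e.\ the shared symbol is the one fixed at the outset. The justification is your first proposed route, and it is already visible in the proof of Proposition~\ref{PET2}: there, for each fixed $(a_{1},\dots,a_{s+1})$, the new types and symbols are manufactured from a single symbol $\w$ and a single index $i$ serving all $b$ (the recursion \eqref{i3}), and the initial family $\p$ carries the constant symbol $(1,2,\dots,k)$ at every level; so the symbol assignment the paper works with is constant along each shape class, which is all your argument needs. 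Your second route is not the right tool: the possible coincidences $b_{j',v^{*}}=b_{j,v^{*}}$ are fixed algebraic relations among the coefficients of $\p$, not events in varying integer parameters, so Lemmas~\ref{ag} and \ref{misc01} have no bearing on them; moreover, with only pairwise shared symbols the entry at slot $m$ could drift among indices $w$ with $b_{w,v^{*}}=b_{j,v^{*}}$ but different $d_{1,w}$, and then the top-degree bookkeeping breaks down. In short: same approach as the paper, and complete once you commit to the shape-uniform symbol choice (your first fix), which is how (P1)--(P4) are meant to be read and are propagated through the vdC-operations.
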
	
 
 We remark that although Proposition \ref{12-3} holds for all non-degenerate $\q$, we will use it for the case $\deg(\q)=1$.
 We first give an example to explain the idea behind it.

 \begin{example}\label{ex02}
 	Let $\bold{p}=(p_{1},p_{2})$
 	be  as in Example \ref{ex020}.
 	Then $G_{1,0}(\p)=G(b_{1,2})$ and  $G_{1,2}(\p)=G(b_{1,2}-b_{2,2})$.
 	Let $\u(b;a)$ be the level data of $\partial_{2}\bold{p}$. Then 
 	\begin{equation}\nonumber
 	\begin{split}
 	& \u(1;0)=(b_{1,1}-b_{2,1})\cdot(1,0,1) \text{ is of type $(1,2,1)$ and has symbol $(1,2,1)$},
 	\\& \u(1;1)=2b_{1,2}\cdot(1,0,0)+2b_{2,2}\cdot(0,1,0) \text{ is of type $(2,0,2)$ and has symbol $(1,2,0)$},
 	\\& \u(2;0)=(b_{1,2}-b_{2,2})\cdot(1,0,1) \text{ is of type $(1,2,2)$ and has symbol $(1,2,1)$}.
 	\end{split}
 	\end{equation}
 	Here we will not compute $\u(b;a)$ for $b=0$ as it is irrelevant to our purposes.
 	It is easy to see that $H_{1,0}(\partial_{2}\p)=G(b_{1,1}-b_{2,1},b_{1,2},b_{1,2}-b_{2,2})\supseteq G_{1,0}(\p)\cup G_{1,2}(\p)$, $H_{1,2}(\partial_{2}\p)=G(b_{1,1}-b_{2,1},b_{1,2}-b_{2,2})\supseteq G_{1,2}(\p)$,
 	$H_{1,3}(\partial_{2}\p)=G(b_{1,2})=G_{1,0}(\p)$.
 	So, Proposition \ref{12-3} holds for $\partial_{2}\p$. 

    Let $\u'(b;a_{1},a_{2})$ denote the level data of $\partial_{2}\partial_{2}\bold{p}$. Then 
   \begin{equation}\nonumber
   \begin{split}
   & \u'(1;0,0)=(b_{1,1}-b_{2,1})\cdot(1,1,1,1) \text{ is of type $(1,2,1)$ and has symbol $(1,1,1,1)$},
   \\& \u'(1;1,0)=2(b_{1,2}-b_{2,2})\cdot(1,0,1,0)-2b_{2,2}\cdot(0,1,0,1) \text{ has type $(2,2,2)$ and sym. $(1,0,1,0)$},
   \\& \u'(1;0,1)=2(b_{1,2}-b_{2,2})\cdot(1,1,0,0) \text{ is of type $(2,2,2)$ and has symbol $(1,1,2,2)$},
   \\& \u'(2;0,0)=(b_{1,2}-b_{2,2})\cdot(1,1,1,1) \text{ is of type $(1,2,2)$ and has symbol $(1,1,1,1)$}.
   \end{split}
   \end{equation}
   (We do not compute the types and symbols for $\u'(b;a_{1},a_{2})$ for $b=0$.) 
   It is easy to see that $H_{1,0}(\partial_{2}\partial_{2}\p)=G(b_{1,1}-b_{2,1},b_{1,2}-b_{2,2})\supseteq G_{1,2}(\p)$, $H_{1,2}(\partial_{2}\partial_{2}\p)=G(b_{1,2})= G_{1,0}(\p),$
   $H_{1,3}(\partial_{2}\partial_{2}\p)=G(b_{1,2}-b_{2,2})\supseteq G_{1,2}(\p),$ $H_{1,4}(\partial_{2}\partial_{2}\p)=G(b_{1,2},b_{1,2}-b_{2,2})\supseteq G_{1,0}(\p)\cup G_{1,2}(\p)$.
   So, Proposition \ref{12-3} holds for $\partial_{2}\partial_{2}\p$. 
   
   Finally, let $\u''(b;a_{1},a_{2},a_{3})$ denote the level data of $\partial_{4}\partial_{2}\partial_{2}\bold{p}$. Then $\deg(\partial_{4}\partial_{2}\partial_{2}\p)=1$ and
   \begin{equation}\nonumber
   \begin{split}
   & \u''(1;0,0,0)=(0,0,0,0,0,0,0) \text{ is trivial},
   \\& \u''(1;1,0,0)=2b_{1,2}\cdot(1,0,1,0,1,0,1) \text{ is of type $(2,0,2)$ and symbol $(1,0,1,0,1,0,1)$},
   \\& \u''(1;0,1,0)=2(b_{1,2}-b_{2,2})\cdot(1,1,0,0,1,1,0) \text{ has type $(2,2,2)$ and symbol $(1,1,2,2,1,1,2)$},
   \\& \u''(1;0,0,1)=2(b_{1,2}-b_{2,2})\cdot(1,1,1,1,0,0,0) \text{ has type $(2,2,2)$ and symbol $(1,1,1,1,2,2,2)$}.
   \end{split}
   \end{equation}
   (Once more, we do not compute the types and symbols for $\u''(b;a_{1},a_{2},a_{3})$ for $b=0$.)
   It is easy to see that $H_{1,0}(\partial_{4}\partial_{2}\partial_{2}\p)=H_{1,4}(\partial_{4}\partial_{2}\partial_{2}\p)=H_{1,6}(\partial_{4}\partial_{2}\partial_{2}\p)=G(b_{1,2},b_{1,2}-b_{2,2})\supseteq G_{1,1}(\p)$, $H_{1,2}(\partial_{4}\partial_{2}\partial_{2}\p)=G(b_{1,2})= G_{1,1}(\p)$, and
   $H_{1,3}(\partial_{4}\partial_{2}\partial_{2}\p)=H_{1,5}(\partial_{4}\partial_{2}\partial_{2}\p)=H_{1,7}(\partial_{4}\partial_{2}\partial_{2}\p)=G(b_{1,2}-b_{2,2})= G_{1,2}(\p)$.
   So, Proposition \ref{12-3} holds for $\partial_{4}\partial_{2}\partial_{2}\p$. 
 \end{example}

 To briefly explain why Proposition \ref{12-3} holds for Example \ref{ex02}, we explain, for convenience, why $H_{1,0}(\q)$ contains either $G_{1,0}(\p)$ or $G_{1,2}(\p)$ for $\q=\partial_{2}\p,\partial_{2}\partial_{2}\p$ and $\partial_{4}\partial_{2}\partial_{2}\p$.
 Let $\u(b;a_{1},$ $\dots,a_{s})$ be a level data of type $(r,i,v)$ and symbol $\w$, and $\u(b';a'_{1},\dots,a'_{s})$ be a level data of type $(r',i',v')$ and symbol $\w'$.
 We say that the level data $\u(b';a'_{1},\dots,a'_{s})$ \emph{dominates} (or \emph{strictly dominates}) 
 $\u(b;a_{1},\dots,a_{s})$ if $i=i'$, $\w=\w'$ and $\vert v'\vert\geq \vert v\vert$ (or $\vert v'\vert>\vert v\vert$ respectively).
In Example \ref{ex02}, it is not hard to see that for all $b\in \N,$ $a_{1},a_{2}\in\N_0$, if $\u'(b;a_{1},a_{2})$ is not of type $(\ast,\ast,2)$,
then there exist $b'\in \N,$ $a'_{1},a'_{2}\in\N_0$ such that $\u'(b';a'_{1},a'_{2})$ strictly dominates $\u'(b;a_{1},a_{2})$ (in this example, $\u'(1;0,0)$ is strictly dominated by $\u'(2;0,0)$). 
Similar conclusions hold for $\u(b;a)$ and $\u''(b;a_{1},a_{2},a_{3})$. In other words, the group $H_{1,0}(\q)$ must contain the elements of a level data of type $(\ast,\ast,2)$, and thus it must contain one of $G_{1,0}(\p)$ and $G_{1,2}(\p)$.
 
 \medskip
 
 We are now ready to prove Proposition \ref{12-3}. The main point is that given any nontrivial level data $\u(b;a_{1},\dots,a_{s})$, we can find another one, $\u(b';a'_{1},\dots,a'_{s}),$ which dominates $\u(b;a_{1},\dots,a_{s})$ and is of type $(\ast,\ast,v)$ with $\vert v\vert$ being as large as possible (in this step we need to exploit the properties (P1)--(P4)). After that, we use the information of the ``top'' level data $\u(b';a'_{1},\dots,a'_{s})$ to conclude.
 \begin{proof}[Proof of Proposition \ref{12-3}]
 	
 		We start with a claim.  Recall that $S$ denotes the set of all $(a,a')\in\N_0^{L}$ such that $a$ and $a'$ are both $\bold{0}$ or both different than $\bold{0}$ (check Definition~\ref{d21} for notation).
 	
 	\medskip
 	
 			\textbf{Claim.}	
 			Let $d,s\in\N$ and $b,a_{1},\dots,a_{s},v\in\N_0^{L}$. If $\vert v\vert\geq \vert b+a_{1}+\dots+a_{s}\vert$, then
 			 there exist $b',a'_{1},\dots,a'_{s}\in\N_0^{L}$ such that $(a_{1},a'_{1}),\dots,(a_{s},a'_{s})\in S, \vert b'\vert\geq \vert b\vert$ and $b'+a'_{1}+\dots+a'_{s}=v$. 
 			 
 		To show the claim, we may first assume that $\vert v\vert=\vert b+a_{1}+\dots+a_{s}\vert.$ Indeed, write $x=(x_{1},\ldots,x_{L})$ for $x=v,b,a_{1},\dots,a_{s}$. If $c\coloneqq \vert v\vert-\vert b+a_{1}+\dots+a_{s}\vert>0$, then we write $b'=b+(c,0,\dots,0)$. Then $\vert b'\vert>\vert b\vert$ and $\vert v\vert=\vert b'+a_{1}+\dots+a_{s}\vert.$
 		
 		It suffices to show that if  $\vert v-(b+a_{1}+\dots+a_{s})\vert>0$, then there exist $b',a'_{1},\dots,a'_{s}\in\N_0^{L}$ such that $(a_{1},a'_{1}),\dots,(a_{s},a'_{s})\in S$, $\vert b'\vert=\vert b\vert$, and $\vert v-(b'+a'_{1}+\dots+a'_{s})\vert<\vert v-(b+a_{1}+\dots+a_{s})\vert$.
 		Since $\vert v\vert=\vert b+a_{1}+\dots+a_{s}\vert$, $\vert v-(b+a_{1}+\dots+a_{s})\vert$ is at least 2, so, there exist $1\leq i,j\leq L, i\neq j$ such that the $t$-th coordinate of  $v-(b+a_{1}+\dots+a_{s})$ is at least $1$ for $t=i$ and is at most $-1$ for $t=j$. We may assume without loss of generality that $i=1$ and $j=2$. Then we have that $v_{1}\geq 1$, and one of $b_{2},a_{1,2},\dots,a_{s,2}$ is at least 1. If $b_{2}\geq 1$, then $b'=b+(1,-1,0,\dots,0)\in\N_0^{L}, a'_{i}=a_{i},1\leq i\leq s$ satisfy the requirement. If one of $a_{i,2}$ is positive, then $b'=b, a'_{i}=a_{i}+(1,-1,0,\dots,0)\in\N_0^{L}, a'_{j}=a_{j},1\leq j\leq s, j\neq i$ satisfy the requirement. This proves the claim.
 		
 \medskip		
 		
 			Consider the group $H_{1,m}(\q)$ for some $0\leq m\leq \ell, m\neq 1$. Since $\q$ is non-degenerate, there exist some $b,a_{1},\dots,a_{s}\in\N_0^{L}, \vert b\vert\geq 1$ such that $u_{1}(b;a_{1},\dots,a_{s})-u_{m}(b;a_{1},\dots,a_{s})\neq \bold{0}$. By (P1), we may assume that $\u(b;a_{1},\dots,a_{s})$ is of type $(r,i,v)$ and has symbol $(w_{1},\dots,w_{\ell})$. Since $w_{1}=1$,
 			$$u_{1}(b;a_{1},\dots,a_{s})-u_{m}(b;a_{1},\dots,a_{s})=r(b_{1,v}-b_{w_{m},v}).$$
 			Recall that for $\p=(p_{1},\dots,p_{k})$, $d_{1,0}=\deg(p_{1})$ and $d_{1,j}=\deg(p_{1}-p_{j})$ for $2\leq j\leq \ell$.  Since  $u_{1}(b;a_{1},\dots,a_{s})-u_{m}(b;a_{1},\dots,a_{s})\neq \bold{0}$, we have that $w_{m}\neq 1$ and $\vert v\vert=\vert b+a_{1}+\dots+a_{s}\vert\leq d_{1,w_{m}}$.
 
 			By the Claim, for all $v'\in\N_0^{L}$ with $\vert v'\vert=d_{1,w_{m}}$, there exist $b',a'_{1},\dots,a'_{s}\in\N_0^{L}$ such that $(a_{1},a'_{1}),\dots,(a_{s},a'_{s})\in S$, $\vert b'\vert\geq 1$ and $b'+a'_{1}+\dots+a'_{s}=v'$. By (P2) and (P3), $\u(b';a'_{1},\dots,a'_{s})$ is of type $(r',i,v'), r'\neq 0$ and has symbol $(w_{1},\dots,w_{\ell})$ (i.e. $\u(b';a'_{1},\dots,a'_{s})$ dominates $\u(b;a_{1},\dots,a_{s})$). So  $$u_{1}(b';a'_{1},\dots,a'_{s})-u_{m}(b';a'_{1},\dots,a'_{s})=r'(b_{1,v'}-b_{w_{m},v'}).$$
 			 In other words, for all $v'\in\N^{L}$ with $\vert v'\vert=d_{1,w_{m}}$, the group  $H_{1,m}(\q)$ contains a nonzero multiple of $b_{1,v'}-b_{w_{m},v'}$. So this group contains $G_{1,w_{m}}(\p)$ and we are done.
 \end{proof} 		
 
We are now ready to prove Proposition \ref{pet}. 
 
 \begin{proof}[Proof of Proposition \ref{pet}]
Let $A$ denote the PET-tuple $(L,0,k,(p_{1},\dots,p_{k}),(f_{1},\dots,f_{k}))$. Then for all $\tau>0$,
 		\[S(A,\tau)=\F\vl\Bigl\Vert\ei\prod_{m=1}^{k}T_{p_{m}(n)}f_{m}\Bigr\Vert^{\tau}_{L^{2}(\mu)}.\]
By assumption, $A$ is non-degenerate.
 		We only prove (\ref{30}) for $f_{1}$ as the other cases are similar.
 		We first assume that $A$ is 1-standard for $f_{1}$. By Theorem \ref{PET}, there exist $t\in\N_{0},$ depending only on $d,k,K,L,$ and finitely many
 		vdC-operations $\partial_{\rho_{1}},\dots,\partial_{\rho_{t}}, \rho_{1},\dots,\rho_{t}\neq 1$ such that
 		for all $1\leq t'\leq t$, $\partial_{\rho_{t'}}\dots\partial_{\rho_{1}}A$ is non-degenerate and 1-standard for $f_{1}$, and that $\partial_{\rho_{t'-1}}\dots\partial_{\rho_{1}}A\to \partial_{\rho_{t'}}\dots\partial_{\rho_{1}}A$ is 1-inherited. Moreover,
 		$A':=\partial_{\rho_{t}}\dots\partial_{\rho_{1}}A$ is of degree 1.
 		By Proposition \ref{induction}, $S(A, 2^{t})\leq C\cdot S(A',1)$ for some $C>0$ that depends only on $t$. We may assume that
 	\[S(A',1)=\overline{\mathbb{E}}^{\square}_{h_{1},\dots,h_{s}\in\mathbb{Z}^{L}}\F\vl\Bigl\Vert\ei \prod_{m=1}^{\ell}T_{\textbf{d}_{m}(h_{1},\dots,h_{s})\cdot n+r_{m}(h_{1},\dots,h_{s})}g_{m}(x;h_{1},\dots,h_{s})\Bigr\Vert_{2}\]
 		for some $s,\ell\in\mathbb{N}$, functions $g_{1},\dots,g_{\ell}\colon X\times (\mathbb{Z}^{L})^{s}\to\mathbb{R},$ where $g_{1}(\cdot;h_{1},\dots,h_{s})=f_1,$ such that each $g_{m}(\cdot;h_{1},\dots,h_{s})$ is an $L^{\infty}(\mu)$ function bounded by $1,$ and polynomials $\textbf{d}_{m}\colon(\mathbb{Z}^{L})^{s}\to(\mathbb{Z}^{d})^{L}$ and $r_{m}\colon(\mathbb{Z}^{L})^{s}\to\mathbb{Z}^{d},$ $1\leq m\leq \ell$, where $\textbf{d}_{m}, r_{m}$ take values vectors with integer coordinates as vdC-operations transform integer-valued polynomials to integer-valued polynomials.

 		Let $\c_{1}=-\textbf{d}_{1}$ and $\c_{m}=\textbf{d}_{m}-\textbf{d}_{1}$ for $m\neq 1$.
 		Since $A'$ is non-degenerate, we have that $\c_{1},\dots,\c_{s}\not\equiv{\bf 0}$. By \cite[Proposition~6.1]{DKS}, if $\ell\geq 2$,  we also have that 
 		\begin{eqnarray*}
 			S(A',1) & \leq & C'\cdot \overline{\mathbb{E}}^{\square}_{h_{1},\dots,h_{s}\in\mathbb{Z}^{L}} \nnorm{T_{r_{1}(h_{1},\dots,h_{s})}f_1}_{\{G(\c_{i}(h_{1},\dots,h_{s}))\}_{1\leq i\leq \ell}} \\
 			& = & C'\cdot \overline{\mathbb{E}}^{\square}_{h_{1},\dots,h_{s}\in\mathbb{Z}^{L}}\nnorm{f_{1}}_{\{G(\c_{i}(h_{1},\dots,h_{s}))\}_{1\leq i\leq \ell}}
 		\end{eqnarray*}
 		for some $C'>0$ depending only on the polynomials $p_{1},\dots,p_{k}$. If $\ell=1$, using the mean ergodic theorem (see for example \cite[Theorem~2.3]{DKS}) and \cite[Lemma~2.4 (iv), (vi)]{DKS}, we have
 		\begin{eqnarray*}
 			S(A',1) 
 			& = &  \overline{\mathbb{E}}^{\square}_{h_{1},\dots,h_{s}\in\mathbb{Z}^{L}} \Vert \mathbb{E}(T_{r_{1}(h_{1},\dots,h_{s})}f_1\vert \mathcal{I}(\c_{1}(h_{1},\dots,h_{s})))\Vert_{2} \\
 				& = &  \overline{\mathbb{E}}^{\square}_{h_{1},\dots,h_{s}\in\mathbb{Z}^{L}} \Vert \mathbb{E}(f_1\vert \mathcal{I}(\c_{1}(h_{1},\dots,h_{s})))\Vert_{2} \\
 			& = &  \overline{\mathbb{E}}^{\square}_{h_{1},\dots,h_{s}\in\mathbb{Z}^{L}} \nnorm{f_1}_{\c_{1}(h_{1},\dots,h_{s})} \\
 			& \leq &  \overline{\mathbb{E}}^{\square}_{h_{1},\dots,h_{s}\in\mathbb{Z}^{L}} \nnorm{f_1}_{G(\c_{1}(h_{1},\dots,h_{s})),\c_{1}(h_{1},\dots,h_{s})} \\
 			& = & \overline{\mathbb{E}}^{\square}_{h_{1},\dots,h_{s}\in\mathbb{Z}^{L}} \nnorm{f_1}_{G(\c_{1}(h_{1},\dots,h_{s}))^{\times 2}}. 
 		\end{eqnarray*}
 		Combining this with the fact that $S(A,2^{t})\leq C\cdot S(A',1)$, we get (\ref{30}).\footnote{
 			We are using \cite[Proposition~6.1]{DKS} for $\ell \geq 2$ (which is incorrectly stated for $\ell \geq 1$ in \cite{DKS} but does not affect any other result in that work). }  
 		
 		We now consider the groups $H_{1,m}, 0\leq m\leq \ell, m\neq 1$.
 		Suppose that $$\textbf{c}_{m}(h_{1},\dots,h_{s})=\sum_{a_{1},\dots,a_{s}\in\mathbb{N}_0^L}h^{a_{1}}_{1}\dots h^{a_{s}}_{s}\cdot \v_{m}(a_{1},\dots,a_{s})$$
 		and
 		$$\textbf{d}_{m}(h_{1},\dots,h_{s})=\sum_{a_{1},\dots,a_{s}\in\mathbb{N}_0^L}h^{a_{1}}_{1}\dots h^{a_{s}}_{s}\cdot \u_{m}(a_{1},\dots,a_{s})$$
 		for some vectors $\u_{m}(a_{1},\dots,a_{s})=(u_{m,1}(a_{1},\dots,a_{s}),\dots,u_{m,L}(a_{1},\dots,a_{s}))$, and $\v_{m}(a_{1},\dots,a_{s})$ $=(v_{m,1}(a_{1},\dots,a_{s}),\dots,v_{m,L}(a_{1},\dots,a_{s}))\in(\mathbb{Q}^{d})^{L}$ with all but finitely many terms being zero for each $m$.
 		Obviously  $A$ satisfies (P1)--(P4).
 		Since $A'=\partial_{\rho_{t}}\dots\partial_{\rho_{1}}A$,  by Proposition \ref{PET2},  $A'$ satisfies (P1)--(P4). Since $\deg(A')=1$, by Proposition \ref{12-3}, 
 		 each of  
 		 \begin{equation}\nonumber
 		 \begin{split}
 		 &\quad G(\{v_{1,j}(a_{1},\dots,a_{s})\colon (a_{1},\dots,a_{s})\in(\N_0^{L})^{s}, 1\leq j\leq L\})
 		 \\&=G(\{u_{1,j}(a_{1},\dots,a_{s})\colon (a_{1},\dots,a_{s})\in(\N_0^{L})^{s}, 1\leq j\leq L\})=H_{1,0}
 		 \end{split}
 		 \end{equation}
 		 and
 		 \begin{equation}\nonumber
 		 \begin{split}
 		 &\quad G(\{v_{m,j}(a_{1},\dots,a_{s})\colon (a_{1},\dots,a_{s})\in(\N_0^{L})^{s}\})
 		 \\&=G(\{u_{1,j}(a_{1},\dots,a_{s})-u_{m,j}(a_{1},\dots,a_{s})\colon (a_{1},\dots,a_{s})\in(\N_0^{L})^{s}\})=H_{1,m},\; 2\leq m\leq \ell
 		 \end{split}
 		 \end{equation}
		contains some of the groups $G_{1,j}(\p), 0\leq j\leq k, j\neq 1$.

 		\medskip
 		
 		Next we assume that $A$ is not 1-standard for $f_{1}$. In this case, we need to invoke the ``dimension increment'' argument to convert $A$ to be 1-standard for $f_{1}$.
 		We may assume without loss of generality that $p_{k}$ has the highest degree.
		 Since $A$ is semi-standard for $f_{1}$, by \cite[Proposition~6.3]{DKS},  there exists a PET-tuple $A'=(2L,0,\ell,\p',\g)$ which is non-degenerate and 1-standard  for $f_{1}$ such that $S(A,2\tau)\leq S(A',\tau)$ for all $\tau>0$. Moreover, $\p'$ is obtained by selecting some polynomials from the family
 		 $$\q:=(p_{1}(n)-p_{k}(n'),\dots,p_{k}(n)-p_{k}(n'),p_{1}(n')-p_{k}(n'),\dots,p_{k-1}(n')-p_{k}(n'))$$ 
 		 with $2L$-dimensional variables $(n,n'),$ where $p_{1}(n)-p_{k}(n')$ is selected in $\p'$ and is associated to $f_{1}$. It is not hard to compute that $G_{1,j}(\q)=G_{1,j}(\p)$ for $0\leq j\leq k, j\neq 1$. Moreover, for $1\leq j\leq k-1$, $G_{1,k+j}(\q)=G_{1,0}(\p)+G_{j,0}(\p)\supseteq G_{1,0}(\p)$ if $d_{1,0}=d_{j,0}$, $G_{1,k+j}(\q)=G_{1,0}(\p)=G_{1,j}(\q)$ if $d_{1,0}>d_{j,0}$, and $G_{1,k+j}(\q)=G_{j,0}(\p)=G_{1,j}(\q)$ if $d_{1,0}<d_{j,0}$. 
 		 In other words, each $G_{1,j}(\q)$ and thus each $G_{1,j}(\p')$ contains some $G_{1,j'}(\p).$
 		 Applying the previous conclusion to $A'$, we are done.

 		 Finally the fact that each
 		$\v_{i,m}(a_{1},\dots,a_{s})$ is a polynomial function in terms of the coefficients of $p_{i}, 1\leq i\leq k$ whose degree depends only on $d,k,K,L$ follows easily from the polynomial nature of the vdC-operations. 
 \end{proof}
 We now have all the ingredients needed to prove Theorem \ref{5555}. In fact,
Theorem \ref{5555} has a proof similar to that of \cite[Theorem~5.1]{DKS}.
\begin{proof}[Proof of Theorem \ref{5555}]
By Propositions \ref{pet} and \ref{pet3}, and the definition of Host-Kra characteristic factors, the left hand side of  \eqref{123} is 0 if for some $1\leq i\leq k$, $f_{i}$ is orthogonal to
$Z_{\{H_{i,m}\}_{1\leq m\leq t_{i}}^{\times D_{i}}}(\X)$ for some $t_{i},D_{i}\in\N$, where $H_{i,m}$ is defined as in Proposition \ref{pet}.  By Proposition \ref{pet}, $H_{i,m}$ is contained in one of $G_{i,j}(\p), 0\leq j\leq k, j\neq i$.
Using Proposition \ref{prop:basic_properties} (v), if some $f_{i},$  $1\leq i\leq k$, is orthogonal to
$Z_{\{G_{i,j}(\p)\}_{0\leq j\leq k, j\neq i}^{\times D_{i}t_{i}}}(\X)$, then it is also orthogonal to
$Z_{\{H_{i,m}\}_{1\leq m\leq t_{i}}^{\times D_{i}}}(\X)$, and thus the left hand side of  (\ref{123}) is 0.

The ``in particular'' part follows from Corollary \ref{st1}.
\end{proof}

 \begin{remark}\label{R:bounds}
We remark that the number $D$ derived in Theorem \ref{5555} is not optimal.

To see this, recall that this number indicates the step of the nilsequence in the splitting results. For multicorrelation sequences with general polynomial iterates, this $D$ can be taken to be equal to the number of vdC-operations we have to perform in order for all the iterates to become constant (e.g., \cite{fra1} via \cite{FHK}, and \cite{K1} via \cite{fra1}). At this point, a word of caution is necessary for the approach of this paper. Specifically, while the number $D$ in Theorem \ref{5555} can still be chosen to be the number of transformations in the case of linear iterates (given that there is no dependence on $h$--the variable arising from the vdC-operations), in the general case the picture is quite different. By carefully tracking the constants that appear in Propositions \ref{pet3} and \ref{pet}, $D$ can be chosen to be the maximum of 
$t_{i}s_{i}^{[t_{i}(s'_{i}+1)^{s_{i}L}]+1}, 1\leq i\leq k$, where $s'_{i}$ is the degree of $\p$, $t_{i}$  is the number of terms remaining when ${\bf{p}}$ is converted to a linear family which is 1-standard?  for $f_{i}$ for the first time, and $s_{i}$ is $t_{i}$ plus the number of vdC-operations needed to convert $\p$ in such a way.  (The details are left to the interested readers.)
\end{remark}

 \section{Proof of Main results}\label{s6}
Using  Theorem \ref{5555}, we prove in this section Theorems~\ref{mainthm}, \ref{j2} and \ref{j3}.

\begin{proof}[Proof of Theorem \ref{mainthm}] We follow and adapt the proof strategy in \cite[Section~3]{leibman1}.
To avoid confusion we use $\nnorm{\cdot}_{\ast}$ to denote the Host-Kra seminorm on the $\Z^{d}$-system $(X,\mathcal{B},\mu,(T_{n})_{n\in\Z^{d}})$ and $\nnorm{\cdot}'_{\ast}$ to denote the Host-Kra seminorm on the $\Z^{d}$-system $(X^{2},\mathcal{B}^{2},\mu^{2},$ $(S_{n})_{n\in\Z^{d}})$, where $S_{n}=T_{n}\times T_{n}$.
Let $(I_N)$ be a F{\o}lner sequence in $\Z^L$. Then, by Theorem \ref{5555}, we have  
\begin{equation}\label{bound1}
\begin{split}
 		&\quad \lim_{N \to \infty} \frac{1}{|I_N|} \sum_{n \in I_N} \left| \int_X f_0\cdot T_{p_1(n)}f_1 \cdot \dotso \cdot T_{p_k(n)} f_k \ d\mu \right|^2
 		\\&=
\lim_{N \to \infty} \frac{1}{|I_N|} \sum_{n \in I_N} \int_{X^2} f_0 \otimes \bar{f_0} \cdot \prod_{i=1}^k S_{p_i(n)} (f_i \otimes \bar{f_i}) \ d\mu\times \mu 
\\&\leq \nnorm{f_i \otimes \bar{f_i}}'_{\{G'_{i,j}(\p)\}^{\times D'}_{0\leq j\leq k, j\neq i}}
\end{split}
\end{equation}
for all $1 \leq i \leq k$ and some $D'\in\N$,
where $G'_{i,j}(\p)$ is the group action generated by $S_{n}$ for all $n\in G_{i,j}(\p)$.\footnote{Strictly speaking, $G_{i,j}(\p)$ and $G'_{i,j}(\p)$ are the same subgroup of $\Z^{d}$. We distinguish these two notions to indicate that $G_{i,j}(\p)$ and $G'_{i,j}(\p)$ are attached to the distinct group actions $(T_{n})_{n\in G_{i,j}(\p)}$ and $(S_{n})_{n\in G'_{i,j}(\p)}$.}
Using Lemma \ref{lem1}, the right hand side of (\ref{bound1}) is bounded by $\nnorm{f_i}^2_{\{G_{i,j}(\p)\}^{\times D'}_{0\leq j\leq k, j\neq i},\Z^{d}}$, which is equal to $\nnorm{f_i}^2_{(\Z^{d})^{\times (D+1)}}$ with $D=kD'$ by our ergodicity assumptions and Corollary~\ref{st1}.

Therefore, for $1 \leq i\leq k$ we have
\begin{equation}\label{bound2}
\lim_{N \to \infty} \frac{1}{|I_N|} \sum_{n \in I_N} \int_{X^2} f_0 \otimes \bar{f_0} \cdot \prod_{i=1}^k S_{p_i(n)} (f_i \otimes \bar{f_i}) \ d\mu \times\mu \leq C\nnorm{f_i}_{(\Z^d)^{\times (D+1)}}^2.
\end{equation}
The bound in \eqref{bound2} and Theorem \ref{st2} imply that the sequence 
\begin{equation}\label{multinull}
a(n)-\int_{X} f_0 \cdot T_{p_1(n)} \mathbb{E}(f_1 \mid Z_{(\Z^{d})^{\times (D+1)}}(\X)) \cdot \dotso \cdot T_{p_k(n)}\mathbb{E}(f_k \mid Z_{(\Z^{d})^{\times (D+1)}}(\X)) \ d\mu
\end{equation}
is a null-sequence.

Let $\varepsilon>0$. The factor $Z_{(\Z^{d})^{\times (D+1)}}(\X),$  via Theorem~\ref{st2}, is an inverse limit of $D$-step nilsystems. Thus, there exists a factor of $Z_{(\Z^{d})^{\times (D+1)}}(\X)$ with the structure of a $D$-step nilsystem $(\tilde{X},\mathcal{B}(\tilde{X}),\mu_{\tilde{X}},$ $T_1,\dots,T_d)$, on which each $T_i$ acts as a niltranslation by an element $a_i \in \tilde{X}$, such that for $\tilde{f_i}=\mathbb{E}(f_i \mid \tilde{X})$, and $\vec{a}\coloneqq (a_1,\dots,a_d),$ we have
\[ \left|\int_{X} f_0 \cdot \prod_{i=1}^k T_{p_i(n)} \mathbb{E}(f_i \mid Z_{(\Z^{d})^{\times (D+1)}}(\X)) \ d\mu-\int_{\tilde{X}} \tilde{f_0} \cdot \prod_{i=1}^k \vec{a}_{p_i(n)}\tilde{f_i} \ d\mu_{\tilde{X}} \right|<\varepsilon\]
for all $n \in \Z^L$, where, if $p_{i}=(p_{i,1},\ldots,p_{i,d}),$ then $\vec{a}_{p_i(n)}$ denotes the niltranslation by the element $(a_1^{p_{1,1}(n)},\dots,a_d^{p_{1,d}(n)})$. Therefore, there exists a nullsequence $\lambda$ such that 
\begin{equation}\label{ellinfty1}
\left|\left|a(n)-\left(\int_{\tilde{X}} \tilde{f_0} \cdot \vec{a}_{p_1(n)}\tilde{f_1} \cdot \dotso \cdot \vec{a}_{p_k(n)} \tilde{f_k} \ d\mu_{\tilde{X}}+\lambda(n)\right)\right|\right|_{\ell^{\infty}(\Z^L)}<\varepsilon.
\end{equation}
 A standard approximation argument allows us to assume without loss of generality that $\tilde{f_1},\dots,\tilde{f_k} \in C(\tilde{X})$ in \eqref{ellinfty1}. Applying \cite[Theorem~2.5]{leibman1} to the nilmanifold $\tilde{X}^k$, the diagonal subnilmanifold $\{(x,\dots,x) : x \in \tilde{X}\}$, the polynomial sequence $(\vec{a}_{p_1(n)},\dots,\vec{a}_{p_k(n)})$ and the function $f(x_1,\dots,x_k)=\tilde{f_1}(x_1)\cdot\dotso \cdot \tilde{f_k}(x_k) \in C(\tilde{X}^k)$, we obtain that the sequence
\[\psi(n):=\int_{\tilde{X}} \tilde{f_0} \cdot \vec{a}_{p_1(n)}\tilde{f_1} \cdot \dotso \cdot \vec{a}_{p_k(n)} \tilde{f_k} \ d\mu_{\tilde{X}} \]
is a sum of a $D$-step nilsequence and a nullsequence.

Therefore, for each $\varepsilon>0$ we can find a $D$-step nilsequence $\psi$, a nullsequence $\lambda$ and a bounded sequence $\delta$ with $\Vert\delta\Vert_{\ell^{\infty}(\Z^L)} \leq \varepsilon$ such that
\begin{equation}\label{decomp1} a(n)=\psi(n)+\lambda(n)+\delta(n).\end{equation}
For each $l \in \N$, consider the decomposition $a=\psi_l+\lambda_l+\delta_l$, where $\Vert\delta_l\Vert_{\ell^{\infty}(\Z^L)}<\frac{1}{l}$. For $r \neq l$, we have
\begin{equation}\label{nilsequence1} \vert\psi_l(n)-\psi_r(n)\vert=\vert(\lambda_l(n)-\lambda_r(n))+(\delta_l(n)-\delta_r(n))\vert.
\end{equation}
Now, $\lim_{|I_N| \to \infty} \frac{1}{|I_N|}\sum_{n \in I_N} \vert\lambda_l(n)-\lambda_r(n)\vert=0$ and $\sup_{n \in \Z^L} \vert\delta_r(n)-\delta_l(n)\vert \leq \frac{1}{l}+\frac{1}{r}$. Therefore,
\begin{equation}\label{nilsequence2}
    \vert\psi_l(n)-\psi_r(n)\vert\leq \frac{1}{l}+\frac{1}{r}
\end{equation}
for all $n \in \Z^L$ except potentially a subset $A \subseteq \Z^L,$ with its characteristic function, $\mathbbm{1}_A(n),$ being a nullsequence. For each $l, r \in \N$, the sequence $\psi_l(n)-\psi_r(n)$ is a nilsequence, so it follows that inequality \eqref{nilsequence2} must, in fact, hold for all $n \in \Z^L$. Hence, the sequence $(\psi_l)_{l \in \N}$ is a Cauchy sequence in $\ell^{\infty}(\Z^L)$ that consists of $D$-step nilsequences, and since we already showed that $(\delta_r)_{r \in \N}$ is a Cauchy sequence in $\ell^{\infty}(\Z^L)$ converging to a nullsequence, from where the conclusion follows.
\end{proof}

\begin{remark}\label{R:step}
It is worth noting that if the polynomials $p_{1},\dots,p_{k}$ are linear, then there is an easier proof of Theorem \ref{mainthm}, where one has $D=k$. The reason is that, instead of Theorem \ref{5555}, one can use \cite[Proposition~1]{host1} or \cite[Proposition~6.1]{DKS} to improve the right hand side of \eqref{bound2} to $\nnorm{f_{i}}^{2}_{(\Z^{d})^{\times (k+1)}}$. 
\end{remark}

Next, we provide the proof of Theorem \ref{j3} (the arguments are similar to those in the proof of \cite[Theorem~1.3, Pages 26--27]{DKS}).

\begin{proof}[Proof of Theorem \ref{j3}]
Using Property (i) and Theorem \ref{5555}, we have that there exists $D\in\N$ such that the left hand side of \eqref{444} is 0 if one of $f_i$'s satisfies $\nnorm{f_{i}}_{(\Z^{d})^{\times D}}=0.$ Thus, it suffices to show \eqref{444} under the assumption that all $f_{i}$'s are measurable with respect to $Z_{(\Z^{d})^{\times D}}$. Using Theorem~\ref{st2} and an approximation argument, we may assume without loss of generality that $\X$ is a nilsystem. As a consequence of \cite[Theorem~B]{L05} (see also \cite[Theorem~2.9]{DKS}),  Property (ii) is equivalent to (\ref{444}) on a nilsystem, which finishes the proof. 
 \end{proof}
 
 Finally, we prove Theorem \ref{j2}. We start with proving that (C1) implies (C2). In fact, we show the following more general result:
 
 \begin{proposition}\label{g22}
 	Let $d,k,L\in\mathbb{N},$ $q_{1},\dots, q_{k}\colon\mathbb{Z}^{L}\to \mathbb{Z}^{d}$  be polynomials, and $\X=(X,\mathcal{B},\mu,$ $ (T_{g})_{g\in\mathbb{Z}^{d}})$ be a $\mathbb{Z}^{d}$-system. Suppose that $(T_{q_{1}(n)},\dots,T_{q_{k}(n)})_{n\in\Z^{L}}$ is jointly ergodic for $\mu$. Then
 	for all $1\leq i,j\leq k, i\neq j$, we have that $(T_{q_{i}(n)-q_{j}(n)})_{n\in\Z^{L}}$ is ergodic for $\mu$.
 	
 	Furthermore, if there exist polynomials $p_{1},\dots, p_{k}\colon\mathbb{Z}^{L}\to \mathbb{Z}$
 	and $v_{1},\dots,v_{k}\in\Z^{d}$ such that $q_{i}(n)=p_{i}(n)v_{i}$ for all $1\leq i\leq k$, then
 	$(T_{p_{1}(n)v_{1}}\times\dots\times T_{p_{k}(n)v_{k}})_{n\in\Z^{L}}$ is ergodic for $\mu^{\otimes k}$.
 \end{proposition}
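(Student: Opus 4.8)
The plan is to deduce both statements from scalar limit identities, invoking joint ergodicity only through carefully chosen test functions. The basic tool is the mean ergodic theorem for polynomial $\Z^{L}\to\Z^{d}$-actions: for any polynomial $r\colon\Z^{L}\to\Z^{d}$ and any F\o lner sequence $(I_{N})$ in $\Z^{L}$, the averages $\mathbb{E}_{n\in I_{N}}T_{r(n)}h$ converge in $L^{2}(\mu)$ to $P_{r}h$, where $P_{r}$ is an orthogonal projection fixing the constant functions (this follows from \cite{W}; see also \cite{DKS}). Hence $(T_{r(n)})_{n\in\Z^{L}}$ is ergodic for $\mu$ if and only if $\lim_{N\to\infty}\mathbb{E}_{n\in I_{N}}\langle T_{r(n)}h,h\rangle=\bigl\lvert\int_{X}h\,d\mu\bigr\rvert^{2}$ for every $h\in L^{\infty}(\mu)$: this identity reads $\langle P_{r}h,h\rangle=\lvert\int_{X}h\,d\mu\rvert^{2}$, and since $P_{r}$ is a self-adjoint idempotent with $P_{r}1=1$ it forces $P_{r}h=\int_{X}h\,d\mu$, first for $h\in L^{\infty}(\mu)$ and then, by density, for all $h\in L^{2}(\mu)$.

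For the first assertion, fix $i\neq j$ and apply joint ergodicity with $f_{i}=f$, $f_{j}=\overline{f}$ and $f_{\ell}=1$ for $\ell\notin\{i,j\}$, obtaining $\mathbb{E}_{n\in I_{N}}T_{q_{i}(n)}f\cdot T_{q_{j}(n)}\overline{f}\to\lvert\int_{X}f\,d\mu\rvert^{2}$ in $L^{2}(\mu)$. Pairing with the constant function $1$ and using that each $T_{q_{j}(n)}$ preserves $\mu$, the left-hand side equals $\mathbb{E}_{n\in I_{N}}\int_{X}T_{q_{i}(n)-q_{j}(n)}f\cdot\overline{f}\,d\mu=\mathbb{E}_{n\in I_{N}}\langle T_{q_{i}(n)-q_{j}(n)}f,f\rangle$. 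Thus $\lim_{N\to\infty}\mathbb{E}_{n\in I_{N}}\langle T_{q_{i}(n)-q_{j}(n)}f,f\rangle=\lvert\int_{X}f\,d\mu\rvert^{2}$ for all $f\in L^{\infty}(\mu)$, so $(T_{q_{i}(n)-q_{j}(n)})_{n\in\Z^{L}}$ is ergodic for $\mu$ by the preceding paragraph. (Taking $f_{\ell}=1$ for all $\ell\neq i$ likewise shows each $(T_{q_{i}(n)})_{n\in\Z^{L}}$ is ergodic for $\mu$.)

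For the second assertion, put $S_{i}:=T_{v_{i}}$, a single invertible measure preserving transformation with $T_{q_{i}(n)}=S_{i}^{p_{i}(n)}$; since $(S_{i}^{p_{i}(n)})_{n}$ is ergodic for $\mu$ (take $f_{\ell}=1$, $\ell\neq i$) and $p_{i}$ is non-constant, $S_{i}$ itself is ergodic for $\mu$. The sequence $(S_{1}^{p_{1}(n)}\times\dots\times S_{k}^{p_{k}(n)})_{n}$ on $X^{k}$ is again a polynomial $\Z^{kd}$-action, so by the reduction of the first paragraph (and density of tensor products in $L^{2}(\mu^{\otimes k})$, together with boundedness of the limiting projection) it suffices to prove
\[\lim_{N\to\infty}\mathbb{E}_{n\in I_{N}}\prod_{i=1}^{k}\langle S_{i}^{p_{i}(n)}f_{i},f_{i}\rangle=\prod_{i=1}^{k}\Bigl\lvert\int_{X}f_{i}\,d\mu\Bigr\rvert^{2}\]
for all $f_{1},\dots,f_{k}\in L^{\infty}(\mu)$. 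Writing $\langle S_{i}^{p_{i}(n)}f_{i},f_{i}\rangle=\int_{\mathbb{T}}e^{2\pi i\,p_{i}(n)\theta}\,d\sigma_{i}(\theta)$ with $\sigma_{i}$ the spectral measure of $f_{i}$ relative to $S_{i}$, and using that $\lim_{N\to\infty}\mathbb{E}_{n\in I_{N}}e^{2\pi i\sum_{i}p_{i}(n)\theta_{i}}=\mathbbm{1}_{E}(\theta_{1},\dots,\theta_{k})$ by Weyl's polynomial equidistribution theorem, where $E:=\{\theta\in\mathbb{T}^{k}:\sum_{i}p_{i}(n)\theta_{i}\in\Z\ \text{for all }n\}$, the dominated convergence theorem shows that the left-hand side above tends to $(\sigma_{1}\times\dots\times\sigma_{k})(E)$. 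As each $p_{i}$ is non-constant, $E$ is a proper, hence Haar-null, closed subgroup of $\mathbb{T}^{k}$; decomposing each $\sigma_{i}$ into atomic and continuous parts one checks (using that each $p_{i}$ is non-constant) that the continuous parts do not charge $E$, so $(\sigma_{1}\times\dots\times\sigma_{k})(E)=\sum\prod_{i}\sigma_{i}(\{a_{i}\})$, the sum being over tuples $(a_{1},\dots,a_{k})\in E$ with each $a_{i}$ an atom of $\sigma_{i}$. The tuple $(0,\dots,0)$ contributes exactly $\prod_{i}\lvert\int_{X}f_{i}\,d\mu\rvert^{2}$ (each $S_{i}$ being ergodic), so it remains to see that any other such tuple gives $\prod_{i}\sigma_{i}(\{a_{i}\})=0$; otherwise each $a_{i}$ is an eigenvalue of $S_{i}$, realised by a unit eigenfunction $g_{i}$ (with $g_{i}=1$ when $a_{i}=0$), and then $\prod_{i}S_{i}^{p_{i}(n)}g_{i}=e^{2\pi i\sum_{i}p_{i}(n)a_{i}}\prod_{i}g_{i}=\prod_{i}g_{i}$ is a constant-in-$n$, nowhere-vanishing function, whereas joint ergodicity forces $\mathbb{E}_{n\in I_{N}}\prod_{i}S_{i}^{p_{i}(n)}g_{i}\to\prod_{i}\int_{X}g_{i}\,d\mu=0$ (for some $i$, $a_{i}\neq 0$, whence $\int_{X}g_{i}\,d\mu=0$ as $S_{i}$ is ergodic) --- a contradiction, which completes the argument.

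I expect the main obstacle to be this last step of the second assertion: one must isolate the content of joint ergodicity that goes beyond the ergodicity of the individual sequences $(T_{q_{i}(n)})_{n}$ and of their differences, namely the non-existence of resonant eigenvalue tuples in $E\setminus\{\mathbf{0}\}$, and it is precisely here that the hypothesis $q_{i}=p_{i}v_{i}$ is used, so that each $T_{v_{i}}$ is a single transformation with a genuine point spectrum. The remaining points --- polynomial equidistribution and dominated convergence along arbitrary F\o lner sequences, and the verification that the continuous spectral parts assign zero mass to the lower-dimensional subgroup $E$ --- are routine but should be spelled out.
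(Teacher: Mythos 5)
Your overall strategy is genuinely different from the paper's: for the second assertion the paper runs its seminorm machinery (the Claim reducing to functions measurable with respect to $Z_{\Z^{d},\Z^{d}}(\X)$ via van der Corput and Lemmas \ref{Kron}, \ref{Kron2}, \ref{alg3}) and only then passes to eigenfunctions, whereas you go directly through spectral measures of the single transformations $S_{i}=T_{v_{i}}$ and Weyl equidistribution, which is more elementary and exploits the special form $q_{i}=p_{i}v_{i}$. However, there is a concrete false step at the heart of your computation: the limit $\lim_{N}\mathbb{E}_{n\in I_{N}}e^{2\pi i\sum_{i}p_{i}(n)\theta_{i}}$ is \emph{not} $\mathbbm{1}_{E}(\theta)$. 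Weyl's theorem gives $0$ only when some non-constant coefficient $\sum_{i}b_{i,v}\theta_{i}$ ($v\neq\mathbf{0}$) is irrational; when all of them are rational the limit is a Gauss-type periodic mean which can be nonzero even though $\sum_{i}p_{i}(n)\theta_{i}\notin\Z$ for some $n$. For instance, with $p_{1}(n)=n^{2}$ and $\theta_{1}=1/3$ the limit is $(1+2e^{2\pi i/3})/3=i/\sqrt{3}\neq 0$, while $\mathbbm{1}_{E}(1/3)=0$. Consequently your dominated-convergence identity ``limit $=(\sigma_{1}\times\dots\times\sigma_{k})(E)$'' is wrong, and your atom analysis silently drops exactly the tuples of rational eigenvalues with non-integral polynomial phase but nonvanishing Gauss mean. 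These tuples must also be excluded, and it is again the joint ergodicity hypothesis that does it: for such an atom tuple $(a_{1},\dots,a_{k})\neq\mathbf{0}$ with unimodular eigenfunctions $g_{i}$, one has $\mathbb{E}_{n\in I_{N}}\prod_{i}S_{i}^{p_{i}(n)}g_{i}\to W(a)\prod_{i}g_{i}\neq 0$ with $W(a)$ the nonzero Gauss mean, contradicting the required limit $\prod_{i}\int g_{i}\,d\mu=0$; and the continuous spectral components give zero mass to the larger set $\{\theta:\sum_{i}b_{i,v}\theta_{i}\in\Q\ \forall v\neq\mathbf{0}\}$ by the same slicing argument you intend for $E$. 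So the gap is repairable within your strategy, but as written the key analytic identity fails.

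Two smaller points need fixing as well. First, the limit operator $P_{r}$ of $\mathbb{E}_{n\in I_{N}}T_{r(n)}$ is in general neither self-adjoint nor idempotent (e.g.\ $T$ a rotation of order $3$ and $r(n)=n^{2}$ gives the eigenvalue $(1+2e^{2\pi i/3})/3$ on an eigenfunction), so your justification of the criterion ``scalar identity for all $h\in L^{\infty}$ implies ergodicity'' is wrong; the correct and easy repair is polarization over $\C$: $\langle(P_{r}-Q)h,h\rangle=0$ for all $h$ forces $P_{r}=Q$, where $Q$ is the projection onto constants. Second, in the product-system step you cannot pass from the diagonal quadratic identity on tensor products to the operator identity merely by ``density plus boundedness'' (a quadratic form vanishing on a spanning set need not vanish identically); you need either to verify the off-diagonal limits $\lim_{N}\mathbb{E}_{n\in I_{N}}\prod_{i}\langle S_{i}^{p_{i}(n)}f_{i},g_{i}\rangle=\prod_{i}\int f_{i}\,d\mu\,\overline{\int g_{i}\,d\mu}$ (your spectral computation extends to the complex measures $\sigma_{f_{i},g_{i}}$), or to apply the polarization identity coordinatewise, which reduces these off-diagonal limits to the diagonal ones you compute. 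With these repairs your argument goes through and yields the paper's statement by a route that avoids the Host--Kra/PET apparatus entirely; your first assertion's proof coincides in spirit with the paper's (which refers to \cite[Proposition~5.3]{DKS} and omits details), modulo the same polarization fix.
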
	
 \begin{proof}
 The sequence $(T_{q_{i}(n)-q_{j}(n)})_{n\in\Z^{L}}$ is ergodic for $\mu$ for all $1\leq i\neq j\leq k$ by an argument similar to the one given in the proof of \cite[Proposition~5.3]{DKS}, so, we choose to omit the details.

 	We now assume that  $q_{i}(n)=p_{i}(n)v_{i}$ for all $1\leq i\leq k$ and show that 	$(T_{p_{1}(n)v_{1}}\times\dots\times T_{p_{k}(n)v_{k}})_{n\in\Z^{L}}$ is ergodic for $\mu^{\otimes k}$.
 	It suffices to show that for all $f_{i}\in L^{\infty}(\mu),1\leq i\leq k$ with $\prod_{i=1}^{k} \int_{X}f_{i}\,d\mu=0$,  we have that
 	\begin{eqnarray}\label{73}
 \F\varlimsup_{N\to \infty}
  \bigg\Vert\mathbb{E}_{n\in I_{N}}\bigotimes_{i=1}^{k}T_{p_{i}(n)v_{i}}f_{i}\bigg\Vert_{L^{2}(\mu^{\otimes k})}=0.
 	\end{eqnarray}
 	
\noindent 	\textbf{Claim.}
 	If $\mathbb{E}(f_{i}\vert Z_{\Z^{d},\Z^{d}}(\X))=0$ for some $1\leq i\leq k$, then (\ref{73}) holds.
 	
 	We may assume without loss of generality that $\deg(p_{1})\geq\deg(p_{2})\geq\dots\geq\deg(p_{k})$.
 	Suppose that we have shown that for some $1\leq k_{0}\leq k$,  (\ref{73}) holds if $\mathbb{E}(f_{i}\vert Z_{\Z^{d},\Z^{d}}(\X))=0$ for some $1\leq i\leq k_{0}-1$, where the case $k_{0}=1$ is understood to be always true. It suffices to show  that  (\ref{73}) holds if $\mathbb{E}(f_{k_{0}}\vert Z_{\Z^{d},\Z^{d}}(\X))=0$.
 	
 	By the induction hypothesis, we may assume without loss of generality that $f_{i}$ is $Z_{\Z^{d},\Z^{d}}(\X)$-measurable for all $1\leq i\leq k_{0}-1$. 
 	By \cite[Lemma 2.7]{DKS}, we can approximate each $f_{i}$ in $L^{2}(\mu)$ by an eigenfunction of $\X$. By multi-linearity, we may assume without loss of generality that each $f_{i},1\leq i\leq k_{0}-1$ is a non-constant eigenfunction of $\X$ given by $T_{n}f_{i}=\exp(\lambda_{i}(n))f_{i}$ for all $n\in \mathbb{Z}^{d}$ and some group homomorphism $\lambda_{i}\colon\mathbb{Z}^{d}\to\R$, and that $f_{i}(x)\neq 0$ $\mu$-a.e $x\in X$. Then the left hand side of (\ref{73}) is equal to
 	\begin{eqnarray}\label{735}
 	\F\varlimsup_{N\to \infty}\bigg\Vert\mathbb{E}_{n\in I_{N}}\exp(P(n))\bigotimes_{i=1}^{k_{0}-1}f_{i}\bigotimes_{i=k_{0}}^{k}T_{p_{i}(n)v_{i}}f_{i}\bigg\Vert_{L^{2}(\mu^{\otimes k})},
 	\end{eqnarray}
 	where $P(n)\coloneqq \sum_{i=1}^{k_{0}-1}\lambda_{i}(v_{i})p_{i}(n).$
 		Denote 
 	$P(n)=\sum_{j=0}^{\deg(p_{1})}Q_{j}(n)$, where $Q_{j}$ is a homogeneous polynomial of degree $j$  for all $0\leq j\leq \deg(p_{1})$. Then
 	\begin{equation}\label{737}
 	\begin{split} \Delta^{K}P(n,h_{1},\dots,h_{K})=\sum_{j=K}^{\deg(p_{1})}\Delta^{K}Q_{j}(n,h_{1},\dots,h_{K}).
 	\end{split}
 	\end{equation} 
 	
 	We first consider the case when
 	$Q_{j}(n)\notin \mathbb{Q}[n]+\R$ for some $\deg(p_{k_{0}})+1\leq j\leq \deg(p_{1})$. In this case,
 	let $K=\deg(p_{k_{0}})$ in (\ref{737}).
 	Since $\Delta^{K}p_{i}(n,h_{1},\dots,h_{K})$ is constant in $n$ for all $k_{0}\leq i\leq k$, by Lemma \ref{Kron}, to show that (\ref{735}) is 0, it suffices to show that 
 	\begin{equation}\label{736}
 	\begin{split} &  	\E^{\square}_{h_{1},\dots,h_{K}\in\Z^{L}}\F\varlimsup_{N\to \infty}\vert\mathbb{E}_{n\in I_{N}}\exp(\Delta^{K}P(n,h_{1},\dots,h_{K}))\vert=0
 	\end{split}
 	\end{equation}
 	(see Definition \ref{d1} for the definition of the polynomial $\Delta^{K}P$).

 	As $Q_{j}(n)\notin \mathbb{Q}[n]+\R$ for some $\deg(p_{k_{0}})+1\leq j\leq \deg(p_{1})$, Lemma~\ref{alg3} implies that $\Delta^{K}Q_{j}(\cdot,h_{1},\dots,h_{K})\notin \mathbb{Q}[n]+\R$ for a set of $(h_{1},\dots,h_{K})$ of density 1.
 	By Weyl's criterion and (\ref{737}), we have that (\ref{736}) holds and thus (\ref{73}) holds. 
 	
 	We now consider the case when $Q_{j}(n)\in \mathbb{Q}[n]+\R$ for all $K+1\leq j\leq \deg(p_{1})$.
 	Let $P'(n)=\sum_{j=0}^{K}Q_{j}(n)$.
 	It is not hard to see that there exists $Q\in\N$ such that for all $r\in\{0,\dots,Q-1\}^{L}$  and $n\in \mathbb{Z}^{L}$, we have that 
 	$$P(Qn+r)-P'(Qn+r)=P(r)-P'(r).$$
 	By (\ref{735}), to show (\ref{73}), it suffices to show that for all $r\in\{0,\dots,Q-1\}^{L}$, we have that 
 	\begin{eqnarray}\label{74}
 	\F\varlimsup_{N\to \infty}\Biggl\Vert\mathbb{E}_{n\in I_{N}}\exp(P'(Qn+r))\bigotimes_{i=k_{0}}^{k}T_{p_{i}(Qn+r)v_{i}}f_{i}\Biggr\Vert_{L^{2}(\mu^{\otimes t})}=0,
 	\end{eqnarray}
 	where $t=k-k_{0}+1$.
 	Fix $r\in\{0,\dots,Q-1\}^{L}$ and set $R(n)=P'(Qn+r)$. Let $p\colon \Z^{L}\to\Z^{dt}$ be the polynomial given by
 	$$p(n)=(p_{i}(Qn+r)v_{i})_{k_{0}\leq i\leq k}.$$
 	Let $(X^{t},\mathcal{B}^{t},\mu^{t},(S_{g})_{g\in \Z^{dt}})$ be the $\Z^{dt}$-system such that 
 	$$S_{(u_{i})_{k_{0}\leq i\leq k}}\coloneqq \prod_{i=k_{0}}^{k} T_{u_{i}}$$
 	for all $u_{i}\in\Z^{d}, k_{0}\leq i\leq k$, and denote $f\coloneqq \bigotimes_{i=k_{0}}^{k}f_{i}$. We may then rewrite the left hand side of (\ref{74}) as 
 	\begin{equation}\label{75}
 	\begin{split} 
 	\F\varlimsup_{N\to \infty}\Vert\mathbb{E}_{n\in I_{N}}\exp(R(n))S_{p(n)}f\Vert_{L^{2}(\mu^{\otimes t})}.
 	\end{split}
 	\end{equation}
 	For $K=\deg(p_{k_{0}})-1,$
 	using Lemma \ref{Kron}, to show that (\ref{75}) is zero, it suffices to show
 	\begin{equation}\label{76}
 	\begin{split} 	\E^{\square}_{{\bf {h}}=(h_{1},\dots,h_{K})\in(\Z^{L})^K}\F\varlimsup_{N\to \infty}\Vert\mathbb{E}_{n\in I_{N}}\exp(\Delta^{K}R(n,{\bf {h}}))S_{\Delta^{K}p(n,{\bf{h}})}f\Vert_{L^{2}(\mu^{\otimes t})}=0.
 	\end{split}
 	\end{equation}
 	
 	By assumption,  $\Delta^{K}R(n,h_{1},\dots,h_{K})$ is of degree  1 in the variable $n$.
 	Since $\deg(p)=\deg(p_{k_{0}})\geq\deg(p_{i})$ for all $i\geq k_{0}$,
 	$\Delta^{K} p(n,h_{1},\dots,h_{K})$ is also of degree 1 in the variable $n$.
 	We may thus assume that 
 	$$\Delta^{K} p(n,h_{1},\dots,h_{K})=((c_{i}(h_{1},\dots, h_{K})\cdot n+c'_{i}(h_{1},\dots, h_{K}))v_{i})_{k_{0}\leq i\leq k},$$
 	for some polynomials $c_{k_{0}},\dots,c_{k}\colon\Z^{LK}\to\Z^{L}$ and $c'_{k_{0}},\dots,c'_{k}\colon\Z^{LK}\to\Z$.
 	Write $\bold{c}(h_{1},\dots, h_{K})\coloneqq (c_{i}(h_{1},\dots, h_{K})v_{i})_{k_{0}\leq i\leq k}$ (which is viewed as a $t$-tuple of $L$-tuple of vectors in $\Z^{d}$).
 	If we write
 	$$c_{i}(h_{1},\dots, h_{K})=(c_{i,1}(h_{1},\dots, h_{K}),\dots,c_{i,L}(h_{1},\dots, h_{K}))$$
 	for some $c_{i,j}(h_{1},\dots, h_{K})\in\Z$, then, by definition, $G(\c(h_{1},\dots,c_{K}))$ is the subgroup of $\Z^{dt}$ generated by the elements
 	$$(c_{k_{0},j}(h_{1},\dots, h_{K})v_{k_{0}},\dots,c_{k,j}(h_{1},\dots, h_{K})v_{k}), 1\leq j\leq L.$$ 
 	By Lemma \ref{Kron2},  the left hand side of (\ref{76}) is bounded by a constant multiple of
 	\begin{equation}\label{77}
 	\begin{split}
 	  (\E^{\square}_{h_{1},\dots,h_{K}\in\Z^{L}}\nnorm{f_{k_0}}^{4}_{G(c_{k_{0}}(h_{1},\dots,c_{K})v_{k_{0}})^{\times 2}})^{1/4},
 	\end{split}
 	\end{equation}
 	where $G(c_{k_{0}}(h_{1},\dots,c_{K})v_{k_{0}})$ is the subgroup of $\Z^{d}$ generated by the elements
 	$$c_{k_{0},1}(h_{1},\dots, h_{K})v_{k_{0}}, \dots,c_{k_{0},L}(h_{1},\dots, h_{K})v_{k_{0}},$$
 	i.e., the entries of $c_{k_{0}}(h_{1},\dots,h_{K})v_{k_{0}}$. 
 	For any $u_{k_{0}}\in G(c_{k_{0}}(h_{1},\dots,h_{K})v_{k_{0}})$, note that $u_{k_{0}}$ is a rational multiple of $v_{k_{0}}$. So, if $c_{k_{0}}(h_{1},\dots,h_{K})\neq \bold{0}$, then 
 	$G(c_{k_{0}}(h_{1},\dots,h_{K})v_{k_{0}})=G(v_{k_{0}})$.
 	
 	Since $(T_{p_{i}(n)v_{i}})_{n\in\Z^{L}}$ is ergodic for $\mu$, we have that $T_{v_{i}}$ is ergodic for $\mu$. 
 	As $\mathbb{E}(f_{k_{0}}\vert 	Z_{\Z^{d},\Z^{d}}(\X))=0$, by \cite[Lemma~2.4]{DKS}, we have
 	that $$\nnorm{f_{k_{0}}}_{G(c_{k_{0}}(h_{1},\dots,c_{K})v_{k_{0}})^{\times 2}}=\nnorm{f_{k_{0}}}_{v_{k_{0}}^{\times 2}}=\nnorm{f_{k_{0}}}_{(\Z^{d})^{\times 2}}=0$$
 	whenever 
 	$c_{k_{0}}(h_{1},\dots,h_{K})\neq\bold{0}$. Since $K=\deg(p_{k_{0}})-1=\deg(p_{k_{0}}(Q\cdot +r))-1$, it is easy to see that $c_{k_{0}}\not \equiv\bold{0}$. By \cite[Lemma 2.12]{DKS}, the set of such  $(h_{1},\dots,h_{K})$ is of density 1. So,
 	averaging over all $h_{1},\dots,h_{K}\in\Z^{L}$, we have that (\ref{77}) is 0.
 	This finishes the proof of the claim.
 	
 	\medskip
 	
 	Using the  claim, it suffices to prove (\ref{73}) under the assumption that all $f_{i}$'s are measurable with respect to $Z_{(\Z^{d})^{\times 2}}(\X)$.
 	By Lemma 2.7 of \cite{DKS}, we can approximate each $f_{i}$ in $L^{2}(\mu)$ by an eigenfunction of $\X$. By multi-linearity, we may assume without loss of generality that each $f_{i}$ is a non-constant eigenfunction of $\X$ given by $T_{n}f_{i}=\exp(\lambda_{i}(n))f_{i}$ for all $n\in \mathbb{Z}^{d}$ for some group homomorphism $\lambda_{i}\colon\mathbb{Z}^{d}\to\R,$ and that $f_{i}(x)\neq 0$ $\mu$-a.e $x\in X$. Then, since $(T_{p_{i}(n)v_{i}}\colon 1\leq i\leq k)_{n\in\Z^{L}}$ is jointly ergodic for $\mu$, for any F\o lner sequence $(I_{N})_{N\in\N}$ of $\Z^{d}$,
 	\[ 0= \prod_{i=1}^{k}\int_{X} f_{i} d\mu=  \lim_{N\to\infty}\mathbb{E}_{n\in I_{N}}\prod_{i=1}^{k}T_{p_{i}(n)v_{i}}f_{i}=\left(\lim_{N\to\infty}\mathbb{E}_{n\in I_{N}}\prod_{i=1}^{k}\exp(\lambda_{i}(p_{i}(n)v_{i}))\right)\prod_{i=1}^{k}f_{i}.\]
 	This implies that $\lim_{N\to\infty}\mathbb{E}_{n\in I_{N}}\prod_{i=1}^{k}\exp(\lambda_{i}(p_{i}(n)v_{i}))=0$. So,
 	\[\lim_{N\to\infty}\mathbb{E}_{n\in I_{N}}\bigotimes_{i=1}^{k} T_{p_{i}(n)v_{i}}f_{i}=\left(\lim_{N\to\infty}\mathbb{E}_{n\in I_{N}}\prod_{i=1}^{k}\exp(\lambda_{i}(p_{i}(n)v_{i}))\right)\bigotimes_{i=1}^{k}f_{i}.\]
 	This finishes the proof.	
 \end{proof}	
 
 We are now ready to complete the proof of Theorem \ref{j2}.

 \begin{proof}[Proof of Theorem \ref{j2}]
 	Using Proposition \ref{g22}, we have that (C1) implies (C2). It is obvious that (C2) implies (C2'). So, it suffices to show that (C2') implies (C1).
  	
 	It is not hard to see that we may assume without loss of generality that $p_{i}(0)=0$ for all $1\leq i\leq k$.	
 	  By Theorem \ref{j3}, to show that $(T_{p_{i}(n)v_{i}}\colon 1\leq i\leq k)_{n\in\Z^{L}}$ is jointly ergodic for $\mu$, it suffices to show that $G_{i,j}(\p)$ is ergodic for $\mu$ for all $0\leq i,j\leq k, i\neq j$.	Fix any such pair $(i,j)$. We may assume without loss of generality that $i\neq 0$.
 	If $j=0$, then by (ii), $(T_{p_{i}(n)v_{i}})_{n\in\Z^{L}}$ is ergodic for $\mu$. So $T_{v_{i}}$ is ergodic for $\mu$, and thus $G_{i,0}(\p)=G(v_{i})$ is ergodic for $\mu$.
 	So, we may now assume that $j\neq 0$. 
 	
 	Assume first that $\deg(p_{i})=\deg(p_{j})$.
 	 By assumption, either  $v_{i}$ and $v_{j}$ are linearly dependent, or $p_{i}(n)$ and $p_{j}(n)$ are linearly dependent. 
 	 
 	If $v_{i}$ and $v_{j}$ are linearly dependent over $\Z$, then we may assume without loss of generality that $v_{i}=av$ and $v_{j}=bv$ for some $a,b\in\mathbb{Q}$ and $v\in\Z^{d}$. By (i),  $(T_{(ap_{i}(n)-bp_{j}(n))v})_{n\in\Z^{L}}$ is ergodic for $\mu$, which implies that $G(v)$ is ergodic for $\mu$.
 	On the other hand, $G_{i,j}(\p)$ is a group generated by some elements which are linear combinations of $v_{i}$ and $v_{j}$, which are thus multiples of $v$. Since $\p$ is non-degenerate,  $G_{i,j}(\p)$ is not the trivial group. It follows that $G_{i,j}(\p)=G(v),$ so the group $G_{i,j}(\p)$ is ergodic for $\mu$.
 	
 	If $p_{i}(n)$ and $p_{j}(n)$ are linearly dependent over $\Z$, then we may assume without loss of generality that $p_{i}(n)=ap(n)$ and $p_{j}(n)=bp(n)$ for some $a,b\in\mathbb{Q}$ and polynomial $p$. By (i),  $(T_{(p(n)(av_{i}-bv_{j})})_{n\in\Z^{L}}$ is ergodic for $\mu$, which implies that $G(av_{i}-bv_{j})$ is ergodic for $\mu$. On the other hand, $G_{i,j}(\p)$ is a group generated by some elements which are multiples of $av_{i}-bv_{j}$. Since $\p$ is non-degenerate,  $G_{i,j}(\p)$ is not the trivial group. It follows that $G_{i,j}(\p)=G(av_{i}-bv_{j}),$ so the group $G_{i,j}(\p)$ is ergodic for $\mu$.
 	
 	Finally, we consider the case when $\deg(p_{i})\neq \deg(p_{j})$. We may assume without loss of generality that $\deg(p_{i})>\deg(p_{j})$.  In this case, $G_{i,j}(\p)=G_{i,0}(\p)$, which we have shown is ergodic for $\mu$.
 	 \end{proof}
\section{Potential future directions}\label{S:Future_direction}

We close this article with two potential future directions regarding the splitting of multicorrelation sequences. The first one is for integer polynomial iterates under no assumptions on the transformations other than commutativity (see Theorem~\ref{T:no_com} for a special case of two terms). 

The second one pertains to potential results analogous to Theorem~\ref{mainthm} for iterates of the form $[p_i(n)],$ $1\leq i\leq k,$ where $p_i=(p_{i,1},\ldots,p_{i,d}):\mathbb{Z}^L\to \mathbb{R}^d$ are vectors of real valued polynomials.\footnote{ Here, for $x=(x_1,\ldots,x_L)\in \mathbb{R}^L,$ we write $[x]\coloneqq ([x_1],\ldots,[x_L]),$ where $[\cdot]$ is the floor function. In fact, one can consider any combination of rounding functions, i.e., floor, ceiling, or closest integer.} 

\subsection{The two-term case with no ergodicity assumptions}

Given the results in the appendix of \cite{CFH}, we are able to obtain the following splitting result for two commuting transformations without any ergodicity assumptions:
\begin{theorem}\label{T:no_com} Let $(X,\mathcal{B},\mu,T,S)$ be a measure preserving system with $TS=ST$. Let $f_0,f_1,f_2 \in L^{\infty}(\mu)$ and $p \in \Z[n]$ with degree $K\geq 2$. Then, the multicorrelation sequence
\[ a(n)\coloneqq \int_X f_0 \cdot T^nf_1 \cdot S^{p(n)}f_2 \ d\mu \]
can be decomposed as a sum of a uniform limit of $K$-step nilsequences plus a nullsequence.
\end{theorem}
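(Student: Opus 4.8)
The plan is to reduce the two-transformation correlation sequence to an expression controlled by a single Host--Kra seminorm on a suitable $\Z^2$-system, then invoke the structure theory to extract a nilsequence part. The key observation is that the iterates $(n,p(n))$ define a polynomial $\Z$-sequence inside $\Z^2$, on which the action $R_{(a,b)}\coloneqq T^aS^b$ makes $(X,\mathcal B,\mu,(R_g)_{g\in\Z^2})$ a $\Z^2$-system (here we use $TS=ST$). Thus $a(n)=\int_X f_0\cdot R_{(n,0)}f_1\cdot R_{(0,p(n))}f_2\,d\mu$, a multicorrelation sequence of the type studied in the paper, with $d=2$, $L=1$, $k=2$, and polynomials $p_1(n)=(n,0)$, $p_2(n)=(0,p(n))$. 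First I would pass to the product system $(X^2,\mathcal B^2,\mu^2,(R_g\times R_g)_{g\in\Z^2})$ and observe, exactly as in the proof of Theorem~\ref{mainthm}, that
\[
\mathbb E_{n\in I_N}|a(n)|^2=\mathbb E_{n\in I_N}\int_{X^2}(f_0\otimes\bar f_0)\cdot (R\times R)_{(n,0)}(f_1\otimes\bar f_1)\cdot (R\times R)_{(0,p(n))}(f_2\otimes\bar f_2)\,d(\mu\times\mu).
\]

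The next step is to obtain a seminorm bound. Here, however, I would \emph{not} use Theorem~\ref{5555} directly, because it would force ergodicity-type hypotheses; instead, following the remark that the appendix of \cite{CFH} provides the relevant non-ergodic input, I would invoke the fact that for the pair of iterates $(n,p(n))$ the averages
$\mathbb E_{n\in I_N}\prod_{i=1}^2 R_{p_i(n)}g_i$ are controlled by a seminorm of $g_2$ relative to the single transformation $S$ (of degree $K$, so a $\nnorm{\cdot}$-type seminorm of level $K$ built only from $S$), and symmetrically by a degree-$1$ seminorm of $g_1$ relative to $T$. Applying this to the product system and then using Lemma~\ref{lem1} to collapse the tensor seminorm back to a seminorm of $f_2$ on $X$, one gets that the limiting average of $|a(n)|^2$ is dominated by $\nnorm{f_2}_{\langle S\rangle^{\times(K+1)}}$ (and by $\nnorm{f_1}_{\langle T\rangle^{\times 2}}$), where $\langle S\rangle,\langle T\rangle$ denote the cyclic subgroups generated by $S,T$ inside $\Z^2$. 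By Proposition~\ref{prop:basic_properties}(v),(vi) and the fact that $\langle S\rangle\subseteq\Z^2$, this is bounded by $\nnorm{f_2}_{(\Z^2)^{\times(K+1)}}$. This lets us replace $f_2$ (and similarly $f_1$) by $\mathbb E(f_i\mid Z_{(\Z^2)^{\times(K+1)}})$ at the cost of a nullsequence, as in \eqref{multinull}.

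The final step is purely structural and mirrors the end of the proof of Theorem~\ref{mainthm}: over the ergodic decomposition of $(X,\mathcal B,\mu,(R_g)_{g\in\Z^2})$, the factor $Z_{(\Z^2)^{\times(K+1)}}$ is an inverse limit of $K$-step $\Z^2$-nilsystems by Theorem~\ref{st2}; approximating $\mathbb E(f_i\mid Z_{(\Z^2)^{\times(K+1)}})$ by continuous functions on a $K$-step nilfactor, applying Leibman's theorem \cite[Theorem~2.5]{leibman1} to the nilmanifold $\tilde X^3$ with the diagonal and the polynomial orbit $(\vec a_{(n,0)},\vec a_{(0,p(n))})$ evaluated against $\tilde f_0\otimes\tilde f_1\otimes\tilde f_2$, and running the same Cauchy-sequence-in-$\ell^\infty$ argument, we conclude that $a(n)$ is a uniform limit of $K$-step nilsequences plus a nullsequence. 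One technical point: the ergodic decomposition is over $(R_g)_{g\in\Z^2}$, not over $T$ or $S$ alone, so I would be slightly careful to note that the seminorm bounds above hold on each ergodic component (they are proved componentwise) and that the nilsequence extracted from each component can be glued via a measurable selection / direct-integral argument exactly as in \cite{leibman1}; since we only need a uniform limit of nilsequences plus a nullsequence, this causes no trouble.

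\textbf{Main obstacle.} The crux is the non-ergodic seminorm estimate: showing that $\mathbb E_{n\in I_N}\prod_i R_{p_i(n)}g_i$ is controlled by a $T$- or $S$-seminorm \emph{without} any ergodicity hypothesis. The PET-induction machinery of Sections~\ref{s:3}--\ref{s4} is built for $\Z^d$-systems but the cleanest statements (Theorem~\ref{5555}) still assume ergodicity of certain subgroup actions; the whole point of Theorem~\ref{T:no_com} is to dispense with that, which is exactly why one must import the appendix results of \cite{CFH} (where the relevant characteristic-factor statement for iterates $(n,p(n))$ is established in full generality, essentially because the single-transformation Host--Kra theory survives the ergodic decomposition). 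Verifying that those results apply verbatim to the product system $X^2$ and interface correctly with Lemma~\ref{lem1} is the step that requires the most care.
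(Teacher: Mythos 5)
Your reduction to the product $\Z^2$-system and the overall ``replace $f_i$ by a projection, then conclude'' outline is reasonable, but the two key steps do not survive scrutiny, and they are exactly where the absence of ergodicity bites. First, the seminorm manipulation goes in the wrong direction: for a subgroup $H'\leq H$ one has $\mathcal{I}(H)\subseteq\mathcal{I}(H')$, hence $\nnorm{f}_{H'}\geq\nnorm{f}_{H}$, and Proposition~\ref{prop:basic_properties}(v) says precisely that shrinking a group \emph{enlarges} the factor. So a bound of the averages by $\nnorm{f_2}_{\langle S\rangle^{\times(K+1)}}$ does \emph{not} yield a bound by $\nnorm{f_2}_{(\Z^2)^{\times(K+1)}}$; upgrading a cyclic-subgroup seminorm to the full-group seminorm is exactly what Corollary~\ref{st1} does, and it requires the ergodicity you are trying to avoid. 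Second, even if you had a bound by $\nnorm{f_2}_{(\Z^2)^{\times(K+1)}}$, Theorem~\ref{st2} applies only to \emph{ergodic} $\Z^2$-systems, and your proposed fix --- ergodic decomposition of the $\Z^2$-action plus ``gluing'' the nilsequences from the fibers by a measurable-selection argument ``as in Leibman'' --- is not a routine step: a measurable family of nilsequences is not a nilsequence, the null part must be averaged uniformly over the decomposition, and making this work for non-ergodic $\Z$-systems was the content of \cite{leibman2}; for commuting transformations it is precisely the open problem discussed in the introduction. So the proposal, as written, quietly assumes the hard part.

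The paper's actual proof avoids both issues by using a different (and much stronger) consequence of the appendix of \cite{CFH}: for the iterates $n$ and $p(n)$ with $\deg p=K\geq 2$, Theorem~A.3 there shows that the \emph{rational Kronecker factors} of the single transformations $T\times T$ and $S\times S$ are characteristic for the averages of $|a(n)|^2$, with no ergodicity hypothesis. One may therefore replace $f_1$ by $P_cf_1$ and $f_2$ by $Q_cf_2$ (the Jacobs--Glicksberg--de Leeuw compact components for $T$ and $S$) up to a nullsequence, approximate these by finite sums of eigenfunctions, and observe that the resulting correlation is a finite sum $\sum_{i,j}c_{i,j}\lambda_i^n\rho_j^{p(n)}$, i.e.\ an explicit $K$-step nilsequence since $\rho_j^{p(n)}$ is a degree-$K$ polynomial phase; the proof then concludes with the same $\ell^\infty$-approximation/Cauchy argument as Theorem~\ref{mainthm}. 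No Host--Kra seminorm bound on the $\Z^2$-system, no structure theorem, and no ergodic decomposition are needed. If you want to salvage your route, you would have to prove a genuinely non-ergodic characteristic-factor statement for the $\Z^2$-action itself, which is not supplied by Theorem~\ref{5555} or by the seminorm monotonicity facts you cite.
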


\begin{proof} Setting $F_i=f_i \otimes \bar{f}_i,$ $i=0,1,2,$ and $\tilde{\mu}=\mu\times\mu,$ we have that
\begin{equation}\label{ratkronecker1} \frac{1}{N}\sum_{n=1}^N |a(n)|^2=\frac{1}{N}\sum_{n=1}^N \int_{X^2} F_0\cdot (T \times T)^n F_1\cdot (S \times S)^{p(n)} F_2 \ d\tilde{\mu}. \end{equation}
Using \cite[Theorem A.3]{CFH}, we get that the rational Kronecker factor\footnote{The \emph{rational Kronecker factor} is the smallest sub-$\sigma$-algebra of $\mathcal{B}$ that makes all functions with finite orbit in $L^2(\mu)$ under the transformation $T$ measurable.} is characteristic for the averages appearing in \eqref{ratkronecker1}. Consequently, we may replace $f_1$ by $P_cf_1$ and $f_2$ by $Q_cf_2$ in $a(n)$ up to a nullsequence, where $P_c$ denotes the orthogonal projection onto the compact component of the splitting associated to $T$, and $Q_c$ the one associated to $S$. (Here we make use of the Hilbert space splitting of $L^2(\mu)$ into its compact and weakly mixing components for a given unitary operator. The seeds for these results are already present in the work of Koopman and von Neumann \cite{koopmanvonneumann}. They were later generalized by Jacobs, Glicksberg and de Leeuw. See \cite[Section~16.3]{efhn} for a more modern treatment). Thus, the sequence
\[ a(n)-\int_X f_0 \cdot T^n P_cf_1 \cdot S^{p(n)} Q_c f_2 \ d\mu\]
is a nullsequence. Let $\varepsilon>0$ and choose $h_1,\dots,h_k,g_1,\dots,g_k \in L^2(\mu)$ such that $Th_i=\lambda_ih_i$ and $Sg_i=\rho_ig_i$ (for some $\lambda_1,\dots,\lambda_k,\rho_1,\dots,\rho_k \in \C$ of absolute value one) as well as $a_1,\dots,a_k,b_1,\dots,b_k \in \C$ such that
\[ \left|\int_X f_0 \cdot T^n P_cf_1 \cdot S^{p(n)} Q_c f_2 \ d\mu-\int_X f_0 \cdot T^n \sum_{i=1}^k a_ih_i \cdot S^{p(n)} \sum_{j=1}^k b_jg_j \ d\mu\right|<\varepsilon.\]
Observe that 
\begin{eqnarray*} \int_X f_0 \cdot T^n \sum_{i=1}^k a_ih_i \cdot S^{p(n)} \sum_{j=1}^k b_ig_i \ d\mu & = & \int_X f_0 \cdot \sum_{i=1}^k a_i\lambda_i^nh_i \cdot \sum_{j=1}^k b_j\rho_j^{p(n)}g_j \ d\mu
\\ & = &
\sum_{i,j=1}^k \left(a_ib_j \int_X f_0 \cdot h_i \cdot g_j \ d\mu \right)\lambda_i^n\rho_j^{p(n)},\end{eqnarray*}
which is a $K$-step nilsequence. Applying the same argument as in the proof Theorem~\ref{mainthm} we deduce the decomposition result. The rest of the details are omitted for the sake of brevity.
\end{proof}

It is natural to ask whether a result analogous to Theorem~\ref{T:no_com} holds for longer expressions (potentially via a generalization of the results in the appendix of \cite{CFH}), and with more general polynomial iterates, even without necessarily assuming they have distinct degrees. Thus, we state the following problem:

\begin{problem}\label{P_not_com}
Obtain decomposition results of the form ``uniform limit of nilsequences plus a nullsequence'' for multicorrelation sequences with (integer) polynomial iterates for general systems under no ergodicity assumptions on the transformations.
\end{problem}

\subsection{Integer part polynomial iterates} 
With a, by now, standard argument (introduced in \cite{bjw} and \cite{les} for a single term, extended for two terms in \cite{mw}, and further developed in \cite{K2}, \cite{K1} and \cite{klmr}) one has, for the vectors of real polynomials $p_{i}=(p_{i,1},\ldots,p_{i,d}),$ that the expression 
\begin{equation}\label{E:general_floor}
\frac{1}{|I_N|}\sum_{n\in I_N}\prod_{i=1}^k T_{[p_i(n)]}f_i=\frac{1}{|I_N|}\sum_{n\in I_N}\prod_{i=1}^k \prod_{j=1}^d T_j^{[p_{i,j}(n)]}f_i
\end{equation}
is ``close'' to  
\begin{equation}\label{E:general_flow}
\frac{1}{|I_N|}\sum_{n\in I_N}\prod_{i=1}^k \prod_{j=1}^d S_j^{p_{i,j}(n)}g_i,
\end{equation} where $S_j$'s are $\mathbb{R}$-flows on an ``extension system'' $Y$ of $X$, and the functions $g_i$'s are extensions of the $f_i$'s (see \cite{K1} for details).\footnote{ We say that a jointly measurable family $(S_t)_{t\in\R^d}$ of measure preserving transformations on a probability space, is an \emph{$\R^d$-action} (\emph{flow}), if it satisfies $S_{t+r}=S_t\circ S_r$ for all $t,r\in \R^d.$} As an application of Theorem~\ref{mainthm}, one can prove splitting theorems for $\R^d$-actions on the extension system. 

Indeed,\footnote{ We address the $L=1$ case for simplicity; following the same argument one can similarly get the corresponding result for the general case of $L$-variable polynomials by using an ordering on the parameters, e.g., $n_1>\ldots>n_L.$} consider the multicorrelation sequence
\begin{equation}\label{E:flow} \int_X f_0\cdot S_{p_1(n)}f_1 \cdot \dotso \cdot S_{p_k(n)} f_k \ d\mu,
	\end{equation} 
	where  $S$ is a measure preserving $\R^d$-action on the probability space $(X,\mathcal{B},\mu),$ $f_0,f_1,\ldots,f_k\in L^\infty(X),$ and $p_{1},\dots,p_{k}\colon\mathbb{Z}\to\mathbb{R}^{d}$ a non-degenerate family of polynomials of degree at most $K$ with $p_{i}(n) = \sum_{h = 0}^K a_{i,h} n^h$, $a_{i,h}\in \mathbb{R}^{d}$.
 Then 
$$S_{p_{i}(n)}=S_{\sum_{h = 0}^K a_{i,h} n^h}=\prod_{h=0}^{K}(S_{a_{i,h}})^{n^{h}}.$$
Note that $S_{a_{i,h}}, 1\leq i\leq k, 1\leq h\leq K$ generate a $\Z^{kK}$-action on $(X,\mathcal{B},\mu)$. For convenience, set $p_{0}$ to be the constant zero polynomial.
 For $0\leq i, j\leq k, i\neq j$, let $D_{i,j}$ be the largest integer $h$ such that $S_{a_{i,D_{i,j}}-a_{j,D_{i,j}}}\neq \textrm{id}$. This transformation will be denoted by $R_{i,j}$.
By Theorem~\ref{mainthm}, one can show that the desired splitting result  for the sequence (\ref{E:flow}), if all the transformations $R_{i,j}, 0\leq i, j\leq k, i\neq j$
are all ergodic (as $\Z$-actions on the extension system $Y$).

Unfortunately, even though we have the previous result for flows, the error term that arises from the approximation of \eqref{E:general_floor} by \eqref{E:general_flow} prevents us from getting the conclusion of  Theorem~\ref{mainthm} for multicorrelation sequences of the form
\begin{equation*}\label{E:multi_floor}
\int_X f_0\cdot T_{[p_1(n)]}f_1 \cdot \dotso \cdot T_{[p_k(n)]} f_k \ d\mu.\footnote{ To this day, only splittings of the form nilsequence plus an error term  that is small in uniform density are known for this class of multicorrelation sequences (for this, see \cite{K1}. One is referred to \cite{klmr} for averages along primes for the error term--in this last reference only single variable real polynomials were considered. Using the multivariable approach of \cite{hostfra} instead of \cite{fra1}, one immediately gets the aforementioned result for integer part, or indeed for combinations of any other rounding functions of multivariable real polynomial iterates). }
\end{equation*}

\begin{remark}
It is important to stress that, for integer part real polynomial iterates, one does not expect to have the desired multicorrelation splitting in general. The next example shows that, even for $k=1,$ an ergodic system, and linear iterates, it can be too much to hope for: 

\emph{Indeed, following \cite[Example~7]{klmr}, let $X=\mathbb{T}\coloneqq \mathbb{R}/\mathbb{Z}$, $T(x)=x+1/\sqrt{2}$, $p(n)=\sqrt{2}n$, $f_0(x)=e(x)$ and $f_1(x)=e(-x)$, where $e(x)\coloneqq e^{2\pi ix}$. Then, we have that
\begin{eqnarray*}
\int f_0\cdot T^{[p(n)]}f_1\, d\mu
    & = & \int e(x) e\left(-x-\frac{1}{\sqrt{2}} [\sqrt{2}n]\right)\,dx 
     =  e \left( - \frac{1}{\sqrt{2}} [\sqrt{2} n] \right)
     = e\left(\frac{1}{\sqrt{2}}\{\sqrt{2}n\}\right),
\end{eqnarray*}
which cannot be written as a uniform limit of nilsequences and a nullsequence.}
\end{remark}

\begin{remark}
One may think that the fact that $\sqrt{2}$ and $\frac{1}{\sqrt{2}}$ are not linearly independent over $\Q$ is behind the impossibility of the splitting in the example above. However, a closer examination of the proof given in \cite{klmr} shows that this is not the case, and that the failure extends quite generally. 

\emph{Indeed, we can imitate the example quoted above as follows. Let $(X,\mathcal{B},\mu,T)$ be an ergodic measure preserving system with non-trivial irrational spectrum. Let $f_1 : X \to \mathbb{S}^1$ be an eigenfunction of $T$ with eigenvalue $e^{2\pi i \beta}$, with $\beta \in \R \setminus \Q$. Put $f_0=\bar{f_1}$. Then, if we consider the multicorrelation sequence 
\begin{equation*}\label{E:brackets}
        a(n)\coloneqq \int_X f_0 \cdot T^{[\alpha n]}f_1 \ d\mu,
    \end{equation*}
with the choices made above, we observe that, in fact, 
    $a(n)=e^{2\pi i [\alpha n]\beta}.$ 
The same argument as in \cite[Example~7]{klmr} shows that $a(n)$ cannot be written as a uniform limit of nilsequences plus a nullsequence.}
\end{remark}

On the other hand, if we postulate very strong assumptions on our transformations, we do have the desired decomposition results. For example (see \cite{K1}), if $T_1,\ldots,T_k$ are commuting weakly mixing transformations on $(X,\mathcal{B},\mu)$, $q_i(n)=p_i(n)e_i,$ $1\leq i\leq k,$ where $p_i\colon \mathbb{Z}\to\mathbb{R}$ are real polynomials of distinct, positive degrees, and $f_1,\dots,f_k \in L^{\infty}(\mu),$ then we have that
\[ \lim_{N-M \to \infty} \frac{1}{N-M}\sum_{n=M}^{N-1}T_{[q_1(n)]}f_1 \cdot \dotso \cdot T_{[q_k(n)]}f_k=\prod_{i=1}^k \int_X f_i \ d\mu.\]
Hence, for any $f_0 \in L^{\infty}(\mu),$ the multicorrelation sequence
    \[
         \int_X f_0 \cdot T_{[q_1(n)]}f_1 \cdot \dotso \cdot T_{[q_k(n)]}f_k \ d\mu
    \]
    can be written as a sum of a constant (i.e., a $0$-step nilsequence) and a nullsequence.

\medskip

We conclude this article with the following problem that arises naturally:

\begin{problem}\label{P_floor}
Let $d,k,K,L\in\mathbb{N},$ $p_{1},\dots,p_{k}\colon\mathbb{Z}^{L}\to\mathbb{R}^{d}$ be a non-degenerate family of polynomials of degree at most $K$,  $(X,\mathcal{B},\mu,T_1,\dots,T_d)$ a measure preserving system and $f_0,\dots,f_k \in L^{\infty}(\mu).$ 
Find conditions, on the $p_i$'s and/or the $\mathbb{Z}^d$-action $T$ that is defined by the $T_i$'s, so that the multicorrelation sequence
\[ \int_X f_0\cdot T_{[p_1(n)]}f_1 \cdot \ldots \cdot T_{[p_k(n)]} f_k \ d\mu\]
can be decomposed as a sum of a uniform limit of $D$-step nilsequences  and a nullsequence.
\end{problem}

\end{document}